\tikzset{Box/.style={very thick, rounded corners}}
\tikzset{marked/.style={star, star point height = .75mm, star points =5, fill=black,minimum size=2mm, inner sep=0mm} }
\tikzset{verythickline/.style = {line width=7pt}}
\tikzset{thickline/.style = {line width=5pt}}
\tikzset{medthick/.style = {line width=3pt}}
\tikzset{med/.style = {line width=2pt}}
\tikzset{count/.style = {fill=white,circle,draw,thin, inner sep=2pt}}
\tikzset{rcount/.style = {fill=white,rectangle,draw,thin,inner sep=2pt, rounded corners}}
\tikzset{cpr/.style = {draw,fill=white,rectangle,thin, rounded corners}}
\theoremstyle{plain}
	\newtheorem{theorem*}{Theorem}
	\newtheorem{theorem}{Theorem}[section]
	\newtheorem{proposition}[theorem]{Proposition}
	\newtheorem{lemma}[theorem]{Lemma}
	\newtheorem{corollary}[theorem]{Corollary}
\theoremstyle{definition}	\newtheorem{definition}[theorem]{Definition}
	\newtheorem{example}[theorem]{Example}
\theoremstyle{remark}
	\newtheorem{remark}[theorem]{Remark}
\DeclareMathOperator{\NC}{NC}
\DeclareMathOperator{\BNC}{BNC}
\DeclareMathOperator{\tr}{tr}
\DeclareMathOperator{\alg}{alg}
\newcommand{\op}{{\text{op}}}
\newcommand{\qqand}{\qquad \text{and}\qquad}
\newcommand{\qand}{\quad \text{and}\quad}
\newcommand{\C}{\mathbb{C}}
\newcommand{\bC}{\mathbb{C}}
\newcommand{\bR}{\mathbb{R}}
\newcommand{\bN}{\mathbb{N}}
\renewcommand{\H}{\mathcal{H}}
\newcommand{\F}{\mathcal{F}}
\newcommand{\fM}{\mathfrak{M}}
\renewcommand{\P}{\mathcal{P}}
\newcommand{\vX}{\mathbf{X}}
\newcommand{\vY}{\mathbf{Y}}
\newcommand{\vZ}{\mathbf{Z}}
\newcommand{\vS}{\mathbf{S}}
\newcommand{\vD}{\mathbf{D}}
\newcommand{\vC}{\mathbf{C}}
\newcommand{\A}{\mathcal{A}}
\newcommand{\sx}{s_{\chi}}
\newcommand{\sxv}{s_{{\chi}|_V}}
\newcommand{\xv}{\chi|_V}
\newcommand{\pleq}{\preceq_{\chi}}
\newcommand{\ple}{\prec_{\chi}}
\newcommand{\xh}{\widehat{\chi}}
\newcommand{\oxh}{\widehat{0_{\chi}}}
\renewcommand{\phi}{\varphi} 
\newcounter{my_enumerate_counter}
\title{Bi-Free Entropy with Respect to Completely Positive Maps}
\author{Georgios Katsimpas}
\address{Department of Mathematics and Statistics, York University, 4700 Keele Street, Toronto, Ontario, M3J 1P3, Canada}
\email{gkats@mathstat.yorku.ca}
\author{Paul Skoufranis}
\address{Department of Mathematics and Statistics, York University, 4700 Keele Street, Toronto, Ontario, M3J 1P3, Canada}
\email{pskoufra@yorku.ca}
\subjclass[2010]{46L54, 46L53}
\date{\today}
\keywords{bi-free probability, entropy, completely positive maps, bi-free cumulants, bi-R-diagonals.}
\thanks{The funding for the first author was partially supported and the research of the second author was supported in part by NSERC (Canada) grant RGPIN-2017-05711.}
\begin{document}

\begin{abstract}
In this paper, a notion of non-microstate bi-free entropy with respect to completely positive maps is constructed thereby extending the notions of non-microstate bi-free entropy and free entropy with respect to a completely positive map.  By extending the operator-valued bi-free structures to allow for more analytical arguments, a notion of conjugate variables is constructed using both moment and cumulant expressions.  The notions of free Fisher information and entropy are then extended to this setting and used to show minima of the Fisher information and maxima of the non-microstate bi-free entropy at bi-R-diagonal elements.
\end{abstract}

\maketitle

\section{Introduction}
\label{sctn:introduction}

Free entropy was introduced in a series of papers by Voiculescu including \cites{V1998-2, V1999} that cemented the foundations of free probability and its applications to operator algebras.  Of note is the non-microstate approach of \cite{V1998-2} that generalized the notions of Fisher information and entropy to the non-commutative random variables studied in free probability by using a conjugate variable system and free Brownian motions.  These ideas were further extended to the operator-valued setting by Shlyakhtenko in \cite{S1998} by modifying the conjugate variable formulae to involve a completely positive map on the algebra of amalgamation.  One immediate application was \cite{S1998}*{Proposition 7.14} that obtained a formula for the Jones index of a subfactor.  Furthermore, free entropy with respect to a completely positive map was  essential to the work in \cite{NSS1999} which demonstrated that minimal values for the free Fisher information and maximal values for the non-microstate free entropy existed and were obtained at R-diagonal elements.

More recently in \cite{V2014}, Voiculescu extended the notion of free independence to simultaneously study the left and right actions of algebras on reduced free product spaces.  In particular, this permits a notion of independence, called bi-free independence, that contains both free and classical independence (see \cite{S2016-1}) and a free probability construction to simultaneously study both a von Neumann algebra and its commutant (see Example \ref{exam:factors}).  Significant effort has gone into enhancing results from free probability to the bi-free setting and examining potential applications.  In terms of entropy, Charlesworth and the second author recently developed \cites{CS2020, CS2019}, thereby extending both the microstate and non-microstate free entropies to the bi-free setting.

The purpose of this paper is to extend the notion of non-microstate bi-free entropy to incorporate the existence of a completely positive map and examine applications of said theory.  In particular, our main applications are Theorem \ref{thm:minimizing-fisher-info} which examines the minimal value of the bi-free Fisher information for collections of pairs of operators with similarities in their distributions, and the following theorem which cumulates the first author's work on bi-R-diagonal elements from \cite{K2019}.

\begin{theorem*}
Let $(\A, \varphi)$ be a C$^*$-non-commutative probability space and let $x,y \in \A$ be such that $x^*x$ and $xx^*$ have the same distribution with respect to $\varphi$ and $y^*y$ and $yy^*$ have the same distribution with respect to $\varphi$.  With $\tau_2 : \A \otimes M_2(\bC) \otimes M_2(\bC)^{\op} \to \bC$ defined by
\[
\tau_2(T \otimes b_1 \otimes b_2) = \varphi(T) \tr_2(b_1b_2)
\]
and
\[
X = x \otimes E_{1,2} \otimes I_2 + x^* \otimes E_{2,1} \otimes I_2 \qand Y = y \otimes I_2 \otimes E_{1,2} + y^* \otimes I_2 \otimes E_{2,1},
\]
we have that
\[
\chi^*(\{x,x^*\} \sqcup \{y,y^*\}) \leq 2 \chi^*(X \sqcup Y)
\]
and equality holds whenever the pair $(x, y)$ is bi-R-diagonal and alternating adjoint flipping.
\end{theorem*}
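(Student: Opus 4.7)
The proof strategy mirrors the free-probabilistic argument of Nica-Shlyakhtenko-Speicher adapted to the bi-free setting developed earlier in the paper. The plan is to first establish a Fisher information inequality of the form $\Phi^*(X \sqcup Y) \geq \frac{1}{2}\Phi^*(\{x,x^*\} \sqcup \{y,y^*\})$, and then integrate along bi-free Brownian motions to upgrade this to the desired entropy inequality. The hypothesis that $x^*x$ and $xx^*$ (respectively $y^*y$ and $yy^*$) are equidistributed is exactly the symmetry that places the pair $\{x,x^*\}$ (and $\{y,y^*\}$) into the setting where Theorem \ref{thm:minimizing-fisher-info} applies.

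First I would unpack the structure of $X$ and $Y$ as elements of $\A \otimes M_2(\bC) \otimes M_2(\bC)^{\op}$. Observe that $X$ is self-adjoint with respect to the natural involution and its moments under $\tau_2$ are essentially the symmetrized $*$-moments of $x$ (similarly for $Y$); the tensor slots involving $M_2(\bC)$ act on the left while those involving $M_2(\bC)^{\op}$ act on the right, so $X$ is a left variable and $Y$ is a right variable in the bi-free sense with respect to the algebra of amalgamation $M_2(\bC) \otimes M_2(\bC)^{\op}$. Applying Theorem \ref{thm:minimizing-fisher-info} to the collection $(X, Y)$ then produces the Fisher information lower bound, with the factor of $\frac{1}{2}$ arising from the normalized trace $\tr_2$ in each matrix factor.

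Next I would invoke the integral formula expressing $\chi^*$ in terms of $\Phi^*$ of perturbations by bi-free Brownian motions (established earlier in the paper as the bi-free, completely positive analogue of Voiculescu's formula from \cite{V1998-2} and Shlyakhtenko's \cite{S1998}). Perturbing $X, Y$ by standard bi-free Brownian motions compatible with $\tau_2$ and perturbing $\{x,x^*\}, \{y,y^*\}$ by their corresponding Brownian motions, the Fisher information inequality persists along the perturbation because the matrix-valued bi-free Brownian motions decompose in exactly the block form that mirrors $X, Y$. Integrating and accounting for the dimension constants (again producing the factor of $2$ coming from working on $M_2(\bC)^{\otimes 2}$) yields $\chi^*(\{x,x^*\} \sqcup \{y,y^*\}) \leq 2\chi^*(X \sqcup Y)$.

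For the equality claim, I would use the results from \cite{K2019} to exploit the vanishing pattern of bi-free cumulants when $(x,y)$ is bi-R-diagonal and alternating adjoint flipping: the nonzero bi-free $*$-cumulants only involve strictly alternating products of $x, x^*, y, y^*$ in a prescribed flipping pattern. Using the cumulant characterization of bi-free conjugate variables developed earlier, I would compute the conjugate system for $(X, Y)$ with respect to $\tau_2$ and show it is the image, under the block embedding, of the conjugate system for $\{x,x^*\} \sqcup \{y,y^*\}$ with respect to $\varphi$; this forces equality in the Fisher information bound and hence in the integrated entropy inequality. The main obstacle I anticipate is precisely this last combinatorial step: matching the bi-non-crossing cumulant expansion of the conjugate variables of $X, Y$ (which live in the operator-valued framework over $M_2(\bC) \otimes M_2(\bC)^{\op}$) against the scalar-valued conjugate variables of $(x,y)$, since the bi-R-diagonal plus alternating adjoint flipping hypothesis must be shown to cancel exactly those cumulant contributions that would otherwise produce strict inequality.
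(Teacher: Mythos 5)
Your overall strategy --- apply Theorem \ref{thm:minimizing-fisher-info} pointwise along a semicircular perturbation and integrate --- is the same as the paper's, but as written there are genuine gaps. First, a direction error: you state the Fisher inequality as $\Phi^*(X \sqcup Y) \geq \frac{1}{2}\Phi^*(\{x,x^*\}\sqcup\{y,y^*\})$, whereas Theorem \ref{thm:minimizing-fisher-info} gives $\Phi^*(\{x,x^*\}\sqcup\{y,y^*\}) \geq 2\Phi^*(X\sqcup Y)$; since $\chi^*$ is a constant minus an integral of Fisher informations of perturbations, your inequality would integrate to the \emph{reverse} of the claimed entropy bound. Second, and more substantively, the integration step requires the hypotheses of Theorem \ref{thm:minimizing-fisher-info} to hold for the perturbed pair $\left(x + \sqrt{t}\,c_\ell,\; y + \sqrt{t}\,c_r\right)$ at every $t>0$, not just at $t=0$. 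The paper isolates this in Lemma \ref{lem:perturbing-preserves-AAF}: equidistribution of $z^*z$ and $zz^*$ is preserved under adding a bi-free circular element (needed for the inequality), and bi-R-diagonality together with alternating adjoint flipping are preserved as well (needed for equality). Your equality argument only matches conjugate systems at $t=0$; without persistence along the flow you cannot conclude equality of the integrands for all $t$, hence not of the entropies.

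There is also a third point you gesture at ("the matrix-valued bi-free Brownian motions decompose in exactly the block form that mirrors $X,Y$") but do not resolve: to identify $\chi^*(X\sqcup Y)$ with the integral involving $\Phi^*\!\left(X+\sqrt{t}\,S_\ell \sqcup Y+\sqrt{t}\,S_r\right)$, where $S_\ell = c_\ell\otimes E_{1,2}\otimes I_2 + c_\ell^*\otimes E_{2,1}\otimes I_2$ and similarly for $S_r$, one must show that $S_\ell, S_r$ are genuinely \emph{scalar-valued} semicircular elements that are bi-free from $(X,Y)$ \emph{with respect to $\tau_2$}. This does not follow from the operator-valued statement in Remark \ref{rem:inflating-preservers-bi-freeness}, since bi-freeness with amalgamation over $M_2(\bC)$ with respect to $E_2$ does not imply bi-freeness with respect to $\tau_2$; the paper proves it by a direct cumulant computation in Lemma \ref{lem:inflating-circular-yields-bi-free-semis}, exploiting the alternating structure of the $\tau_2$-moments. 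Without this, the perturbation you integrate is not the one appearing in the definition of $\chi^*(X\sqcup Y)$.
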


This paper is structured as follows.  After reviewing some preliminaries and notation pertaining to bi-free probability in Section \ref{sec:prelim}, Section \ref{sec:Analytical-B-B-NCPS} will extend the structures used in operator-valued bi-free probability.  This is necessary as expectations in operator-valued bi-free probability need not be positive and thus to perform analytical computations additional structures are required.  These structures occur and are modelled based on the left and right actions of a II$_1$ factor on its $L_2$-space (see Example \ref{exam:vonNeumannAnalytical}).  By adding a tracial state on the algebra of amalgamation that satisfies certain compatibility conditions, the appropriate $L_2$-spaces can be constructed and used to study operator-valued bi-free probability.  It is these structures that the authors believe will be vital to future applications of operator-valued bi-free probability.

In Section \ref{sec:Bi-Multi} the operator-valued bi-free cumulant functions are extended to allow for the last entry to be an element of the corresponding $L_2$-space.  This is thematic in bi-free probability where the last entry corresponds to the location where left and right operators mix and therefore it is natural to extend the bi-free moment and cumulant functions to have a mixture of left and right operators in the last entry.  These analytical extensions of the operator-valued bi-free cumulants are shown to have the appropriate bi-multiplicative properties by methods similar to \cite{CNS2015-1}.

In Section \ref{sec:Conj} the bi-free conjugate variables with respect to completely positive maps are defined via moment relations.  It is then shown that these moment relations transfer to cumulant relations using the results of Section \ref{sec:Bi-Multi} and thus the natural properties of free conjugate variables extend to this setting.  One technical detail in developing the bi-free entropy with respect to a completely positive map is to show that if one perturbs operators by  operator-valued bi-free central limit distributions, then the resulting operators have bi-free conjugate variables.  Thus, it is necessary to show that one can always add in the appropriate operator-valued bi-free central limit distributions into the structures from Section \ref{sec:Analytical-B-B-NCPS} and remain within that context.  Results along these line sufficient for this paper are developed in Section \ref{sec:semis}.

In Sections \ref{sec:Fisher} and \ref{sec:Entropy} the bi-free Fisher information and entropy with respect to a completely positive map are defined and shown to have the desired properties from \cites{V1998-2, S1998, CS2020} with very similar proofs.  Sections \ref{sec:minimize-Fisher} and \ref{sec:Max-Entropy} extend the techniques from \cite{NSS1999} taking into account the differences in bi-free probability to obtain the minimal value of the bi-free Fisher information of a collection of pairs of operators with similarities in their distributions and the above theorem.  Finally, Section \ref{sec:Other} outlines some other results that immediately extend from \cite{NSS1999} to the bi-free setting using the results and techniques from this paper.

\section{Preliminaries}
\label{sec:prelim}

In this section we will remind the reader on the basic combinatorial and operator-valued structures that have been used in previous papers on bi-free independence.  For a more in-depth reminder of these concepts, we refer the readers to the original papers \cites{CNS2015-1, CNS2015-2}.

\subsection*{Combinatorics on the Lattice of Bi-Non-Crossing Partitions}

For $n\in\bN$, the collection of all partitions on $\{1,\ldots,n\}$ is denoted by $\mathcal{P}(n)$.  The elements of any $\pi\in \mathcal{P}(n)$ are called the \emph{blocks} of $\pi$. A partial ordered on $\P(n)$ is defined via refinement where for $\pi,\sigma\in\mathcal{P}(n)$ we write $\pi\leq\sigma$ if every block of $\pi$ is contained in a block of $\sigma$.  The maximal element of $\mathcal{P}(n)$ with respect to this partial order is the partition consisting of one block and is denoted by $1_n$, while the minimal element is the partition consisting of $n$-blocks and denoted by $0_n$. Note that $\mathcal{P}(n)$ becomes a lattice under this partial ordering.  For $\pi,\sigma\in\mathcal{P}(n)$ the \emph{join} of $\pi$ and $\sigma$, denoted $\pi\vee \sigma$, is the minimum element of the non-empty set $\{\upsilon\in\mathcal{P}(n)\, \mid \, \upsilon\geq\pi,\sigma\}$. A partition $\pi$ on $\{1,\ldots, n\}$ is said to be \emph{non-crossing} if whenever $V,W$ are blocks of $\pi$ and $v_1,v_2\in V , w_1,w_2\in W$ are such that
\[
v_1 < w_1 < v_2 < w_2,
\]
then $V=W$.   The lattice of non-crossing partitions on $\{1,\ldots, n\}$ is denoted $\NC(n)$.

In the bi-free setting, all operators are implicitly imbued with a direction; either left or right.  Given a sequence of $n$ operators, we will use a map $\chi \in {\{\ell,r\}}^n$ to distinguish whether the $k^{\text{th}}$ operator is a left or a right operator.  Such a map automatically gives rise to a permutation $\sx$ on $\{1,\ldots,n\}$ defined as follows: if ${\chi}^{-1}(\{\ell\}) = \{i_1<\ldots <i_p\}$ and ${\chi}^{-1}(\{r\}) = \{j_1<\ldots <j_{n-p}\}$, then 
\[   
\sx(k) = 
     \begin{cases}
       i_k &\text{if }  k\leq p\\
       j_{n+1-k} &\text{if }  k>p \\
     \end{cases}.
\]

From a combinatorial point of view, the main difference between free and bi-free probability arise from dealing with $\sx$.   The permutation $\sx$ naturally induces a total order $\prec_\chi$ on $\{1,\ldots,n\}$ henceforth referred to as the \textit{$\chi $-order} as follows:
\[
i \prec_\chi j \iff \sx^{-1}(i)<  \sx^{-1}(j).
\]
Instead of reading $\{1,\ldots,n\}$ in the traditional order, this corresponds to first reading the elements of $\{1,\ldots,n\}$ labelled ``$\ell$'' in increasing order followed by reading the elements labelled ``$r$'' in decreasing order. Note that if $V$ is any non-empty subset of $\{1,\ldots,n\}$, the map $\xv$ naturally gives rise to a map $\sxv$, which should be thought of as a permutation on $\{1,\ldots,|V|\}$.

\begin{definition}\label{BNC}
Let $n\in\bN$ and $ \chi\in {\{\ell,r\}}^n$. A partition $\pi \in\mathcal{P}(n)$ is called \textit{bi-non-crossing with respect to $\chi$} if the partition $\sx^{-1}\circ \pi$ (i.e. the partition obtained by applying the permutation $\sx^{-1}$ to each entry of every block of $\pi$) is non-crossing. Equivalently, $\pi$ is bi-non-crossing with respect to $\chi$ if whenever $V,W$ are blocks of $\pi$ and $v_1,v_2\in V , w_1,w_2\in W$ are such that
\[
v_1 \prec_\chi w_1 \prec_\chi v_2 \prec_\chi w_2,
\]
then $V=W$.  The collection of bi-non-crossing partitions with respect to $\chi$ is denoted by $\BNC(\chi)$.  It is clear that
\[
\BNC(\chi) = \left\{\pi\in\mathcal{P}(n)\, \mid \, \sx^{-1}\circ\pi\in \NC(n)\right\} = \left\{\sx\circ \sigma\, \mid \, \sigma \in \NC(n) \right\}.
\]
\end{definition}

In the context when the map $\chi$ is clear, we will refer to an element of $\BNC(\chi)$ simply as bi-non-crossing.  To each partition $\pi \in BNC(\chi)$ we can associate a ``bi-non-crossing diagram'' (with respect to $\chi$) by placing nodes along two vertical lines, labelled $1$ to $n$ from top to bottom, such that the nodes on the left line correspond to those values for which $\chi(k) = \ell$ (similarly for the right), and connecting those nodes which are in the same block of $\pi$ in a non-crossing manner.  In particular, a partition $\pi \in \P(n)$ is in $\BNC(\chi)$ if and only if it can be drawn in this non-crossing way.

\begin{example}
If $\chi\in {\{\ell,r\}}^6$ is such that $\chi^{-1}(\{\ell\}) = \{1,2,3,6\}$ and $\chi^{-1}(\{r\}) = \{4,5\}$, then 
\[
(\sx(1),\ldots,\sx(6)) = (1,2,3,6,5,4)
\]
and the partition given by 
\[
\pi = \{\{1,4\}, \{2,5\}, \{3,6\}\}
\]
is bi-non-crossing with respect to $\chi$ even though $\pi\notin \NC(6)$. This may also be seen via the following diagrams:
\begin{align*}
\begin{tikzpicture}[baseline]
			\draw[thick,dashed] (-.5,2.4) -- (-.5,-.25) -- (1.5,-.25) -- (1.5,2.4);
\node[left] at (-.5,2.3) {1};
			\draw[black,fill=black] (-.5,2.3) circle (0.05);
\node[left] at (-.5,1.85) {2};
			\draw[black,fill=black] (-.5,1.85) circle (0.05);
\node[left] at (-.5,1.4) {3};
			\draw[black,fill=black] (-.5,1.4) circle (0.05);
\node[right] at (1.5,0.95) {4};
			\draw[black,fill=black] (1.5,0.95) circle (0.05);
\node[right] at (1.5,0.5) {5};
			\draw[black,fill=black] (1.5,0.5) circle (0.05);
\node[left] at (-.5,0.01) {6};
			\draw[black,fill=black] (-.5,0.01) circle (0.05);
			\draw[thick, black] (-.5,2.3) -- (0.9,2.3) -- (0.9,0.95) -- (1.5,0.95);
			\draw[thick, black] (-.5,1.85) -- (0.5,1.85) -- (0.5,0.5) -- (1.5,0.5);
			\draw[thick, black] (-.5,1.4) -- (0.15,1.4) -- (0.15, 0.01) -- (-.5,0.01);
	\end{tikzpicture}
	\qquad\longrightarrow	\qquad
\begin{tikzpicture}[baseline]
			\draw[thick,dashed] (5,0) -- (9,0);
\filldraw (5.2,0) circle (0.06) node[anchor=north] {1};
\filldraw (5.9,0) circle (0.06) node[anchor=north] {2};
\filldraw (6.6,0) circle (0.06) node[anchor=north] {3};
\filldraw (7.3,0) circle (0.06) node[anchor=north] {6};
\filldraw (8,0) circle (0.06) node[anchor=north] {5};
\filldraw (8.7,0) circle (0.06) node[anchor=north] {4};	
\draw[thick, black] (5.2,0) -- (5.2,1.5) -- (8.7,1.5) -- (8.7,0);	
\draw[thick, black] (5.9,0) -- (5.9,1.1) -- (8,1.1) -- (8,0);	
\draw[thick, black] (6.6,0) -- (6.6,0.65) -- (7.3,0.65) -- (7.3,0);	
			\end{tikzpicture}
			\qquad
	\end{align*}
\end{example}

The set of bi-non-crossing partitions with respect to a map $\chi\in {\{\ell,r\}}^n$ automatically inherits the lattice structure from $\mathcal{P}(n)$ via the partial order of refinement. The minimal and maximal elements of $\BNC(\chi)$ will be denoted  by $0_{\chi}$ and $1_{\chi}$ respectively, and note that $0_{\chi}=\sx(0_n)=0_n$ and $1_{\chi}=\sx(1_n)=1_n$.  For $\emptyset\neq V\subseteq\{1,\ldots,n\}$, we denote by $\min_\leq V$ and $\min_{\pleq}V$ the minimum  element of $V$ with respect to the natural order and the ${\chi}$-order of $\{1,\ldots,n\}$ respectively, with similar notation used for the maximum elements.  For $\pi,\sigma\in\BNC(\chi)$ with $\sigma\leq\pi$ we will denote by $[\sigma,\pi]$ the interval with respect to the partial order of refinement.

\begin{definition}\label{defbncmobius}
The \emph{bi-non-crossing M{\"o}bius function} is the map
\[
\mu_{\BNC}: \bigcup_{n\in\bN} \bigcup_{\chi\in {\{\ell,r\}}^n} \BNC(\chi)\times\BNC(\chi)\to\mathbb{Z}
\]
defined recursively by
\[
\sum_{\substack{\nu\in\BNC(n)\\
\pi\leq\tau\leq\sigma\\}}\mu_{\BNC}(\pi,\nu) = \sum_{\substack{\tau\in\BNC(n)\\
\pi\leq\tau\leq \sigma\\}}\mu_{\BNC}(\nu,\sigma) =
     \begin{cases}
       1 &\text{if }  \pi=\sigma\\
        0 &\text{if } \pi <\sigma\\
     \end{cases}
\]
whenever $\pi\leq\sigma$, while taking the zero value otherwise.
\end{definition}

The connection between the bi-non-crossing M{\"o}bius function and the M{\"o}bius function on the lattice of non-crossing partitions $\mu_{\NC}$ is given by the formula
\[
\mu_{\BNC}(\pi,\sigma) = \mu_{\NC}(\sx^{-1}\circ\pi, \sx^{-1}\circ\sigma)
\]
for all $\pi\leq\sigma\in\BNC(\chi)$.  Hence $\mu_{\BNC}$ inherits permuted analogues of the multiplicative properties of $\mu_{\NC}$ (see \cite{CNS2015-2}*{Section 3}).  In particular, if $n\in\bN$, if $\chi\in\{\ell,r\}^n$, if $\pi,\sigma\in\BNC(\chi)$ such that $\pi\leq\sigma$, and if $V_1,\ldots,V_m$ are unions of blocks of $\pi$ that partition $\{1,\ldots,n\}$, then the natural map
\begin{align*}
[\sigma,\pi] & \longrightarrow \prod_{k=1}^m [\sigma|_{V_k}, \pi|_{V_k}]
\\ \tau & \mapsto\qquad {({\tau |}_{V_k})}_{k=1}^m
\end{align*}
is a bijection and 
\[
\mu_{\BNC}(\sigma,\pi) = \mu_{\BNC}({\sigma |}_{V_1}, \pi|_{V_1}) \cdots  \mu_{\BNC}({\sigma |}_{V_m},\pi|_{V_m}).
\]

\subsection*{B-B-Non-Commutative Probability Spaces and Bi-Freeness}

To study operator-valued bi-free independence, certain structures are required.  Thus we shall remind the reader of the general structures as developed in \cite{CNS2015-1} and refer the reader there for more details.

\begin{definition}\label{BBncpsdef}
Let $B$ be a unital $*$-algebra and let $B^{\op}$ denote the unital $*$-algebra with the same elements as $B$ with the opposite multiplication.   A \textit{B-B-non-commutative probability space} consists of a triple $(A,E,\varepsilon)$ where $A$ is a unital $*$-algebra, $\varepsilon: B\otimes B^{\op}\to A$ is a unital $*$-homomorphism such that the restrictions ${\varepsilon |}_{B\otimes 1_B}$ and ${\varepsilon |}_{1_B\otimes B^{\op}}$ are both injective, and $E:A\to B$ is a unital linear map that such that
\[
E(\varepsilon(b_1\otimes b_2) a) = b_1 E(a) b_2
\qqand E(a \varepsilon(b\otimes 1_B)) = E(a \varepsilon(1_B\otimes b)),
\]
for all $b,b_1,b_2\in B$ and $a\in A$. In addition, consider the unital $*$-subalgebras $A_\ell$ and $A_r$ of $A$ given by
\[
A_\ell = \{a\in A\, \mid \, a \varepsilon (1_B\otimes b) = \varepsilon(1_B\otimes b) a \text{ for all } b\in B \}
\]
and
\[
A_r = \{a\in A\, \mid \, a \varepsilon (b\otimes 1_B) = \varepsilon(b\otimes 1_B) a\text{ for all }  b\in B  \}.
\]
We call $A_\ell$ and $A_r$ the \textit{left and right algebras of $A$} respectively.
\end{definition}
 
Note one can always assume that a $B$-$B$-non-commutative probability space is generated as a $*$-algebra by $A_\ell$ and $A_r$.

\begin{example}
\label{exam:tensor-up}
Let $\A$ and $B$ be unital $*$-algebras and let $\varphi : \A \to \bC$ be a unital, linear map.  If $A = \A \otimes B \otimes B^{\op}$, if $\varepsilon  : B \otimes B^{\op} \to A$ is defined by $\varepsilon(b_1 \otimes b_2) = 1_\A \otimes b_1 \otimes b_2$ for all $b_1, b_2 \in B$, and $E : A \to B$ is defined by
\[
E(a \otimes b_1 \otimes b_2) = \varphi(a) b_1b_2
\] 
for all $a \in A$ and $b_1, b_2 \in B$, then $(A, E, \varepsilon)$ is a $B$-$B$-non-commutative probability space.  Indeed, clearly $\varepsilon$ is a unital injective $*$-homomorphism.  Furthermore, note for all $Z\in\A$ and $b,b_1,b_2,b_3,b_4\in B$ that
\[
E((1_\A\otimes b_1\otimes b_2)(Z\otimes b_3\otimes b_4)) = \phi(Z)b_1b_3b_4b_2 = b_1 E(Z\otimes b_3\otimes b_4)b_2
\] 
and  
\[
E((Z\otimes b_1\otimes b_2)(1_\A\otimes b\otimes 1_B)) = \phi(Z)b_1bb_2 = E((Z\otimes b_1\otimes b_2)(1_\A \otimes 1_B\otimes b)).
\]
Hence $E$ satisfies the required properties.

For future use, notice that
\[
\A \otimes B \otimes 1_B \subseteq \A_\ell \qqand \A \otimes 1_B \otimes B^{\op} \subseteq \A_r.
\]
Moreover, in the case $B = \bC$, $(A, E, \varepsilon)$ efficiently reduces down to $(\A, \varphi)$; the usual notion of a non-commutative probability space.
\end{example}

\begin{example}
\label{exam:factors}
Let $\fM$ be a finite von Neumann algebra with a tracial state $\tau : \fM \to \bC$ and let $L_2(\fM, \tau)$ be the GNS Hilbert space generated by $(\fM, \tau)$.  For $T \in \fM$, let $L_T$ denote the left action of $T$ on $L_2(\fM, \tau)$, and let $R_T$ denote the right action of $T$ on $L_2(\fM, \tau)$.  Furthermore, let $A$ be the algebra generated by $\{L_T, R_T \, \mid \, T \in \fM\}$.

Let $B$ be a unital von Neumann subalgebra of $\fM$ and let $E_B : \fM \to B$ be the conditional expectation of $\fM$ onto $B$.  Recall that if $P : L_2(\fM, \tau) \to L_2(B, \tau)$ is the orthogonal projection of $L_2(\fM, \tau)$ onto $L_2(B, \tau)$, then $E_B(Z) = P(Z1_\fM)$ for all $Z \in \fM$.  

Define $\varepsilon : B \otimes B^{\op} \to A$ by $\varepsilon(b_1 \otimes b_2) = L_{b_1} R_{b_2}$ and define $E : A \to B$ by
\[
E(Z) = P(Z 1_{\fM})
\]
for all $Z \in A$.  Elementary von Neumann algebra theory implies that the range of $E$ is indeed contained in $B$.  To see that $(A, E, \varepsilon)$ is a $B$-$B$-non-commutative probability space, first note that $\varepsilon$ is clearly a unital $*$-homomorphism that is injective when restricted to $B \otimes 1_B$ and when restricted to $1_B \otimes B^\op$.  Moreover, note  for all $Z\in A$ and $b,b_1,b_2\in B$ that
\[
E(L_{b_1} R_{b_2}Z ) = P(b_1(Z1_{\fM})b_2) = b_1P(Z1_{\fM})b_2 = b_1 E(Z)b_2
\] 
and  
\[
E(TL_b) = P(TL_b 1_\fM)= P(Tb) = P(TR_b 1_\fM) = E(TR_b).
\]
Hence $E$ satisfies the required properties.
\end{example}

The map $\varepsilon:B\otimes B^{\op}\to A$ encodes the left and right elements of $B$ in $A$.  For notational purposes, for each $b \in B$ we will denote $\varepsilon(b\otimes 1_B)$ and $\varepsilon(1_B\otimes b)$ by $L_b$ and $R_b$ respectively and we denote 
\[
B_\ell = \varepsilon(B\otimes 1_B)=\{L_b \, \mid \, b\in B\}
\qand
B_r = \varepsilon(1_B\otimes B^{\op})=\{R_b  \, \mid \, b\in B\}.
\]
To examine bi-free independence with amalgamation over $B$, it is necessary that left operators are contained in $A_\ell$ (i.e. commute with the right copy of $B$) and right operators are contained in $A_r$ (i.e. commute with the left copy of $B$).

\begin{definition}[\cite{CNS2015-1}]  \label{bifreedef}
Let $(A,E,\varepsilon)$ be a $B$-$B$-non-commutative probability space.
\begin{enumerate}[(i)]
\item A \textit{pair of $B$-algebras} is a pair $(C,D)$ consisting of unital subalgebras of $A$ such that
\[
B_\ell \subseteq C\subseteq A_\ell \qand B_r \subseteq D\subseteq A_r.
\]
\item A family ${\{(C_k,D_k)\}}_{k\in K}$ of pairs of $B$-algebras in $A$ is called \textit{bi-free with amalgamation over $B$} if there exist $B$-$B$-bimodules with specified $B$-vector states ${\{(\mathcal{X}_k,\overset{\circ}{\mathcal{X}}_k,p_k)\}}_{k\in K}$ and unital homomorphisms
\[
l_k:C_k\to \mathcal{L}_\ell(\mathcal{X}_k) \qand r_k:D_k\to \mathcal{L}_r(\mathcal{X}_k),
\]
such that the joint distribution of the family ${\{(C_k,D_k)\}}_{k\in K}$ with respect to $E$ coincides with the joint distribution of the images
\[
{\{((\lambda_k\circ l_k)(C_k),(\rho_k\circ r_k)(D_k))\}}_{k\in K}
\]
in the space $\mathcal{L}(*_{k\in K}\mathcal{X}_k)$, with respect to $E_{\mathcal{L}(*_{k\in K}\mathcal{X}_k)}$, where $*_{k\in K}\mathcal{X}_k$ is the reduced free product of ${\{(\mathcal{X}_k,\overset{\circ}{\mathcal{X}}_k,p_k)\}}_{k\in K}$ with amalgamation over $B$.
\end{enumerate}
\end{definition}

\begin{remark}\label{rem:inflating-preservers-bi-freeness}
Let $\A$ and $B$ be unital $*$-algebras and let $\varphi : \A \to \bC$ be a unital linear map.  Let $(A, E, \varepsilon)$ be as in Example \ref{exam:tensor-up}.  By \cites{S2016-1, CS2019}, if $\{(C_k, D_k)\}$ are $*$-subalgebras of $\A$ that are bi-free with respect to $\varphi$, then $\{(C_k \otimes B \otimes 1_B, D_k \otimes 1_B \otimes B^\op)\}_{k \in K}$ are bi-free with amalgamation over $B$ with respect to $E$.  Thus Example \ref{exam:tensor-up} is the correct notion of ``inflating $(\A, \varphi)$ by $B$'' in the bi-free setting.
\end{remark}

\begin{example}
Let $\fM_1$ and $\fM_2$ be finite von Neumann algebras with a common von Neumann subalgebra $B$ and tracial states $\tau_1$ and $\tau_2$ respectively such that $\tau_1|_B = \tau_2|_B$.  Let $\fM = \fM_1\ast_B \fM_2$ be the reduced free product von Neumann algebra with amalgamation over $B$, let $E_B : \fM \to B$ be the conditional expectation of $\fM$ onto $B$, and let $\tau = \tau_1 \ast \tau_2 = \tau_1|_B \circ E_B$ be the tracial state on $\fM$.  If $E$ and $\varepsilon$ are as in Example \ref{exam:factors} for $(\fM, \tau)$, then
\[
\left\{\left(\{L_X \, \mid \, X \in \fM_1\}, \{R_Y \, \mid \, Y \in \fM_1\}\right)\right\}\qqand \left\{\left(\{L_X \, \mid \, X \in \fM_2\}, \{R_Y \, \mid \, Y \in \fM_2\}\right)\right\}
\]
are bi-free with amalgamation over $B$.
\end{example}

In order to study bi-free independence with amalgamation, the operator-valued bi-free moment and cumulant functions are key.  These functions have specific properties that are described via the following concept.  In what follows and for the remainder of the paper, given an $n$-tuple $(Z_1, \ldots, Z_n)$ and  $V \subseteq \{1,\ldots, n\}$, we will use $(Z_1, \ldots, Z_n)|_V$ to denote the $|V|$-tuple where only the entries $Z_k$ where $k \in V$ remain.

\begin{definition}[\cite{CNS2015-1}*{Definition 4.2.1}]
\label{defn:bi-multiplicative-function}
Let $(A,E,\varepsilon)$ be a $B$-$B$-non-commutative probability space. A map
\[
\Phi : \bigcup_{n\in\bN}\bigcup_{\chi\in\{\ell,r\}^n}\BNC(\chi)\times A_{\chi(1)}\times A_{\chi(2)}\times\ldots\times A_{\chi(n)}\to B
\]
is called \textit{bi-multiplicative} if it is $\C$-linear in each of the $A_{\chi(k)}$ entries and for all $n\in\bN$, $\chi\in \{\ell,r\}^n$, $\pi\in\BNC(\chi)$, $b\in B$, and $Z_k\in A_{\chi(k)}$ the following four conditions hold:
\begin{enumerate}[(i)]
\item Let 
\[
q = \max_\leq\{k\in\{1,\ldots,n\} \, \mid \, \chi(k)\neq \chi(n)\}.
\]

If $\chi(n)=\ell,$ then
\[
\Phi_{1_{\chi}}(Z_1,\ldots,Z_{n-1},Z_n L_b) = \begin{cases}
       \Phi_{1_\chi}(Z_1,\ldots,Z_{q-1},Z_q R_b,Z_{q+1},\ldots,Z_n)  &\text{if }q\neq -\infty,\\
       \Phi_{1_\chi}(Z_1,\ldots,Z_{n-1},Z_n) b &   \text{if }  q= - \infty. \\
     \end{cases}
\]
If $\chi(n)=r,$ then
\[
\Phi_{1_{\chi}}(Z_1,\ldots,Z_{n-1},Z_n R_b) = \begin{cases}
       \Phi_{1_\chi}(Z_1,\ldots,Z_{q-1},Z_q L_b,Z_{q+1},\ldots,Z_n)  &\text{if }q\neq -\infty,\\
       b \Phi_{1_\chi}(Z_1,\ldots,Z_{n-1},Z_n) &   \text{if } q= - \infty. \\
     \end{cases}
\]
\item Let $p\in\{1,\ldots,n\}$ and let
\[
q = \max_\leq\left\{k\in\{1,\ldots,n\}\, \mid \, \chi(k)=\chi(p), k<p\right\}.
\]
If $\chi(p)=\ell,$ then
\[
\Phi_{1_\chi}(Z_1,\ldots,Z_{p-1},L_b Z_p,Z_{p+1}\ldots,Z_n)= \begin{cases}
       \Phi_{1_\chi}(Z_1,\ldots,Z_{q-1},Z_q L_b,Z_{q+1},\ldots,Z_n) &\text{if }q\neq -\infty,\\
      b \Phi_{1_\chi}(Z_1,\ldots,Z_{n-1},Z_n) &  \text{if }q= - \infty. \\
     \end{cases}
\]
If $\chi(p)=r,$ then
\[
\Phi_{1_\chi}(Z_1,\ldots,Z_{p-1},R_b Z_p,Z_{p+1}\ldots,Z_n)= \begin{cases}
       \Phi_{1_\chi}(Z_1,\ldots,Z_{q-1},Z_q R_b,Z_{q+1},\ldots,Z_n) & \text{if }q\neq -\infty,\\
      \Phi_{1_\chi}(Z_1,\ldots,Z_{n-1},Z_n)  b &  \text{if }q= - \infty. \\
     \end{cases}
\]
\item Suppose $V_1,\ldots, V_m$ are unions of blocks of $\pi$ that partition $\{1,\ldots,n\}$ with each being a $\chi$-interval (i.e. an interval in the $\chi$-ordering) and the sets $V_1,\ldots,V_m$ are ordered by $\pleq$ (i.e. $(\min_{\pleq}V_k)\ple (\min_{\pleq}V_{k+1})$ for all $k$). Then
\[
\Phi_\pi(Z_1,\ldots,Z_n) = \Phi_{\pi|_{V_1}}({(Z_1,\ldots,Z_n)|}_{V_1})\cdots  \Phi_{\pi|_{V_m}}({(Z_1,\ldots,Z_n)|}_{V_m}).
\]
\item Suppose that $V$ and $W$ are unions of blocks of $\pi$ that partition $\{1,\ldots,n\}$, $V$ is a $\chi$-interval, and $\sx(1),\sx(n)\in W$. Let 
\[
p = \max_{\pleq}\left\{k\in W \, \left| \,  k \ple \min_{\pleq}V \right. \right\} \qqand q = \min_{\pleq}\left\{k\in W \, \left| \, \max_{\pleq}V \ple  k\right. \right\}.
\]
Then, we have that
\begin{align*} 
\Phi_\pi(Z_1,\ldots,Z_n) &=  \begin{cases}
       \Phi_{\pi|_W}\left({(Z_1,\ldots,Z_{p-1},Z_p L_{\Phi_{\pi|_V}({(Z_1,\ldots,Z_n)|}_V)},Z_{p+1},\ldots,Z_n) |}_W\right) &\text{if }\chi(p)=\ell,\\
       \Phi_{\pi|_W}\left({(Z_1,\ldots,Z_{p-1},R_{\Phi_{\pi|_V}({(Z_1,\ldots,Z_n)|}_V)} Z_p ,Z_{p+1},\ldots,Z_n) |}_W\right) &  \text{if }\chi(p)=r, \\
     \end{cases} \\ 
 &=  \begin{cases}
       \Phi_{\pi|_W}\left({(Z_1,\ldots,Z_{q-1},L_{\Phi_{\pi|_V}({(Z_1,\ldots,Z_n)|}_V)} Z_q, Z_{q+1},\ldots,Z_n) |}_W\right)  &\text{if }\chi(q)=\ell,\\
       \Phi_{\pi|_W}\left({(Z_1,\ldots,Z_{q-1}, Z_q R_{\Phi_{\pi|_V}({(Z_1,\ldots,Z_n)|}_V)} ,Z_{q+1},\ldots,Z_n) |}_W\right) &  \text{if }  \chi(q)=r. \\
     \end{cases}
\end{align*}
\end{enumerate}
\end{definition}

Given a $B$-$B$-non-commutative probability space $(A,E,\varepsilon)$, the moment and cumulant functions are well-defined bi-multiplicative functions.

\begin{definition}\label{bidef}
Let $(A,E,\varepsilon)$ be a $B$-$B$-non-commutative probability space.
\begin{enumerate}[(i)]
\item The \textit{operator-valued bi-free moment function} 
\[
E : \bigcup_{n\in\bN}\bigcup_{\chi\in\{\ell,r\}^n}\BNC(\chi)\times A_{\chi(1)}\times\ldots\times A_{\chi(n)}\to B
\]
is the bi-multiplicative function (see \cite{CNS2015-1}*{Theorem 5.1.4}) that satisfies
\[
E_{1_\chi}(Z_1,Z_2,\ldots,Z_n) = E(Z_1 Z_2\cdots  Z_n),
\]
for all $n\in\bN$, $\chi\in \{\ell,r\}^n$, and $Z_k\in A_{\chi(k)}$.
\item The \textit{operator-valued bi-free cumulant function}
\[
\kappa^B : \bigcup_{n\in\bN}\bigcup_{\chi\in\{\ell,r\}^n}\BNC(\chi)\times A_{\chi(1)}\times\ldots\times A_{\chi(n)}\to B
\]
is the bi-multiplicative function (see \cite{CNS2015-1}*{Corollary 6.2.2}) defined by
\[
\kappa^B_\pi(Z_1,\ldots,Z_n) = 
\sum_{\substack{\sigma\in\BNC(\chi)\\
\sigma\leq\pi\\}}E_\sigma(Z_1,\ldots,Z_n)\mu_{\BNC}(\sigma,\pi),
\]
for each $n\in\bN$, $\chi\in \{\ell,r\}^n$, $\pi\in\BNC(\chi)$, and $Z_k\in A_{\chi(k)}$. In the special case when $\pi=1_\chi,$ the map $\kappa^B_{1_\chi}$ is simply  denoted by $\kappa^B_\chi$. An instance of M{\"o}bius inversion yields that the equality
\[
E_\sigma(Z_1,\ldots,Z_n) = 
\sum_{\substack{\pi\in\BNC(\chi)\\
\pi\leq\sigma\\}}\kappa^B_\pi(Z_1,\ldots,Z_n)
\]
holds for all $n\in\bN$, $\chi\in \{\ell,r\}^n$, $\sigma\in\BNC(\chi)$, and $Z_k\in A_{\chi(k)}$.
\end{enumerate}
\end{definition}

The condition of bi-freeness with amalgamation over $B$ for a family of pairs of $B$-faces is equivalent to the vanishing of their mixed operator-valued bi-free cumulants, as the following result indicates.

\begin{theorem}[\cite{CNS2015-1}*{Theorem 8.1.1}]
Let $(A,E,\varepsilon)$ be a $B$-$B$-non-commutative probability space and let ${\{(C_k,D_k)\}}_{k\in K}$ be a family of pairs of $B$-algebras in $A$. The following are equivalent:
\begin{enumerate}[(i)]
\item the family ${\{(C_k,D_k)\}}_{k\in K}$ is bi-free with amalgamation over $B$,
\item for all $n\in\bN$, $\chi\in\{\ell,r\}^n$, $Z_1,\ldots,Z_n\in A$, and non-constant maps $\gamma:\{1,\ldots,n\}\to K$ such that
\[
Z_k\in \begin{cases}
       C_{\gamma(k)} &\text{if } \chi (k)=\ell \\
        D_{\gamma(k)}  &\text{if }\chi (k) = r\\   
     \end{cases} 
\]
we have that
\[
\kappa^B_\chi(Z_1,\ldots,Z_n)=0.
\]
\end{enumerate}
\end{theorem}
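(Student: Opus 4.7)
The plan is to prove the equivalence by the standard Speicher-style strategy, lifted to the bi-free amalgamated setting, using the reduced free product as the universal model.

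For (i) $\Rightarrow$ (ii), since bi-freeness is defined by coincidence of joint distributions with the reduced free product, it suffices to verify the vanishing of mixed cumulants inside $\mathcal{L}(*_{k\in K}\mathcal{X}_k)$ for the images $(\lambda_k\circ l_k)(C_k)$ and $(\rho_k\circ r_k)(D_k)$. I would proceed by induction on $n$. For a $\chi$-indexed tuple indexed by a non-constant $\gamma$, expand
\[
\kappa^B_\chi(Z_1,\ldots,Z_n) = \sum_{\sigma\in\BNC(\chi)} E_\sigma(Z_1,\ldots,Z_n)\,\mu_{\BNC}(\sigma,1_\chi).
\]
Using the reduced free product structure, evaluating $E_{1_\chi}$ amounts to successively applying alternating left/right actions from distinct colors to $1_B$; the ``centered'' components $\overset{\circ}{\mathcal{X}}_k$ for distinct $k$ live in orthogonal summands, so only those partitions $\sigma$ whose interval block structure accounts exactly for these orthogonality cancellations contribute, and the induction hypothesis (smaller mixed cumulants vanish) together with the recursive factorization in Definition \ref{defn:bi-multiplicative-function} (iii)--(iv) forces the signed sum to collapse to $0$.

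For (ii) $\Rightarrow$ (i), the plan is universality: I would show that condition (ii) together with the individual joint distributions of each pair $(C_k,D_k)$ completely determines every joint moment through a fixed combinatorial formula. Starting from
\[
E(Z_1\cdots Z_n) = E_{1_\chi}(Z_1,\ldots,Z_n) = \sum_{\pi\in\BNC(\chi)} \kappa^B_\pi(Z_1,\ldots,Z_n),
\]
I would argue that any $\pi$ with a block hit by at least two distinct values of $\gamma$ contributes $0$: by bi-multiplicativity one can peel off $\pi$ into its $\chi$-interval components and inductively absorb inner blocks, eventually producing a factor of the form $\kappa^B_{\chi'}(\cdots)$ on a mixed non-constant $\gamma$-tuple, which vanishes by (ii). Therefore $E(Z_1\cdots Z_n)$ equals a sum over $\gamma$-monochromatic $\pi \in \BNC(\chi)$ of a product of cumulants living entirely inside the individual pairs $(C_k,D_k)$. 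Since the implication (i) $\Rightarrow$ (ii) applied to the reduced free product construction built from the individual distributions of each $(C_k,D_k)$ produces exactly the same universal formula for its moments, the joint distribution on $A$ coincides with the reduced free product distribution, which is precisely bi-freeness with amalgamation over $B$.

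The main obstacle is controlling the bi-multiplicative reduction in the (ii) $\Rightarrow$ (i) direction: given a polychromatic $\pi \in \BNC(\chi)$, one must locate an inner $\chi$-interval block and apply Definition \ref{defn:bi-multiplicative-function} (iv) to absorb its cumulant value into an adjacent entry as either an $L_b$ or $R_b$ factor, with the side determined by the $\chi$-value at the neighboring position in the $\chi$-order. Iterating this so that one eventually exposes a cumulant $\kappa^B_{\chi'}$ on a non-constant $\gamma$-tuple requires a careful induction on the depth of nesting inside the bi-non-crossing diagram, together with a verification that polychromaticity is preserved under each absorption step. The analogous book-keeping on the free product side in (i) $\Rightarrow$ (ii), identifying which $\sigma \leq 1_\chi$ actually survive the orthogonality cancellations, is the other place where the bi-free asymmetry between $\ell$ and $r$ operators makes the argument strictly more delicate than in the single-faced free case treated classically.
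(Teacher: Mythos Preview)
The paper does not contain a proof of this theorem: it is stated in the preliminaries (Section~\ref{sec:prelim}) as a quotation of \cite{CNS2015-1}*{Theorem 8.1.1}, with no argument given here. So there is no ``paper's own proof'' to compare your proposal against.

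That said, your outline is the correct strategy and is essentially how the result is established in \cite{CNS2015-1}. The direction (ii) $\Rightarrow$ (i) is exactly as you describe: the moment-cumulant relation together with bi-multiplicativity reduces every joint moment to a sum over $\pi\in\BNC(\chi)$ with $\pi\leq\gamma$, and since this formula depends only on the marginal distributions of each pair $(C_k,D_k)$, it must agree with the reduced free product model. For (i) $\Rightarrow$ (ii), the argument in \cite{CNS2015-1} is somewhat more structural than the inductive orthogonality computation you sketch: it goes through an explicit operator model on the reduced free product space and a diagrammatic expansion (the so-called LR-diagrams) that identifies exactly which bi-non-crossing partitions contribute to a given moment, from which the vanishing of mixed cumulants is read off. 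Your inductive approach would work as well, but the bookkeeping you flag as the ``main obstacle'' is precisely what the LR-diagram machinery of \cite{CNS2015-1} is designed to handle cleanly.
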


\section{Analytical B-B-Non-Commutative Probability Spaces} 
\label{sec:Analytical-B-B-NCPS}

The notion of a $B$-$B$-non-commutative probability space is purely an algebraic construction.  In order perform the more analytical computations necessary in this paper, additional structure is needed.  These structures are analogous to those observed in Example \ref{exam:factors} and will be seen to be the correct enhancement of a $B$-$B$-non-commutative probability space to perform functional analysis.

Given a unital $*$-algebra $A$, by a state $\tau:A\to \C$ we will always mean a unital, linear functional with the property that $\tau(a^*a) \geq 0$ for all $a \in A$.  If $N_\tau = \{a\in A : \tau(a^*a)=0\}$, then $L_2(A,\tau)$ will denote the Hilbert space completion of the quotient space $A/N_\tau$ with respect to the inner product induced by $\tau$  given by
\[
\left\langle a_1+N_\tau,a_2+N_\tau\right\rangle = \tau(a_2^*a_1),
\]
for all $a_1,a_2\in A$ and $\left\| \,\, \cdot \, \, \right\|_\tau$ will denote the Hilbert space norm on $L_2(A, \tau)$.

\begin{definition}\label{enhancedbbdef}
Given a unital $*$-algebra $B$, an \textit{analytical $B$-$B$-non-commutative probability space} consists of a tuple $(A,E,\varepsilon,\tau)$ such that
\begin{enumerate}[(i)]
\item $(A,E,\varepsilon)$ is a $B$-$B$-non-commutative probability space,
\item $\tau:A\to\C$ is a state that is compatible with $E$; that is, 
\[
\tau(a) = \tau\left(L_{E(a)}\right) = \tau\left(R_{E(a)}\right)
\]
for all $a\in A$,
\item the canonical state $\tau_B:B\to\C$ defined by $\tau_B(b) = \tau(L_b)$ for all $b\in B$ is tracial,
\item left multiplication of $A$ on $A/N_\tau$ are bounded linear operators and thus extend to bounded linear operators on $L_2(A,\tau)$, and
\item $E$ is completely positive when restricted to $A_\ell$ and when restricted to $A_r$.
\end{enumerate}
\end{definition}

\begin{remark}\label{remarkenhancedbbdef}
Given an analytical $B$-$B$-non-commutative probability space $(A, E, \varepsilon, \tau)$, note the following.  
\begin{enumerate}[(i)]
\item The that fact that $\tau_B$ is a state immediately follows from the fact that $\tau$ is a state and $\varepsilon$ is a $*$-homomorphism.  Specifically, for positivity, notice for all $b\in B$ that
\[
\tau_B(b^*b) = \tau(L_{b^*b}) = \tau((L_b)^* L_b)\geq 0.
\]
\item Note for all $b \in B$ that 
\[
{\left\|b+N_{\tau_B}\right\|}_{\tau_B}^2 = \tau_B(b^*b) = \tau(L_{b^*b})= {\left\|L_b+N_\tau\right\|}_\tau^2 .
\]
Hence the map from $B/N_{\tau_B}$ to $L_2(A,\tau)$ defined by
\[
b+N_{\tau_B}\mapsto L_b+N_\tau
\]
for all $b \in B$ is a well-defined, linear isometry. Therefore, a standard density argument yields that
\[
L_2(B,\tau_B)\cong {\overline{\{L_b+N_\tau\, \mid \, b\in B\}}}^{{\left\|\cdot\right\|}_\tau}\subseteq L_2(A,\tau).
\]
Henceforth, we shall only be making reference to the space $L_2(B,\tau_B)$ via this identification.
\item The state $\tau$ naturally extends to a linear functional on $L_2(A,\tau)$ by defining
\[
\tau(\xi) = {\left\langle \xi,1_A+N_\tau\right\rangle }_{L_2(A,\tau)}
\]
for all $\xi\in L_2(A,\tau)$. Similarly, the scalar $\tau_B(\zeta) = \tau(\zeta)$ is well-defined for any $\zeta\in L_2(B,\tau_B)$.
\item As left multiplication by $A$ on $A/N_\tau$ is bounded, we immediately extend the left multiplication map to obtain a unital $*$-homomorphism from $A$ into $\mathcal{B}(L_2(A))$.  Thus $a\xi$ is a well-defined element of $L_2(A,\tau)$ for all $a\in A$ and $\xi\in L_2(A,\tau)$.  
\item The requirement of the left multiplication inducing bounded operators is immediate in the case when $A$ is a $\mathrm{C}^*$-algebra,  however it also holds in more general situations.  For instance, when $A$ is a unital $*$- generated its partial isometries, the left multiplication map is automatically bounded (see \cite{NS2006}*{Exercise 7.22}).
\item Since the state $\tau_B$ is assumed to be tracial, right multiplication of $B$ on $B/N_{\tau_B}$ is also bounded.  Thus, for any $b_1,b_2\in B$ and $\zeta\in L_2(B,\tau_B)$, we have that $b_1 \zeta b_2$ is a well-defined element of $L_2(B,\tau_B)$ and, in $L_2(A, \tau)$, $L_{b_1} R_{b_2} \zeta = b_1 \zeta b_2$.   Furthermore, note that left and right multiplication of $B$ on $L_2(B, \tau_B)$ are commuting $*$-homomorphisms.
\item For all $a \in A$ and $b \in B$, we automatically have $\tau(a L_b) = \tau(a R_b)$, as $\tau$ is compatible with $E$.  Indeed 
\[
\tau(a L_b) = \tau(L_{E(aL_b)}) = \tau(L_{E(aR_b)}) = \tau(aR_b),
\]
as desired.  Hence $L_b+N_\tau = R_b+N_\tau$ for all $b\in B$. 
\end{enumerate}
\end{remark}

In some cases, property (v) of Definition \ref{enhancedbbdef} is redundant.

\begin{lemma}\label{lem:expectations-CP}
Let $(A, E, \varepsilon, \tau)$ satisfy assumptions (i), (ii), (iii), and (iv) of Definition \ref{enhancedbbdef}.  If $B$ is a C$^*$-algebra and $\tau_B$ is faithful, then property (v) of Definition \ref{enhancedbbdef} holds.
\end{lemma}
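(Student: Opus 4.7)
The plan is to verify property (v) of Definition \ref{enhancedbbdef} directly. Given $a_1, \ldots, a_n \in A_\ell$, it suffices to show that the matrix $[E(a_i^* a_j)]_{i,j=1}^n$ is positive in $M_n(B)$, since the positive cone of $M_n(A_\ell)$ is generated by sums of matrices of this form. The argument will be identical for $A_r$; in fact, as will be clear from the computation, the hypothesis $a_i \in A_\ell$ is never used, so one obtains that $E$ is completely positive on all of $A$.

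First I would record the identity $\tau = \tau_B \circ E$, which is immediate from the compatibility condition (ii) of Definition \ref{enhancedbbdef}: $\tau(a) = \tau(L_{E(a)}) = \tau_B(E(a))$ for all $a \in A$. Next, I would exploit the C$^*$-algebra hypothesis on $B$ together with faithfulness of $\tau_B$ to obtain a workable characterization of positivity in $M_n(B)$. Since $\tau_B$ is faithful, the GNS representation $\pi_{\tau_B} : B \to \mathcal{B}(L_2(B, \tau_B))$ is an injective $*$-homomorphism, and hence so is its $n$-fold amplification $M_n(B) \to \mathcal{B}(L_2(B, \tau_B)^n)$. Testing positivity on the dense collection of vectors of the form $(\widehat{c_1}, \ldots, \widehat{c_n})$ with $c_i \in B$, we see that $[b_{ij}] \in M_n(B)$ is positive if and only if $\sum_{i,j} \tau_B(c_i^* b_{ij} c_j) \geq 0$ for all $c_1, \ldots, c_n \in B$.

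With this criterion in hand, the core of the proof is a direct application of the $B$-$B$-bimodule property of $E$ from Definition \ref{BBncpsdef}. For arbitrary $c_1, \ldots, c_n \in B$,
\[
\sum_{i,j} \tau_B\bigl(c_i^* E(a_i^* a_j) c_j\bigr) = \sum_{i,j} \tau_B\bigl(E(L_{c_i^*} a_i^* a_j L_{c_j})\bigr) = \sum_{i,j} \tau\bigl(L_{c_i^*} a_i^* a_j L_{c_j}\bigr) = \tau\bigl(Z^* Z\bigr) \geq 0,
\]
where $Z := \sum_i a_i L_{c_i} \in A$ and we have used that $L$ is a $*$-homomorphism, so $L_{c_i^*} = L_{c_i}^*$. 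The non-negativity is simply because $\tau$ is a state on $A$. I do not anticipate any substantive obstacle: all of the work is really in setting up the positivity criterion for $M_n(B)$ via the faithful GNS representation of $B$, after which the bimodule identity together with $\tau = \tau_B \circ E$ and positivity of $\tau$ close the argument in one line.
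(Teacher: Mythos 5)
Your argument for the lemma as stated is essentially the paper's own proof: both reduce positivity of $E_n$ applied to $A^*A$ (equivalently, to matrices $[a_i^*a_j]$) to positivity of $\tau$ on $A$, by testing against vectors from $B^n \subseteq L_2(B,\tau_B)^{\oplus n}$ and using faithfulness of $\tau_B$. That part is fine. However, your parenthetical claim that ``the hypothesis $a_i \in A_\ell$ is never used, so one obtains that $E$ is completely positive on all of $A$'' is false, and it signals that you have not justified the one step where all the content lives. The identity
\[
c_i^*\, E(a_i^*a_j)\, c_j \;=\; E\bigl(L_{c_i^*}\, a_i^*a_j\, L_{c_j}\bigr)
\]
is not an axiom of a $B$-$B$-non-commutative probability space. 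What Definition \ref{BBncpsdef} gives you is $c_i^* E(a) c_j = E(L_{c_i^*} R_{c_j}\, a)$ (both $L$ and $R$ multiply $a$ from the left), together with $E(aL_b)=E(aR_b)$. To pass from $E(L_{c_i^*} R_{c_j}\, a_i^*a_j)$ to $E(L_{c_i^*}\, a_i^*a_j\, R_{c_j})$ and thence to $E(L_{c_i^*}\, a_i^*a_j\, L_{c_j})$, you must commute $R_{c_j}$ past $a_i^*a_j$, and this is precisely where $a_i, a_j \in A_\ell$ is used (elements of $A_\ell$ are by definition those commuting with $B_r$); traciality of $\tau_B$ does not rescue the general case, since $\tau$ itself is not assumed tracial on $A$.

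That the restriction to $A_\ell$ (or $A_r$) is genuinely needed is shown by the paper itself: the remark following Example \ref{canonicalexample} exhibits $E$ failing to be even positive on all of $A$ (there $E(Z\otimes b_1\otimes b_2)=\varphi(Z)b_1b_2$, and a product of positives need not be positive). So you should delete the claim of complete positivity on $A$ and insert the commutation step explicitly, at which point your proof coincides with the paper's.
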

\begin{proof}
To see that $E$ is completely positive on $A_\ell$, let $d \in \bN$ and $A = [a_{i,j}] \in M_d(A_\ell)$.  To verify that $E_d(A^*A) \geq 0$ in $B$, as $B$ is a C$^*$-algebra and $\tau_B$ is faithful, it suffices to show for all $h = (b_1, \ldots, b_d) \in B^d$ that
\[
\langle E_d(A^*A) h, h\rangle_{L_2(B, \tau_B)^{\oplus d}} \geq 0.
\]
Note that
\begin{align*}
\left\langle E_d(A^*A) h, h\right\rangle_{L_2(B, \tau_B)^{\oplus d}} &= \sum^d_{i,j,k=1} \tau_B\left(b_i^* E(a^*_{k,i} a_{k,j}) b_j\right)   \\
&= \sum^d_{i,j,k=1} \tau_B\left(E(R_{b_j} L_{b_i^*} a^*_{k,i} a_{k,j}) \right)\\
&= \sum^d_{i,j,k=1} \tau_B\left(E(L_{b_i^*} a^*_{k,i} a_{k,j} R_{b_j} ) \right)\\
&= \sum^d_{i,j,k=1} \tau_B\left(E(L_{b_i}^* a^*_{k,i} a_{k,j} L_{b_j} ) \right)\\
&= \sum^d_{i,j,k=1} \tau\left(L_{b_i}^* a^*_{k,i} a_{k,j} L_{b_j} \right)\\
&= \sum^d_{k=1} \tau\left(c_k^*c_k \right)\\
\end{align*}
where $c_k = \sum^d_{j=1} a_{k,j} L_{b_j}$.  Hence, as $\tau$ is positive and the computation for $A_r$ is similar, the result follows.
\end{proof}

At this point, let us revisit Examples \ref{exam:tensor-up} and \ref{exam:factors} to provide the canonical examples of analytical $B$-$B$-non-commutative probability spaces.

\begin{example}\label{canonicalexample}
Let $\A$ and $B$ be unital $\mathrm{C}^*$-algebras and let $\phi: \A \to\C$ be a state. Recall from Example \ref{exam:tensor-up} that $(A, E, \varepsilon)$ is a $B$-$B$-non-commutative probability space where $A=\A \otimes B\otimes B^{\op}$, $\varepsilon:B\otimes B^{\op}\to A$ is the natural embedding, and $E:A\to B$ is defined by
\[
E(Z\otimes b_1\otimes b_2) = \phi(Z)b_1b_2,
\]
for all $Z\in\A$ and $b_1,b_2\in B$.

Let $\tau_B:B\to\C$ be any tracial state.  Extend $\tau_B$ to a linear map $\tau : A \to \bC$ by defining
\[
\tau(Z\otimes b_1\otimes b_2) = \tau_B (E(Z\otimes b_1\otimes b_2)) = \phi(Z)\tau_B(b_1b_2),
\]
for all $Z\otimes b_1\otimes b_2\in A$.  We claim that $(A, E, \varepsilon, \tau)$ is an analytical $B$-$B$-non-commutative probability space.  To see this, it suffices to prove that $\tau$ is a state that is compatible with $E$, since $\A$ and $B$ being unital C$^*$-algebras automatically implies that left multiplication will be bounded on $L_2(A, \tau)$, and Lemma \ref{lem:expectations-CP} implies that $E$ is completely positive when restricted to $A_\ell$ or $A_r$ (or one may simply use the fact that states are completely positive).  

Clearly $\tau$ is a unital, linear map that is compatible with $E$.  To see that $\tau$ is positive, let ${(Z_i)}_{i=1}^n\subseteq \A$, ${(b_k)}_{k=1}^n,{(c_k)}_{k=1}^n\subseteq B$, and
 \[
a = \sum_{k=1}^n Z_k\otimes b_k\otimes c_k\in A.
\]
To see that  $\tau(a^*a)\geq 0$, note that
\begin{align*}
\tau(a^*a) &= \sum_{i,j=1}^n\tau(Z_i^* Z_j\otimes b_i^* b_j\otimes c_j c_i^*)\\
&= \sum_{i,j=1}^n \phi(Z_i^* Z_j)\tau_B(b_i^*b_j c_j c_i^*)\\
&=\sum_{i,j=1}^n \phi(Z_i^* Z_j)\tau_B(c_i^* b_i^*b_j c_j)\\
&= \sum_{i,j=1}^n \phi(Z_i^* Z_j)\tau_B({(b_i c_i)}^*(b_j c_j)),
\end{align*}
with the third equality being due to the fact that $\tau_B$ is tracial.  Observe that the matrices 
\[
{[Z_i^* Z_j]}    \qqand  {[{(b_i c_i)}^* (b_j c_j)]} 
\]
are positive in $M_n(\A)$ and $M_n(B)$ respectively.  Therefore, as states on C$^*$-algebras are completely positive, this implies that the matrices
\[
{[\phi(Z_i^* Z_j)]}   \qqand {[{\tau_B((b_i c_i)}^* (b_j c_j))]}
\]
are positive in $M_n(\bC)$. Consequently
\[
{[\phi(Z_i^*Z_j){\tau_B((b_i c_i)}^* (b_j c_j))]} 
\]
is also positive being the Schur product of positive matrices (see, for instance, \cite{NS2006}*{Lemma 6.11}). Therefore, as the sum of all entries of a positive matrix equals a positive scalar, we obtain that $\tau(a^*a)\geq 0$. Hence $(A,E,\varepsilon,\tau)$ is an analytical $B$-$B$-non-commutative probability space.
\end{example}

\begin{remark}
Note Example \ref{canonicalexample} demonstrates $E$ need not be a positive map on $A$ since the product of two positive matrices need not be positive.  Thus, even if $\tau_B : B \to \bC$ is defined to be a state, $\tau_B \circ E$ may not be for an arbitrary $A$.
\end{remark}

\begin{example}
\label{exam:vonNeumannAnalytical}
For a finite von Neumann algebra $\fM$ with a unital von Neumann subalgebra $B$ and tracial state $\tau$, let $(A, E, \varepsilon)$ be the $B$-$B$-non-commutative probability space as in Example \ref{exam:factors}.  Note that $\tau$ extends to a unital linear map $\tau_A : A \to \bC$ defined by
\[
\tau_A(T) = \langle T1_\fM, 1_\fM\rangle_{L_2(\fM,\tau)}
\]
for all $T \in A$.  Clearly $\tau_A$ is a state as $A \subseteq \mathcal{B}(L_2(\fM, \tau))$ and $\tau_A$ is a vector state.  Furthermore, notice that
\[
\tau_A(T) =  \langle P(T1_\fM), 1_\fM\rangle_{L_2(\fM,\tau)} = \langle L_{E(T)} 1_\fM, 1_\fM\rangle_{L_2(\fM,\tau)} = \tau_A(L_{E(T)})  
\]
for all $T \in A$ and $\tau_A(T) = \tau_A(R_{E(T)})$ by a similar computation.  Finally as
\[
\tau_A(L_b) = \langle b, 1_\fM\rangle_{L_2(\fM,\tau)} = \tau(b)
\]
for all $b \in B$, we see that $\tau_B = \tau_A \circ E$ is tracial on $B$ as $\tau$ is tracial.  Again, we automatically have that left multiplication will be bounded on $L_2(A, \tau)$ and that $E$ is completely positive when restricted (as they are the conditional expectation of a copy of $\fM$ onto $B$).   Hence $(A, E, \varepsilon, \tau_A)$ is an analytical $B$-$B$-non-commutative probability space.
\end{example}

As motivated by Example \ref{exam:vonNeumannAnalytical}, it is natural in an analytical $B$-$B$-non-commutative probability space to extend the expectation $E:A\to B$ to a map from $L_2(A,\tau)$ to $L_2(B,\tau_B)$ via orthogonal projection.  From this point onwards, for $a \in A$ we will often denote the coset $a+N_\tau$ simply by $a$ and, for $b \in B$ we will often denote the coset $b+N_{\tau_b}$ by $\widehat{b}$.  Note that if $\tau_B$ is faithful, then the map $b \mapsto \widehat{b}$ is a bijection.

\begin{proposition}\label{orthproj}
Let $(A,E,\varepsilon,\tau)$ be an analytical $B$-$B$-non-commutative probability space.  If $\widetilde{E}:L_2(A,\tau)\to L_2(B,\tau_B)$ denotes the orthogonal projection, then
\[
\widetilde{E}(a) = \widehat{E(a)}
\]
for all $a \in A$.  In particular, when $\tau_B$ is faithful, $\widetilde{E}$ extends $E$.
\end{proposition}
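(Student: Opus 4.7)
The plan is to verify the proposition using the defining property of the orthogonal projection: $\widetilde{E}(a)$ is the unique element $\zeta \in L_2(B,\tau_B)$ such that $a - \zeta$ is orthogonal to $L_2(B,\tau_B)$. Under the isometric identification of $L_2(B,\tau_B)$ with the closure of $\{L_b + N_\tau \, \mid \, b \in B\}$ inside $L_2(A,\tau)$ (described in Remark \ref{remarkenhancedbbdef}(ii)), the candidate $\widehat{E(a)}$ corresponds to $L_{E(a)} + N_\tau$. So the task reduces to showing that $L_{E(a)} + N_\tau$ lies in the relevant subspace (immediate, since $E(a) \in B$) and that $a - L_{E(a)}$ is orthogonal to every $L_b + N_\tau$ with $b \in B$.

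For the orthogonality, I would compute, for arbitrary $b \in B$,
\[
\left\langle a - L_{E(a)},\, L_b + N_\tau\right\rangle_\tau = \tau\bigl(L_b^*(a - L_{E(a)})\bigr) = \tau(L_{b^*}a) - \tau(L_{b^*}L_{E(a)}).
\]
The key step is to rewrite each term using compatibility of $\tau$ with $E$ (Definition \ref{enhancedbbdef}(ii)) together with the bimodule property of $E$ and the fact that $\varepsilon$ is a $*$-homomorphism. Concretely, since $L_{b^*} \in B_\ell$, the bimodule property gives $E(L_{b^*}a) = b^* E(a)$, so
\[
\tau(L_{b^*}a) = \tau\bigl(L_{E(L_{b^*}a)}\bigr) = \tau(L_{b^* E(a)}) = \tau(L_{b^*}L_{E(a)}),
\]
the last equality coming from $\varepsilon$ being multiplicative. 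This yields the desired orthogonality and hence $\widetilde{E}(a) = \widehat{E(a)}$.

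For the concluding statement, if $\tau_B$ is faithful then $N_{\tau_B} = \{0\}$, so the quotient map $b \mapsto \widehat{b}$ is injective on $B$. Thus the composition $E(a) \mapsto \widehat{E(a)} = \widetilde{E}(a)$ identifies $E$ with the restriction of $\widetilde{E}$ to $A$ (under the injection $A \hookrightarrow L_2(A,\tau)$), so $\widetilde{E}$ genuinely extends $E$.

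I do not expect a serious obstacle: the content is essentially an application of Hilbert-space orthogonal projection theory to the algebraic compatibility built into Definition \ref{enhancedbbdef}. The only subtle point is being careful to distinguish the copy of $B$ sitting inside $L_2(A,\tau)$ via $b \mapsto L_b + N_\tau$ from $L_2(B,\tau_B)$ itself, and to use Remark \ref{remarkenhancedbbdef}(ii) and (vii) to translate between them cleanly.
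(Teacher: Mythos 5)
Your proof is correct and follows essentially the same route as the paper: both verify that $a - L_{E(a)}$ is orthogonal to each $L_b + N_\tau$ by using the compatibility $\tau(L_{b^*}a) = \tau(L_{E(L_{b^*}a)})$ together with the bimodule property $E(L_{b^*}a) = b^*E(a)$ and multiplicativity of $\varepsilon$. The concluding remark about faithfulness of $\tau_B$ is also handled as intended.
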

\begin{proof}
Notice for all $a \in A$ and $b \in B$ that
\begin{align*}
{\left\langle a-\widehat{E(a)}, L_b\right\rangle }_{L_2(B,\tau_B)} &= \langle L_{b^*}(a - L_{E(a)}) 1_A, 1_A \rangle_{L_2(A,\tau)} \\
&= \tau\left(L_{b^*}(a - L_{E(a)}) \right) \\
&= \tau(L_{b^*}a) - \tau\left(L_{b*} L_{E(a)}\right) \\
&= \tau\left(L_{E(L_{b^*}a)}\right) - \tau\left(L_{b^*E(a)}\right) = 0.
\end{align*}
Since $b$ was arbitrary, the element $a-\widehat{E(a)}$ is orthogonal to $L_2(B,\tau_B)$ and hence $\widetilde{E}(a) = \widehat{E(a)}$.
\end{proof}

\begin{remark}
Notice in Proposition \ref{orthproj} that if $B$ is finite-dimensional and the trace $\tau_B:B\to\C$ is faithful, then $L_2(B,\tau_B)\cong B$, so $E:A\to B$  extends to a map from $L_2(A,\tau)$ into $B$.
\end{remark}

Of course $\widetilde{E}$ inherits many properties that $E$ is required to have.

\begin{proposition}\label{propE}
Let $(A,E,\varepsilon,\tau)$ be an analytical $B$-$B$-non-commutative probability space and let $\widetilde{E}:L_2(A,\tau)\to L_2(B,\tau_B)$ denote the orthogonal projection. For $a\in A$, $b,b_1,b_2\in B$, $\xi, \xi_1,\xi_2\in L_2(A,\tau)$, and $\zeta\in L_2(B,\tau_B)$, the following hold:
\begin{enumerate}[(i)]
\item $\tau(\xi) = \tau_B\left(\widetilde{E}(\xi)\right)$,
\item $\widetilde{E}(aL_b) = \widetilde{E}(aR_b)$,
\item $\widetilde{E}(L_{b_1}R_{b_2}\xi) = b_1\widetilde{E}(\xi)b_2$,
\item if $a\in A_\ell$, then $\widetilde{E}(a\zeta) = E(a)\zeta$,
\item if  $a\in A_r$, then $\widetilde{E}(a\zeta) = \zeta E(a)$, and
\item if $\tau(L_b \xi_1) = \tau(L_b \xi_2)$ for all $b\in B$, then $\widetilde{E}(\xi_1)=\widetilde{E}(\xi_2)$.
\item if $\tau(R_b \xi_1) = \tau(R_b \xi_2)$ for all $b\in B$, then $\widetilde{E}(\xi_1)=\widetilde{E}(\xi_2)$.
\end{enumerate}
\end{proposition}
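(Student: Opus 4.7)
The plan is to prove each item in order, leveraging Proposition \ref{orthproj} (which identifies $\widetilde{E}(a)$ with $\widehat{E(a)}$ on $A$), the Remarks following Definition \ref{enhancedbbdef}, and the defining properties of $(A,E,\varepsilon)$.

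First I would dispatch the easy bookkeeping items. For (i), observe that $1_A + N_\tau$ is identified with $\widehat{1_B} \in L_2(B,\tau_B)$, so
\[
\tau(\xi) = \langle \xi, 1 \rangle_{L_2(A,\tau)} = \langle \widetilde{E}(\xi), 1 \rangle_{L_2(B,\tau_B)} = \tau_B(\widetilde{E}(\xi)),
\]
using that the orthogonal projection $\widetilde{E}$ preserves inner products with elements of its range. Item (ii) is immediate from Proposition \ref{orthproj}: since $aL_b, aR_b \in A$, we get $\widetilde{E}(aL_b) = \widehat{E(aL_b)} = \widehat{E(aR_b)} = \widetilde{E}(aR_b)$, using the defining relation $E(aL_b)=E(aR_b)$ of a $B$-$B$-non-commutative probability space.

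For (iii), I would first verify the identity on the dense subspace $A/N_\tau \subseteq L_2(A,\tau)$: for $\xi \in A$, Proposition \ref{orthproj} and the $B$-bimodule property of $E$ give $\widetilde{E}(L_{b_1}R_{b_2}\xi) = \widehat{b_1 E(\xi) b_2} = b_1 \widetilde{E}(\xi) b_2$. Then the equality extends to $\xi \in L_2(A,\tau)$ by continuity: left multiplication by $L_{b_1}R_{b_2}$ is bounded on $L_2(A,\tau)$ by Definition \ref{enhancedbbdef}(iv), $\widetilde{E}$ is bounded as a projection, and left and right multiplication by $b_1, b_2$ are bounded on $L_2(B,\tau_B)$ since $\tau_B$ is tracial. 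For (iv), take $\zeta = \widehat{b}$ for $b \in B$; under the identification of Remark \ref{remarkenhancedbbdef}(ii), $a\zeta$ coincides with $aL_b \in A$. Since $a \in A_\ell$ commutes with $R_b$, the bimodule property yields
\[
E(aL_b) = E(aR_b) = E(R_b a) = E(a)b,
\]
so $\widetilde{E}(a\zeta) = \widehat{E(a)b} = E(a)\zeta$, and the general case follows by the same boundedness considerations. Item (v) is completely parallel, using that $a \in A_r$ commutes with $L_b$ to obtain $E(aL_b)=E(L_b a)=bE(a)$, which under the identification equals $\widehat{b}E(a)=\zeta E(a)$.

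Finally, for (vi) and (vii), since $\widetilde{E}(\xi_1), \widetilde{E}(\xi_2) \in L_2(B,\tau_B)$, it suffices to check that their inner products against a dense subset of $L_2(B,\tau_B)$ agree. Using the identification and the fact that $\widetilde{E}$ is the orthogonal projection,
\[
\langle \widetilde{E}(\xi_i), \widehat{b}\rangle_{L_2(B,\tau_B)} = \langle \xi_i, L_b\rangle_{L_2(A,\tau)} = \tau(L_{b^*}\xi_i).
\]
Applying the hypothesis of (vi) with $b^*$ in place of $b$ shows these agree for $i=1,2$, and since $\{L_b + N_\tau : b \in B\}$ is dense in $L_2(B,\tau_B)$, we conclude $\widetilde{E}(\xi_1) = \widetilde{E}(\xi_2)$. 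For (vii), the identical argument works because $L_b + N_\tau = R_b + N_\tau$ by Remark \ref{remarkenhancedbbdef}(vii), so $\langle \xi_i, L_b\rangle_{L_2(A,\tau)} = \langle \xi_i, R_b\rangle_{L_2(A,\tau)} = \tau(R_{b^*}\xi_i)$, and the same density argument applies. None of these steps present a genuine obstacle; the only point requiring care is systematically extending identities from $A$ to $L_2(A,\tau)$ via the boundedness provided by Definition \ref{enhancedbbdef}(iv) and the traciality of $\tau_B$.
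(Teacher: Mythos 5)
Your proof is correct and follows essentially the same route as the paper's: verify each identity on the dense subspaces $A/N_\tau$ (resp.\ $\{\widehat{b} \mid b \in B\}$) using the defining properties of $E$ and the compatibility of $\tau$, then extend by the boundedness of left multiplication and the traciality of $\tau_B$. The only cosmetic differences are that you invoke Proposition~\ref{orthproj} directly for (ii) and handle (vi)--(vii) via the adjoint of the left-multiplication operator rather than routing through part (iii), neither of which changes the substance of the argument.
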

\begin{proof}
For (i), since  $L_{1_B} = 1_A$ as $\varepsilon$ is unital, note that
\[
\tau_B\left(\widetilde{E}(\xi)\right) = \left\langle \widetilde{E}(\xi), 1_B \right\rangle_{L_2(B, \tau_B)} =\left\langle \xi, \widetilde{E}(1_B) \right\rangle_{L_2(A, \tau)}  = \left\langle \xi, 1_A \right\rangle_{L_2(A, \tau)} = \tau(\xi)
\]
as desired.

For (ii), note for all $b' \in B$ that
\begin{align*}
\left\langle \widetilde{E}(aL_b), \widehat{b'}\right\rangle_{L_2(B, \tau_B)} &= \left\langle aL_b, L_{b'}\right\rangle_{L_2(A, \tau)} = \tau\left(L_{(b')^*} a L_b\right) = \tau\left(L_{(b')^*} a R_b\right) = \left\langle \widetilde{E}(aR_b), \widehat{b'}\right\rangle_{L_2(B, \tau_B)}.
\end{align*}
Hence $\widetilde{E}(aL_b) = \widetilde{E}(aR_b)$.

For (iii), let $(a_n)_{n\geq 1}$ be a sequence of elements of $A$ that converge to $\xi$ in $L_2(A, \tau)$. Since left multiplication in $L_2(A, \tau)$ by elements of $A$ are bounded and thus continuous, and since left and right multiplication in $L_2(B, \tau_B)$ by elements of $B$ are bounded and thus continuous,  we obtain that
\begin{align*}
\widetilde{E}(L_{b_1}R_{b_2}\xi) = \lim_{n\to \infty} E(L_{b_1}R_{b_2}a_n) + N_{\tau_B} 
= \lim_{n\to \infty} b_1 E(a_n) b_2 + N_{\tau_B}   = b_1 \widetilde{E}(\xi) b_2
\end{align*}
as desired.

For (iv) and (v), let $(c_n)_{n\geq 1}$ be a sequence of elements of $B$ that converge to $\zeta$ in $L_2(B, \tau_B)$.  Thus, by the inclusion of $L_2(B, \tau_B)$ into $L_2(A, \tau)$, we have that $(L_{c_n})_{n\geq 1}$ is a sequence of elements of $L_2(A, \tau)$ that converge to $\zeta$ in $L_2(A, \tau)$.  Thus, if $a \in A_\ell$, then
\begin{align*}
\widetilde{E}(a\zeta)
&= \lim_{n\to \infty} E(aL_{c_n}) + N_{\tau_B} \\
&= \lim_{n\to \infty} E(aR_{c_n}) + N_{\tau_B} \\
&= \lim_{n\to \infty} E(R_{c_n}a) + N_{\tau_B} \\
&= \lim_{n\to \infty} E(a)c_n + N_{\tau_B} = E(a) \zeta
\end{align*}
thereby proving (iv).  Note (v) is similar using $(R_{c_n})_{n\geq 1}$ in place of $(L_{c_n})_{n\geq 1}$.

As (vi) and (vii) are similar, we prove (vii).  Note by (iii) and the fact that $\tau_B$ is tracial that 
\begin{align*}
\left\langle \widetilde{E}(\xi_1)-\widetilde{E}(\xi_2), \widehat{b} \right\rangle_{L_2(B,\tau_B)} &= \left\langle \widetilde{E}(\xi_1)b^*, \widetilde{E}(1_A)\right\rangle_{L_2(B,\tau_B)} - \left\langle \widetilde{E}(\xi_2)b^*, \widetilde{E}(1_A)\right\rangle_{L_2(B,\tau_B)}\\
&= \left\langle \widetilde{E}(R_{b^*}\xi_1),\widetilde{E}(1_A)\right\rangle_{L_2(B,\tau_B)} - \left\langle \widetilde{E}(R_{b^*} \xi_2), \widetilde{E}(1_A)\right\rangle_{L_2(B,\tau_B)}\\
&= \left\langle R_{b^*}\xi_1,1_A\right\rangle_{L_2(A,\tau)} - \left\langle R_{b^*} \xi_2, 1_A\right\rangle_{L_2(A,\tau)}\\
&= \tau(R_{b^*}\xi_1)-\tau(R_{b^*}\xi_2) = 0
\end{align*}
As the above holds for all $b\in B$, (vii) follows.
\end{proof}

\section{Analytical Bi-Multiplicative Functions}
\label{sec:Bi-Multi}

In this section, we extend the notion of bi-multiplicative functions on analytical $B$-$B$-non-commutative probability spaces in order to permit the last entry to be an element of $L_2(A, \tau)$.  This is possible as the last entry can be treated as a left or right operator as \cite{S2016-2} shows, or can be treated as a mixture of left and right operators as \cite{CS2020} shows.  Extending the operator-valued bi-free cumulant function to permit the last entry to be an element of $L_2(A, \tau)$ is necessary in order to permit the simple development of conjugate variable systems in the next section.

We advise the reader that familiarity with specifics of bi-multiplicative functions, the construction of the operator-valued bi-free moment function, and the construction of the operator-valued bi-free cumulant function from \cite{CNS2015-1} would be of great aid in comprehension of this section.  As the proofs are nearly identical, to avoid clutter we will focus on that which is different and why the results of \cite{CNS2015-1} extend.

\begin{definition}\label{defn:anal-bi-multiplicative}
Let $(A,E,\varepsilon,\tau)$ be an analytical $B$-$B$-non-commutative probability space and let $\Phi$ be a bi-multiplicative function on $(A, E, \varepsilon)$.  A function
\[
\widetilde{\Phi}  :\bigcup_{n\in\bN}\bigcup_{\chi\in \{\ell,r\}^n}\BNC(\chi)\times A_{\chi(1)}\times\ldots\times A_{\chi(n-1)}\times L_2(A,\tau)\to L_2(B,\tau_B)
\]
is said to be \emph{analytical extension of $\Phi$} if $\widetilde{\Phi}_\pi$ is $\C$-multilinear function that does not change values if the last entry of $\chi$ is changed from an $\ell$ to an $r$ and satisfies the following three properties:   For all $n\in\bN$, $\chi\in\{\ell,r\}^n$, $\pi\in\BNC(\chi)$, $\xi\in L_2(A,\tau)$, $\zeta \in L_2(B, \tau_B)$, $b\in B$, and $Z_k\in A_{\chi(k)}$:
\begin{enumerate}[(i)]
\item If $\chi(k) = \ell$ for all $k \in \{1,2, \ldots, n\}$ then
\[
\widetilde{\Phi}_{1_\chi}(Z_1,\ldots, Z_{n-1}, Z_n\zeta) = \Phi_{1_\chi}(Z_1, \ldots, Z_n) \zeta,
\]
and if $\chi(k) = r$ for all $k \in \{1,2,\ldots, n\}$, then 
\[
\widetilde{\Phi}_{1_\chi}(Z_1,\ldots,Z_{n-1}, Z_n\zeta) = \zeta \Phi_{1_\chi}(Z_1, \ldots, Z_n).
\]
In particular, by setting $\zeta = 1_B = 1_A$, we see $\widetilde{\Phi}$ does extend $\Phi$.
\item[(ii)] Let $p\in\{1,\ldots,n\}$ and let
\[
q = \max_\leq\left\{k\in\{1,\ldots,n\}\, \mid \, \chi(k)=\chi(p),  k<p\right\}.
\]
If $\chi(p)=\ell$, then
\[
\widetilde{\Phi}_{1_\chi}(Z_1,\ldots,Z_{p-1},L_b Z_p, Z_{p+1}, \ldots,Z_{n-1},\xi)= \begin{cases}
       \widetilde{\Phi}_{1_\chi}(Z_1,\ldots,Z_{q-1},Z_q L_b, Z_{q+1}, \ldots,Z_{n-1},\xi)  &\text{if }q\neq -\infty,\\
      b \widetilde{\Phi}_{1_\chi}(Z_1,\ldots,Z_{n-1},\xi) &   \text{if } q= - \infty \\
     \end{cases},
\]
and if $\chi(p)=r,$ then
\[
\widetilde{\Phi}_{1_\chi}(Z_1,\ldots,Z_{p-1},R_b Z_p, Z_{p+1}, \ldots,Z_{n-1},\xi)= \begin{cases}
       \widetilde{\Phi}_{1_\chi}(Z_1,\ldots,Z_{q-1},Z_q R_b,Z_{q+1}, \ldots,Z_{n-1},\xi) & \text{if }q\neq -\infty,\\
      \widetilde{\Phi}_{1_\chi}(Z_1,\ldots,Z_{n-1},\xi)b &   \text{if } q= - \infty. \\
     \end{cases}
\]
\item[(iii)] Suppose $V_1,\ldots, V_m$ are unions of blocks of $\pi$ that partition $\{1,\ldots,n\}$, with each being a $\chi$-interval. Moreover, assume that the sets $V_1,\ldots,V_m$ are ordered by $\pleq$ (i.e. $(\min_{\pleq}V_k)\ple (\min_{\pleq}V_{k+1})$). Let $q\in\{1,\ldots,m\}$ be such that $n\in V_q$ and for each $k\neq q$ let
\[
b_k = \Phi_{\pi|_{V_k}}\left({(Z_1,\ldots,Z_{n-1},\xi)|}_{V_k}\right).
\]
Then $b_k\in B$ for $k \neq q$ and
\[
\widetilde{\Phi}_\pi(Z_1,\ldots,Z_{n-1},\xi) = b_1b_2 \cdots b_{q-1} \widetilde{\Phi}_{\pi|_{V_i}}({(Z_1,\ldots,Z_{n-1},\xi)|}_{V_q}) b_{q+1} \cdots b_m.
\]
\item[(iv)] Suppose that $V$ and $W$ are unions of blocks of $\pi$ that partition $\{1,\ldots,n\}$ such that $V$ is a $\chi$-interval and $\sx(1),\sx(n)\in W$. Let
\[
p = \max_{\pleq} \left \{k\in W\, \left| \, k \pleq \min_{\pleq}V \right.\right\}\qqand q = \min_{\pleq}\left\{k\in W\, \left| \, \max_{\pleq}V \pleq k \right. \right\}.
\]
Then one of the following four cases hold:
\begin{enumerate}[a)]
\item If $n\in V$ and $k=\max_\leq W$, then
\[
\widetilde{\Phi}_\pi(Z_1,\ldots,Z_{n-1},\xi) =  \widetilde{\Phi}_{\pi|_W}\left({(Z_1,\ldots,Z_{k-1}, Z_k \widetilde{\Phi}_{\pi|_V}{({(Z_1,\ldots,Z_{n-1},\xi)|}_V)} ,\ldots,Z_{n-1},\xi)|}_W\right).
\]
\item If $n\in W$ then
\begin{align*} 
\widetilde{\Phi}_\pi(Z_1,\ldots,Z_n) &=  \begin{cases}
       \widetilde{\Phi}_{\pi|_W}\left({(Z_1,\ldots,Z_{p-1},Z_p L_{\Phi_{\pi|_V}({(Z_1,\ldots,Z_n)|}_V)},Z_{p+1},\ldots,Z_n) |}_W\right) &\text{if }\chi(p)=\ell,\\
       \widetilde{\Phi}_{\pi|_W}\left({(Z_1,\ldots,Z_{p-1},R_{\Phi_{\pi|_V}({(Z_1,\ldots,Z_n)|}_V)} Z_p ,Z_{p+1},\ldots,Z_n) |}_W\right) &  \text{if }\chi(p)=r, \\
     \end{cases} \\ 
 &=  \begin{cases}
       \widetilde{\Phi}_{\pi|_W}\left({(Z_1,\ldots,Z_{q-1},L_{\Phi_{\pi|_V}({(Z_1,\ldots,Z_n)|}_V)} Z_q, Z_{q+1},\ldots,Z_n) |}_W\right)  &\text{if }\chi(q)=\ell,\\
       \widetilde{\Phi}_{\pi|_W}\left({(Z_1,\ldots,Z_{q-1}, Z_q R_{\Phi_{\pi|_V}({(Z_1,\ldots,Z_n)|}_V)} ,Z_{q+1},\ldots,Z_n) |}_W\right) &  \text{if }  \chi(q)=r. \\
     \end{cases}
\end{align*}
(Recall we can set $\chi(n) = \ell$ or $\chi(n) = r$.)
\end{enumerate}
\end{enumerate}
\end{definition}
\begin{remark}
Note that the pair of a bi-multiplicative function and its extension are very reminiscent of the two expectation extensions of bi-multiplicative functions used for operator-valued conditional bi-free independence from \cite{GS2018}.  The main difference is that the notion in \cite{GS2018} looks at interior versus exterior blocks of the partition whereas Definition \ref{defn:anal-bi-multiplicative} looks at the blocks containing the last entry.  This is due to the fact that the $L_2(A, \tau)$ element is always the last entry and must be treated differently being a generalization of a mixture of left and right operators.

It is worth pointing out that treating the last entry as an element of $L_2(A, \tau)$ is no issue.  In particular, the properties in Definition \ref{defn:anal-bi-multiplicative} are well-defined. Indeed, properties (i) and (ii) of Definition \ref{defn:anal-bi-multiplicative} are clearly well-defined and properties (iii) and (iv) in Definition \ref{defn:anal-bi-multiplicative} are well-defined as all terms where $\Phi$ is used over $\widetilde{\Phi}$ never involve an element of $L_2(A, \tau)$ and as elements from $B$ have left and right actions on $L_2(B, \tau_B)$.
\end{remark}
\begin{remark}
Note that property (i) of Definition \ref{defn:anal-bi-multiplicative} is clearly  the correct generalization of property (i) from Definition \ref{defn:bi-multiplicative-function}, as an element of $L_2(B, \tau_B)$ is playing the role of $L_b$ and $R_b$ in this generalization and thus should be able to escape these expressions if only left operators or right operators are present.  The absence of the full property (i) from Definition \ref{defn:bi-multiplicative-function}  causes no issues when attempting to reduce or rearrange the value of $\widetilde{\Phi}_\pi$ to an expression involving only $\widetilde{\Phi}_{1_\chi}$'s, as the last entry of any sequence input into $\widetilde{\Phi}$ is always in $L_2(A, \tau)$, is reduced to an element of $L_2(B, \tau)$, and an element of $A_\ell$ or $A_r$ then acts on it via the left action of $A$ on $L_2(A, \tau)$. Thus there is never any need to move the $L_2(A, \tau)$ entry to another position.

If property (i) is ever used, we note that if $\zeta \in L_2(B, \tau_B)$ is viewed as an element of $L_2(A, \tau)$, then $L_b \zeta$ is simply the element $b\zeta  \in L_2(B, \tau_B)$ and $R_b \zeta$ is simply the element $\zeta b \in L_2(B,\tau_B)$.  Thus, using (i) does not pose problems when trying to ``move around $L_b$ and $R_b$ elements'' in proofs when trying to show the equivalence of any reductions as the following example demonstrates.
\end{remark}

\begin{example}\label{exam:analytic-bi-mult-functions-work}
Let $\chi\in\{\ell,r\}^{8}$ be such that $\chi^{-1}(\{\ell\})=\{5,6\}$, let $\xi\in L_2(A,\tau)$, let $Z_k\in A_{\chi(k)}$, and let $\pi\in\BNC(\chi)$ be the partition 
\[
\pi = \{\{1,2\}, \{3,5\}, \{4,7\}, \{6,8\}\}.
\]
Note the bi-non-crossing diagram of $\pi$ can be represented as the following (with the convention now that the last entry is at the bottom instead of on its respective side):
\begin{align*}
\begin{tikzpicture}[baseline]
    \draw[thick,dashed] (-1,4) -- (-1,0) -- (1,0) -- (1,4);
	\node[right] at (1, 3.5) {1};
	\draw[fill=black] (1,3.5) circle (0.05);
	\node[right] at (1, 3) {2};
	\draw[fill=black] (1,3) circle (0.05);
	\node[right] at (1, 2.5) {3};
	\draw[fill=black] (1,2.5) circle (0.05);
	\node[right] at (1, 2) {4};
	\draw[fill=black] (1,2) circle (0.05);
	\node[left] at (-1, 1.5) {5};
	\draw[fill=black] (-1,1.5) circle (0.05);
	\node[left] at (-1, 1) {6};
	\draw[fill=black] (-1,1) circle (0.05);
	\node[right] at (1, .5) {7};
	\draw[fill=black] (1,.5) circle (0.05);
	\node[below] at (0,0) {8};
	\draw[fill=black] (0,0) circle (0.05);
	\draw[thick,black] (1,3.5) -- (0.5,3.5) -- (0.5, 3) -- (1,3);
	\draw[thick, black] (1,2.5) -- (0,2.5) -- (0, 1.5) -- (-1, 1.5);
	\draw[thick, black] (1,2) -- (0.5,2) -- (0.5, 0.5) -- (1,0.5);
	\draw[thick, black] (-1,1) -- (0,1) -- (0, 0);
\end{tikzpicture}
\end{align*}
When reducing $\widetilde{\Phi}_\pi(Z_1, Z_2, \ldots, Z_7, \xi)$, we can clearly use property (iii) of Definition \ref{defn:anal-bi-multiplicative} first to obtain with $U = \{3,4,\ldots, 8\}$ that
\[
\widetilde{\Phi}_\pi(Z_1, Z_2, \ldots, Z_7, \xi) = \widetilde{\Phi}_{\pi|_U}(Z_3, Z_4, \ldots, Z_7, \xi) \Phi_{1_{(r,r)}}(Z_1, Z_2).
\]
To reduce the expression fully, we have to simply reduce $\widetilde{\Phi}_{\pi|_U}(Z_3, Z_4, \ldots, Z_7, \xi)$ using  property (iv) of Definition \ref{defn:anal-bi-multiplicative} of which there are three ways to do so.

The first way to reduce is to use $V = \{4,6,7,8\}$ and $W = \{3,5\}$.  By applying property (iv) of Definition \ref{defn:anal-bi-multiplicative} we obtain that
\begin{align*}
\widetilde{\Phi}_{\pi|_U}(Z_3, Z_4, \ldots, Z_7, \xi) &= \widetilde{\Phi}_{\pi|_W}\left(Z_3, Z_5\widetilde{\Phi}_{\pi|_V}(Z_4, Z_6, Z_7, \xi)\right).
\end{align*}
Finally, by applying property (iii) of Definition \ref{defn:anal-bi-multiplicative} to $\widetilde{\Phi}_{\pi|_V}(Z_4, Z_6, Z_7, \xi)$ we obtain that
\begin{align*}
\widetilde{\Phi}_{\pi|_U}(Z_3, Z_4, \ldots, Z_7, \xi) &= \widetilde{\Phi}_{1_{(r,\ell)}}\left(Z_3, Z_5\left(\widetilde{\Phi}_{1_{(\ell, \ell)}}(Z_6, \xi )   \Phi_{1_{(r,r)}}(Z_4, Z_7) \right)   \right)\\
&= \widetilde{\Phi}_{1_{(r,\ell)}}\left(Z_3, Z_5R_{\Phi_{1_{(r,r)}}(Z_4, Z_7)} \widetilde{\Phi}_{1_{(\ell, \ell)}}(Z_6, \xi )      \right).
\end{align*}

The second way to reduce is to use $V = \{6,8\}$ and $W = \{3,4, 5, 7\}$.  By applying property (iv) of Definition \ref{defn:anal-bi-multiplicative} we obtain that
\begin{align*}
\widetilde{\Phi}_{\pi|_U}(Z_3, Z_4, \ldots, Z_7, \xi) &= \widetilde{\Phi}_{\pi|_W}\left(Z_3, Z_4, Z_5, Z_7\widetilde{\Phi}_{\pi|_V}(Z_6, \xi)\right).
\end{align*}
By applying property (iv) of Definition \ref{defn:anal-bi-multiplicative} again as $\{4,7\}$ is now a $\chi|_{W}$-interval), we obtain that 
\begin{align*}
\widetilde{\Phi}_{\pi|_U}(Z_3, Z_4, \ldots, Z_7, \xi) &= \widetilde{\Phi}_{1_{r,\ell}}\left(Z_3, Z_5\left(\widetilde{\Phi}_{1_{(r, r)}}(Z_4, Z_7\widetilde{\Phi}_{1_{(\ell, \ell)}}(Z_6, \xi)  \right)  \right).
\end{align*}
However, as $Z_4, Z_7 \in A_r$, we obtain by property (i) that
\begin{align*}
\widetilde{\Phi}_{\pi|_U}(Z_3, Z_4, \ldots, Z_7, \xi) &= \widetilde{\Phi}_{1_{(r,\ell)}}\left(Z_3, Z_5\left(\widetilde{\Phi}_{1_{(\ell, \ell)}}(Z_6, \xi )   \Phi_{1_{(r,r)}}(Z_4, Z_7) \right)   \right)\\
&= \widetilde{\Phi}_{1_{(r,\ell)}}\left(Z_3, Z_5R_{\Phi_{1_{(r,r)}}(Z_4, Z_7)} \widetilde{\Phi}_{1_{(\ell, \ell)}}(Z_6, \xi )      \right),
\end{align*}
thereby agreeing with the above expression.

The third way to reduce is to use $V = \{4,7\}$ and $W = \{3,5,6,8\}$. By applying property (iv) of Definition \ref{defn:anal-bi-multiplicative} we obtain that
\begin{align*}
\widetilde{\Phi}_{\pi|_U}(Z_3, Z_4, \ldots, Z_7, \xi) &= \widetilde{\Phi}_{\pi|_W}\left(Z_3, Z_5, Z_6, R_{\Phi_{\pi|_V}(Z_4, Z_7)} \xi\right) \\
&= \widetilde{\Phi}_{\pi|_W}\left(Z_3 R_{\Phi_{\pi|_V}(Z_4, Z_7)}, Z_5, Z_6,  \xi\right),
\end{align*}
by using the two expressions in property (iv).  Using either expression, we will now again property (iv) of Definition \ref{defn:anal-bi-multiplicative} as $\{6,8\}$ is now $\chi|_{W}$-interval.  For the first, we obtain that
\begin{align*}
\widetilde{\Phi}_{\pi|_U}(Z_3, Z_4, \ldots, Z_7, \xi) &= \widetilde{\Phi}_{1_{(r, \ell)}}  \left(Z_3, Z_5  \widetilde{\Phi}_{1_{(\ell, \ell)}} \left( Z_6, R_{\Phi_{1_{(r,r)}}(Z_4, Z_7)} \xi \right)\right) \\
&= \widetilde{\Phi}_{1_{(r,\ell)}}\left(Z_3, Z_5 \left(   \widetilde{\Phi}_{1_{(\ell, \ell)}}(Z_6, \xi )\Phi_{1_{(r,r)}}(Z_4, Z_7)  \right)   \right)\\
&= \widetilde{\Phi}_{1_{(r,\ell)}}\left(Z_3, Z_5R_{\Phi_{1_{(r,r)}}(Z_4, Z_7)} \widetilde{\Phi}_{1_{(\ell, \ell)}}(Z_6, \xi )      \right)
\end{align*}
where the second equality follows from applying property (ii) of Definition \ref{defn:anal-bi-multiplicative}, as $Z_6 \in A_\ell$.  For the second expression, we obtain that
\begin{align*}
\widetilde{\Phi}_{\pi|_U}(Z_3, Z_4, \ldots, Z_7, \xi) &= \widetilde{\Phi}_{1_{(r, \ell)}}\left(Z_3 R_{\Phi_{1_{(r,r)}}(Z_4, Z_7)}, Z_5\widetilde{\Phi}_{1_{(\ell, \ell)}}(Z_6, \xi ) \right)\\
&= \widetilde{\Phi}_{1_{(r, \ell)}}\left(Z_3, R_{\Phi_{1_{(r,r)}}(Z_4, Z_7)} Z_5\widetilde{\Phi}_{1_{(\ell, \ell)}}(Z_6, \xi ) \right)\\
&= \widetilde{\Phi}_{1_{(r, \ell)}}\left(Z_3, Z_5 R_{\Phi_{1_{(r,r)}}(Z_4, Z_7)}\widetilde{\Phi}_{1_{(\ell, \ell)}}(Z_6, \xi ) \right),
\end{align*}
where the second equality follows from applying property (ii) of Definition \ref{defn:anal-bi-multiplicative} as the last entry is now the $L_2(A, \tau)$ entry, and the third equality holds as $Z_5 \in A_\ell$ and thus commutes with $R_b$.

Hence Definition \ref{defn:anal-bi-multiplicative} is consistent in this example (and will be in all examples due to similar computations).
\end{example}

Using similar reductions for arbitrary expressions, one can prove the following.

\begin{lemma}\label{lem:analytic-properties-extend-from-1-to-pi}
Let $(A,E,\varepsilon,\tau)$ be an analytical $B$-$B$-non-commutative probability space, let $\Phi$ be a bi-multiplicative function on $(A, E, \varepsilon)$, and let $\widetilde{\Phi}$ be an analytic extension of $\Phi$.  Then, properties (i) and (ii) of Definition \ref{defn:anal-bi-multiplicative}  hold when $1_\chi$ is replaced with any $\pi \in \BNC(\chi)$.
\end{lemma}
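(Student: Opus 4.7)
The strategy is induction on the number of blocks of $\pi \in \BNC(\chi)$, using properties (iii) and (iv) of Definition \ref{defn:anal-bi-multiplicative} as the recursive engine and invoking the fact, established already in \cite{CNS2015-1}, that $\Phi$ itself satisfies the analogues of (i) and (ii) at every $\pi$. The base case $\pi = 1_\chi$ is precisely the hypothesis. For the inductive step, suppose $\pi \neq 1_\chi$, so one of the following two reductions is available.

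\textbf{Case A:} There exist $\chi$-intervals $V_1, \ldots, V_m$ (with $m \geq 2$) that are unions of blocks of $\pi$ and partition $\{1,\ldots,n\}$. Let $V_q$ be the unique interval containing $n$. Property (iii) gives
\[
\widetilde{\Phi}_\pi(Z_1,\ldots,Z_{n-1},\xi) = b_1 \cdots b_{q-1}\, \widetilde{\Phi}_{\pi|_{V_q}}\bigl((Z_1,\ldots,Z_{n-1},\xi)|_{V_q}\bigr)\, b_{q+1} \cdots b_m,
\]
with each $b_k = \Phi_{\pi|_{V_k}}(\cdots) \in B$. Since $\pi|_{V_q}$ has strictly fewer blocks than $\pi$, the inductive hypothesis applies to $\widetilde{\Phi}_{\pi|_{V_q}}$. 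To verify property (i) (say, the all-$\ell$ case), note that only the $V_q$-factor carries the last entry $Z_n\zeta$; pulling $\zeta$ out by induction and recombining with the scalar-side $b_k$'s via the known bi-multiplicativity of $\Phi$ produces $\Phi_\pi(Z_1,\ldots,Z_n)\zeta$. For property (ii), the entry $L_b Z_p$ (or $R_b Z_p$) lies in exactly one $V_k$; if $k \neq q$ we invoke property (ii) for $\Phi$ at $\pi|_{V_k}$ (established in \cite{CNS2015-1}), and if $k = q$ we invoke the inductive hypothesis for $\widetilde{\Phi}_{\pi|_{V_q}}$. The two sub-cases according to whether the target position $q$ of the absorption lies in $V_k$ or an earlier/later interval are handled by the associativity of the product of $b_k$'s together with the defining compatibility of $L_b, R_b$ under $\varepsilon$.

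\textbf{Case B:} No nontrivial $\chi$-interval decomposition exists. Then $\sx(1)$ and $\sx(n)$ lie in a common block, and one can locate a $\chi$-interval $V$ which is a union of blocks of $\pi$ and whose complement $W$ is also a union of blocks with $\sx(1),\sx(n) \in W$. Property (iv) then expresses $\widetilde{\Phi}_\pi$ either as $\widetilde{\Phi}_{\pi|_W}(\cdots)$ with an $L$- or $R$-factor of $\Phi_{\pi|_V}(\cdots) \in B$ inserted on a neighbouring entry (if $n \in W$), or as $\widetilde{\Phi}_{\pi|_W}$ with the ``outer'' entry adjacent to $V$ multiplied by $\widetilde{\Phi}_{\pi|_V}(\cdots)$ (if $n \in V$, via (iv)(a)). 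In either sub-case $\pi|_W$ has strictly fewer blocks than $\pi$, so the inductive hypothesis supplies properties (i) and (ii) for $\widetilde{\Phi}_{\pi|_W}$. Property (i) is verified by tracking the last entry through the reduction (it lives either in $V$ or in $W$, and in both routes the final absorption of $\zeta$ happens on a single $\widetilde{\Phi}_{1_{\chi|_U}}$ invocation, to which the hypothesis applies). Property (ii) again splits into the sub-cases where the absorbed $L_b$ or $R_b$ lies in $V$ or $W$, with each handled by combining the inductive hypothesis on the appropriate restricted partition with the classical property (ii) for $\Phi$.

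The principal technical difficulty is bookkeeping rather than a conceptual hurdle: one must check that the two route-choices in property (iv) (reducing by $p$ vs.~$q$) and the potentially many reductions in Case A all give the same answer, as illustrated in Example \ref{exam:analytic-bi-mult-functions-work}. This consistency is exactly the content of Definition \ref{defn:anal-bi-multiplicative} together with the known bi-multiplicative relations for $\Phi$, so the induction closes. The argument is completely parallel to the corresponding extension for $\Phi$ in \cite{CNS2015-1}, with the single new ingredient being that when the recursion ultimately reaches a $1_{\chi|_U}$-block containing the entry $\xi$ (or the modified entry $Z_n\zeta$), we apply the hypothesised $1_\chi$-version of (i) or (ii) rather than its $\Phi$-counterpart.
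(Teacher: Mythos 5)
Your proposal is correct and follows essentially the same route as the paper: reduce $\widetilde{\Phi}_\pi$ via properties (iii) and (iv) to expressions built from $\widetilde{\Phi}$ and $\Phi$ on restrictions with fewer blocks, then close using the $1_\chi$-case together with the known fact that $\Phi$ itself satisfies (i) and (ii) at every $\pi$ (the paper presents this informally through the worked reductions of Example \ref{exam:analytic-bi-mult-functions-work} rather than as an explicit induction on the number of blocks, but the mechanism is identical). The only point worth making explicit in your Case A treatment of property (i) is that when $\chi$ is constant the $\chi$-interval containing $n$ is necessarily the extreme one ($V_m$ in the all-$\ell$ case, $V_1$ in the all-$r$ case), which is why the extracted $\zeta$ lands on the correct side of the product of the $b_k$'s.
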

\begin{proof}
The proof is essentially the same as the proof that properties (i) and (ii) of Definition \ref{defn:bi-multiplicative-function} hold for $\Phi$ when $1_\chi$ is replaced with any $\pi \in \BNC(\chi)$ as in \cite{CNS2015-1}*{Proposition 4.2.5}.  To see that property (i) of Definition \ref{defn:anal-bi-multiplicative} extends, note when using (iii) and (iv) to reduce the expression for $\widetilde{\Phi}_\pi(Z_1, Z_2, \ldots, Z_{n-1}, Z_n\zeta)$ that one is effectively using the bi-multiplicative properties of $\Phi$ and including $\zeta$ in the appropriate spot.  To see that property (ii) of Definition \ref{defn:anal-bi-multiplicative} extends, indices that are always adjacent in the $\chi$-ordering will remain in the correct ordering so that when $L_b$ or $R_b$ operators are considered, we can always move them outside the $\Phi$- and $\widetilde{\Phi}$-expressions on the correct side to move them to the next operator (that is, things will always move around as they do in the free multiplicative functions from \cite{S1998} after reordering by the $\chi$-order).  For example, in Example \ref{exam:analytic-bi-mult-functions-work}, we showed that
\[
\widetilde{\Phi}_\pi(Z_1, Z_2, \ldots, Z_7, \xi) =   \widetilde{\Phi}_{1_{(r, \ell)}}\left(Z_3, Z_5 R_{\Phi_{1_{(r,r)}}(Z_4, Z_7)}\widetilde{\Phi}_{1_{(\ell, \ell)}}(Z_6, \xi ) \right)   \Phi_{1_{(r,r)}}(Z_1, Z_2).
\]
If $Z_3$ were replaced with $R_b Z_3$, we would have
\begin{align*}
\widetilde{\Phi}_\pi(Z_1, Z_2, R_b Z_3, Z_4 \ldots, Z_7, \xi) &=  \widetilde{\Phi}_{1_{(r, \ell)}}\left(R_b Z_3, Z_5 R_{\Phi_{1_{(r,r)}}(Z_4, Z_7)}\widetilde{\Phi}_{1_{(\ell, \ell)}}(Z_6, \xi ) \right)   \Phi_{1_{(r,r)}}(Z_1, Z_2)\\
 &=  \widetilde{\Phi}_{1_{(r, \ell)}}\left(Z_3, Z_5 R_{\Phi_{1_{(r,r)}}(Z_4, Z_7)}\widetilde{\Phi}_{1_{(\ell, \ell)}}(Z_6, \xi ) \right) b   \Phi_{1_{(r,r)}}(Z_1, Z_2)\\
  &=  \widetilde{\Phi}_{1_{(r, \ell)}}\left(Z_3, Z_5 R_{\Phi_{1_{(r,r)}}(Z_4, Z_7)}\widetilde{\Phi}_{1_{(\ell, \ell)}}(Z_6, \xi ) \right)  \Phi_{1_{(r,r)}}(Z_1, Z_2R_b)\\
  &= \widetilde{\Phi}_\pi(Z_1, Z_2R_b , Z_3, Z_4 \ldots, Z_7, \xi) .
\end{align*}
If $\xi$ were replaced with $L_b \xi$, then clearly the $L_b$ can be moved to give $Z_6 L_b$ via (ii) with a $1_\chi$  as the expression $\widetilde{\Phi}_{1_{(\ell, \ell)}}(Z_6, \xi )$ is present.  If $\xi$ were replaced with $R_b \xi$, then 
\begin{align*}
\widetilde{\Phi}_\pi(Z_1, Z_2, \ldots, Z_7, R_b\xi) &= \widetilde{\Phi}_{1_{(r, \ell)}}\left(Z_3, Z_5 R_{\Phi_{1_{(r,r)}}(Z_4, Z_7)}\widetilde{\Phi}_{1_{(\ell, \ell)}}(Z_6, R_b\xi ) \right)   \Phi_{1_{(r,r)}}(Z_1, Z_2)\\
&= \widetilde{\Phi}_{1_{(r, \ell)}}\left(Z_3, Z_5 R_{\Phi_{1_{(r,r)}}(Z_4, Z_7)} \left(  \widetilde{\Phi}_{1_{(\ell, \ell)}}(Z_6, \xi )b  \right) \right)   \Phi_{1_{(r,r)}}(Z_1, Z_2)\\
&= \widetilde{\Phi}_{1_{(r, \ell)}}\left(Z_3, Z_5 R_{\Phi_{1_{(r,r)}}(Z_4, Z_7)} R_b  \widetilde{\Phi}_{1_{(\ell, \ell)}}(Z_6, \xi )  \right)   \Phi_{1_{(r,r)}}(Z_1, Z_2)\\
&= \widetilde{\Phi}_{1_{(r, \ell)}}\left(Z_3, Z_5 R_{b\Phi_{1_{(r,r)}}(Z_4, Z_7)}  \widetilde{\Phi}_{1_{(\ell, \ell)}}(Z_6, \xi )  \right)   \Phi_{1_{(r,r)}}(Z_1, Z_2)\\
&= \widetilde{\Phi}_{1_{(r, \ell)}}\left(Z_3, Z_5 R_{\Phi_{1_{(r,r)}}(Z_4, Z_7R_b)}  \widetilde{\Phi}_{1_{(\ell, \ell)}}(Z_6, \xi )  \right)   \Phi_{1_{(r,r)}}(Z_1, Z_2)\\
&= \widetilde{\Phi}_\pi(Z_1, \ldots, Z_6,  Z_7R_b, \xi),
\end{align*}
as desired.  Thus the result follows.
\end{proof}

\subsection*{Analytical Operator-Valued Bi-Moment Function}

We will now construct the analytical extension of the operator-valued bi-moment function  via recursion and the map $\widetilde{E}:L_2(A,\tau)\to L_2(B,\tau_B)$ from Section \ref{sec:Analytical-B-B-NCPS}.  Note that the recursive process in the following definition is different than that from \cite{CNS2015-1}*{Definition 5.1} and \cite{GS2018}*{Definition 4.4}, in order  to facilitate the introduction of the $L_2(A, \tau)$ element.  The same recursive process could have been used in \cite{CNS2015-1}*{Definition 5.1} and \cite{GS2018}*{Definition 4.4}, as these processes are equivalent in those settings.  Note we use $\Psi$ in the following to avoid confusion with $\widetilde{E}$ in Section \ref{sec:Analytical-B-B-NCPS}, although $\Psi$ is a multi-entry extension of $\widetilde{E}$.

\begin{definition}\label{enhancedbimomentdef}
Let $(A,E,\varepsilon,\tau)$ be an analytical $B$-$B$-non-commutative probability space. The \textit{analytical bi-moment function}
\[
\Psi:\bigcup_{n\in\bN}\bigcup_{\chi\in \{\ell,r\}^n}\BNC(\chi)\times A_{\chi(1)}\times\ldots\times A_{\chi(n-1)}\times L_2(A,\tau)\to L_2(B,\tau_B)
\]
is defined recursively as follows:  Let $n\in\bN$, $\chi\in\{\ell,r\}^n$, $\pi\in\BNC(\chi)$, $\xi\in L_2(A,\tau)$, and $Z_k \in A_{\chi(k)}$. 
\begin{itemize}
\item If $\pi=1_\chi$, then 
\[
\Psi_{1_\chi}(Z_1,Z_2,\ldots,Z_{n-1},\xi) = \widetilde{E}(Z_1 Z_2\cdots Z_{n-1}\xi).
\]
\item If $\pi\neq 1_\chi$, let $V$ be the block in $\pi$ such that $n\in V$. We divide discussion into two cases:
\begin{itemize}
\item Suppose that  $\min_{\pleq}V=\sx(1)$ and $\max_{\pleq}V=\sx(n)$ and let 
\[
p = \min_{\pleq}\{i\in\{1,\ldots,n\} \, \mid \,  i\notin V\}, \quad q = \min_{\pleq}\{j\in V \, \mid \,  p \ple j\} \qand m = \max_{\pleq}\{i \in V \, \mid \, i \prec_\chi p \}.
\]
Set
\[
W = \{i\in\{1,\ldots,n\} \, \mid \,  p \pleq  i   \ple  q\}.
\]
Note by construction and the fact that $\pi \in \BNC(\chi)$ that $W$ is equal to a union of blocks of $\pi$ and $\chi(p)=\chi(j)$ for all $j \in W$.   Thus we define
\[
\Psi_\pi(Z_1,\ldots,Z_{n-1},\xi) = \begin{cases}
       \Psi_{\pi|_{W^\mathsf{c}}}\left({(Z_1,\ldots,Z_{p-1}, Z_m L_{E_{\pi|_W}({(Z_1,\ldots,Z_{n-1},\xi)|}_W)} ,\ldots,Z_{n-1},\xi)|}_{W^\mathsf{c}}\right)  &\text{if }\chi(p)=\ell,\\
     \Psi_{\pi|_{W^\mathsf{c}}}\left({(Z_1,\ldots,Z_{q-1}, Z_q R_{E_{\pi|_W}({(Z_1,\ldots,Z_{n-1},\xi)|}_W)} ,\ldots,Z_{n-1},\xi)|}_{W^\mathsf{c}}\right)  &  \text{if } \chi(p)=r. \\
     \end{cases}
\]
[Note as $n\notin W$, the quantity $E_{\pi|_W}({(Z_1,\ldots,Z_{n-1},\xi)|}_W)$ is always a  well-defined element of $B$ in this case.  Note in the case that $\chi(p) = \ell$ that $m \ple p \ple n$ and thus $Z_m \neq \xi$, so $\Psi_\pi(Z_1,\ldots,Z_{n-1},\xi)$ is well-defined.  Also,  in the case when $\chi(p)=r$ note that $n\ple q$ and thus $Z_q\neq\xi$, so $\Psi_\pi(Z_1,\ldots,Z_{n-1},\xi)$ is well-defined.]
\item Otherwise, set
\[
\widetilde{V} = \left\{i\in\{1,\ldots,n\}\, \left| \, \min_{\pleq}V \pleq i \pleq \max_{\pleq}V \right.\right\}.
\]
Note $\widetilde{V}$ is a proper subset of $\{1,\ldots,n\}$ that is a union of blocks of $\pi$ and is such that $n\in V\subseteq\widetilde{V}$. For $q=\max_\leq {\widetilde{V}^\mathsf{c}}$ and define
\[
\Psi_\pi(Z_1,\ldots,Z_{n-1},\xi) =  \Psi_{\pi|_{\widetilde{V}^\mathsf{c}}}\left({(Z_1,\ldots,Z_{q-1}, Z_q \Psi_{\pi|_{\widetilde{V}}}{({(Z_1,\ldots,Z_{n-1},\xi)|}_{\widetilde{V}})} ,\ldots,Z_{n-1},\xi)|}_{\widetilde{V}^\mathsf{c}}\right).
\]
[Note that the quantity $\Psi_{\pi|_{\widetilde{V}}}{({(Z_1,\ldots,Z_{n-1},\xi)|}_{\widetilde{V}})}$ is a well-defined element of $L_2(B,\tau_B)$ due to the recursive nature of our definition. Moreover,the last element of the sequence
\[
{(Z_1,\ldots,Z_{q-1}, Z_q \Psi_{\pi|_{\widetilde{V}}}{({(Z_1,\ldots,Z_{n-1},\xi)|}_{\widetilde{V}})} ,\ldots,Z_{n-1},\xi)|}_{\widetilde{V}^\mathsf{c}}
\]
is equal to $Z_q \Psi_{\pi|_{\widetilde{V}}}{({(Z_1,\ldots,Z_{n-1},\xi)|}_{\widetilde{V}})}$, which is an element of $L_2(A,\tau)$ so $\Psi_\pi(Z_1,\ldots,Z_{n-1},\xi) $ is well-defined.]
\end{itemize}
\end{itemize}
\end{definition}

To aid in the comprehension of Definition \ref{enhancedbimomentdef}, we provide an example using of bi-non-crossing diagrams to show the recursive construction.  We note that $\xi$ will always appear last in a sequence of operators and is an element of $L_2(A,\tau)$ and thus neither a left nor right operator.  As such, we treat it as neither.  This is reminiscent of \cite{S2016-2}*{Lemma 2.17} where it does not matter whether we treat the last operator in a list as a left or as a right operator, and of \cite{CS2020}*{Lemma 2.29 and Proposition 2.30} where the last entry can be a mixture of left and right operators.

\begin{example}\label{exampledef}
Let $\chi\in\{\ell,r\}^{12}$ be such that $\chi^{-1}(\{\ell\})=\{1,5,8,9,11,12\}$, let $\xi\in L_2(A,\tau)$, let $Z_k\in A_{\chi(k)}$, and let $\pi\in\BNC(\chi)$ be the partition with blocks
\[
V_1 = \{1,3\}, \quad V_2 = \{2\}, \quad V_3 = \{4,5,11,12\}, \quad V_4 = \{6,10\}, \quad V_5 = \{7\}, \qand V_6 = \{8,9\}.
\]
To compute $\Psi_\pi (Z_1, \ldots, Z_{11}, \xi)$, we note the second part of the second step of the recursive definition from Definition \ref{enhancedbimomentdef} applies first.  In particular
\[
\widetilde{V} = \bigcup_{k=3}^6 V_k.
\]  
Thus, if
\[
X = \Psi_{\pi|_{\widetilde{V}}}(Z_4,Z_5,Z_6,Z_7,Z_8,Z_9,Z_{10},Z_{11},\xi),
\]
then 
\[
\Psi_\pi(Z_1,\ldots,Z_{11},\xi) = \Psi_{\pi|_{{\widetilde{V}^{\mathsf{c}}}}}(Z_1,Z_2,Z_3 X).
\]
Diagrammatically, this first reduction is seen as follows:	
\begin{align*}
\begin{tikzpicture}[baseline]
			\draw[thick,dashed] (-.5,2.4) -- (-.5,-4.3) -- (2.3,-4.3) -- (2.3,2.4);
\node[left] at (-.5,2.3) {$Z_1$};
			\draw[black,fill=black] (-.5,2.3) circle (0.05);
\node[right] at (2.3,1.7) {$Z_2$};
\draw[black,fill=black] (2.3,1.7) circle (0.05);	
\node[right] at (2.3,1.1) {$Z_3$};
\draw[black,fill=black] (2.3,1.1) circle (0.05);	
\node[right] at (2.3,0.5) {$Z_4$};
\draw[black,fill=black] (2.3,0.5) circle (0.05);	
\node[left] at (-0.5,-0.1) {$Z_5$};
\draw[black,fill=black] (-0.5,-0.1) circle (0.05);	
\node[right] at (2.3,-0.7) {$Z_6$};
\draw[black,fill=black] (2.3,-0.7) circle (0.05);	
\node[right] at (2.3,-1.3) {$Z_7$};
\draw[black,fill=black] (2.3,-1.3) circle (0.05);	
\node[left] at (-0.5,-1.9) {$Z_8$};
\draw[black,fill=black] (-0.5,-1.9) circle (0.05);	
\node[left] at (-0.5,-2.5) {$Z_9$};
\draw[black,fill=black] (-0.5,-2.5) circle (0.05);	
\node[right] at (2.3,-3.1) {$Z_{10}$};
\draw[black,fill=black] (2.3,-3.1) circle (0.05);	
\node[left] at (-0.5,-3.7) {$Z_{11}$};
\draw[black,fill=black] (-0.5,-3.7) circle (0.05);	
\node[anchor=north] at (0.9,-4.3) {$\xi$};
\draw[black,fill=black] (0.9,-4.3) circle (0.05);
\draw[thick, black] (-.5,2.3) -- (0.9,2.3) -- (0.9,1.1) -- (2.3,1.1);
\draw[thick, black] (2.3,0.5) -- (0.9,0.5) -- (0.9,-0.1) -- (-0.5,-0.1);
\draw[thick, black] (-0.5,-3.7) -- (0.9,-3.7) -- (0.9,-4.3);
\draw[thick, black] (0.9,-0.1) -- (0.9,-3.7);
\draw[thick, black] (-0.5,-1.9) -- (0.2,-1.9) -- (0.2,-2.5) -- (-0.5,-2.5);
\draw[thick, black] (2.3,-0.7) -- (1.6,-0.7) -- (1.6,-3.1) -- (2.3,-3.1);
\end{tikzpicture}
	\qquad\longrightarrow	\qquad 
\begin{tikzpicture}[baseline]
		\draw[thick,dashed] (-.5,2.4) -- (-.5,-1.2) -- (2.3,-1.2) -- (2.3,2.4);
\node[left] at (-.5,2.3) {$Z_1$};
			\draw[black,fill=black] (-.5,2.3) circle (0.05);
\node[right] at (2.3,1.1) {$Z_2$};
\draw[black,fill=black] (2.3,1.1) circle (0.05);	
\node[right] at (2.3,-0.5) {$Z_3 X$};
\draw[black,fill=black] (2.3,-0.5) circle (0.05);	
\draw[thick, black] (-.5,2.3) -- (0.9,2.3) -- (0.9,-0.5) -- (2.3,-0.5);
\end{tikzpicture}
	\qquad
	\end{align*}
Note
\[
\Psi_{\pi|_{{\widetilde{V}^{\mathsf{c}}}}}(Z_1,Z_2,Z_3 X) = \Psi_{\pi|_{V_2}}(Z_2 \Psi_{\pi|_{V_1}}(Z_1,Z_3X))= \widetilde{E}\left(Z_2 \widetilde{E}(Z_1Z_3 X)\right),
\]	
where the first equality holds by the same recursive idea, whereas the second equality holds by the first step of Definition \ref{enhancedbimomentdef}.

When computing the value of $X$, the minimal and maximal elements of $\{4,5,\ldots,11,12\}$ in the ${\chi |}_{\widetilde{V}}$-order are $5$ and $4$ respectively and the block that contains the index corresponding to $\xi$ contains both 5 and 4.  Thus the first part of the second step of Definition \ref{enhancedbimomentdef} should be used.  The algorithm in Definition \ref{enhancedbimomentdef} then calculates the value of $X$ by ``stripping out" the $\chi$-intervals $V_6$ and $V_4\cup V_5$ successively and this is seen via the following two diagrammatic reductions:
\begin{align*}
\begin{tikzpicture}[baseline]
			\draw[thick,dashed] (-.5,2.4) -- (-.5,-1.7) -- (1.5,-1.7) -- (1.5,2.4);
\node[right] at (1.5,2.3) {$Z_4$};
			\draw[black,fill=black] (1.5,2.3) circle (0.05);
			\node[left] at (-.5,1.8) {$Z_5$};
			\draw[black,fill=black] (-.5,1.8) circle (0.05);
			\node[right] at (1.5,1.3) {$Z_6$};
			\draw[black,fill=black] (1.5,1.3) circle (0.05);
			\node[right] at (1.5,0.8) {$Z_7$};
			\draw[black,fill=black] (1.5,0.8) circle (0.05);
			\node[left] at (-.5,0.3) {$Z_8$};
			\draw[black,fill=black] (-.5,0.3) circle (0.05);
			\node[left] at (-.5,-0.2) {$Z_9$};
			\draw[black,fill=black] (-.5,-0.2) circle (0.05);
			\node[right] at (1.5,-0.7) {$Z_{10}$};
			\draw[black,fill=black] (1.5,-0.7) circle (0.05);
			\node[left] at (-.5,-1.2) {$Z_{11}$};
			\draw[black,fill=black] (-.5,-1.2) circle (0.05);
			\node[anchor=north] at (0.5,-1.7) {$\xi$};
			\draw[black,fill=black] (0.5,-1.7) circle (0.05);
			\draw[thick, black] (1.5,2.3) -- (0.5,2.3) -- (0.5,1.8) -- (-0.5,1.8);
			\draw[thick, black] (-0.5,-1.2) -- (0.5,-1.2) -- (0.5,-1.7);
			\draw[thick, black] (0.5,-1.7) -- (0.5,1.8);
			\draw[thick, black] (-.5,0.3) -- (0,0.3) -- (0,-0.2) -- (-0.5,-0.2);
			\draw[thick, black] (1.5,1.3) -- (1,1.3) -- (1,-0.7) -- (1.5,-0.7);
\end{tikzpicture}
	\qquad
	\longrightarrow	\qquad
	\begin{tikzpicture}[baseline]
			\draw[thick,dashed] (-.5,2.4) -- (-.5,-1.7) -- (1.5,-1.7) -- (1.5,2.4);
\node[right] at (1.5,2.3) {$Z_4$};
			\draw[black,fill=black] (1.5,2.3) circle (0.05);
			\node[left] at (-.5,1.8) {$Z_5 L_{\Psi_{\pi|_{V_6}}(Z_8,Z_9)} $};
			\draw[black,fill=black] (-.5,1.8) circle (0.05);
			\node[right] at (1.5,1.3) {$Z_6$};
			\draw[black,fill=black] (1.5,1.3) circle (0.05);
			\node[right] at (1.5,0.8) {$Z_7$};
			\draw[black,fill=black] (1.5,0.8) circle (0.05);
			\node[right] at (1.5,-0.7) {$Z_{10}$};
			\draw[black,fill=black] (1.5,-0.7) circle (0.05);
			\node[left] at (-.5,-1.2) {$Z_{11}$};
			\draw[black,fill=black] (-.5,-1.2) circle (0.05);
			\node[anchor=north] at (0.5,-1.7) {$\xi$};
			\draw[black,fill=black] (0.5,-1.7) circle (0.05);
			\draw[thick, black] (1.5,2.3) -- (0.5,2.3) -- (0.5,1.8) -- (-0.5,1.8);
			\draw[thick, black] (-0.5,-1.2) -- (0.5,-1.2) -- (0.5,-1.7);
			\draw[thick, black] (0.5,-1.7) -- (0.5,1.8);
			\draw[thick, black] (1.5,1.3) -- (1,1.3) -- (1,-0.7) -- (1.5,-0.7);
\end{tikzpicture}
	\qquad
	\end{align*}
and 
\begin{align*}
\begin{tikzpicture}[baseline]
			\draw[thick,dashed] (-.5,2.4) -- (-.5,-1.7) -- (1.5,-1.7) -- (1.5,2.4);
\node[right] at (1.5,2.3) {$Z_4$};
			\draw[black,fill=black] (1.5,2.3) circle (0.05);
			\node[left] at (-.5,1.8) {$Z_5L_{\Psi_{\pi|_{V_6}}(Z_8,Z_9)} $};
			\draw[black,fill=black] (-.5,1.8) circle (0.05);
			\node[right] at (1.5,1.3) {$Z_6$};
			\draw[black,fill=black] (1.5,1.3) circle (0.05);
			\node[right] at (1.5,0.8) {$Z_7$};
			\draw[black,fill=black] (1.5,0.8) circle (0.05);
			\node[right] at (1.5,-0.7) {$Z_{10}$};
			\draw[black,fill=black] (1.5,-0.7) circle (0.05);
			\node[left] at (-.5,-1.2) {$Z_{11}$};
			\draw[black,fill=black] (-.5,-1.2) circle (0.05);
			\node[anchor=north] at (0.5,-1.7) {$\xi$};
			\draw[black,fill=black] (0.5,-1.7) circle (0.05);
			\draw[thick, black] (1.5,2.3) -- (0.5,2.3) -- (0.5,1.8) -- (-0.5,1.8);
			\draw[thick, black] (-0.5,-1.2) -- (0.5,-1.2) -- (0.5,-1.7);
			\draw[thick, black] (0.5,-1.7) -- (0.5,1.8);
			\draw[thick, black] (1.5,1.3) -- (1,1.3) -- (1,-0.7) -- (1.5,-0.7);
\end{tikzpicture}
	\qquad
	\longrightarrow	 \quad 
	\begin{tikzpicture}[baseline]
			\draw[thick,dashed] (-.5,2.4) -- (-.5,-1.7) -- (1.5,-1.7) -- (1.5,2.4);
\node[right] at (1.5,2.3) {$Z_4 R_{\Psi_{\pi|_{V_4\cup V_6}}(Z_6,Z_7,Z_{10})}$};
			\draw[black,fill=black] (1.5,2.3) circle (0.05);
			\node[left] at (-.5,1.8) {$Z_5 L_{\Psi_{\pi|_{V_6}}(Z_8,Z_9)}$};
			\draw[black,fill=black] (-.5,1.8) circle (0.05);
			\node[left] at (-.5,-1.2) {$ Z_{11}$};
			\draw[black,fill=black] (-.5,-1.2) circle (0.05);
			\node[anchor=north] at (0.5,-1.7) {$\xi$};
			\draw[black,fill=black] (0.5,-1.7) circle (0.05);
			\draw[thick, black] (1.5,2.3) -- (0.5,2.3) -- (0.5,1.8) -- (-0.5,1.8);
			\draw[thick, black] (-0.5,-1.2) -- (0.5,-1.2) -- (0.5,-1.7);
			\draw[thick, black] (0.5,-1.7) -- (0.5,1.8);		
\end{tikzpicture}
	\qquad
	\end{align*}
It is readily verified using the fact that the operator-valued bi-free  moment function is bi-multiplicative that $E_{\pi|_{V_6}}(Z_8,Z_9)= E(Z_8Z_9)$ and $E_{\pi|_{V_4\cup V_6}}(Z_6,Z_7,Z_{10}) = E(Z_6R_{E(Z_7)}Z_{10})$. Thus, using the fact that $\pi|_{V_3} = 1_{{\chi |}_{V_3}}$, the first step in Definition \ref{enhancedbimomentdef} yields
\[
X = \widetilde{E}\left(Z_4 R_{E(Z_6R_{E(Z_7)}Z_{10})} Z_5 L_{E(Z_8Z_9)} Z_{11}\xi \right).
\]
Hence
\begin{align*}
\Psi_\pi(Z_1,\ldots,Z_{11},\xi)  &= \widetilde{E}\left(Z_2 \widetilde{E}\left(Z_1Z_3 \widetilde{E}\left(Z_4 R_{E\left(Z_6R_{E(Z_7)}Z_{10}\right)} Z_5 L_{E(Z_8Z_9)} Z_{11}\xi\right)\right)\right)\\
&=  \widetilde{E}\left(Z_1Z_3 \widetilde{E}\left(Z_4 R_{E\left(Z_6R_{E(Z_7)}Z_{10}\right)} Z_5 L_{E(Z_8Z_9)} Z_{11}\xi\right)\right)E(Z_2),
\end{align*}
with the last equality following from Proposition \ref{propE}.
\end{example}

Before investigating the bi-multiplicative properties inherited by the analytical bi-moment function, we note it is truly an extension of the operator-valued bi-moment function.

\begin{theorem}\label{thmPsi=E}
Let $(A,E,\varepsilon,\tau)$ be an analytical $B$-$B$-non-commutative probability space.  For any $n\in\bN$, $\chi\in\{\ell,r\}^n$, $\pi\in\BNC(\chi)$, and $Z_k\in A_{\chi(k)}$,
\[
\Psi_\pi(Z_1,\ldots,Z_{n-1},Z_n+N_\tau) = E_\pi(Z_1,\ldots,Z_{n-1},Z_n) + N_{\tau_B}.
\]
\end{theorem}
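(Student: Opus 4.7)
The plan is to prove the identity by a double induction: the outer induction is on $n$ and the inner induction is on the number of blocks of $\pi \in \BNC(\chi)$ (or, equivalently, descending induction on the refinement order so that larger partitions are handled first). The core observation is that the recursive reductions in Definition \ref{enhancedbimomentdef} mirror the bi-multiplicativity properties of $E$ from Definition \ref{defn:bi-multiplicative-function}, with the extra input simply being that an element $Z_n \in A$ is fed in via its coset $Z_n + N_\tau \in L_2(A, \tau)$.

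For the base case $n=1$, Definition \ref{enhancedbimomentdef} gives $\Psi_{1_\chi}(Z_1 + N_\tau) = \widetilde{E}(Z_1 + N_\tau)$, and Proposition \ref{orthproj} yields $\widetilde{E}(Z_1 + N_\tau) = \widehat{E(Z_1)} = E(Z_1) + N_{\tau_B} = E_{1_\chi}(Z_1) + N_{\tau_B}$. For the inductive step with $\pi = 1_\chi$, I would compute
\[
\Psi_{1_\chi}(Z_1, \ldots, Z_{n-1}, Z_n + N_\tau) = \widetilde{E}(Z_1 \cdots Z_{n-1} Z_n + N_\tau) = E(Z_1 \cdots Z_n) + N_{\tau_B},
\]
using that left multiplication by $Z_1 \cdots Z_{n-1}$ on $A/N_\tau$ sends the coset $Z_n + N_\tau$ to $Z_1 \cdots Z_n + N_\tau$, followed by Proposition \ref{orthproj}.

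For $\pi \neq 1_\chi$ I would split along the two cases of Definition \ref{enhancedbimomentdef}. In the first subcase, the stripped-out set $W$ does not contain $n$, so the scalar $E_{\pi|_W}({(Z_1, \ldots, Z_n)|}_W)$ lies in $B$, and the recursion on $\Psi_\pi$ becomes $\Psi_{\pi|_{W^{\mathsf c}}}$ applied to a shorter sequence whose last entry is still $Z_n + N_\tau$. Apply the outer induction hypothesis (smaller $n$) to $\Psi_{\pi|_{W^{\mathsf c}}}$ and then invoke property (iv) of Definition \ref{defn:bi-multiplicative-function} for $E_\pi$ to match. In the second subcase, $n \in \widetilde{V}$, and the inner induction applied to the smaller partition $\pi|_{\widetilde{V}}$ on a shorter tuple (whose last entry is $Z_n + N_\tau$) gives
\[
\Psi_{\pi|_{\widetilde V}}({(Z_1, \ldots, Z_{n-1}, Z_n + N_\tau)|}_{\widetilde V}) = E_{\pi|_{\widetilde V}}({(Z_1, \ldots, Z_n)|}_{\widetilde V}) + N_{\tau_B}.
\]
Setting $b = E_{\pi|_{\widetilde V}}({(Z_1, \ldots, Z_n)|}_{\widetilde V})$ and using Remark \ref{remarkenhancedbbdef}(vii) (so that $b + N_{\tau_B}$ may be identified with $L_b + N_\tau = R_b + N_\tau$ inside $L_2(A,\tau)$), I rewrite $Z_q (b + N_{\tau_B})$ as $Z_q L_b + N_\tau$ if $\chi(q) = \ell$ and as $Z_q R_b + N_\tau$ if $\chi(q) = r$. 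The outer induction on $n$ then applies to $\Psi_{\pi|_{\widetilde V^{\mathsf c}}}$, and property (iv) of Definition \ref{defn:bi-multiplicative-function} for $E$ shows the result matches $E_\pi(Z_1, \ldots, Z_n) + N_{\tau_B}$.

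The main obstacle is purely bookkeeping: one must carefully align the subcases (interior block versus exterior block containing $n$) with the two bi-multiplicative reductions for $E$, and ensure that the identification $L_2(B, \tau_B) \hookrightarrow L_2(A, \tau)$ from Remark \ref{remarkenhancedbbdef} makes the transition $Z_q \cdot (b + N_{\tau_B}) = Z_q L_b + N_\tau$ (or the right-handed version) fully rigorous. Once that identification is handled via Remark \ref{remarkenhancedbbdef}(vii), which ensures $L_b + N_\tau = R_b + N_\tau$, the two recursions become formally identical and the induction closes without further computation.
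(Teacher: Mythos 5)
Your proposal is correct and follows essentially the same route as the paper: the paper's proof likewise observes that each recursive reduction in Definition \ref{enhancedbimomentdef} is a legitimate bi-multiplicative reduction of $E_\pi$ and that $\widetilde{E}(a) = E(a) + N_{\tau_B}$ by Proposition \ref{orthproj}, so the two computations track each other step by step. Your version merely makes the induction explicit (and correctly flags the identification $b + N_{\tau_B} = L_b + N_\tau = R_b + N_\tau$ needed in the second subcase), which the paper leaves implicit.
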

\begin{proof}
Note that each step in the recursive definition of Definition \ref{enhancedbimomentdef} is a step that can be performed to the operator-valued bi-free moment function as the operator-valued bi-free moment function is bi-multiplicative (see Definition \ref{defn:bi-multiplicative-function}).  Therefore, as Proposition \ref{orthproj} implies that 
\[
\widetilde{E}(a) = E(a) + N_{\tau_B}
\]
for all $a \in A$, by applying the same recusive properties to $E_\pi(Z_1,\ldots,Z_{n-1},Z_n)$ as used to compute $\Psi_\pi(Z_1,\ldots,Z_{n-1},Z_n+N_\tau)$, the result follows.
\end{proof}

Like with the construction of the operator-valued bi-free moment function in \cite{CNS2015-1}, although the construction of the analytical bi-moment function is done using specific rules from the operator-valued bi-free moment function in a specific order, we desire more flexibility in the reductions that can be done and the order they can be done in.  In particular, we desire to show that the analytical bi-moment function is an analytic extension of the operator-valued bi-free moment function.   

The main ideas used to prove this are similar to those utilized in the proof of \cite{CNS2015-1}*{Theorem 5.1.4} and hence we shall be concerned with demonstrating that the inclusion of the $L_2(A,\tau)$ term and the slightly modified recursive definition are not issue and pose next to no changes.  In particular,  it may appear that $\Psi$ behaves differently than the   operator-valued bi-free moment function   as entries in $L_2(A, \tau)$ can also act as mixtures of left and right operators, which was not dealt with in \cite{CNS2015-1}.  However, using the properties of $\widetilde{E}$ as developed in  Proposition \ref{propE}, one familiar with \cite{CNS2015-1} can easily see that the desired results will hold with simple adaptations.  We note that similar adaptations were done in \cite{GS2018} without issue.

When examining the proof of the following, Example \ref{exampledef} serves as a good example to keep in mind, just as Example \ref{exam:analytic-bi-mult-functions-work} aided in comprehending why analytic extensions of bi-multiplicative functions work.

\begin{theorem}\label{thm:enhanced1}
Let $(A,E,\varepsilon,\tau)$ be an analytical $B$-$B$-non-commutative probability space and let 
\[
\Psi:\bigcup_{n\in\bN}\bigcup_{\chi\in \{\ell,r\}^n}\BNC(\chi)\times A_{\chi(1)}\times\ldots\times A_{\chi(n-1)}\times L_2(A,\tau)\to L_2(B,\tau_B)
\]
be the analytical bi-moment function.  Then $\Psi$ is an analytically extension of the operator-valued bi-free moment function.
\end{theorem}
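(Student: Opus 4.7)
The plan is to verify the four defining properties of an analytical extension from Definition \ref{defn:anal-bi-multiplicative} for $\Psi$. Theorem \ref{thmPsi=E} already gives that $\Psi$ agrees with the operator-valued bi-free moment function $E$ whenever the last entry comes from $A/N_\tau \subseteq L_2(A,\tau)$, so the strategy is to reduce each required identity to an analogous identity about $E_\pi$ (known to hold because $E_\pi$ is bi-multiplicative by \cite{CNS2015-1}*{Theorem 5.1.4}) and then extend to arbitrary last entries in $L_2(A,\tau)$ by density and continuity.

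First I would prove by induction on $n$ that each $\Psi_\pi$ is $\bC$-multilinear in all entries and continuous in the last entry. The base case $\pi = 1_\chi$ is immediate: $\widetilde{E}$ is continuous as an orthogonal projection, and left multiplication on $L_2(A,\tau)$ by any $a \in A$ is bounded by property (iv) of Definition \ref{enhancedbbdef}. In the inductive steps of Definition \ref{enhancedbimomentdef}, the block containing $n$ is reduced either by peeling off a $\chi$-interval whose evaluation is a scalar in $B$ (which then acts via $L_b$ or $R_b$, both bounded on $L_2(A,\tau)$) or by recursing into a subset $\widetilde{V}$ containing $n$ to produce an element of $L_2(B,\tau_B) \subseteq L_2(A,\tau)$ that is acted on from the left by some $Z_q \in A$. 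In every case we compose bounded, $\bC$-linear maps, so both multilinearity and continuity in $\xi$ propagate. A separate short check shows that the output does not depend on whether one sets $\chi(n) = \ell$ or $\chi(n) = r$: the recursion never splits $n$ from $\xi$, and $\xi$ is only ever multiplied on the left by an element of $A$, an operation insensitive to the nominal side of $\xi$.

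With these preparations I would check properties (i)--(iv) of Definition \ref{defn:anal-bi-multiplicative}. For property (i), if $\zeta = \widehat{b}$ for some $b \in B$, then $Z_n\zeta$ equals $Z_n L_b + N_\tau$ (all-left case) or $Z_n R_b + N_\tau$ (all-right case), so by Theorem \ref{thmPsi=E} the claim reduces to Definition \ref{defn:bi-multiplicative-function}(i) for $E$, and continuity together with the density of $B/N_{\tau_B}$ in $L_2(B,\tau_B)$ (Remark \ref{remarkenhancedbbdef}(ii)) extends this to arbitrary $\zeta$. For (ii), (iii), and (iv), whenever $\xi \in A/N_\tau$ Theorem \ref{thmPsi=E} converts each required equation into the corresponding bi-multiplicativity identity for $E_\pi$ established in \cite{CNS2015-1}*{Theorem 5.1.4}. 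Taking a sequence $\xi_m \in A/N_\tau$ with $\xi_m \to \xi$ in $L_2(A,\tau)$ and using continuity of $\Psi_\pi$ in the last entry on both sides, the identity extends to every $\xi \in L_2(A,\tau)$.

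The main obstacle will be that the particular order of reductions forced by Definition \ref{enhancedbimomentdef} is rigid, whereas properties (iii) and (iv) of the analytic extension framework allow for alternative stripping orders. One must argue, as in \cite{CNS2015-1}*{Theorem 5.1.4}, that any admissible order of interval reductions produces the same output; once the $L_2(A,\tau)$-entry is approximated by elements of $A/N_\tau$, this amounts to the standard combinatorial commutation of bi-multiplicative reductions among the $A$-valued entries, with the final left action by the surrounding $A$-element transferring cleanly to the limit by continuity. This is the place where Example \ref{exam:analytic-bi-mult-functions-work} serves as a template for the general argument, and where the observation that $\xi$ is never moved from the last position — only acted upon from the left — is what allows the usual free-probability rearrangements to go through without modification.
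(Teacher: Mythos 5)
Your preparatory steps (multilinearity, continuity of $\Psi_\pi$ in the last entry via the recursion, insensitivity to the choice of $\chi(n)$) are correct, and your treatment of property (i) is fine since $B/N_{\tau_B}$ genuinely is dense in $L_2(B,\tau_B)$. The gap is in the reduction of properties (ii)--(iv). Theorem \ref{thmPsi=E} identifies $\Psi_\pi(Z_1,\ldots,Z_{n-1},Z_n+N_\tau)$ with $E_\pi(Z_1,\ldots,Z_{n-1},Z_n)$ only when $Z_n\in A_{\chi(n)}$; for a general $Z_n\in A$ (a mixed word in left and right operators) the expression $E_\pi(Z_1,\ldots,Z_{n-1},Z_n)$ is not even defined, so \cite{CNS2015-1}*{Theorem 5.1.4} says nothing about it. Consequently your density argument, applied to a sequence $\xi_m\in A/N_\tau$, only establishes the identities for $\xi$ in the closure of $A_{\chi(n)}+N_\tau$, and this closure is not known to be all of $L_2(A,\tau)$ --- it is not one of the axioms of an analytical $B$-$B$-non-commutative probability space, and nothing in Section \ref{sec:Analytical-B-B-NCPS} supplies it. Since the entire purpose of the analytic extension is to allow the last entry to be the $L_2$-image of an arbitrary mixture of left and right operators (this is flagged explicitly before Theorem \ref{thm:enhanced1}: such entries ``were not dealt with in \cite{CNS2015-1}''), the case your argument misses is precisely the case the theorem is for.

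To repair this you would either need to first prove an algebraic extension of bi-multiplicativity of $E$ permitting an arbitrary element of $A$ in the last slot (in the spirit of \cite{S2016-2}*{Lemma 2.17} and \cite{CS2020}*{Lemma 2.29}, and essentially as much work as the theorem itself) before taking limits, or do what the paper does: dispense with approximation entirely and verify (ii)--(iv) directly for arbitrary $\xi\in L_2(A,\tau)$ by re-running the interval-stripping arguments of \cite{CNS2015-1} on the recursion of Definition \ref{enhancedbimomentdef}, using Proposition \ref{propE} (in particular parts (iv) and (v), which let one pull $E(a)$ out of $\widetilde{E}(a\zeta)$ on the correct side for $a\in A_\ell$ or $a\in A_r$ and any $\zeta\in L_2(B,\tau_B)$) to justify each manipulation with the $L_2$-entry carried along. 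Your closing observation --- that $\xi$ is never moved from the last position and is only acted on from the left --- is exactly the right organizing principle for that direct argument, but as written it is invoked to transfer identities from a subspace that may be proper, rather than to prove them outright.
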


Clearly the map $\Psi_{1_\chi}$ is $\C$-multilinear and it does not matter whether $\chi(n) = \ell $ or $\chi(n) = r$.  A straightforward induction argument using the definition of $\Psi$ shows that the map $\Psi_\pi$ will be $\C$-multilinear.  Thus we focus on the remaining four properties.

\begin{proof}[Proof of Theorem \ref{thm:enhanced1} property (i)]
This immediately follows from parts (ii) and (iii) of Proposition \ref{propE}.
\end{proof}

\begin{proof}[Proof of Theorem \ref{thm:enhanced1} property (ii)]
To see (ii), fix $n\in\bN$, $\chi\in\{\ell,r\}^n$, $\pi\in\BNC(\chi)$, $\xi\in L_2(A,\tau)$, $b\in B$, and $Z_k\in A_{\chi(k)}$, and let $p$ and $q$ be as in the statement of (ii).  In the case that $\chi(p) = \ell$, note that
\[
\Psi_{1_\chi}(Z_1,\ldots,Z_{p-1},L_b Z_p, Z_{p+1}, \ldots,Z_{n-1},\xi) = \widetilde{E}(Z_1 \cdots Z_{p-1}L_bZ_p Z_{p+1} \cdots Z_{n-1} \xi).
\]

If $q \neq \infty$, then $Z_{q+1},\ldots, Z_{p-1} \in A_r$ and thus commute with $L_b$.  Hence
\begin{align*}
\Psi_{1_\chi}(Z_1,\ldots,Z_{p-1},L_b Z_p, Z_{p+1}, \ldots,Z_{n-1},\xi) &= \widetilde{E}(Z_1 \cdots Z_{q-1}Z_qL_b Z_{q+1} \cdots Z_{n-1} \xi) \\
&= \Psi_{1_\chi}(Z_1,\ldots,Z_{q-1},Z_q L_b, Z_{q+1}, \ldots,Z_{n-1},\xi)
\end{align*}
(and note $Z_qL_b \in A_\ell$).

If $q = \infty$, then $Z_1, \ldots, Z_{p-1} \in A_r$ and thus commute with $L_b$.  Hence
\begin{align*}
\Psi_{1_\chi}(Z_1,\ldots,Z_{p-1},L_b Z_p, Z_{p+1}, \ldots,Z_{n-1},\xi) &= \widetilde{E}(L_b Z_1  \cdots Z_{n-1} \xi) \\
&= b \widetilde{E}(Z_1  \cdots Z_{n-1} \xi) \\
&= b \Psi_{1_\chi}(Z_1,\ldots,Z_{n-1},\xi)
\end{align*}
by Proposition \ref{propE}.  The case $\chi(p) = r$ is similar.
\end{proof}

\begin{proof}[Proof of Theorem \ref{thm:enhanced1} property (iii)]
To highlight how the proof works, we begin with the case that $\pi$ consists of exactly three blocks that are $\chi$-intervals and $n$ is contained in the middle block under the $\chi$-ordering. Suppose that $\pi = \{V_1,V_2,V_3\}$ and thus there exists $i\in\{2,\ldots,n-1\}$ and $j\in\{0,\ldots,n-2\}$ such that
\begin{align*}
V_1 &= \{\sx(1),\sx(2),\ldots,\sx(i-1)\},\\
V_2 &= \{\sx(i),\sx(i+1),\ldots,\sx(i+j)\}, \text{ and } \\
V_3 &=\{\sx(i+j+1),\sx(i+j+2),\ldots,\sx(n)\}.
\end{align*}
Thus $n=\sx(k)$ for some  $i\leq k\leq i+j$.  This implies that $\chi(p)=\ell$ for all $p\in V_1$ and $\chi(p)=r$ for all $p\in V_3$.   If $W=V_1\cup V_3$, observe that the definition of the permutation $\sx$ yields that
\[
q = \max_\leq W=\max_\leq \{\sx(i-1),\sx(i+j+1)\}.
\]

Consider the case $q=\sx(i-1)$ so that $q \in V_1$, and let $m= \sx(i+j+1)$.  Notice that if
\[
X = \Psi_{\pi|_{V_2}}({(Z_1,\ldots,Z_{n-1},\xi)|}_{V_2}),
\]
then by    Definition \ref{enhancedbimomentdef} we obtain that
\[
\Psi_\pi(Z_1,\ldots,Z_{n-1},\xi) = \Psi_{\pi|_W}({(Z_1,\ldots,Z_{q-1},Z_qX,\ldots,Z_{n-1},\xi)|}_W).
\]
Thus, if
\[
Y = \Psi_{\pi|_{V_1}}({(Z_1,\ldots,Z_{q-1},Z_qX,\ldots,Z_{n-1},\xi)|}_{V_1}),
\]
then  again  Definition \ref{enhancedbimomentdef} implies that
\begin{align*}
\Psi_\pi(Z_1,\ldots,Z_{n-1},\xi) &= \Psi_{\pi|_{V_3}}({(Z_1,\ldots,Z_{m-1},Z_mY,\ldots,Z_{n-1},\xi)|}_{V_3})\\
&= \widetilde{E}\left(Z_{\sx(n)} Z_{\sx(n-1)}\ldots Z_{\sx(i+j+1)} Y\right).
\end{align*}
Therefore, since $Z_{\sx(n)} Z_{\sx(n-1)}\ldots Z_{\sx(i+j+1)}  \in A_r$, Proposition \ref{propE} implies that
\[
\Psi_\pi(Z_1,\ldots,Z_{n-1},\xi) =  YE\left(Z_{\sx(n)} Z_{\sx(n-1)}\ldots Z_{\sx(i+j+1)}\right) = Y E_{\pi|_{V_3}}((Z_1,\ldots,Z_{n-1},\xi)|_{V_3}).
\]
Since $Z_{\sx(1)}, Z_{\sx(2)},\ldots, Z_{\sx(i-1)} \in A_r$, Proposition \ref{propE} implies that
\begin{align*}
Y = E(Z_{\sx(1)} Z_{\sx(2)}\ldots Z_{\sx(i-1)})X = E_{\pi|_{V_1}}((Z_1,\ldots,Z_{n-1},\xi)|_{V_1})X,
\end{align*}
so the result follows. Note the case $q=\sx(i+j+1)$ is handled similarly  by interchanging the orders of the $\chi$- intervals $V_1$ and $V_3$ .  This argument can be extended via induction to any bi-non-crossing partition $\pi$ all of whose blocks are $\chi$-interval.  

By the same argument as \cite{CNS2015-1}*{Lemma 5.2.1}, one need only consider the case in property (iii) that for each $\chi$-interval, the $\chi$-maximal and $\chi$-minimal elements belong to the same block.  When using the recursive procedure in Definition \ref{enhancedbimomentdef} to reduce $\Psi_\pi$, one of the $\chi$-intervals (which will either be entirely on the left or entirely on the right) will have the $L_2(A, \tau)$ term added to the last entry as above.  This $L_2(A, \tau)$ entry can be pulled out on the appropriate side leaving only the bi-moment function expression for the $\chi$-interval, which can be undone as usual.  By repetition, eventually all that remains is the expression for the $\chi$-interval containing $n$ as desired.
\end{proof}

\begin{proof}[Proof of Theorem \ref{thm:enhanced1} property (iv)]
The proof that property (iv) holds for the operator-valued bi-free moment function is one of the longest of \cite{CNS2015-1} consisting of \cite{CNS2015-1}*{Lemma 5.3.1},  \cite{CNS2015-1}*{Lemma 5.3.2},  \cite{CNS2015-1}*{Lemma 5.3.3}, and \cite{CNS2015-1}*{Lemma 5.3.4}.  As such, we will only sketch the details here.

First one proceeds to show that properties (i) and (ii) of Definition \ref{defn:anal-bi-multiplicative} hold for $\Psi$ when $1_\chi$ is replaced with an arbitrary bi-non-crossing partition.  This effectively makes use of the same arguments as in Lemma \ref{lem:analytic-properties-extend-from-1-to-pi}; that is, one uses the recursive algorithm to reduce down and then note the proofs of properties (i) and (ii) above still apply and lets one move elements around as needed.  In particular, the same arguments used in \cite{CNS2015-1}*{Lemma 5.3.2} and  \cite{CNS2015-1}*{Lemma 5.3.3} transfer with the use of Proposition \ref{propE}.

Next, using property (iii), we need only prove property (iv) under the assumption that $s_\chi(1)$ and $s_\chi(n)$ are in the same block $W_0$ of $W$.  One then follows many of the same ideas as \cite{CNS2015-1}*{Lemma 5.3.1} and \cite{CNS2015-1}*{Lemma 5.3.4} by applying the recursive definition from Definition \ref{enhancedbimomentdef}, moving around the appropriate $B$-elements using the more general (i) and (ii), and combining the appropriate elements using (iii) as needed.
\end{proof}

\subsection*{Analytical Operator-Valued Bi-Free Cumulant Function}

By convolving the analytical bi-moment function with the bi-non-crossing M\"{o}bius function, we obtain the following which is essential to our study of conjugate variables in the subsequent section.

\begin{definition}\label{encumdef}
Let $(A,E,\varepsilon,\tau)$ be an analytical $B$-$B$-non-commutative probability space and denote by $\Psi$ the analytical bi-moment function. The \textit{analytical bi-cumulant function}
\[
\widetilde{\kappa}: \bigcup_{n\in\bN}\bigcup_{\chi\in \{\ell,r\}^n}\BNC(\chi)\times A_{\chi(1)}\times\ldots\times A_{\chi(n-1)}\times L_2(A,\tau)\to L_2(B,\tau_B)
\]
is defined by
\[
\widetilde{\kappa}_\pi(Z_1,\ldots,Z_{n-1},\xi) = 
\sum_{\substack{\sigma\in\BNC(\chi)\\
\sigma\leq\pi\\}}\Psi_\sigma(Z_1,\ldots,Z_{n-1},\xi)\mu_{\BNC}(\sigma,\pi),
\]
for all $n\in\bN$, $\chi\in\{\ell,r\}^n$, $\pi\in\BNC(\chi)$, $\xi\in L_2(A,\tau)$, and $Z_k\in A_{\chi(k)}$. 
\end{definition}
\begin{remark}
\begin{enumerate}[(i)]
\item In the case when $\pi=1_\chi$, we will denote the map $\widetilde{\kappa}_{1_\chi}$ simply by $\widetilde{\kappa}_\chi$.  By M\"{o}bius inversion, we obtain that
\[
\Psi_\sigma(Z_1,\ldots,Z_n) = 
\sum_{\substack{\pi\in\BNC(\chi)\\
\pi\leq\sigma\\}}\widetilde{\kappa}_\pi(Z_1,\ldots,Z_{n-1},\xi)
\]
for all $n\in\bN$, $\chi\in \{\ell,r\}^n$, $\sigma\in\BNC(\chi)$, and $Z_k\in A_{\chi(k)}$.
\item In the case that $B = \bC$, the analytical bi-cumulant function are precisely the $L_2(A, \tau)$-valued bi-free cumulants that were used in \cite{CS2020}.
\end{enumerate}
\end{remark}

Unsurprisingly, the analytic extension of the operator-valued bi-free cumulant function lives upto its name.

\begin{theorem}\label{thm:analytic-cumulants-bi-multiplicative}
Let $(A,E,\varepsilon,\tau)$ be an analytical $B$-$B$-non-commutative probability space.  Then the analytical bi-cumulant function is the analytic extension of the operator-valued bi-free cumulant function.
\end{theorem}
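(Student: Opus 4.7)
The plan is to transfer the four defining conditions of an analytic extension (Definition \ref{defn:anal-bi-multiplicative}) from $\Psi$ to $\widetilde{\kappa}$ using the Möbius factorization of $\mu_{\BNC}$ recorded in Section \ref{sec:prelim}. The entire argument is the analytic analogue of the purely algebraic derivation, in \cite{CNS2015-1}*{Section 6}, of the bi-multiplicativity of $\kappa^B$ from that of $E$: since $\widetilde{\kappa}$ is defined from $\Psi$ by exactly the same Möbius inversion formula by which $\kappa^B$ is defined from $E$, and since Theorem \ref{thm:enhanced1} already establishes that $\Psi$ is an analytic extension of $E$, termwise substitution should do the job, provided that sums and products over $\BNC(\chi)$ are interchanged carefully.

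\textbf{Properties (i) and (ii).} The $\C$-multilinearity of $\widetilde{\kappa}_\pi$ and its independence of whether $\chi(n)=\ell$ or $\chi(n)=r$ are immediate termwise consequences of the same properties of $\Psi_\sigma$. For property (i) in the all-left case (the all-right case is symmetric), I will first show by induction on the number of blocks of $\sigma$ that $\Psi_\sigma(Z_1,\ldots,Z_{n-1},Z_n\zeta) = E_\sigma(Z_1,\ldots,Z_n)\zeta$ for every $\sigma\in\BNC(\chi)$, using the recursive definition of $\Psi$ (Definition \ref{enhancedbimomentdef}) together with Proposition \ref{propE}(iv), which ensures that whenever the recursion bottoms out at a $\widetilde{E}$ applied to an element of $A_\ell\cdot L_2(B,\tau_B)$, the element $\zeta$ exits on the right. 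Summing against $\mu_{\BNC}(\sigma,1_\chi)$ immediately yields property (i) for $\widetilde{\kappa}_{1_\chi}$. For property (ii), Lemma \ref{lem:analytic-properties-extend-from-1-to-pi} ensures property (ii) holds for each $\Psi_\sigma$, so transforming each term of the Möbius sum delivers property (ii) for $\widetilde{\kappa}_{1_\chi}$ at once.

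\textbf{Properties (iii) and (iv).} For property (iii), if $V_1,\ldots,V_m$ are $\chi$-intervals partitioning $\{1,\ldots,n\}$ into unions of blocks of $\pi$, then for every $\sigma\leq\pi$ each $V_k$ is also a union of blocks of $\sigma$, so the Möbius factorization recorded in Section \ref{sec:prelim} gives $\mu_{\BNC}(\sigma,\pi) = \prod_k \mu_{\BNC}(\sigma|_{V_k},\pi|_{V_k})$. Combining this with the factorization of $\Psi_\sigma$ from property (iii) of $\Psi$ and interchanging sums and products splits $\widetilde{\kappa}_\pi$ as a product of factors $\sum_{\sigma_k \leq \pi|_{V_k}} E_{\sigma_k}(\ldots)\mu_{\BNC}(\sigma_k,\pi|_{V_k}) = \kappa^B_{\pi|_{V_k}}(\ldots)$ for $k\neq q$ and $\widetilde{\kappa}_{\pi|_{V_q}}(\ldots)$ at $k=q$, which is exactly property (iii). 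For property (iv) the same Möbius factorization $\mu_{\BNC}(\sigma,\pi) = \mu_{\BNC}(\sigma|_V,\pi|_V)\mu_{\BNC}(\sigma|_W,\pi|_W)$ applies, and property (iv) of $\Psi$ supplies the corresponding factorization of $\Psi_\sigma$. In case (iv)(b), where $n\in W$, the $V$-contribution is a $B$-element entering via $L$- or $R$-multiplication, and $\C$-multilinearity of $\Psi_{\sigma|_W}$ in the modified entry pushes the sum over $\sigma|_V$ inside, producing $L_{\kappa^B_{\pi|_V}(\ldots)}$ or $R_{\kappa^B_{\pi|_V}(\ldots)}$ inside $\widetilde{\kappa}_{\pi|_W}$. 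In case (iv)(a), where $n\in V$, the $V$-contribution is an element of $L_2(B,\tau_B)$ acted on by $Z_k$ with $k=\max_\leq W$; multilinearity of $\Psi_{\sigma|_W}$ in its last entry analogously yields $\widetilde{\kappa}_{\pi|_W}$ applied to $Z_k\widetilde{\kappa}_{\pi|_V}(\ldots)$.

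\textbf{Main obstacle.} The one genuinely new point beyond the proof in \cite{CNS2015-1}*{Section 6} is case (iv)(a): the inserted quantity coming from $V$ is an $L_2(B,\tau_B)$-element rather than an element of $B$, and one must interchange the Möbius summation with the linear action of $Z_k$ on $L_2(A,\tau)$. This is precisely where the $\C$-multilinearity of $\Psi_{\sigma|_W}$ in its $L_2(A,\tau)$-entry, combined with the boundedness of left multiplication from Definition \ref{enhancedbbdef}(iv), closes the argument. All other steps are routine translations of the algebraic proofs from \cite{CNS2015-1}*{Section 6}.
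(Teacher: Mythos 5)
Your proposal is correct and follows essentially the same route as the paper: the paper's proof simply cites the fact (from \cite{CNS2015-1}*{Theorem 6.2.1}) that convolving a (here, analytically extended) bi-multiplicative function against the multiplicative Möbius function $\mu_{\BNC}$ preserves the (analytic extension) properties, which is exactly the factorize-and-interchange argument you carry out in detail. Your explicit treatment of case (iv)(a), where the inserted $V$-contribution lies in $L_2(B,\tau_B)$ and the Möbius sum must be passed through the action of $Z_k$, is the one point the paper leaves implicit, and your handling of it via multilinearity in the last entry is sound.
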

\begin{proof}
Recall by \cite{CNS2015-1}*{Theorem 6.2.1} that the convolution of a bi-multiplicative function with a scalar-valued multiplicative function on the lattice of non-crossing partitions (e.g. the bi-free M\"{o}bius function) produces a bi-multiplicative function.  As the properties of an analytic extension of a bi-multiplicative function are analogous to those of a bi-multiplicative function, we obtain that the convolution of an analytic extension of a bi-multiplicative function with a scalar-valued multiplicative function on the lattice of non-crossing partitions (e.g. the bi-free M\"{o}bius function) produces the analytic extension of the corresponding bi-multiplicative function obtained via the same convolution.  Hence the result follows.
\end{proof}

\begin{theorem}\label{thmkappaPsi=kappaB}
Let $(A,E,\varepsilon,\tau)$ be an analytical $B$-$B$-non-commutative probability space.  For all $n\in\bN$, $\chi\in\{\ell,r\}^n$, $\pi\in\BNC(\chi)$, and $Z_k\in A_{\chi(k)}$, we have that
\[
\widetilde{\kappa}_\pi(Z_1,\ldots,Z_{n-1}, Z_n + N_{\tau}) = \kappa^B_\pi(Z_1,\ldots,Z_{n-1},Z_n) + N_{\tau_B}.
\]
\end{theorem}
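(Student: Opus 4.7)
The plan is to deduce this directly from Theorem \ref{thmPsi=E} together with the M\"obius definition of $\widetilde{\kappa}_\pi$.  The key observation is that both $\widetilde{\kappa}_\pi$ and $\kappa^B_\pi$ are defined as the same M\"obius convolution (with respect to $\mu_{\BNC}$) of $\Psi_\sigma$ and $E_\sigma$ respectively.  Thus once one knows that $\Psi_\sigma$ and $E_\sigma$ agree (modulo $N_{\tau_B}$) when the last entry is taken to be $Z_n + N_\tau$, summing over $\sigma \le \pi$ with the same scalar coefficients $\mu_{\BNC}(\sigma,\pi)$ yields the claim.

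Concretely, I would first fix $n \in \bN$, $\chi \in \{\ell,r\}^n$, $\pi \in \BNC(\chi)$, and $Z_k \in A_{\chi(k)}$, and set $\xi = Z_n + N_\tau \in L_2(A,\tau)$.  Next, I would invoke Definition \ref{encumdef} to expand
\[
\widetilde{\kappa}_\pi(Z_1,\ldots,Z_{n-1},\xi) = \sum_{\substack{\sigma \in \BNC(\chi) \\ \sigma \le \pi}} \Psi_\sigma(Z_1,\ldots,Z_{n-1},\xi)\, \mu_{\BNC}(\sigma,\pi).
\]
Then I would apply Theorem \ref{thmPsi=E} termwise, which gives
\[
\Psi_\sigma(Z_1,\ldots,Z_{n-1}, Z_n + N_\tau) = E_\sigma(Z_1,\ldots,Z_{n-1},Z_n) + N_{\tau_B}
\]
for each $\sigma \leq \pi$.

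Substituting this into the sum and using linearity (recall $\mu_{\BNC}(\sigma,\pi) \in \Z$, and the quotient map $B \to B/N_{\tau_B}$ is linear, so the coset decoration distributes through the finite sum), we obtain
\[
\widetilde{\kappa}_\pi(Z_1,\ldots,Z_{n-1},Z_n + N_\tau) = \left(\sum_{\substack{\sigma \in \BNC(\chi) \\ \sigma \le \pi}} E_\sigma(Z_1,\ldots,Z_n)\, \mu_{\BNC}(\sigma,\pi)\right) + N_{\tau_B},
\]
and the parenthesized quantity is precisely $\kappa^B_\pi(Z_1,\ldots,Z_n)$ by Definition \ref{bidef}(ii).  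There is no real obstacle here; the content has already been packaged into Theorem \ref{thmPsi=E} (the moment-level version) and Theorem \ref{thm:analytic-cumulants-bi-multiplicative} (the bi-multiplicativity of $\widetilde{\kappa}$ is not even needed for this particular corollary).  The statement is essentially a formal consequence of applying M\"obius inversion inside a canonical quotient map.
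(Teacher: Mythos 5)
Your proposal is correct and is essentially identical to the paper's own argument: both apply Theorem \ref{thmPsi=E} termwise to the M\"obius convolution defining $\widetilde{\kappa}_\pi$ and identify the resulting sum with $\kappa^B_\pi$ via Definition \ref{bidef}. Nothing further is needed.
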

\begin{proof}
By Theorem \ref{thmPsi=E}, we know that
\[
\widetilde{\Psi}_\pi(Z_1,\ldots,Z_{n-1}, Z_n + N_{\tau}) = E_\pi(Z_1,\ldots,Z_{n-1},Z_n) + N_{\tau_B}
\]
for all $\pi \in \BNC(\chi)$.  Therefore as $\widetilde{\kappa}$ and $\kappa^B$ are the convolution of $\widetilde{\Psi}$ and $E$ against the bi-free M\"{o}bius function respectively, the result follows.
\end{proof}

Of course, as \cite{CNS2015-1}*{Theorem 8.1.1} demonstrated that bi-freeness with amalagmation over $B$ is equivalent to the mixed operator-valued bi-free cumulants vanishing, Theorem \ref{thmkappaPsi=kappaB} immediately implies the following.

\begin{corollary}\label{cor:bi-free-is-vanishing-mixed-cumulants}
Let $(A,E,\varepsilon,\tau)$ be an analytical $B$-$B$-non-commutative probability space containing a family of pairs of $B$-algebras $\{(C_k,D_k)\}_{k\in K}$.  Consider the following two conditions:
\begin{enumerate}
\item The family $\{(C_k,D_k)\}_{k\in K}$ is bi-free with amalgamation over $B$ with respect to $E$.
\item For all $n\in\bN$, $\chi\in\{\ell,r\}^n$, $Z_1,\ldots,Z_n\in A$, and non-constant maps $\gamma:\{1,\ldots,n\}\to K$ such that
\[
Z_k\in \begin{cases}
       C_{\gamma(k)} &\text{if } \chi(k)=\ell \\
        D_{\gamma(k)} &\text{if }\chi(k) = r\\   
     \end{cases},
\]
it follows that
\[
\widetilde{\kappa}_\chi(Z_1,\ldots,Z_{n-1},Z_n + N_\tau)=0.
\]
\end{enumerate}
Then (1) implies (2).  In the case that $\tau_B : B \to \bC$ is faithful, (2) implies (1).
\end{corollary}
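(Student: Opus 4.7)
The plan is to deduce this corollary immediately by combining the identification between the two cumulant functions given in Theorem \ref{thmkappaPsi=kappaB} with the characterization of bi-freeness with amalgamation via vanishing mixed operator-valued bi-free cumulants (\cite{CNS2015-1}*{Theorem 8.1.1}). The whole content is really just to track carefully when $N_{\tau_B}$ is a zero ideal.

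For the implication (1) $\Rightarrow$ (2), I would proceed as follows. Assume $\{(C_k, D_k)\}_{k \in K}$ is bi-free with amalgamation over $B$ with respect to $E$. Fix $n \in \bN$, $\chi \in \{\ell, r\}^n$, a non-constant $\gamma : \{1, \ldots, n\} \to K$, and $Z_k$ in the appropriate $C_{\gamma(k)}$ or $D_{\gamma(k)}$. By \cite{CNS2015-1}*{Theorem 8.1.1} we have $\kappa^B_\chi(Z_1, \ldots, Z_n) = 0$. Applying Theorem \ref{thmkappaPsi=kappaB} with $\pi = 1_\chi$ then gives
\[
\widetilde{\kappa}_\chi(Z_1, \ldots, Z_{n-1}, Z_n + N_\tau) = \kappa^B_\chi(Z_1, \ldots, Z_n) + N_{\tau_B} = 0
\]
in $L_2(B, \tau_B)$, as required. (Since $\widetilde{\kappa}_\chi$ is $\bC$-multilinear in its last $L_2(A,\tau)$-entry and agrees with $\kappa^B_\chi + N_{\tau_B}$ on the dense subspace $A + N_\tau \subseteq L_2(A,\tau)$, this really uses only the special values coming from $A$.)

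For the reverse implication, assume $\tau_B$ is faithful, so that the map $b \mapsto \widehat{b} = b + N_{\tau_B}$ from $B$ into $L_2(B, \tau_B)$ is injective. Under hypothesis (2), for every mixed $\chi$-sequence as above we have $\widetilde{\kappa}_\chi(Z_1, \ldots, Z_{n-1}, Z_n + N_\tau) = 0$. Applying Theorem \ref{thmkappaPsi=kappaB} in reverse, this says $\kappa^B_\chi(Z_1, \ldots, Z_n) + N_{\tau_B} = 0$, and then faithfulness of $\tau_B$ forces $\kappa^B_\chi(Z_1, \ldots, Z_n) = 0$. Hence all mixed operator-valued bi-free cumulants vanish, and \cite{CNS2015-1}*{Theorem 8.1.1} gives the bi-freeness with amalgamation over $B$.

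There is no real obstacle: both directions are essentially bookkeeping once Theorem \ref{thmkappaPsi=kappaB} is in hand. The only subtlety worth flagging is that the reverse direction genuinely needs faithfulness of $\tau_B$, since without it the map $B \to L_2(B, \tau_B)$ has nontrivial kernel $N_{\tau_B}$ and the vanishing of $\widetilde{\kappa}_\chi$ only detects cumulants modulo $N_{\tau_B}$; this is exactly why the corollary is stated as an equivalence only under the faithfulness assumption.
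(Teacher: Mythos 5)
Your proposal is correct and follows exactly the paper's own argument: both directions reduce to combining Theorem \ref{thmkappaPsi=kappaB} with \cite{CNS2015-1}*{Theorem 8.1.1}, with faithfulness of $\tau_B$ invoked only to lift the vanishing modulo $N_{\tau_B}$ back to vanishing in $B$ for the reverse implication. No gaps.
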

\begin{proof}
Note (1) implies (2) follows from \cite{CNS2015-1}*{Theorem 8.1.1} and Theorem \ref{thmkappaPsi=kappaB}.  In the case that $\tau_B$ is faithful, (2) immediately implies $\kappa^B_\pi(Z_1,\ldots,Z_{n-1},Z_n)=0$ where $\{Z_k\}^n_{k=1}$ are as in (2) via Theorem \ref{thmkappaPsi=kappaB}.  Hence \cite{CNS2015-1}*{Theorem 8.1.1} completes the argument.
\end{proof}

\subsection*{Vanishing Analytical Cumulants}

However, something stronger than Corollary \ref{cor:bi-free-is-vanishing-mixed-cumulants} holds.  Indeed, note that the analytic operator-valued bi-free cumulant function has the added benefit that the last entry can be an element of $L_2(A, \tau)$ and thus the $L_2$-image of a product of left and right operators.  As such, it is possible to verify that additional analytic bi-cumulants vanish.  

The desired result is analogous to the scalar-valued result demonstrated in \cite{CS2020}*{Proposition 2.30} and proved in a similar manner.  Thus we begin with a generalization of \cite{CNS2015-1}*{Theorem 9.1.5} where we can expand out a cumulant involving products of operators.  In \cite{CNS2015-1}*{Theorem 9.1.5} only products of left and right operators were considered in the operator-valued setting whereas \cite{CS2020}*{Lemma 2.29} expanded out scalar-valued cumulants involving a product of left and right operator in the last entry.

To begin, fix $m<n\in\bN$, $\chi\in {\{\ell,r\}}^m$, integers
\[
k(0)=0<k(1)<  \ldots<k(m)=n,
\]
and any function $\xh\in {\{\ell,r\}}^n$ such that for all $q \in \{1,\ldots, n\}$ for which there exists a (necessarily unique) $p_q \in \{1,\ldots, m-1\}$ with $k(p_q-1)<q\leq k(p_q)$, we have
\[
\xh(q)=\chi(p_q).
\]
Thus $\widehat{\chi}$ is constant from $k(p-1)+1$ to $k(p)$ whereas $\widehat{\chi}$ does not need to be constant from $k(m-1)+1$ to $k(m)$.

We may embed $\BNC(\chi)$ into $\BNC(\widehat{\chi})$ via $\pi \mapsto \widehat{\pi}$ where the blocks of $\widehat{\pi}$ are formed by taking each block $V$ of $\pi$ and forming a block 
\[
\widehat{V} = \bigcup_{p \in V} \{k(p - 1) + 1, \ldots,k(p)\}
\]
of $\widehat{\pi}$.  It is not difficult to see that $\widehat{\pi} \in \BNC(\widehat{\chi})$ as $\widehat{\chi}$ is constant on $\{k(p - 1) + 1, \ldots,k(p)\}$ for all $p \in V \setminus \{m\}$ and although the block containing $\{k(n - 1) + 1, \ldots,k(n)\}$ has both left and right entries, it occurs at the bottom of the bi-non-crossing diagram and thus poses no problem.  Alternatively, this map can be viewed as an analogue of the map on non-crossing partitions from \cite{NS2006}*{Notation 11.9} after applying $s^{-1}_\chi$.

It is easy to see that $\widehat{1_\chi} = 1_{\widehat{\chi}}$, 
\[
\widehat{0_\chi} = \bigcup_{p=1}^m \{k(p - 1) + 1, \ldots,k(p)\},
\]
and that the map $\pi \mapsto \widehat{\pi}$ is injective and order-preserving.  Furthermore, the image of $\BNC(\chi)$ under this map is
\[
\widehat{\BNC}(\chi) = \left[\widehat{0_\chi}, \widehat{1_\chi}\right] = \left[\widehat{0_\chi}, 1_{\widehat{\chi}}\right] \subseteq \BNC(\widehat{\chi}).
\]
\begin{remark}
	\label{rem:partial-mobius-inversion}
	Recall that since $\mu_{\BNC}$ is the bi-non-crossing M\"{o}bius function, we have for each $\sigma,\pi \in \BNC(\chi)$ with $\sigma \leq \pi$ that
	\[
		\sum_{\substack{ \upsilon \in \BNC(\chi) \\ \sigma \leq \upsilon \leq \pi  }} \mu_{\BNC}(\upsilon, \pi) =  \left\{
			\begin{array}{ll}
				1 & \mbox{if } \sigma = \pi  \\
				0 & \mbox{otherwise }
		\end{array} \right. .
	\]
Since the lattice structure is preserved under the map defined above, we see that $\mu_{\BNC}(\sigma, \pi) = \mu_{\BNC}(\widehat{\sigma}, \widehat{\pi})$.

	It is also easy to see that the partial M\"{o}bius inversion from \cite{NS2006}*{Proposition 10.11} holds in the bi-free setting; that is, if $f, g : \BNC(\chi) \to \bC$ are such that 
	\[
		f(\pi) = \sum_{\substack{\sigma \in \BNC(\chi) \\ \sigma \leq \pi}} g(\sigma)
	\]
	for all $\pi \in \BNC(\chi)$, then for all $\pi, \sigma \in \BNC(\chi)$ with $\sigma \leq \pi$, we have the relation
	\[
		\sum_{\substack{\upsilon \in \BNC(\chi) \\ \sigma \leq \upsilon \leq \pi }} f(\upsilon) \mu_{\BNC}(\upsilon, \pi) = \sum_{\substack{\omega \in \BNC(\chi) \\ \omega \vee \sigma = \pi }} g(\omega)
	\]
	where $\pi \vee \sigma$ denotes the smallest element of $\BNC(\chi)$ greater than $\pi$ and $\sigma$.
\end{remark}

Thus, by following the proofs of either \cite{NS2006}*{Theorem 11.12}, \cite{CNS2015-1}*{Theorem 9.1.5}, or \cite{CS2020}*{Lemma 2.29}, we arrive at the following.

\begin{proposition}\label{prop:analytic-cumulants-with-product-entries}
Let $(A,E,\varepsilon,\tau)$ be  be an analytical $B$-$B$-non-commutative probability space.  Under the above notation, if $\pi \in \BNC(\chi)$ and $Z_k\in A_{\xh(k)}$, then
\[
\widetilde{\kappa}_{\pi}(Z_1\cdots Z_{k(1)},\ldots, Z_{k(m-2)+1}\cdots Z_{k(m-1)}, Z_{k(m-1)+1}\cdots Z_{k(m)} + N_\tau) = \sum_{\substack{\sigma\in \BNC(\xh)\\
\sigma \vee\oxh = \widehat{\pi}}}
        \widetilde{\kappa}_\sigma(Z_1,\ldots,Z_{n-1},Z_n+ N_\tau).
\]
In particular, when $\sigma = 1_\chi$, we have
\[
\widetilde{\kappa}_{\chi}(Z_1\cdots Z_{k(1)},\ldots, Z_{k(m-2)+1}\cdots Z_{k(m-1)}, Z_{k(m-1)+1}\cdots Z_{k(m)} + N_\tau) = \sum_{\substack{\sigma\in \BNC(\xh)\\
\sigma\vee\oxh = 1_{\xh}\\}}
        \widetilde{\kappa}_\sigma(Z_1,\ldots,Z_{n-1},Z_n+ N_\tau),
\]
\end{proposition}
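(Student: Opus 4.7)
My plan is to follow the template of \cite{NS2006}*{Theorem 11.12}, adapted to the operator-valued bi-free setting as in \cite{CNS2015-1}*{Theorem 9.1.5} and to the analytic, $L_2$-valued last-entry setting as in \cite{CS2020}*{Lemma 2.29}. The argument proceeds in three stages: a moment-level product identity, Möbius inversion inside $\BNC(\chi)$, and the partial Möbius inversion recorded in Remark \ref{rem:partial-mobius-inversion}.

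First I would establish the moment identity
\[
\Psi_\pi(Z_1\cdots Z_{k(1)}, \ldots, Z_{k(m-1)+1}\cdots Z_{k(m)} + N_\tau) \;=\; \Psi_{\widehat{\pi}}(Z_1, \ldots, Z_{n-1}, Z_n + N_\tau)
\]
for every $\pi \in \BNC(\chi)$. When $\pi = 1_\chi$ both sides equal $\widetilde{E}(Z_1 Z_2 \cdots Z_n)$ by Definition \ref{enhancedbimomentdef}. For general $\pi$, note that each block $\widehat{V}$ of $\widehat{\pi}$ is the union of the consecutive sub-blocks $\{k(p-1)+1, \ldots, k(p)\}$ for $p \in V$, each of which for $p < m$ is a $\xh$-interval on which $\xh$ is constant. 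Iterated application of the bi-multiplicative reduction in Theorem \ref{thm:enhanced1}(iii) collapses these inner sub-blocks and turns the $\widehat{\pi}$-evaluation into the $\pi$-evaluation with the products as entries. The terminal sub-block $\{k(m-1)+1, \ldots, k(m)\}$, which may mix left and right operators, is handled by the recursive rule of Definition \ref{enhancedbimomentdef} applied to the block containing $n$: its evaluation reduces to the single $L_2(A,\tau)$-entry $Z_{k(m-1)+1} \cdots Z_{k(m)} + N_\tau$ sitting in the last position.

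Next, by Definition \ref{encumdef} and the identity just established,
\begin{align*}
\widetilde{\kappa}_\pi\bigl(Z_1\cdots Z_{k(1)}, \ldots, Z_{k(m-1)+1}\cdots Z_{k(m)} + N_\tau\bigr)
&= \sum_{\tau \leq \pi} \Psi_\tau(\text{products})\, \mu_{\BNC}(\tau, \pi) \\
&= \sum_{\tau \leq \pi} \Psi_{\widehat{\tau}}(Z_1, \ldots, Z_n + N_\tau)\, \mu_{\BNC}(\widehat{\tau}, \widehat{\pi}),
\end{align*}
using $\mu_{\BNC}(\tau, \pi) = \mu_{\BNC}(\widehat{\tau}, \widehat{\pi})$ from Remark \ref{rem:partial-mobius-inversion}. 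Since $\tau \mapsto \widehat{\tau}$ is an order-preserving bijection of $\BNC(\chi)$ onto the interval $[\oxh, 1_{\xh}] \subseteq \BNC(\xh)$, relabeling $\upsilon = \widehat{\tau}$ converts the sum into one over $\upsilon \in [\oxh, \widehat{\pi}]$. Applying the partial Möbius inversion of Remark \ref{rem:partial-mobius-inversion} to $f = \Psi(Z_1, \ldots, Z_n + N_\tau)$ and $g = \widetilde{\kappa}(Z_1, \ldots, Z_n + N_\tau)$ on $\BNC(\xh)$, with lower endpoint $\oxh$ and upper endpoint $\widehat{\pi}$, yields
\[
\sum_{\upsilon \in [\oxh,\, \widehat{\pi}]} \Psi_\upsilon(Z_1, \ldots, Z_n + N_\tau)\, \mu_{\BNC}(\upsilon, \widehat{\pi}) \;=\; \sum_{\substack{\sigma \in \BNC(\xh) \\ \sigma \vee \oxh = \widehat{\pi}}} \widetilde{\kappa}_\sigma(Z_1, \ldots, Z_n + N_\tau),
\]
which is precisely the claimed identity. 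The case $\pi = 1_\chi$, for which $\widehat{\pi} = 1_{\xh}$, recovers the ``In particular'' assertion.

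The main obstacle is the first step; the remaining two are formal manipulations on the lattice. The delicate point is the terminal sub-block, on which $\xh$ is not required to be constant, so one cannot appeal directly to the $\xh$-interval reduction of Theorem \ref{thm:enhanced1}(iii) for that piece. Instead, one must carefully follow the recursive evaluation of $\Psi_{\widehat{\pi}}$ through the block containing $n$, exploiting that the $L_2(A,\tau)$-entry is treated interchangeably as a mixture of left and right operators (cf.\ Definition \ref{defn:anal-bi-multiplicative}(i) and the computations in Example \ref{exam:analytic-bi-mult-functions-work}). This mirrors the scalar-valued argument of \cite{CS2020}*{Lemma 2.29}, with Theorem \ref{thm:enhanced1} supplying the bi-multiplicative properties of $\Psi$ needed in the analytic operator-valued setting.
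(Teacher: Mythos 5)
Your proposal follows exactly the route the paper takes: first the moment-level identity $\Psi_\upsilon(\text{products}) = \Psi_{\widehat{\upsilon}}(Z_1,\ldots,Z_n+N_\tau)$ (which the paper likewise asserts from the recursive definition of $\Psi$), then the identification $\mu_{\BNC}(\upsilon,\pi)=\mu_{\BNC}(\widehat{\upsilon},\widehat{\pi})$ together with the order-preserving bijection onto $[\oxh,\widehat{\pi}]$, and finally the partial M\"{o}bius inversion of Remark \ref{rem:partial-mobius-inversion}. The argument is correct, and your extra care about the terminal sub-block in the first step only fills in detail the paper leaves implicit.
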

\begin{proof}
First, it is not difficult to verify using the recursive definition of the analytic operator-valued bi-moment function that
\[
\Psi_{\upsilon}(Z_1\cdots Z_{k(1)},\ldots, Z_{k(m-2)+1}\cdots Z_{k(m-1)}, Z_{k(m-1)+1}\cdots Z_{k(m)} + N_\tau)  = \Psi_{\widehat{\upsilon}} (Z_1,\ldots,Z_{n-1},Z_n+ N_\tau)
\]
for all $\upsilon \in \BNC(\chi)$.  Therefore, we have
\begin{align*}
&\widetilde{\kappa}_{\pi}(Z_1\cdots Z_{k(1)},\ldots, Z_{k(m-2)+1}\cdots Z_{k(m-1)}, Z_{k(m-1)+1}\cdots Z_{k(m)} + N_\tau) \\
&= \sum_{\substack{\upsilon \in \BNC(\chi) \\ \upsilon \leq \pi}} \Psi_\upsilon(Z_1\cdots Z_{k(1)},\ldots, Z_{k(m-2)+1}\cdots Z_{k(m-1)}, Z_{k(m-1)+1}\cdots Z_{k(m)} + N_\tau) \mu_{\BNC}(\upsilon, \pi) \\
		& = \sum_{\substack{\upsilon \in \BNC(\chi)\\  \upsilon \leq \pi}}  \Psi_{\widehat{\upsilon}} (Z_1,\ldots,Z_{n-1},Z_n+ N_\tau) \mu_{\BNC}(\widehat{\upsilon}, \widehat{\pi}) \\
		& = \sum_{\substack{\sigma \in \BNC(\widehat{\chi}) \\ \widehat{0_\chi} \leq \sigma \leq  \widehat{\pi}}}  \Psi_{\sigma} (Z_1,\ldots,Z_{n-1},Z_n+ N_\tau) \mu_{\BNC}(\sigma, \widehat{\pi}) \\
		& = \sum_{\substack{\sigma \in \BNC(\widehat{\chi})\\ \sigma \vee \widehat{0_\chi} = \widehat{\pi}}} \widetilde{\kappa}_\sigma(Z_1,\ldots,Z_{n-1},Z_n+ N_\tau)
	\end{align*}
where the last line following from Remark \ref{rem:partial-mobius-inversion}.
\end{proof}

\begin{theorem}\label{thm:vanishing-cumulants-L2-entropy}
Let $(A,E,\varepsilon,\tau)$ be an analytical $B$-$B$-non-commutative probability space containing a family of pairs of $B$-algebras $\{(C_k,D_k)\}_{k\in K}$ that are bi-free with amalgamation over $B$ with respect to $E$.  For each $k \in K$, let $L_2(A_k, \tau)$ be the closed subspace of $L_2(A, \tau)$ generated by
\[
\alg(C_k, D_k) + N_\tau.
\]
Then for all $n\in\bN$, $\chi\in\{\ell,r\}^n$, non-constant maps $\gamma:\{1,\ldots,n\}\to K$, $\xi \in L_2(A_{\gamma(n)}, \tau)$, and $Z_1,\ldots,Z_{n-1}\in A$ such that
\[
Z_k\in \begin{cases}
       C_{\gamma(k)} &\text{if } \chi(k)=\ell \\
        D_{\gamma(k)} &\text{if }\chi(k) = r\\   
     \end{cases},
\]
it follows that
\[
\widetilde{\kappa}_\chi(Z_1,\ldots,Z_{n-1},\xi)=0.
\]
\end{theorem}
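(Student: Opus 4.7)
The plan is to reduce to the case where $\xi$ is the $L_2$-coset of a product of operators from $\alg(C_{\gamma(n)}, D_{\gamma(n)})$, expand using Proposition \ref{prop:analytic-cumulants-with-product-entries}, and then show each resulting term vanishes via the vanishing of mixed bi-free cumulants. First, I would argue that the map $\xi \mapsto \widetilde{\kappa}_\chi(Z_1,\ldots,Z_{n-1},\xi)$ from $L_2(A,\tau)$ to $L_2(B,\tau_B)$ is continuous, since it is a finite linear combination of the maps $\xi \mapsto \Psi_\sigma(Z_1,\ldots,Z_{n-1},\xi)$, each of which is built from bounded left-multiplications by elements of $A$ together with the orthogonal projection $\widetilde{E}$. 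Because $\alg(C_{\gamma(n)},D_{\gamma(n)})+N_\tau$ is dense in $L_2(A_{\gamma(n)},\tau)$ by definition, and $\widetilde{\kappa}_\chi$ is $\bC$-linear in its last slot, it suffices to verify vanishing when $\xi = W_1 W_2 \cdots W_r + N_\tau$ for some $r\in\bN$ with each $W_j \in C_{\gamma(n)}\cup D_{\gamma(n)}$.

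Next, setting $Z_n := W_1\cdots W_r$ and letting $\widehat{\chi}\in\{\ell,r\}^{n-1+r}$ record the types of the expanded entries, I would invoke Proposition \ref{prop:analytic-cumulants-with-product-entries} with $k(p)=p$ for $p\leq n-1$ and $k(n)=n-1+r$ to obtain
\[
\widetilde{\kappa}_\chi(Z_1,\ldots,Z_{n-1},Z_n+N_\tau) = \sum_{\substack{\sigma\in\BNC(\widehat{\chi})\\ \sigma\vee\widehat{0_\chi}=1_{\widehat{\chi}}}} \widetilde{\kappa}_\sigma(Z_1,\ldots,Z_{n-1},W_1,\ldots,W_{r-1},W_r+N_\tau).
\]
Extending $\gamma$ to $\gamma':\{1,\ldots,n-1+r\}\to K$ by setting $\gamma'(j)=\gamma(n)$ for $j\geq n$ leaves $\gamma'$ non-constant and places each expanded entry in the appropriate $C_{\gamma'(\cdot)}$ or $D_{\gamma'(\cdot)}$; the goal reduces to showing each term in the displayed sum vanishes.

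Fixing any $\sigma$ in the sum and letting $V_0$ denote the block of $\sigma$ containing $n-1+r$, iteratively applying properties (iii) and (iv) of Definition \ref{defn:anal-bi-multiplicative} factorizes $\widetilde{\kappa}_\sigma$ into a product of $\kappa^B_{\widehat{\chi}|_V}$ values (elements of $B$) for the blocks $V$ with $n-1+r\notin V$, combined via $L_b$- and $R_b$-insertions into a single outer $\widetilde{\kappa}_{\widehat{\chi}|_{V_0}}$ applied to the (possibly modified) entries indexed by $V_0$ together with the unmodified last entry $W_r+N_\tau$. Because $B_\ell\subseteq C_k$ and $B_r\subseteq D_k$ for every $k\in K$, any such $L_b$ or $R_b$ insertion preserves the $\gamma'$-type of its modified entry. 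I would then split into two cases. If some block $V$ of $\sigma$ with $n-1+r\notin V$ has entries of mixed $\gamma'$-type, then $\kappa^B_{\widehat{\chi}|_V}=0$ by \cite{CNS2015-1}*{Theorem 8.1.1} and the entire term is zero. Otherwise every block of $\sigma$ away from $V_0$ is single-$\gamma'$-type; since the blocks of $\widehat{0_\chi}$ are also each single-$\gamma'$-type (the singletons $\{1\},\ldots,\{n-1\}$ being of various types and $\{n,\ldots,n-1+r\}$ entirely of type $\gamma(n)$), if $V_0$ were single-$\gamma'$-type as well then every block of $\sigma\vee\widehat{0_\chi}$ would be single-$\gamma'$-type, and $\sigma\vee\widehat{0_\chi}=1_{\widehat{\chi}}$ would force $\gamma'$ to be constant, contradicting non-constancy of $\gamma$. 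Hence $V_0$ has mixed $\gamma'$-type, and Corollary \ref{cor:bi-free-is-vanishing-mixed-cumulants} applied to $\widetilde{\kappa}_{\widehat{\chi}|_{V_0}}$—whose last slot $W_r+N_\tau$ is the coset of a pure operator in $C_{\gamma(n)}\cup D_{\gamma(n)}$—makes that factor vanish.

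The main obstacle will be carefully tracking the bi-multiplicative factorization of $\widetilde{\kappa}_\sigma$ and confirming that the $L_b, R_b$ insertions injected from nested blocks do not disturb the $\gamma'$-types of the entries feeding into the inner $\widetilde{\kappa}_{\widehat{\chi}|_{V_0}}$, so that the two vanishing principles (Corollary \ref{cor:bi-free-is-vanishing-mixed-cumulants} for the block containing the last entry and \cite{CNS2015-1}*{Theorem 8.1.1} for the outer blocks) both apply uniformly. Once that bookkeeping is in place, the proof reduces to a clean case analysis on the $\gamma'$-purity of the blocks of $\sigma$.
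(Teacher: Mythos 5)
Your proposal is correct and follows essentially the same route as the paper: reduce by continuity and density to a last entry of the form $W_1\cdots W_r + N_\tau$, expand via Proposition \ref{prop:analytic-cumulants-with-product-entries}, and kill each term in the resulting sum by locating a block of mixed $\gamma'$-type (forced by the join condition $\sigma\vee\widehat{0_\chi}=1_{\widehat{\chi}}$ together with non-constancy of $\gamma$) whose cumulant vanishes by bi-freeness. The only cosmetic difference is that the paper converts the analytic cumulants $\widetilde{\kappa}_\sigma(Z_1,\ldots,Z_n+N_\tau)$ back to operator-valued cumulants $\kappa^B_\sigma(Z_1,\ldots,Z_n)+N_{\tau_B}$ via Theorem \ref{thmkappaPsi=kappaB} and then appeals only to \cite{CNS2015-1}*{Theorem 8.1.1}, whereas your case split on whether the mixed block contains the last index (invoking Corollary \ref{cor:bi-free-is-vanishing-mixed-cumulants} in that case) is slightly more elaborate but equally valid.
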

\begin{proof}
Fix an $m\in\bN$, $\chi\in\{\ell,r\}^m$, non-constant map $\gamma:\{1,\ldots,m\}\to K$, and $Z_1,\ldots,Z_{m-1}\in A$. For any $n \geq m$, if $Z_m, \ldots, Z_n \in C_{\gamma(m)} \cup D_{\gamma(m)}$ and $\widehat{\chi}$ is defined by
\[
\widehat{\chi}(k) = \begin{cases}
\chi(k) & \text{if } k \leq m \\
\ell & \text{if } k > m \text{ and }Z_k \in C_{\gamma(m)} \\
r & \text{if } k > m \text{ and }Z_k \in D_{\gamma(m)} \\
\end{cases},
\]
then by Proposition \ref{prop:analytic-cumulants-with-product-entries} and Theorem \ref{thmPsi=E} implies that 
\begin{align*}
\widetilde{\kappa}_{\chi}(Z_1,\ldots, Z_{m-1}, Z_{m}\cdots Z_{n} + N_\tau) & = \sum_{\substack{\sigma\in \BNC(\xh)\\  \sigma\vee\oxh = 1_{\xh}\\}}   \widetilde{\kappa}_\sigma(Z_1,\ldots,Z_{n-1},Z_n+ N_\tau)\\
& = \sum_{\substack{\sigma\in \BNC(\xh)\\  \sigma\vee\oxh = 1_{\xh}\\}}   \kappa^B_\sigma(Z_1,\ldots,Z_{n-1},Z_n) + N_{\tau_B}.
\end{align*}
As the conditions $\sigma \in \BNC(\widehat{\chi})$ and $\sigma\vee\oxh = 1_{\xh}$ automatically imply that each block of $\sigma$ containing one of $\{1,2,\ldots, m-1\}$ must also contain an element of $\{m, \ldots, n\}$,  the bi-multiplicative properties of the operator-valued bi-free cumulant function imply that each cumulant  $\kappa^B_\sigma(Z_1,\ldots,Z_{n-1},Z_n)$ appearing in the sum above can be reduced down to an expression involving a mixed $\kappa^B$ term which must be 0 by \cite{CNS2015-1}*{Theorem 8.1.1}.  Hence 
\[
\widetilde{\kappa}_{\chi}(Z_1,\ldots, Z_{m-1}, Z_{m}\cdots Z_{n} + N_\tau) = 0.
\]

Since $\widetilde{E}$ is a continuous function and left multiplication of $A$ on $L_2(A, \tau)$ yields bounded operators,  due to the recursive nature of $\Psi$ we see that $\Psi$ is continuous in the $L_2(A, \tau)$ entry.  Therefore, by M\"{o}bius inversion, $\widetilde{\kappa}$ is continuous in the $L_2(A, \tau)$ entry.  Hence the result follows.
\end{proof}

\begin{corollary}
Let $(A,E,\varepsilon,\tau)$ be an analytical $B$-$B$-non-commutative probability space and let $n\geq 2$, $\chi\in\{\ell,r\}^n$, $\xi\in L_2(A,\tau)$, $b\in B$, and $Z_k\in A_{\chi(k)}$. Suppose that either there exists $p\in\{1,\ldots,n-1\}$ such that  
\[
Z_p= \begin{cases}
      L_b &\text{if }  \chi (p)=\ell\\
       R_b &\text{if } \chi (p) = r
     \end{cases}
\]
or that $\xi\in L_2(B, \tau_B)$.  Then
\[
\widetilde{\kappa}_\chi(Z_1,\ldots,Z_{n-1},\xi)=0.
\]
\end{corollary}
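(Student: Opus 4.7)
The plan is to unify both hypotheses by reducing them to the single algebraic statement that the operator-valued bi-free cumulant $\kappa^B_\chi$ vanishes whenever $n\ge 2$ and at least one argument belongs to $B_\ell\cup B_r$. This is precisely the bi-free, operator-valued analogue of the classical vanishing of free cumulants involving a scalar.

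For the first hypothesis (a $B$-element at some interior position), continuity of $\widetilde{\kappa}$ in its last entry---established at the end of the proof of Theorem~\ref{thm:vanishing-cumulants-L2-entropy}---and density of $A+N_\tau$ in $L_2(A,\tau)$ allow one to restrict to $\xi=Z_n+N_\tau$ with $Z_n\in A$, at which point Theorem~\ref{thmkappaPsi=kappaB} turns the claim into $\kappa^B_\chi(Z_1,\ldots,Z_n)=0$. For the second hypothesis, one approximates $\xi\in L_2(B,\tau_B)$ by cosets $\widehat{b_k}=L_{b_k}+N_\tau=R_{b_k}+N_\tau$ (using Remark~\ref{remarkenhancedbbdef}(vii)); the same continuity and Theorem~\ref{thmkappaPsi=kappaB} then reduce the claim to $\kappa^B_\chi(Z_1,\ldots,Z_{n-1},L_b)=0$. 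Hence both cases collapse to the common $\kappa^B$-vanishing statement with the $B$-element placed at some position $p\in\{1,\ldots,n\}$.

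I would prove this $\kappa^B$-statement by induction on $n$, mirroring the classical free-probability argument. The base case $n=2$ is a direct computation:
\[
\kappa^B_\chi(L_b,Z_2)=E(L_bZ_2)-E(L_b)E(Z_2)=bE(Z_2)-bE(Z_2)=0,
\]
using the $B$-bimodule property of $E$ from Definition~\ref{BBncpsdef} together with $E(L_b)=b$; the remaining combinations of $\chi$ and $p$ are analogous. For the inductive step, one begins with the moment-cumulant formula
\[
E(Z_1\cdots Z_n)=\sum_{\sigma\in\BNC(\chi)}\kappa^B_\sigma(Z_1,\ldots,Z_n)
\]
and splits the right-hand side according to whether $\{p\}$ is a singleton block of $\sigma$. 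When $p$ sits in a block of size at least two, the bi-multiplicative factorisation of $\kappa^B_\sigma$ (properties~(iii) and~(iv) of Definition~\ref{defn:bi-multiplicative-function}) isolates a factor of the form $\kappa^B_{\sigma|_{V_p}}(\ldots,L_b,\ldots)$, which vanishes by induction; hence the whole term vanishes. When $\{p\}$ is a singleton block of $\sigma$, the factor $\kappa^B_{\{p\}}(L_b)=b$ gets absorbed into an adjacent operator via property~(iv), realising a bijection between $\{\sigma\in\BNC(\chi):\{p\}\in\sigma\}$ and $\BNC(\chi')$, where $\chi'$ arises by deleting position $p$. A parallel bi-multiplicative simplification of $E(Z_1\cdots Z_n)$---moving $L_b$ into an adjacent factor via property~(ii) of Definition~\ref{defn:bi-multiplicative-function}---yields the moment-cumulant formula for $\chi'$, and the two identities match perfectly, forcing $\kappa^B_{1_\chi}(Z_1,\ldots,Z_n)=0$.

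The main obstacle is exactly this combinatorial bookkeeping: verifying that the absorption rules dictated by property~(iv) of Definition~\ref{defn:bi-multiplicative-function} are compatible across the bijection and across the two sides of the moment-cumulant formula. In the classical free case this is automatic because the scalar $1$ multiplies transparently, but in the operator-valued bi-free setting the $B$-element $L_b$ or $R_b$ must be carefully tracked as it migrates through the product and across blocks, with the bi-non-crossing constraint respected throughout.
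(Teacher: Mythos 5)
Your overall strategy coincides with the paper's: approximate $\xi$ by algebraic elements, use the continuity of $\widetilde{\kappa}$ in the last entry, and reduce everything to the vanishing of operator-valued bi-free cumulants of order at least two containing an $L_b$ or $R_b$ entry. One remark before the main issue: that algebraic vanishing statement is not something the paper re-derives; it is quoted as \cite{CNS2015-1}*{Proposition 6.4.1}. Your induction on $n$ is indeed the standard proof of that fact, so carrying it out is harmless but redundant here --- citing it collapses your second and third paragraphs to a single line, exactly as in the paper's proof.

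There is, however, a genuine gap in your reduction for the first hypothesis. You propose to approximate $\xi$ by $Z_n+N_\tau$ with $Z_n\in A$ and then invoke Theorem \ref{thmkappaPsi=kappaB} to identify $\widetilde{\kappa}_\chi(Z_1,\ldots,Z_{n-1},Z_n+N_\tau)$ with $\kappa^B_\chi(Z_1,\ldots,Z_n)+N_{\tau_B}$. But Theorem \ref{thmkappaPsi=kappaB} requires $Z_n\in A_{\chi(n)}$, whereas a generic element of $A$ is a linear combination of words mixing left and right operators and need lie in neither $A_\ell$ nor $A_r$; the cumulant $\kappa^B_\chi(Z_1,\ldots,Z_n)$ is not even defined for such a $Z_n$, and $A_{\chi(n)}+N_\tau$ is not dense in $L_2(A,\tau)$. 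The missing ingredient is Proposition \ref{prop:analytic-cumulants-with-product-entries}: writing the approximant as $W_1\cdots W_N$ with $W_i\in A_\ell\cup A_r$, one expands $\widetilde{\kappa}_\chi(Z_1,\ldots,Z_{n-1},W_1\cdots W_N+N_\tau)$ as a sum of $\widetilde{\kappa}_\sigma$ over $\sigma$ with $\sigma\vee\oxh=1_{\xh}$; since $\{p\}$ is a singleton block of $\oxh$, every such $\sigma$ places $p$ in a block of size at least two, and the (analytic) bi-multiplicative factorisation isolates a factor that vanishes by \cite{CNS2015-1}*{Proposition 6.4.1}. This is precisely how the paper handles the first case, and your argument does not go through without it. Your treatment of the second hypothesis ($\xi\in L_2(B,\tau_B)$) is fine as written, since there the approximants $L_{b_k}+N_\tau$ do lie in the image of $A_{\chi(n)}$ (using $L_{b_k}+N_\tau=R_{b_k}+N_\tau$ when $\chi(n)=r$) and Theorem \ref{thmkappaPsi=kappaB} applies directly.
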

\begin{proof}
If $Z_p = L_b$ or $Z_p = R_b$ for some $p$, then we may proceed as in the proof of Theorem \ref{thm:vanishing-cumulants-L2-entropy} by assuming that $\xi$ is an element of $A$, expanding out the analytic operator-valued bi-free cumulant function with the aid of Proposition \ref{prop:analytic-cumulants-with-product-entries}, and using the fact that non-singleton operator-valued bi-free cumulants involving $L_b$ or $R_b$ terms are zero by \cite{CNS2015-1}*{Proposition 6.4.1} and then taking a limit at the end.

In the case where $\xi \in L_2(B, \tau_B)$, $\xi$ is a limit of terms of the form $L_b + N_{\tau}$.  As
\[
\widetilde{\kappa}_\chi(Z_1,\ldots,Z_{n-1}, L_b +  N_{\tau}) = \kappa_\chi(Z_1,\ldots,Z_{n-1}, L_b ) + N_{\tau_B} = 0 + N_{\tau_B},
\]
by Theorem \ref{thmkappaPsi=kappaB} and \cite{CNS2015-1}*{Proposition 6.4.1} the result follows by taking a limit.
\end{proof}

\section{Bi-Free Conjugate Variables with respect to Completely Positive Maps}
\label{sec:Conj}

In this section we develop the appropriate notion of conjugate variables in order to define bi-free Fisher information and entropy with respect to completely positive maps.  This can be viewed as both an extension of the bi-free conjugate variables developed in \cite{CS2020} and of the free conjugate variables with respect to a completely positive map developed in \cite{S2000}.  We will focus on both the moment and cumulant characterizations of these conjugate variables, whereas \cite{S2000} focused on the moment and derivation characterizations of free conjugate variables.  Although \cite{CS2020} analyzed the moment, cumulant, and bi-free difference quotient characterizations of bi-free conjugate variables, we will forgo trying to generalize the bi-free difference quotient characterization in this setting as it was the cumulant characterization that was found most effective and as the bi-module structures of \cite{S2000} necessary for the derivation characterization using adjoints are less clear in this setting.

We refer the reader to \cite{NSS1999}*{Definition 2.7} as an equivalent description to \cite{S2000} of the free conjugate variables with respect to a completely positive map that we mimic below.

\begin{definition}\label{defn:conjugate-variables}
Let $(A,E,\varepsilon,\tau)$ be an analytical $B$-$B$-non-commutative probability space, let $(C_\ell, C_r)$ be a pair of $B$-algebras in $A$, let $X \in A_\ell$, let $Y \in A_r$, and let $\eta:B\to B$ be a completely positive map.  

An element $\xi \in L_2(A, \tau)$ is said to satisfy the \emph{left bi-free conjugate variable relations for $X$ with respect to $\eta$ and $\tau$ in the presence of $(C_\ell, C_r)$} if for all $n \in \bN \cup \{0\}$, $Z_1,\ldots,Z_n\in \{X\} \cup C_\ell \cup C_r$ we have
\[
\tau(Z_1\cdots Z_n \xi) = \sum_{\substack{1\leq k \leq n\\
Z_k =X}} \tau\left( \left( \prod_{p \in V_k^c\setminus \{k,n+1\}} Z_p\right) L_{\eta\left(  E\left( \prod_{p \in V_k} Z_p\right)     \right)} \right),
\]
where $V_k = \{k < m < n+1 \, \mid \, Z_m \in \{X\} \cup C_\ell\}$ and where all products are taken in numeric order (with the empty product being 1).   If, in addition, 
\[
\xi \in \overline{\alg(X, C_\ell, C_r)}^{\left\|\,  \cdot \, \right\|_\tau},
\]
we call $\xi$ the \emph{left bi-free conjugate variable for $X$ with respect to $\eta$ and $\tau$ in the presence of $(C_\ell, C_r)$} and denote $\xi$ by $J_\ell(X : (C_\ell, C_r), \eta)$.

Similarly, an element $\nu \in L_2(A, \tau)$ is said to satisfy the \emph{right bi-free conjugate variable relations for $Y$ with respect to $\eta$ and $\tau$ in the presence of $(C_\ell, C_r)$} if for all $n \in \bN \cup \{0\}$, $Z_1,\ldots,Z_n\in \{Y\} \cup C_\ell \cup C_r$ we have
\[
\tau(Z_1\cdots Z_n \nu) = \sum_{\substack{1\leq k \leq n\\
Z_k =X}} \tau\left( \left( \prod_{p \in V_k^c\setminus \{k,n+1\} } Z_p\right) R_{\eta\left(  E\left( \prod_{p \in V_k} Z_p\right)      \right)} \right),
\]
where $V_k = \{k < m < n+1 \, \mid \, Z_m \in \{Y\} \cup C_r\}$.   If, in addition, 
\[
\nu \in \overline{\alg(Y, C_\ell, C_r)}^{\left\|\,  \cdot \, \right\|_\tau},
\]
we call $\nu$ the \emph{right bi-free conjugate variable for $X$ with respect to $\eta$ and $\tau$ in the presence of $(C_\ell, C_r)$} and denote $\nu$ by $J_r(Y : (C_\ell, C_r), \eta)$.
\end{definition}
\begin{example}
For an example of Definition \ref{defn:conjugate-variables}, consider $X \in A_\ell$, $Y \in A_r$, $Z_2, Z_3 \in C_\ell$, and $Z_1, Z_4 \in C_r$.  If $\xi = J_\ell(X : (C_\ell, \alg(C_r, Y)), \eta)$, then
\begin{align*}
\tau(XZ_1Z_2YXYZ_3XZ_4\xi) &= \tau\left(Z_1YYZ_4 L_{\eta(E(Z_2XZ_3X))}\right) \\
&\qquad + \tau\left(XZ_1Z_2YYZ_4 L_{\eta(E(Z_3X))}\right) \\
& \qquad+ \tau\left(XZ_1Z_2YXYZ_3Z_4 L_{\eta(E(1))}\right)
\end{align*}
This can be observed diagrammatically by drawing $X, Z_1, Z_2, Y, X, Y, Z_3, X, Z_4$ as one would in a bi-non-crossing diagram (i.e. drawing two vertical lines and placing the variables on these lines starting at the top and going down with left variables on the left line and right variables on the right line), drawing all pictures connecting the centre of the bottom of the diagram to any $X$, taking the product of the elements starting from the top and going down in each of the two isolated components of the diagram, taking the expectation of the bounded region and applying $\eta$ to the result to obtain a $b \in B$,  appending $L_b$ to the end of the product of operators from the unbounded region, and applying $\tau$ to the result.
\begin{align*}
		\begin{tikzpicture}[baseline]
			\draw[thick, dashed] (-1,4.5) -- (-1,-.5) -- (1,-.5) -- (1,4.5);
			\draw[thick] (0,-.5) -- (0,4) -- (-1,4);
			\node[right] at (1, 3.5) {$Z_1$};
			\draw[black, fill=black] (1,3.5) circle (0.05);
			\node[right] at (1, 2.5) {$Y$};
			\draw[black, fill=black] (1,2.5) circle (0.05);
			\node[right] at (1, 1.5) {$Y$};
			\draw[black, fill=black] (1,1.5) circle (0.05);	
			\node[right] at (1, 0) {$Z_4$};
			\draw[black, fill=black] (1,0) circle (0.05);
			\node[left] at (-1, 4) {$X$};
			\draw[black, fill=black] (-1,4) circle (0.05);	
			\node[left] at (-1, 3) {$Z_2$};
			\draw[black, fill=black] (-1,3) circle (0.05);
			\node[left] at (-1, 2) {$X$};
			\draw[black, fill=black] (-1,2) circle (0.05);
			\node[left] at (-1, 1) {$Z_3$};
			\draw[black, fill=black] (-1,1) circle (0.05);
			\node[left] at (-1, .5) {$X$};
			\draw[black, fill=black] (-1,.5) circle (0.05);
		\end{tikzpicture}
		\qquad  \qquad
		\begin{tikzpicture}[baseline]
			\draw[thick, dashed] (-1,4.5) -- (-1,-.5) -- (1,-.5) -- (1,4.5);
			\draw[thick] (0,-.5) -- (0,2) -- (-1,2);
			\node[right] at (1, 3.5) {$Z_1$};
			\draw[black, fill=black] (1,3.5) circle (0.05);
			\node[right] at (1, 2.5) {$Y$};
			\draw[black, fill=black] (1,2.5) circle (0.05);
			\node[right] at (1, 1.5) {$Y$};
			\draw[black, fill=black] (1,1.5) circle (0.05);	
			\node[right] at (1, 0) {$Z_4$};
			\draw[black, fill=black] (1,0) circle (0.05);
			\node[left] at (-1, 4) {$X$};
			\draw[black, fill=black] (-1,4) circle (0.05);	
			\node[left] at (-1, 3) {$Z_2$};
			\draw[black, fill=black] (-1,3) circle (0.05);
			\node[left] at (-1, 2) {$X$};
			\draw[black, fill=black] (-1,2) circle (0.05);
			\node[left] at (-1, 1) {$Z_3$};
			\draw[black, fill=black] (-1,1) circle (0.05);
			\node[left] at (-1, .5) {$X$};
			\draw[black, fill=black] (-1,.5) circle (0.05);
		\end{tikzpicture}
			\qquad  \qquad
		\begin{tikzpicture}[baseline]
			\draw[thick, dashed] (-1,4.5) -- (-1,-.5) -- (1,-.5) -- (1,4.5);
			\draw[thick] (0,-.5) -- (0,.5) -- (-1,.5);
			\node[right] at (1, 3.5) {$Z_1$};
			\draw[black, fill=black] (1,3.5) circle (0.05);
			\node[right] at (1, 2.5) {$Y$};
			\draw[black, fill=black] (1,2.5) circle (0.05);
			\node[right] at (1, 1.5) {$Y$};
			\draw[black, fill=black] (1,1.5) circle (0.05);	
			\node[right] at (1, 0) {$Z_4$};
			\draw[black, fill=black] (1,0) circle (0.05);
			\node[left] at (-1, 4) {$X$};
			\draw[black, fill=black] (-1,4) circle (0.05);	
			\node[left] at (-1, 3) {$Z_2$};
			\draw[black, fill=black] (-1,3) circle (0.05);
			\node[left] at (-1, 2) {$X$};
			\draw[black, fill=black] (-1,2) circle (0.05);
			\node[left] at (-1, 1) {$Z_3$};
			\draw[black, fill=black] (-1,1) circle (0.05);
			\node[left] at (-1, .5) {$X$};
			\draw[black, fill=black] (-1,.5) circle (0.05);
		\end{tikzpicture}
\end{align*}
This is analogous to applying the left bi-free difference quotient $\partial_{\ell, X}$ defined in \cite{CS2020} on a suitable algebraic free product to $XZ_1Z_2YXYZ_3XZ_4$ to obtain
\[
Z_1 YYZ_4 \otimes Z_2XZ_3Z + XZ_1Z_2YYZ_4 \otimes Z_3X + XZ_1Z_2YXYZ_3Z_4 \otimes 1,
\]
applying $\mathrm{Id} \otimes (\eta \circ E)$, collapsing the tensor, and applying $\tau$ to the result.

Similarly, if $\nu = J_r(Y : (\alg(C_\ell, X), C_r), \eta)$, then
\begin{align*}
\tau(XZ_1Z_2YXYZ_3XZ_4\nu) &= \tau\left(XZ_1Z_2XZ_3X R_{\eta(E(YZ_4))}\right) + \tau\left(XZ_1Z_2YXZ_3X R_{\eta(E(Z_4))}\right) .
\end{align*}
This can be observed diagrammatically in a similar fashion by drawing all pictures connecting the centre of the bottom of the diagram to any $Y$ on the right.
\begin{align*}
		\begin{tikzpicture}[baseline]
			\draw[thick, dashed] (-1,4.5) -- (-1,-.5) -- (1,-.5) -- (1,4.5);
			\draw[thick] (0,-.5) -- (0,2.5) -- (1,2.5);
			\node[right] at (1, 3.5) {$Z_1$};
			\draw[black, fill=black] (1,3.5) circle (0.05);
			\node[right] at (1, 2.5) {$Y$};
			\draw[black, fill=black] (1,2.5) circle (0.05);
			\node[right] at (1, 1.5) {$Y$};
			\draw[black, fill=black] (1,1.5) circle (0.05);	
			\node[right] at (1, 0) {$Z_4$};
			\draw[black, fill=black] (1,0) circle (0.05);
			\node[left] at (-1, 4) {$X$};
			\draw[black, fill=black] (-1,4) circle (0.05);	
			\node[left] at (-1, 3) {$Z_2$};
			\draw[black, fill=black] (-1,3) circle (0.05);
			\node[left] at (-1, 2) {$X$};
			\draw[black, fill=black] (-1,2) circle (0.05);
			\node[left] at (-1, 1) {$Z_3$};
			\draw[black, fill=black] (-1,1) circle (0.05);
			\node[left] at (-1, .5) {$X$};
			\draw[black, fill=black] (-1,.5) circle (0.05);
		\end{tikzpicture}
		\qquad  \qquad
		\begin{tikzpicture}[baseline]
			\draw[thick, dashed] (-1,4.5) -- (-1,-.5) -- (1,-.5) -- (1,4.5);
			\draw[thick] (0,-.5) -- (0,1.5) -- (1,1.5);
			\node[right] at (1, 3.5) {$Z_1$};
			\draw[black, fill=black] (1,3.5) circle (0.05);
			\node[right] at (1, 2.5) {$Y$};
			\draw[black, fill=black] (1,2.5) circle (0.05);
			\node[right] at (1, 1.5) {$Y$};
			\draw[black, fill=black] (1,1.5) circle (0.05);	
			\node[right] at (1, 0) {$Z_4$};
			\draw[black, fill=black] (1,0) circle (0.05);
			\node[left] at (-1, 4) {$X$};
			\draw[black, fill=black] (-1,4) circle (0.05);	
			\node[left] at (-1, 3) {$Z_2$};
			\draw[black, fill=black] (-1,3) circle (0.05);
			\node[left] at (-1, 2) {$X$};
			\draw[black, fill=black] (-1,2) circle (0.05);
			\node[left] at (-1, 1) {$Z_3$};
			\draw[black, fill=black] (-1,1) circle (0.05);
			\node[left] at (-1, .5) {$X$};
			\draw[black, fill=black] (-1,.5) circle (0.05);
		\end{tikzpicture}
\end{align*}
This is analogous to applying the right bi-free difference quotient $\partial_{r, Y}$ defined in \cite{CS2020} on a suitable algebraic free product to $XZ_1Z_2YXYZ_3XZ_4$ to obtain
\[
XZ_1Z_2XZ_3X \otimes Y Z_4 + XZ_1Z_2YXZ_3X \otimes Z_4,
\]
applying $\mathrm{Id} \otimes (\eta \circ E)$, collapsing the tensor, and applying $\tau$ to the result.
\end{example}
\begin{remark}
\label{rem:conjugate-variable-immediate-remarks}
\begin{enumerate}[(i)]
\item As $\tau(aL_b) = \tau(aR_b)$ for all $a \in A$ and $b \in B$, one may use either $L_{\eta \circ E}$ or $R_{\eta \circ E}$ in either part of Definition \ref{defn:conjugate-variables}.  In fact, one may simply use $\eta \circ E$ if one views the resulting element of $B$ as an element of $L_2(B,\tau_B) \subseteq L_2(A,\tau)$, since $\tau(aL_b) = \tau(a (b + N_\tau))$ for all $a \in A$ and $b\in B$ by construction.
\item The element $J_\ell(X : (C_\ell, C_r), \eta)$ is unique in the sense that if $\xi_0 \in  \overline{\alg(X, C_\ell, C_r)}^{\left\|\,  \cdot \, \right\|_\tau}$ satisfies the left bi-free conjugate variable relations for $X$ with respect to $(C_\ell, C_r)$, then $\xi_0 = J_\ell(X : (C_\ell, C_r), \eta)$ as the left bi-free conjugate variable relations causes the inner products in $L_2(A,\tau)$ of both $\xi_0$ and $J_\ell(X : (C_\ell, C_r), \eta)$ against any element of $\alg(X, C_\ell, C_r)$ to be equal.
\item In the case where $B = \bC$, $E$ reduces down to a unital, linear map $\varphi : A\to \bC$ and, as $\tau$ is compatible with $E$, one obtains that $\tau = \varphi$.  As $\varphi$ is linear,  Definition \ref{defn:conjugate-variables} immediately reduces down to the left and right conjugate variables with respect to $\varphi$ in the presence of $(C_\ell, C_r)$ as in \cite{CS2020}, provided $\eta$ is unital.
\item In the setting of Example \ref{exam:factors}, we note that $J_\ell(X : (B_\ell, B_r), \eta)$ exists if and only if the free conjugate variable of $X$ with respect to $(B, \eta)$ from \cite{S2000} exists.  This immediately follows as $B_r$ commutes with $X$ and $B_\ell$ and $\tau$ is tracial, so the expressions for either conjugate variable can be modified into the expressions of the other conjugate variable.
\end{enumerate}
\end{remark}

As with the bi-free conjugate variables in \cite{CS2020}, any moment expression should be equivalent to certain cumulant expressions via M\"{o}bius inversion.  Thus we obtain the following equivalent characterization of conjugate variables.  Note in that which follows, it does not matter whether the last entry in the analytical operator-valued bi-free cumulant function is treated as a left or as a right operator by Definition \ref{defn:anal-bi-multiplicative}.

\begin{theorem}\label{thm:conjugate-variables-via-cumulants}
Let $(A,E,\varepsilon,\tau)$ be an analytical $B$-$B$-non-commutative probability space, let $(C_\ell, C_r)$ be a pair of $B$-algebras in $A$, let $X \in A_\ell$, let $Y \in A_r$, and let $\eta:B\to B$ be a completely positive map.  For $\xi \in L_2(A, \tau)$, the following are equivalent:
\begin{enumerate}[(i)]
\item $\xi$ satisfies the left bi-free conjugate variables relations for $X$ (respectively $\xi$ satisfies the right bi-free conjugate variables relations for $Y$) with respect to $\eta$ and $\tau$ in the presence of $(C_\ell, C_r)$,
\item the following four cumulant conditions hold:
\begin{enumerate}[(a)] 
\item  $\widetilde{\kappa}_{1_{(\ell)}}(\xi)=0 + N_{\tau_B}$,
\item $\widetilde{\kappa}_{1_{(\ell, \ell)}}(XL_b,\xi) =  \eta(b) + N_{\tau_B}$ (respectively $\widetilde{\kappa}_{1_{(r, r)}}(YR_b,\xi) =  \eta(b) + N_{\tau_B}$) for all $b\in B$,
\item $\widetilde{\kappa}_{1_{(\ell, \ell)}}(c_1, \xi ) = \widetilde{\kappa}_{1_{(r, \ell)}}(c_2, \xi ) = 0 + N_{\tau_B}$ for all $c_1 \in C_\ell$ and $c_2 \in C_r$, 
\item for all $n\geq 3$, $\chi\in\{\ell,r\}^n$, and all $Z_1, Z_2, \ldots, Z_{n-1} \in A$ such that
\[
Z_k \in  \begin{cases}
       \{X\} \cup C_\ell &\text{if } \chi(k)=\ell\\
       C_r &\text{if }\chi(k)=r\\
     \end{cases} \qquad \left(   \text{respectively} \quad Z_k \in  \begin{cases}
       C_\ell &\text{if } \chi(k)=\ell\\
       \{Y\} \cup C_r &\text{if }\chi(k)=r\\
     \end{cases} \right)
\]
we have that
\[
\widetilde{\kappa}_\chi(Z_1, \ldots, Z_{n-1},\xi) = 0 + N_{\tau_B}.
\]
\end{enumerate}
\end{enumerate}
\end{theorem}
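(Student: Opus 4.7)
The plan is to establish both implications via M\"{o}bius inversion, leveraging the analytical bi-multiplicative machinery of Section \ref{sec:Bi-Multi}. By symmetry I focus on the left bi-free conjugate variable relations; the right case is entirely analogous.

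For the direction (ii) $\Rightarrow$ (i), I would fix $Z_1, \ldots, Z_n \in \{X\} \cup C_\ell \cup C_r$ and let $\chi \in \{\ell,r\}^{n+1}$ record their left/right types (with $\xi$ treated as $\ell$). Using the identity $\tau(Z_1 \cdots Z_n \xi) = \tau_B(\Psi_{1_\chi}(Z_1, \ldots, Z_n, \xi))$ from Proposition \ref{propE} together with the moment-cumulant formula of Definition \ref{encumdef}, expand
\[
\tau(Z_1 \cdots Z_n \xi) = \sum_{\sigma \in \BNC(\chi)} \tau_B\!\left(\widetilde{\kappa}_\sigma(Z_1, \ldots, Z_n, \xi)\right).
\]
I would then identify which $\sigma$ contribute. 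By (a), (c), and (d), the block $W$ of $\sigma$ containing the last entry $n+1$ must be a two-element block $\{k, n+1\}$ with $Z_k \notin C_\ell \cup C_r$, forcing $Z_k = X$. For each such surviving $\sigma$, the bi-multiplicative reduction of $\widetilde{\kappa}_\sigma$ (Theorem \ref{thm:analytic-cumulants-bi-multiplicative}) together with condition (b) contracts the block $W$ into an $L_{\eta(E(\prod_{p \in V_k} Z_p))}$ factor placed at the appropriate position in the outer product, while the remaining blocks of $\sigma$ resum via the moment-cumulant formula to produce the bi-moment of the exterior entries. A careful combinatorial check shows the bi-non-crossing constraint forces the ``inside'' set of $W$ to be exactly $V_k = \{k < m < n+1 : Z_m \in \{X\} \cup C_\ell\}$, matching Definition \ref{defn:conjugate-variables}.

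For the direction (i) $\Rightarrow$ (ii) I would proceed by induction on the length of the cumulant. The base cases: (a) follows from $\tau(L_b\xi) = 0$ with $n = 1$, $Z_1 = L_b \in C_\ell$, whence $\widetilde{E}(\xi) = 0$; condition (c) follows analogously from $\tau(L_{b^*}c_1\xi) = 0$ and $\tau(R_{b^*}c_2\xi) = 0$ combined with (a) to kill the $\Psi_{0_\chi}$ term in the two-entry M\"{o}bius expansion; and (b) follows from the identity $\widetilde{\kappa}_{1_{(\ell,\ell)}}(XL_b, \xi) = \widetilde{E}(XL_b\xi) - E(XL_b)\widetilde{E}(\xi)$, together with the moment relation $\tau(L_{(b')^*}XL_b\xi) = \tau_B((b')^*\eta(b))$, which identifies $\widetilde{E}(XL_b\xi)$ with $\eta(b) + N_{\tau_B}$. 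The inductive step for (d) writes the target cumulant via M\"{o}bius inversion as a signed sum of bi-moments $\Psi_\sigma$, applies the moment relation wherever $\xi$ appears, and uses the inductive hypothesis to verify the resulting signed sum collapses to zero.

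The main obstacle lies in the combinatorial matching for the (ii) $\Rightarrow$ (i) direction. I must verify that the bi-multiplicative reduction of each surviving $\widetilde{\kappa}_\sigma$ yields exactly the operator $L_{\eta(E(\prod_{p \in V_k} Z_p))}$ attached at precisely the correct position adjacent to the product $\prod_{p \in V_k^c \setminus \{k,n+1\}} Z_p$. The operator-valued setting, unlike the scalar case treated in \cite{CS2020}, requires careful use of property (iv) of Definition \ref{defn:anal-bi-multiplicative} to track how the contracted $\eta \circ E$ value becomes attached to a specific neighboring $\chi$-interval; the flexibility afforded by Remark \ref{rem:conjugate-variable-immediate-remarks}(i) (freely interchanging $L_b$ and $R_b$ under $\tau$) eases this bookkeeping and ultimately makes the attachment consistent with the $L_{\eta(\cdot)}$ form appearing in Definition \ref{defn:conjugate-variables}.
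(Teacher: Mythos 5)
Your proposal is correct and follows essentially the same route as the paper: both directions proceed via the moment--cumulant (M\"{o}bius) relation for the analytical cumulants, with the surviving partitions in (ii) $\Rightarrow$ (i) being exactly those whose block at the last entry is a pair $\{k,n+1\}$ with $Z_k = X$, contracted via condition (b) into the $L_{\eta(E(\cdot))}$ factor, and with (i) $\Rightarrow$ (ii) handled by the same base cases and an induction that isolates the top cumulant as a moment minus lower-order cumulants, pairing against arbitrary $b \in B$ to conclude vanishing in $L_2(B,\tau_B)$. The combinatorial matching you flag as the main obstacle (that the interior of the pair block is forced to be $V_k$ and the contracted value attaches at the correct neighbouring position) is precisely the point the paper resolves with the bi-multiplicative reduction properties of Definition \ref{defn:anal-bi-multiplicative}.
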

\begin{proof}
We will prove the result for the left bi-free conjugate variable as the proof for the right bi-free conjugate variable is analogous.  

Suppose that $\xi$ satisfies (ii).  To see that $\xi$ satisfies the left bi-free conjugate variables relations, let $n \in \bN \cup \{0\}$ and let $Z_1,\ldots,Z_n\in \{X\} \cup C_\ell \cup C_r$.  Fix $\chi \in \{\ell, r\}^{n+1}$ such that
\begin{align*}
\chi(k) = \begin{cases}
\ell & \text{if }Z_k \in \{X\} \cup C_\ell \\
r & \text{if } Z_k \in C_r
\end{cases}
\end{align*}
(note the value of $\chi(n+1)$ does not matter in that which follows).  By the relation between the analytic extensions of the bi-moment and bi-cumulant functions, we obtain that
\begin{align*}
\widetilde{E}(Z_1 \cdots Z_n \xi) = \sum_{\pi \in \BNC(\chi)} \widetilde{\kappa}_\pi (Z_1, \ldots, Z_n, \xi).
\end{align*}
Due to the cumulant conditions in (ii), the only way $\widetilde{\kappa}_\pi (Z_1, \ldots, Z_n, \xi)$ is non-zero is if the block of $\pi$ containing $n+1$ contains a single other index $k$ with $Z_k = X$.  Moreover, there is a bijection between such partitions and partitions of the form 
\[
\pi = \{k, n+1\} \cup \pi_1 \cup \pi_2
\]
where $\pi_1$ is a bi-non-crossing partition on $V_k = \{k < m < n+1 \, \mid \, Z_m \in \{X\} \cup C_\ell\}$ with respect to $\chi|_{V_k}$ and where $\pi_2$ is a bi-non-crossing partition on $W_k = V_k \setminus \{k,n+1\}$ with respect to $\chi|_{W_k}$.  Using this decomposition, the properties of bi-analytic extensions of bi-multiplicative functions and the moment-cumulant formulas yield that
\begin{align*}
&\widetilde{E}(Z_1 \cdots Z_n \xi) \\
&= \sum_{\substack{1 \leq k \leq n  \\  Z_k = X}} \sum_{\pi_2 \in \BNC(\chi|_{W_k})}  \sum_{\pi_1 \in \BNC(\chi|_{V_k})}  \widetilde{\kappa}_{\{k, n+1\} \cup \pi_1 \cup \pi_2} (Z_1, \ldots, Z_n, \xi) \\
 &= \sum_{\substack{1 \leq k \leq n  \\  Z_k = X}} \sum_{\pi_2 \in \BNC(\chi|_{W_k})}  \widetilde{\kappa}_{\{k, n+1\} \cup \pi_1} \left( \left. \left(Z_1, \ldots, Z_{k-1}, Z_kL_{  E\left( \prod_{p \in V_k} Z_p\right) } Z_{k+1}, \ldots, Z_n, \xi\right)\right|_{W_k \cup \{k,n+1\}}\right)\\
 &= \sum_{\substack{1 \leq k \leq n  \\  Z_k = X}} \widetilde{\kappa}_{\pi_1} \left( \left. \left(Z_1, \ldots, Z_{\max_{\leq}(W_k)-1}, Z_{\max_{\leq}(W_k)} \eta\left(E\left( \prod_{p \in V_k} Z_p\right)\right)\right)\right|_{W_k}\right)\\
&= \sum_{\substack{1\leq k \leq n\\
Z_k =X}} \tilde{E}\left( \left( \prod_{p \in V_k^c\setminus \{k,n+1\}} Z_p\right)\eta\left(  E\left( \prod_{p \in V_k} Z_p\right)     \right)\right).
\end{align*}
Hence, by applying $\tau_B$ to both sides of this equation, the  left bi-free conjugate variables relations from Definition \ref{defn:conjugate-variables} are obtained via part (i) of Proposition \ref{propE}.

For the converse direction, suppose $\xi$ satisfies the left bi-free conjugate variables relations for $X$.   Thus for all $b \in B$, $\tau(L_b \xi) = 0$ by the conjugate variable relations.  Hence $\widetilde{\kappa}_{1_{(\ell)}}(\xi) = \widetilde{E}(\xi) = 0$ by part (vi) of  Proposition \ref{propE} and therefore (a) holds.

To see that (b) holds, note for all $b_0, b\in B$ that
\begin{align*}
b_0 \widetilde{\kappa}_{1_{(\ell, \ell)}}(X L_b, \xi) = \widetilde{\kappa}_{1_{(\ell, \ell)}}(L_{b_0}X L_b, \xi)  &=  \Psi_{1_{(\ell, \ell)}}(L_{b_0}XL_b, \xi) - \Psi_{0_{(\ell, \ell)}}(L_{b_0}XL_b, \xi)\\
&= \widetilde{E}(L_{b_0}XL_b \xi) - E(L_{b_0}XL_b) \widetilde{E}(\xi) =  \widetilde{E}(L_{b_0}XL_b \xi).
\end{align*}
Therefore, by applying $\tau_B$ to both sides, we obtain that
\[
\tau_B\left(b_0 \widetilde{\kappa}_{1_{(\ell, \ell)}}(X L_b, \xi)\right) = \tau_B\left(\widetilde{E}(L_{b_0}XL_b \xi)\right) = \tau(L_{b_0} XL_b \xi).
\]
By the left bi-free conjugate variable relations we obtain that 
\begin{align*}
\tau_B\left(b_0 \widetilde{\kappa}_{1_{(\ell, \ell)}}(X L_b, \xi)\right)= \tau\left(L_{b_0} L_{\eta(E(L_b))}\right) =\tau(L_{b_0} L_{\eta(b)}) = \tau(L_{b_0\eta(b)}) = \tau_B(b_0\eta(b) ).
\end{align*}
As this holds for all $b_0 \in B$, we obtain that $\widetilde{\kappa}_{1_{(\ell, \ell)}}(X L_b, \xi) = \eta(b) + N_{\tau_B}$ as desired.   

To see that (c) holds, note for all $b \in B$ and $c_1 \in C_\ell$ that
\[
\tau_B\left(b \widetilde{\kappa}_{1_{(\ell, \ell)}}(c_1, \xi)\right) = \tau(L_b c_1 \xi) = 0
\]
by similar computations as above.  Since this holds for all $b \in B$, we see that $\widetilde{\kappa}_{1_{(\ell, \ell)}}(c_1, \xi) = 0 + N_{\tau_B}$.  Similarly, for all $c_2 \in C_r$ we see that
\begin{align*}
\widetilde{\kappa}_{1_{(r, \ell)}}(c_2, \xi) b = \widetilde{\kappa}_{1_{(r, \ell)}}(R_{b}c_2, \xi)  &=  \Psi_{1_{(r, \ell)}}(R_b c_2, \xi) - \Psi_{0_{(r, \ell)}}(R_b c_2, \xi)\\
&= \widetilde{E}(R_b c_2 \xi) - \widetilde{E}(\xi)E(R_b c_2) =  \widetilde{E}(R_b c_2 \xi).
\end{align*}
Therefore, by applying $\tau_B$ to both sides, we obtain that
\[
\tau_B\left(\widetilde{\kappa}_{1_{(r, \ell)}}(c_2, \xi) b \right) = \tau_B\left(\widetilde{E}(R_b c_2 \xi)\right) = \tau(R_b c_2 \xi).
\]
By the left bi-free conjugate variable relations we obtain that 
\begin{align*}
\tau_B\left(\widetilde{\kappa}_{1_{(r, \ell)}}(c_2, \xi) b\right) =0.
\end{align*}
Therefore, as $\tau_B$ is tracial and the above holds for all $b \in B$, we obtain that $\widetilde{\kappa}_{1_{(r, \ell)}}(c_2, \xi) = 0 + N_{\tau_B}$ as desired.

For (d), we proceed by induction on $n$.  To do so, we will prove the base case $n = 3$ and the inductive step simultaneously.  Fix $n\geq 3$ and suppose when $n > 3$ that (d) holds for all $m < n$.  Let $\chi \in \{\ell, r\}^n$ and let $Z_1, Z_2, \ldots, Z_{n-1} \in A$ be as in the assumptions of (d).  We will assume that $\chi(1) = r$ as the case $\chi(1) = \ell$ will be handled similarly.  Thus for all $b \in B$ we know that
\begin{align*}
\widetilde{\kappa}_\chi(Z_1,\ldots, Z_{n-1},\xi)b &= \widetilde{\kappa}_\chi(R_bZ_1, Z_2, \ldots, Z_{n-1},\xi)\\
& = \widetilde{E}(R_bZ_1Z_2 \cdots Z_{n-1} \xi) - \sum_{\substack{\pi\in\BNC(\chi)\\
\pi\neq 1_\chi\\}}\widetilde{\kappa}_\pi(R_bZ_1, Z_2, \ldots, Z_{n-1}, \xi) \\
& = \widetilde{E}(R_bZ_1Z_2 \cdots Z_{n-1} \xi) - \sum_{\substack{\pi\in\BNC(\chi)\\
\pi\neq 1_\chi\\}}\widetilde{\kappa}_\pi(Z_1, Z_2, \ldots, Z_{n-1}, \xi)b.
\end{align*}
Using the fact that (a), (b), (c) hold and that (d) holds for all $m < n$, we obtain using the same arguments used in the other direction of the proof that 
\[
\sum_{\substack{\pi\in\BNC(\chi)\\ \pi\neq 1_\chi\\}}\widetilde{\kappa}_\pi(Z_1, \ldots, Z_{n-1}, \xi)b = \sum_{\substack{1\leq k < n\\ Z_k =X}} \tilde{E}\left(R_b \left( \prod_{p \in V_k^c\setminus \{k,n\}} Z_p\right)\eta\left(  E\left( \prod_{p \in V_k} Z_p\right)     \right)\right),
\]
where $V_k = \{k < m < n \, \mid \, Z_m \in \{X\} \cup C_\ell\}$.  Therefore, by applying $\tau_B$ to both sides of our initial equation, we obtain that
\begin{align*}
&\tau_B\left( \widetilde{\kappa}_\chi(Z_1,\ldots, Z_{n-1},\xi)b \right) \\
& = \tau_B\left(\widetilde{E}(R_bZ_1Z_2 \cdots Z_{n-1} \xi)  - \sum_{\substack{1\leq k < n\\ Z_k =X}} \tilde{E}\left(R_b \left( \prod_{p \in V_k^c\setminus \{k,n\}} Z_p\right)\eta\left(  E\left( \prod_{p \in V_k} Z_p\right)     \right)\right)      \right)\\
& = \tau (R_bZ_1Z_2 \cdots Z_{n-1} \xi)  - \sum_{\substack{1\leq k < n\\ Z_k =X}} \tau\left(R_b \left( \prod_{p \in V_k^c\setminus \{k,n\}} Z_p\right)\eta\left(  E\left( \prod_{p \in V_k} Z_p\right)     \right)\right)  =0,
\end{align*}
by the left bi-free conjugate variable relations.  Therefore, as the above holds for all $b\in B$ and $\tau_B$ is tracial, the result follows.
\end{proof}

The cumulant approach to conjugate variables has merits as it is very simple to check that most cumulants vanish and the values of others.  For instance, an observant reader might have noticed that the operators $X$ and $Y$ in Definition \ref{defn:conjugate-variables} were not required to be self-adjoint.  This is for later use in the paper and can be converted to studying self-adjoint operators as follows.

\begin{lemma}
\label{lem:non-self-adjoint-conjugate-variables-to-self-adjoint}
Let $(A,E,\varepsilon,\tau)$ be an analytical $B$-$B$-non-commutative probability space, let $(C_\ell, C_r)$ be a pair of $B$-algebras in $A$, let $X \in A_\ell$, and let $\eta:B\to B$ be a completely positive map.  The left bi-free conjugate variables
\[
J_\ell(X : (\alg(C_\ell, X^*), C_r), \eta) \qqand J_\ell(X^* : (\alg(C_\ell, X), C_r), \eta)
\]
exist if and only if
\[
J_\ell(\Re(X) : (\alg(C_\ell, \Im(X)), C_r), \eta) \qqand J_\ell(\Im(X) : (\alg(C_\ell, \Re(X)), C_r), \eta)
\]
exist where $\Re(X) = \frac{1}{2}(X+X^*)$ and $\Im(X)= \frac{1}{2i}(X-X^*)$.  Furthermore,
\begin{align*}
J_\ell(\Re(X) : (\alg(C_\ell, \Im(X)), C_r), \eta) &= J_\ell(X : (\alg(C_\ell, X^*), C_r), \eta)  + J_\ell(X^* : (\alg(C_\ell, X), C_r), \eta) \\
J_\ell(\Im(X) : (\alg(C_\ell, \Re(X)), C_r), \eta) &= iJ_\ell(X : (\alg(C_\ell, X^*), C_r), \eta)  - iJ_\ell(X^* : (\alg(C_\ell, X), C_r), \eta).
\end{align*}
A similar result holds for right bi-free conjugate variables.
\end{lemma}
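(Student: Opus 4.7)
The plan is to verify the cumulant characterization of Theorem \ref{thm:conjugate-variables-via-cumulants}. Set $\xi_X := J_\ell(X : (\alg(C_\ell, X^*), C_r), \eta)$ and $\xi_{X^*} := J_\ell(X^* : (\alg(C_\ell, X), C_r), \eta)$, and define $\xi_\Re := \xi_X + \xi_{X^*}$ and $\xi_\Im := i\xi_X - i\xi_{X^*}$. I will show $\xi_\Re$ satisfies conditions (a)--(d) of Theorem \ref{thm:conjugate-variables-via-cumulants} for $J_\ell(\Re(X) : (\alg(C_\ell, \Im(X)), C_r), \eta)$; the analysis for $\xi_\Im$ is parallel, and the converse direction is obtained by inverting the linear system to get $\xi_X = \tfrac12(\xi_\Re - i\xi_\Im)$ and $\xi_{X^*} = \tfrac12(\xi_\Re + i\xi_\Im)$ and running the same verification. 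The closure condition is immediate since $\alg(X, X^*, C_\ell, C_r) = \alg(\Re(X), \Im(X), C_\ell, C_r)$. Condition (a) is by linearity, and condition (b) follows from expanding $\widetilde{\kappa}_{1_{(\ell,\ell)}}(\Re(X) L_b, \xi_\Re)$ via bilinearity of $\widetilde{\kappa}$: the diagonal pairs $(XL_b, \xi_X)$ and $(X^*L_b, \xi_{X^*})$ each contribute $\eta(b)$ by condition (b) of the summands, while the cross terms vanish by condition (c) since $XL_b \in \alg(C_\ell, X)$ and $X^*L_b \in \alg(C_\ell, X^*)$. Averaging with the $\tfrac12$ factors returns $\eta(b)$.

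For conditions (c) and (d), I will reduce to cumulants on single letters in $C_\ell \cup \{X, X^*\} \cup C_r$ via three operations. First, bilinearity in the last argument splits the cumulant into contributions from $\xi_X$ and $\xi_{X^*}$. Second, multilinearity in the other arguments lets me expand each $Z_k \in \alg(C_\ell, \Im(X))$ as a sum of products of single letters in $C_\ell \cup \{\Im(X)\}$ and substitute $\Im(X) = \tfrac{1}{2i}(X - X^*)$. Third, Proposition \ref{prop:analytic-cumulants-with-product-entries} (combined with continuity of $\widetilde{\kappa}$ in its last entry to accommodate $\xi_\Re \in L_2(A,\tau)$) rewrites each resulting cumulant of products as a sum over bi-non-crossing partitions $\sigma$ of $\widetilde{\kappa}_\sigma$ evaluated on single letters. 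Using the bi-multiplicative factorization of $\widetilde{\kappa}_\sigma$ from Definition \ref{defn:anal-bi-multiplicative}, the block $V$ of $\sigma$ containing the last entry produces a factor $\widetilde{\kappa}_{1_{\chi|_V}}(\cdots, \xi)$; conditions (a), (c), (d) for $\xi_X$ and $\xi_{X^*}$ force this factor to vanish unless $V = \{j^*, \mathrm{last}\}$ has size two with $W_{j^*} \in \{X, X^*\}$, in which case condition (b) supplies the factor $\eta(1)$; the remaining blocks produce operator-valued bi-free cumulants that depend only on letters at positions other than $j^*$.

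The crux is a sign cancellation. In conditions (c) and (d) no $Z_k$ equals $\Re(X)$, so every surviving position $j^*$ with $W_{j^*} \in \{X, X^*\}$ arises from expanding an $\Im(X)$, entering the expansion with coefficient $\tfrac{1}{2i}$ if $W_{j^*} = X$ or $-\tfrac{1}{2i}$ if $W_{j^*} = X^*$. For any fixed partition $\sigma$ and any fixed assignment of letters at positions other than $j^*$, toggling the letter at $j^*$ between $X$ and $X^*$ preserves every $\kappa^B$-factor (because $j^*$ belongs only to the two-element block with the last entry) and preserves the $\eta(1)$-factor (since $\widetilde{\kappa}_{1_{(\ell,\ell)}}(X, \xi_\Re) = \widetilde{\kappa}_{1_{(\ell,\ell)}}(X^*, \xi_\Re) = \eta(1)$), so the two contributions cancel; summing over all polynomial expansions and all $\sigma$ then gives zero. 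The symmetric computation for $\xi_\Im$ pairs the opposite-sign weights $i, -i$ from $\xi_\Im = i\xi_X - i\xi_{X^*}$ against the same-sign coefficients $\tfrac{1}{2}, \tfrac{1}{2}$ from the $\Re(X)$-expansion to produce the analogous cancellation. The main anticipated obstacle is ensuring that the bi-multiplicative factorization in the third reduction step cleanly isolates the block containing the last entry from the rest of $\sigma$, so that the letter at $j^*$ genuinely only affects the $\eta(1)$-factor and the $\Im(X)$-expansion coefficient; once this isolation is secured via the properties in Definition \ref{defn:anal-bi-multiplicative}, the cancellation mechanism applies term by term.
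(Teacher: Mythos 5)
Your proposal is correct and follows essentially the same route as the paper: define the candidate conjugate variables by the same linear combinations, verify conditions (a)--(d) of Theorem \ref{thm:conjugate-variables-via-cumulants} via bilinearity and the sign cancellation between the $X$- and $X^*$-terms, and handle product entries through Proposition \ref{prop:analytic-cumulants-with-product-entries}. The only cosmetic imprecision is that the surviving pair-block factor is $\eta(b)$ for a $b$ produced by the bi-multiplicative reduction rather than $\eta(1)$, but since this factor is the same whether the toggled letter is $X$ or $X^*$, the cancellation mechanism is unaffected.
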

\begin{proof}
Suppose
\[
\xi_1 = J_\ell(X : (\alg(C_\ell, X^*), C_r), \eta) \qqand \xi_2 = J_\ell(X^* : (\alg(C_\ell, X), C_r), \eta)
\]
exist.  Hence $\xi_1$ and $\xi_2$ satisfy the appropriate analytic cumulant equations from Theorem \ref{thm:conjugate-variables-via-cumulants}.  Let
\[
h_1 = \xi_1 + \xi_2 \qqand h_2 = i \xi_1 - i \xi_2.
\]
As
\[
\xi_1 \in \overline{\alg(X, \alg(C_\ell, X^*), C_r)}^{\left\|\, \cdot \, \right\|_\tau} \qqand \xi_2 \in \overline{\alg(X^*, \alg(C_\ell, X), C_r)}^{\left\|\, \cdot \, \right\|_\tau} ,
\]
we easily see that 
\[
h_1 \in \overline{\alg(\Re(X), \alg(C_\ell, \Im(X)), C_r)}^{\left\|\, \cdot \, \right\|_\tau} \qqand \xi_2 \in \overline{\alg(\Im(X), \alg(C_\ell, \Re(X)), C_r)}^{\left\|\, \cdot \, \right\|_\tau}.
\]
Thus, by Theorem \ref{thm:conjugate-variables-via-cumulants}, it suffices to show that $h_1$ and $h_2$ satisfy the appropriate conjugate variable formulae.  Indeed, property (a) of  Theorem \ref{thm:conjugate-variables-via-cumulants} holds as
\[
\widetilde{\kappa}_{1_{(\ell)}}(h_1) = \widetilde{\kappa}_{1_{(\ell)}}(h_2) = 0 + N_{\tau_B}.
\]
Next, notice for all $b \in B$ that
\begin{align*}
&\widetilde{\kappa}_{1_{(\ell, \ell)}}(\Re(X)L_b, h_1) \\
&= \widetilde{\kappa}_{1_{(\ell, \ell)}}\left(\frac{1}{2} X L_b, \xi_1\right) +  \widetilde{\kappa}_{1_{(\ell, \ell)}}\left(\frac{1}{2} X^* L_b, \xi_1\right) +  \widetilde{\kappa}_{1_{(\ell, \ell)}}\left(\frac{1}{2} X L_b, \xi_2\right) +  \widetilde{\kappa}_{1_{(\ell, \ell)}}\left(\frac{1}{2} X^* L_b, \xi_2\right) \\
&= \frac{1}{2} \eta(b) + 0 + 0 + \frac{1}{2}\eta(b) = \eta(b),
\end{align*}
and
\begin{align*}
&\widetilde{\kappa}_{1_{(\ell, \ell)}}(\Im(X)L_b, h_2) \\
&= \widetilde{\kappa}_{1_{(\ell, \ell)}}\left(\frac{1}{2i} X L_b, i\xi_1\right) +  \widetilde{\kappa}_{1_{(\ell, \ell)}}\left(-\frac{1}{2i} X^* L_b, i\xi_1\right) +  \widetilde{\kappa}_{1_{(\ell, \ell)}}\left(\frac{1}{2i} X L_b, -i\xi_2\right) +  \widetilde{\kappa}_{1_{(\ell, \ell)}}\left(-\frac{1}{2i} X^* L_b, -i\xi_2\right) \\
&= \frac{1}{2i} i\eta(b) - 0 + 0 - \frac{1}{2i}(-i)\eta(b) = \eta(b).
\end{align*}
Hence property (b) of Theorem \ref{thm:conjugate-variables-via-cumulants} holds.

To see  properties (c) and (d) of Theorem \ref{thm:conjugate-variables-via-cumulants} hold, note for all $b \in B$ that 
\begin{align*}
&\widetilde{\kappa}_{1_{(\ell, \ell)}}(\Re(X)L_b, h_2) \\
&= \widetilde{\kappa}_{1_{(\ell, \ell)}}\left(\frac{1}{2} X L_b, i\xi_1\right) +  \widetilde{\kappa}_{1_{(\ell, \ell)}}\left(\frac{1}{2} X^* L_b, i\xi_1\right) +  \widetilde{\kappa}_{1_{(\ell, \ell)}}\left(\frac{1}{2} X L_b, -i\xi_2\right) + \widetilde{\kappa}_{1_{(\ell, \ell)}}\left(\frac{1}{2} X^* L_b, -i\xi_2\right) \\
&= \frac{1}{2}i  \eta(b) + 0 + 0 + \frac{1}{2}(-i)\eta(b) = 0,
\end{align*}
and similarly $\widetilde{\kappa}_{1_{(\ell, \ell)}}(\Im(X)L_b, h_1) =0$.  Therefore,  Proposition \ref{prop:analytic-cumulants-with-product-entries} along with the linearity of the cumulants in each entry yield properties (c) and (d).

The converse direction is proved analogously.  
\end{proof}

Of course, many other results follow immediately from the cumulant definition of the conjugate variables.

\begin{lemma}
\label{lem:conj-var-scaling}
Let $(A,E,\varepsilon,\tau)$ be an analytical $B$-$B$-non-commutative probability space, let $(C_\ell, C_r)$ be a pair of $B$-algebras in $A$, let $X \in A_\ell$, and let $\eta:B\to B$ be a completely positive map.  If
\[
\xi = J_\ell(X : (C_\ell, C_r), \eta)   
\]
exists, then for all $\lambda \in \bC \setminus \{0\}$   the conjugate variable $J_\ell(\lambda X : (C_\ell, C_r), \eta) $ exists and is equal to $\frac{1}{\lambda} \xi$.

A similar result holds for right bi-free conjugate variables.
\end{lemma}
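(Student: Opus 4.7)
The plan is to invoke the cumulant characterization of bi-free conjugate variables from Theorem \ref{thm:conjugate-variables-via-cumulants} together with the multilinearity of the analytical bi-cumulant function $\widetilde{\kappa}$. Since $\lambda \neq 0$, we have the algebraic equality
\[
\alg(\lambda X, C_\ell, C_r) = \alg(X, C_\ell, C_r),
\]
so the candidate $\frac{1}{\lambda}\xi$ automatically lies in the required closed subspace $\overline{\alg(\lambda X, C_\ell, C_r)}^{\|\cdot\|_\tau}$. Thus it suffices to verify the four cumulant conditions (a)--(d) of Theorem \ref{thm:conjugate-variables-via-cumulants} for $\lambda X$ and $\frac{1}{\lambda}\xi$ in place of $X$ and $\xi$.

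First I would handle (a) and (c) simultaneously: by the $\bC$-multilinearity of $\widetilde{\kappa}$ in each entry, these cumulants are $\frac{1}{\lambda}$ times the corresponding cumulants for $\xi$, which vanish by hypothesis. For condition (b), a direct application of multilinearity gives
\[
\widetilde{\kappa}_{1_{(\ell,\ell)}}\bigl((\lambda X) L_b,\, \tfrac{1}{\lambda}\xi\bigr) = \lambda \cdot \tfrac{1}{\lambda}\,\widetilde{\kappa}_{1_{(\ell,\ell)}}(X L_b, \xi) = \eta(b) + N_{\tau_B},
\]
so the scalings cancel exactly as required. Finally, for condition (d), fix any $n \geq 3$, $\chi \in \{\ell,r\}^n$, and admissible entries $Z_1', \ldots, Z_{n-1}'$ of the form dictated by (d) for $\lambda X$. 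Write each $Z_k' = \lambda X$ or $Z_k' = Z_k \in C_{\chi(k)}$, and let $m$ be the number of indices where $Z_k' = \lambda X$. By multilinearity,
\[
\widetilde{\kappa}_\chi\bigl(Z_1', \ldots, Z_{n-1}', \tfrac{1}{\lambda}\xi\bigr) = \lambda^{m} \cdot \tfrac{1}{\lambda}\, \widetilde{\kappa}_\chi(Z_1, \ldots, Z_{n-1}, \xi),
\]
where on the right each $\lambda X$ has been replaced by $X$. The right-hand side vanishes by condition (d) applied to $\xi$, so the left-hand side is also zero.

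Since conditions (a)--(d) hold, Theorem \ref{thm:conjugate-variables-via-cumulants} yields that $\frac{1}{\lambda}\xi$ satisfies the left bi-free conjugate variable relations for $\lambda X$ with respect to $\eta$ and $\tau$ in the presence of $(C_\ell, C_r)$, and by the uniqueness noted in Remark \ref{rem:conjugate-variable-immediate-remarks}(ii) it equals $J_\ell(\lambda X : (C_\ell, C_r), \eta)$. The right bi-free conjugate variable case is entirely analogous, replacing (b) with the corresponding relation involving $R_b$. There is no genuine obstacle here; the argument is a direct exploitation of multilinearity of $\widetilde{\kappa}$, and the only point that requires noting is that the two scalings $\lambda$ (on the single occurrence of $\lambda X$ in (b)) and $\frac{1}{\lambda}$ (on the last entry) cancel precisely, while in (d) any surplus power $\lambda^{m}$ multiplies an already-vanishing cumulant.
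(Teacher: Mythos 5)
Your proof is correct and follows exactly the route the paper intends: the lemma is stated without proof immediately after the remark that such results ``follow immediately from the cumulant definition of the conjugate variables,'' and your verification of conditions (a)--(d) of Theorem \ref{thm:conjugate-variables-via-cumulants} via multilinearity of $\widetilde{\kappa}$, together with the observation that $\alg(\lambda X, C_\ell, C_r) = \alg(X, C_\ell, C_r)$ and the uniqueness from Remark \ref{rem:conjugate-variable-immediate-remarks}, is precisely that argument spelled out.
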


\begin{proposition}
Let $(A,E,\varepsilon,\tau)$ be an analytical $B$-$B$-non-commutative probability space, let $(C_\ell, C_r)$ be a pair of $B$-algebras in $A$, let $X \in A_\ell$, and let $\eta_1, \eta_2:B\to B$ be completely positive maps.  If
\[
\xi_1 = J_\ell(X : (C_\ell, C_r), \eta_1) \qqand \xi_2 = J_\ell(X : (C_\ell, C_r), \eta_2)
\]
exist, then $\xi = J_\ell(X : (C_\ell, C_r), \eta_1 + \eta_2)$ exists and $\xi = \xi_1 + \xi_2$.

A similar result holds for right bi-free conjugate variables.
\end{proposition}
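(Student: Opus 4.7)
The plan is to verify the cumulant characterization of Theorem \ref{thm:conjugate-variables-via-cumulants} for the candidate $\xi := \xi_1 + \xi_2$ with respect to the completely positive map $\eta_1 + \eta_2$. First, note that $\eta_1 + \eta_2$ is indeed completely positive (as a sum of completely positive maps), so the statement is well-posed. Moreover, since $\xi_1, \xi_2 \in \overline{\alg(X, C_\ell, C_r)}^{\|\cdot\|_\tau}$ by definition, the sum $\xi$ also lies in this closure, so by the uniqueness remark in Remark \ref{rem:conjugate-variable-immediate-remarks}(ii) it suffices to show $\xi$ satisfies either the moment relations of Definition \ref{defn:conjugate-variables} or equivalently conditions (a)--(d) of Theorem \ref{thm:conjugate-variables-via-cumulants} with $\eta$ replaced by $\eta_1 + \eta_2$.

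The cumulant route is the cleanest. The central observation is that the analytical bi-cumulant function $\widetilde{\kappa}_\pi$ is $\bC$-multilinear in its entries, and in particular $\bC$-linear in the terminal $L_2(A,\tau)$ entry; this is built into Definition \ref{defn:anal-bi-multiplicative} for analytic extensions, is inherited by $\Psi$ via its recursive construction, and is preserved under the M\"obius convolution that produces $\widetilde{\kappa}$. Hence for every bi-non-crossing partition $\pi$ and sequence of operators $(Z_1, \ldots, Z_{n-1})$ we have
\[
\widetilde{\kappa}_\pi(Z_1, \ldots, Z_{n-1}, \xi_1 + \xi_2) = \widetilde{\kappa}_\pi(Z_1, \ldots, Z_{n-1}, \xi_1) + \widetilde{\kappa}_\pi(Z_1, \ldots, Z_{n-1}, \xi_2).
\]

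Applying this identity to the four cumulant conditions of Theorem \ref{thm:conjugate-variables-via-cumulants} and invoking that $\xi_j$ satisfies them with respect to $\eta_j$ for $j=1,2$: condition (a) gives $0+0 = 0$; condition (b) yields $\widetilde{\kappa}_{1_{(\ell,\ell)}}(XL_b, \xi) = \eta_1(b) + \eta_2(b) + N_{\tau_B} = (\eta_1+\eta_2)(b) + N_{\tau_B}$ for each $b \in B$; conditions (c) and (d) again give $0 + 0 = 0$ since both summands vanish. By Theorem \ref{thm:conjugate-variables-via-cumulants}, $\xi$ satisfies the left bi-free conjugate variable relations for $X$ with respect to $\eta_1 + \eta_2$ and $\tau$ in the presence of $(C_\ell, C_r)$, so by uniqueness $\xi = J_\ell(X : (C_\ell, C_r), \eta_1 + \eta_2)$. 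The right version is identical after replacing $L$ with $R$ and $\ell$ with $r$ throughout. There is no real obstacle; the entire argument is a one-line application of linearity in the last entry of the analytic cumulant, enabled precisely by the machinery built in Sections \ref{sec:Bi-Multi} and \ref{sec:Conj}.
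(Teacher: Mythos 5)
Your proof is correct and follows exactly the route the paper intends: the paper states this proposition without a written proof, remarking only that such results "follow immediately from the cumulant definition of the conjugate variables," and your argument — linearity of $\widetilde{\kappa}$ in the terminal $L_2(A,\tau)$ entry applied to conditions (a)--(d) of Theorem \ref{thm:conjugate-variables-via-cumulants}, plus the observation that $\xi_1+\xi_2$ lies in $\overline{\alg(X,C_\ell,C_r)}^{\|\cdot\|_\tau}$ — is precisely that argument, carried out carefully.
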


\begin{proposition}
Let $(A,E,\varepsilon,\tau)$ be an analytical $B$-$B$-non-commutative probability space, let $(C_\ell, C_r)$ be a pair of $B$-algebras in $A$, let $X \in A_\ell$, and let $\eta :B\to B$ be a completely positive map. For  fixed $b_1, b_2 \in B$, define $\eta_{\ell, b_1, b_2} : B \to B$ by
\[
\eta_{\ell, b_1, b_2}(b) = \eta(b b_2) b_1
\]
for all $b \in B$.  If $\xi = J_\ell(X : (C_\ell, C_r), \eta)$ exists and $\eta_{\ell, b_1, b_2}$ is completely positive, then $J_\ell(X : (C_\ell, C_r), \eta_{\ell, b_1, b_2})$ exists and
\[
J_\ell(X : (C_\ell, C_r), \eta_{\ell, b_1, b_2}) = R_{b_1} L_{b_2} \xi.
\]

Similarly, if $Y \in A_r$ and $\eta_{r, b_1, b_2} : B \to B$ is defined by
\[
\eta_{r, b_1, b_2} = b_2 \eta(b_1 b)
\]
for all $b \in B$ is completely positive, and $J_r(Y : (C_\ell, C_r), \eta)$ exists, then $J_r(Y : (C_\ell, C_r), \eta_{r, b_1, b_2})$ exists and
\[
J_r(Y : (C_\ell, C_r), \eta_{r, b_1, b_2}) = R_{b_1} L_{b_2}J_r(Y : (C_\ell, C_r), \eta).
\]
\end{proposition}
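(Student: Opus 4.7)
The plan is to verify the defining moment relations for the conjugate variable directly, by applying the hypothesis to an enlarged test sequence. Since $L_{b_2}\in B_\ell\subseteq C_\ell$ and $R_{b_1}\in B_r\subseteq C_r$, for any admissible tuple $Z_1,\dots,Z_n \in \{X\}\cup C_\ell\cup C_r$ the extended tuple $(Z_1,\dots,Z_n, R_{b_1}, L_{b_2})$ is again admissible. Applying the hypothesis $\xi=J_\ell(X:(C_\ell,C_r),\eta)$ to this extended tuple yields a formula for $\tau(Z_1\cdots Z_n R_{b_1} L_{b_2}\xi)=\tau(Z_1\cdots Z_n h)$, where $h:=R_{b_1}L_{b_2}\xi$. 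Together with the closure condition $h\in\overline{\alg(X,C_\ell,C_r)}^{\|\,\cdot\,\|_\tau}$ (which holds because $R_{b_1}$ and $L_{b_2}$ are bounded operators on $L_2(A,\tau)$ and preserve $\alg(X,C_\ell,C_r)$), the uniqueness of conjugate variables noted in Remark \ref{rem:conjugate-variable-immediate-remarks}(ii) will then identify $h$ with $J_\ell(X:(C_\ell,C_r),\eta_{\ell,b_1,b_2})$.

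The main work is unwinding the right-hand side produced by the hypothesis. Since neither $R_{b_1}$ nor $L_{b_2}$ equals $X$, the sum still runs over those $k\le n$ with $Z_k=X$. For each such $k$, the new index set $V_k$ picks up $n+2$ (as $L_{b_2}\in C_\ell$) but not $n+1$ (as $R_{b_1}\in C_r$), so the inner argument becomes $E(\prod_{p\in V_k^{\mathrm{orig}}}Z_p\cdot L_{b_2})$, while the outer product in numeric order picks up a trailing factor of $R_{b_1}$. Two algebraic identities then convert the resulting summand into the one required by the $\eta_{\ell,b_1,b_2}$-relation. The first is
\[
E(aL_{b_2})=E(aR_{b_2})=E(R_{b_2}a)=E(a)b_2 \qquad \text{for } a\in A_\ell,
\]
which follows from $E(aL_b)=E(aR_b)$, the commutation $aR_{b_2}=R_{b_2}a$ (since $a\in A_\ell$), and the bimodule property of $E$; this produces the inner shift $b\mapsto bb_2$ inside $\eta$. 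The second is
\[
\tau(WR_{b_1}L_\alpha)=\tau(WR_{b_1}R_\alpha)=\tau(WR_{\alpha b_1})=\tau(WL_{\alpha b_1}),
\]
which follows from $\tau(\,\cdot\,L_\gamma)=\tau(\,\cdot\,R_\gamma)$ together with $R_{b_1}R_\alpha=R_{\alpha b_1}$ (as $\varepsilon$ is a homomorphism into the opposite algebra); this absorbs the trailing $R_{b_1}$ into $L_\alpha$, producing the outer right multiplication by $b_1$ and yielding $\eta_{\ell,b_1,b_2}(b)=\eta(bb_2)b_1$.

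The main subtlety is the careful bookkeeping of how $V_k$, its complement, and the numeric-order products behave once the two new indices $n+1, n+2$ are appended; after this is set up correctly, the two identities above make the matching transparent. The right-handed statement follows by an entirely symmetric argument: apply the hypothesis for $\nu := J_r(Y:(C_\ell,C_r),\eta)$ to the same extended tuple $(Z_1,\dots,Z_n, R_{b_1}, L_{b_2})$, so that now the new $V_k$ absorbs $n+1$ instead of $n+2$ and the outer product gains a trailing $L_{b_2}$; the analogue of the first identity becomes $E(aR_{b_1})=b_1E(a)$ for $a\in A_r$ (giving the inner shift $b\mapsto b_1 b$), and the analogue of the second becomes $\tau(WL_{b_2}R_\beta)=\tau(WR_{b_2\beta})$ (giving the outer left multiplication by $b_2$), matching $\eta_{r,b_1,b_2}(b)=b_2\eta(b_1 b)$.
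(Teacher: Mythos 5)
Your proof is correct, but it takes a genuinely different route from the paper. You verify the moment relations of Definition \ref{defn:conjugate-variables} directly, by appending $R_{b_1}$ and $L_{b_2}$ to the test tuple and tracking how $V_k$ and its complement change; the two identities you isolate, $E(aL_{b_2})=E(a)b_2$ for $a\in A_\ell$ and $\tau(WR_{b_1}L_\alpha)=\tau(WL_{\alpha b_1})$, are exactly what is needed and both follow from the axioms of an analytical $B$-$B$-non-commutative probability space as you say. The paper instead works through the cumulant characterization (Theorem \ref{thm:conjugate-variables-via-cumulants}): it computes $\widetilde{\kappa}_{1_{(\ell)}}(R_{b_1}L_{b_2}\xi)=b_2\widetilde{\kappa}_{1_{(\ell)}}(\xi)b_1=0$ and $\widetilde{\kappa}_{1_{(\ell,\ell)}}(XL_b,R_{b_1}L_{b_2}\xi)=\widetilde{\kappa}_{1_{(\ell,\ell)}}(XL_{bb_2},\xi)b_1=\eta(bb_2)b_1$, and then invokes the analytical bi-multiplicativity properties together with Proposition \ref{prop:analytic-cumulants-with-product-entries} to see that the remaining cumulants vanish. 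The cumulant route localizes the entire computation to two low-order cumulants at the cost of leaning on the machinery of Sections \ref{sec:Bi-Multi} and \ref{sec:Conj}; your moment route requires the index-set bookkeeping you describe but is self-contained, and it has the minor virtue of making the closure condition $R_{b_1}L_{b_2}\xi\in\overline{\alg(X,C_\ell,C_r)}^{\left\|\,\cdot\,\right\|_\tau}$ explicit, which the paper's proof leaves implicit.
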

\begin{proof}
By Theorem \ref{thm:conjugate-variables-via-cumulants}, it suffices to show that $R_{b_1} L_{b_2} \xi$ satisfies the appropriate analytical operator-valued bi-free cumulant formula.  Indeed, clearly
\[
\widetilde{\kappa}_{1_{(\ell)}}(R_{b_1} L_{b_2} \xi) = b_2 \widetilde{\kappa}_{1_{(\ell)}}(\xi) b_1 = 0
\]
and for all $b \in B$
\[
\widetilde{\kappa}_{1_{(\ell, \ell)}}(X L_b, R_{b_1} L_{b_2} \xi) = \widetilde{\kappa}_{1_{(\ell, \ell)}}(X L_bL_{b_2},  \xi)b_1 = \widetilde{\kappa}_{1_{(\ell, \ell)}}(X L_{b b_2},  \xi)b_1 = \eta(bb_2) b_1 = \eta_{b_1, b_2}(b).
\]
To show that the other analytical operator-valued bi-free cumulants from Theorem \ref{thm:conjugate-variables-via-cumulants} vanish, one simply needs to use the analytical extension properties of bi-multiplicative functions together with Proposition \ref{prop:analytic-cumulants-with-product-entries}.  The result for right bi-free conjugate variables is analogous.
\end{proof}

Similarly, many results pertaining to conjugate variables from \cites{V1998-2,S2000, CS2020} immediately generalize to the conjugate variables in Definition \ref{defn:conjugate-variables}.  However, one result from \cite{S2000} requires additional set-up.  In the context of Example \ref{exam:factors}, one can always consider a further von Neumann subalgebra $D$ of $B$ and ask how the conjugate variables react.  To analyze the comparable situation in our setting, we need the following example.

\begin{example}\label{exam:reducing-analytic-spaces-to-smaller-algebra}
Let $(A, E, \varepsilon, \tau)$ be an analytical $B$-$B$-non-commutative probability space and let $D$ be a unital $*$-subalgebra of $B$ (with $1_D = 1_B$).  If $F : B \to D$ is a conditional expectation in the sense that $F(d) = d$ for all $d \in D$ and $F(d_1bd_2) = d_1 F(b) d_2$ for all $d_1, d_2 \in D$ and $b \in B$, then $(A, F \circ E, \varepsilon|_{D \otimes D^{\op}})$ is a $D$-$D$-non-commutative probability space by \cite{S2016-2}*{Section 3}.

Note $\tau_D = \tau_B|_D : D \to \bC$ is a tracial state being the restriction of a tracial state.  Moreover, if $\tau_B$ is compatible with $F$ in the sense that $\tau_B(F(b)) = \tau_B(b)$ for all $b \in B$, we easily see that $\tau$ is compatible with $F \circ E$, as for all $a \in A$ we have that
\[
\tau(a) = \tau\left(L_{E(a)}\right) = \tau_B(E(a)) = \tau_B(F(E(a))) = \tau\left(L_{F(E(a))}\right)
\]
and similarly $\tau(a) = \tau\left(R_{F(E(a))}\right)$.  Hence $(A, F \circ E, \varepsilon|_{D \otimes D^{\op}}, \tau)$ is an analytical $D$-$D$-non-commutative probability space.
\end{example} 

\begin{proposition}\label{prop:conj-variables-smaller-amalgamating-algebra}
Let $(A, E, \varepsilon, \tau)$ be an analytical $B$-$B$-non-commutative probability space and  $X \in A_\ell$.  In addition, let $D$ and $F$ be as in Example \ref{exam:reducing-analytic-spaces-to-smaller-algebra} and  let $(C_\ell, C_r)$ be a pair of $B$-algebras (and thus automatically a pair of $D$-algebras) and $\eta : D \to D$ be a completely positive map. Moreover, suppose that $F$ is completely positive (and hence $\eta \circ F : B \to D \subseteq B$ is also completely positive).

Then, the conjugate variable $J_\ell(X : (C_\ell, C_r), \eta \circ F)$ exists in the  analytical $B$-$B$-non-commutative probability space $(A, E, \varepsilon, \tau)$ if and only if the conjugate variable $J_\ell(X : (C_\ell, C_r), \eta)$ exists in the  analytical $D$-$D$-non-commutative probability space $(A, F \circ E, \varepsilon|_{D \otimes D^{\op}}, \tau)$, in which case they are the same element of $L_2(A, \tau)$.

A similar result holds for right bi-free conjugate variables.
\end{proposition}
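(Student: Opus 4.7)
The plan is to verify that the two conjugate variables are characterized by literally the same equations, so that the statement reduces to a routine bookkeeping check. By Definition \ref{defn:conjugate-variables}, an element $\xi \in L_2(A,\tau)$ is the left bi-free conjugate variable $J_\ell(X : (C_\ell,C_r), \eta\circ F)$ in the $B$-$B$-setting exactly when (a) $\xi \in \overline{\alg(X, C_\ell, C_r)}^{\left\|\,\cdot\,\right\|_\tau}$ and (b) for every $n \in \bN\cup \{0\}$ and $Z_1,\ldots,Z_n \in \{X\} \cup C_\ell \cup C_r$ the moment identity
\[
\tau(Z_1\cdots Z_n \xi) = \sum_{\substack{1 \leq k \leq n \\ Z_k = X}} \tau\!\left(\left(\prod_{p \in V_k^c\setminus\{k,n+1\}} Z_p\right) L_{(\eta\circ F)\left(E\left(\prod_{p\in V_k} Z_p\right)\right)}\right)
\]
holds, where $L_b$ here denotes $\varepsilon(b\otimes 1_B)$ and $V_k = \{k < m < n+1 \, \mid \, Z_m \in \{X\} \cup C_\ell\}$.

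First I would observe that condition (a) depends only on the state $\tau : A \to \bC$ and the subalgebra $\alg(X,C_\ell,C_r) \subseteq A$; both of these are unchanged when we pass to the $D$-$D$-structure, so (a) is literally the same requirement in both contexts. Next I would confirm that (b) agrees term-by-term with the corresponding moment identity defining $J_\ell(X : (C_\ell,C_r), \eta)$ in the $D$-$D$-space $(A, F\circ E, \varepsilon|_{D\otimes D^{\op}}, \tau)$. Two elementary observations handle this: by associativity of composition, $(\eta\circ F)\circ E = \eta \circ (F\circ E)$, so the elements of $B$ appearing inside the $L$'s in the two moment identities are literally equal; and because $1_D = 1_B$, for any $d\in D$ we have $(\varepsilon|_{D\otimes D^{\op}})(d\otimes 1_D) = \varepsilon(d\otimes 1_B)$, so the two \emph{a priori} distinct interpretations of $L_d$ coincide. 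Since $\eta\circ F$ and $\eta\circ (F\circ E)$ both take values in $D$, it is precisely such $L_d$ that appear in the moment relations.

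Combining these observations, $\xi$ satisfies the defining conditions for $J_\ell(X : (C_\ell,C_r), \eta\circ F)$ in $(A,E,\varepsilon,\tau)$ if and only if $\xi$ satisfies the defining conditions for $J_\ell(X : (C_\ell,C_r), \eta)$ in $(A, F\circ E, \varepsilon|_{D\otimes D^{\op}}, \tau)$, which yields both the equivalence of existence and the equality of the two elements of $L_2(A,\tau)$. The right-sided statement follows by the symmetric argument, replacing $L$ with $R$ throughout. There is no genuine obstacle to the argument; its content lies entirely in recognizing this compatibility, which was already prefigured by the construction in Example \ref{exam:reducing-analytic-spaces-to-smaller-algebra}.
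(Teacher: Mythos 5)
Your proposal is correct and follows essentially the same route as the paper, whose proof is the one-line observation that $(\eta\circ F)\circ E = \eta\circ(F\circ E)$ makes the conjugate variable relations of Definition \ref{defn:conjugate-variables} literally identical in the two settings. Your additional bookkeeping (that the closure condition and the interpretation of $L_d$ for $d \in D$ are unchanged) is a correct and slightly more careful spelling-out of what the paper leaves implicit.
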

\begin{proof}
As $(\eta \circ F) \circ E = \eta \circ (F \circ E)$, the bi-free conjugate variable relations from Definition \ref{defn:conjugate-variables} are precisely the same and thus there is nothing to prove.
\end{proof}

With Proposition \ref{prop:conj-variables-smaller-amalgamating-algebra} out of the way, we turn our attention to proving that the expected generalizations of conjugate variable properties from \cites{V1998-2,S2000, CS2020} hold.

\begin{lemma}
\label{lem:con-var-reducing-alg-in-presence}
Let $(A,E,\varepsilon,\tau)$ be an analytical $B$-$B$-non-commutative probability space, let $(C_\ell, C_r)$ be a pair of $B$-algebras in $A$, let $X \in A_\ell$, and let $\eta:B\to B$ be a completely positive map.   Suppose further that $D_\ell \subseteq C_\ell$ and $D_r \subseteq C_r$ are such that $(D_\ell, D_r)$ is a pair of $B$-algebras in $A$.  If
\[
\xi = J_\ell(X : (C_\ell, C_r), \eta)   
\]
exists then
\[
\xi' = J_\ell(X : (D_\ell, D_r), \eta)   
\]
exists.  In particular, if $P$ is the orthogonal projection of $L_2(A,\tau)$ onto 
\[
\overline{\alg(X, D_\ell, D_r)}^{\left\|\,\cdot\,\right\|_\tau},
\]
then $\xi' = P(\xi)$.

A similar result holds for right bi-free conjugate variables.
\end{lemma}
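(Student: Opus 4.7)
The plan is to define $\xi' := P(\xi)$ and verify that $\xi'$ satisfies the left bi-free conjugate variable relations for $X$ with respect to $\eta$ and $\tau$ in the presence of $(D_\ell, D_r)$. Since $\xi' \in M := \overline{\alg(X,D_\ell,D_r)}^{\left\|\, \cdot \, \right\|_\tau}$ by construction, the uniqueness statement of Remark~\ref{rem:conjugate-variable-immediate-remarks}(ii) then yields $\xi' = J_\ell(X : (D_\ell, D_r), \eta)$, including the ``in particular'' assertion.

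For the verification, fix $Z_1,\ldots,Z_n \in \{X\}\cup D_\ell \cup D_r$. The right-hand side of the conjugate variable formula in Definition~\ref{defn:conjugate-variables} depends only on $Z_1,\ldots,Z_n$ and $\eta$, and since $D_\ell \subseteq C_\ell$ and $D_r\subseteq C_r$, the hypothesized relation for $\xi$ in the presence of $(C_\ell, C_r)$ gives exactly this right-hand side for $\tau(Z_1\cdots Z_n \xi)$. It therefore suffices to show
\[
\tau\bigl(a(\xi - \xi')\bigr) = 0 \qquad \text{where} \qquad a := Z_1\cdots Z_n \in \alg(X,D_\ell,D_r).
\]
Since left multiplication by $a$ is bounded on $L_2(A,\tau)$, a direct calculation using $\langle x,y\rangle_{L_2(A,\tau)} = \tau(y^*x)$ shows that its Hilbert space adjoint $L_a^*$ satisfies $L_a^*1_A = a^*$. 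Consequently
\[
\tau\bigl(a(\xi - \xi')\bigr) = \langle a(\xi-\xi'), 1_A\rangle_{L_2(A,\tau)} = \langle \xi - \xi', a^*\rangle_{L_2(A,\tau)},
\]
which vanishes by the defining orthogonality $\xi - \xi' \perp M$ as soon as $a^* \in M$.

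The main obstacle is this last step, which amounts to the algebra $\alg(X,D_\ell,D_r)$ being closed under the involution. In the applications relevant to the paper $X$ is self-adjoint and the generators of $D_\ell, D_r$ are chosen $*$-symmetrically, so $a^* \in \alg(X, D_\ell, D_r) \subseteq M$ is automatic; this is precisely the same implicit $*$-symmetry underlying the uniqueness remark. To avoid committing to $*$-closure, one can instead pass to the cumulant characterization: using the parallel inner product calculation on $L_2(B,\tau_B)$ together with parts~(vi) and~(vii) of Proposition~\ref{propE}, establish $\widetilde{E}(W\xi) = \widetilde{E}(W\xi')$ for every $W \in \alg(X,D_\ell,D_r)$, whence M\"obius inversion yields $\widetilde{\kappa}_\chi(Z_1,\ldots,Z_{n-1},\xi') = \widetilde{\kappa}_\chi(Z_1,\ldots,Z_{n-1},\xi)$ for all admissible $\chi$ and $Z_i$, and Theorem~\ref{thm:conjugate-variables-via-cumulants} then transfers conditions (a)--(d) from $\xi$ to $\xi'$ in the presence of $(D_\ell, D_r)$.
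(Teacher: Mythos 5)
Your proposal is correct and follows essentially the same route as the paper: the paper's proof is exactly ``$\xi$ already satisfies the relations in the presence of $(D_\ell,D_r)$, and $\tau(ZP(\xi))=\tau(Z\xi)$ for all $Z\in\alg(X,D_\ell,D_r)$, so $P(\xi)$ is the conjugate variable.'' The $*$-closure point you raise is a real (if pedantic) observation --- the identity $\tau(a(\xi-P\xi))=\langle \xi-P\xi, a^*\rangle$ needs $a^*$ to lie in the closed subspace being projected onto --- but it is implicitly assumed throughout the paper: the same issue already underlies the uniqueness assertion in Remark \ref{rem:conjugate-variable-immediate-remarks}(ii), and in every application the generating sets are $*$-symmetric. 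Be aware, however, that your proposed cumulant workaround does not actually sidestep the issue: parts (vi) and (vii) of Proposition \ref{propE} reduce $\widetilde{E}(W\xi)=\widetilde{E}(W\xi')$ to the hypothesis $\tau(L_bW\xi)=\tau(L_bW\xi')$ for all $b\in B$, and since $L_bW\in\alg(X,D_\ell,D_r)$ this is precisely a trace identity of the form $\tau(a\xi)=\tau(a\xi')$ again, whose verification requires the same adjoint to land in the subspace. So the honest resolution is to note (as the paper tacitly does) that the pairs of $B$-algebras in play are $*$-closed, not to reroute through cumulants.
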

\begin{proof}
Notice if $\xi$ satisfies the left bi-free conjugate variable relations for $X$ with respect to $E$ and $\eta$ in the presence of $(C_\ell, C_r)$, $\xi$ satisfies the left bi-free conjugate variable relations for $X$ with respect to $E$ and $\eta$ in the presence of $(D_\ell, D_r)$.  Therefore, since $\tau(ZP(\xi)) = \tau(Z\xi)$ for all $Z \in \alg(X, D_\ell, D_r)$, it follows that $P(\xi) = J_\ell(X : (D_\ell, D_r), \eta)$ as desired.
\end{proof}

The following generalizes \cite{V1998-2}*{Proposition 3.6}, \cite{S2000}*{Proposition 3.8}, and \cite{CS2020}*{Proposition 4.3}.
\begin{proposition}
\label{prop:con-var-do-not-change-adding-bi-free-parts}
Let $(A,E,\varepsilon,\tau)$ be an analytical $B$-$B$-non-commutative probability space, let $(C_\ell, C_r)$ be a pair of $B$-algebras in $A$, let $X \in A_\ell$, and let $\eta:B\to B$ be a completely positive map.   If $(D_\ell, D_r)$ is another pair of $B$-algebras such that
\[
(\alg(X, C_\ell), C_r ) \qqand (D_\ell, D_r)
\]
are bi-free with amalgamation over $B$ with respect to $E$, then
\[
\xi = J_\ell(X : (C_\ell, C_r), \eta)
\]
exists if and only if
\[
\xi' =  J_\ell(X : (\alg(C_\ell, D_\ell),  \alg(C_r, D_r)), \eta)
\]
exists, in which case they are equal.

A similar result holds for right bi-free conjugate variables.
\end{proposition}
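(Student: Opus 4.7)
The plan is to verify both directions via the cumulant characterization of Theorem \ref{thm:conjugate-variables-via-cumulants} combined with the vanishing of mixed analytical cumulants for bi-free families established in Theorem \ref{thm:vanishing-cumulants-L2-entropy}.

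For the forward direction, assume $\xi = J_\ell(X : (C_\ell, C_r), \eta)$ exists. Since
\[
\overline{\alg(X, C_\ell, C_r)}^{\|\cdot\|_\tau} \subseteq \overline{\alg(X, \alg(C_\ell, D_\ell), \alg(C_r, D_r))}^{\|\cdot\|_\tau},
\]
the closure requirement for $\xi$ to qualify as the conjugate variable in the larger presence is automatic, so it suffices to verify conditions (a)--(d) of Theorem \ref{thm:conjugate-variables-via-cumulants}. Conditions (a) and (b) involve only $\xi$, $X$, and elements of $B$, so they transfer directly from the hypothesis. For (c) and (d), multilinearity reduces the task to cumulants whose first entries each lie in $\{X\} \cup C_\ell \cup D_\ell \cup C_r \cup D_r$. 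If every entry lies in $\{X\} \cup C_\ell \cup C_r$ the cumulant vanishes by the assumed conditions on $\xi$. Otherwise at least one entry lies in $D_\ell \cup D_r$; assigning index $1$ to entries drawn from the pair $(\alg(X, C_\ell), C_r)$ and to the final slot $\xi$ (which lies in $L_2(A_1, \tau)$ in the notation of Theorem \ref{thm:vanishing-cumulants-L2-entropy}) and index $2$ to entries from $(D_\ell, D_r)$ produces a non-constant $\gamma$, so the cumulant vanishes by Theorem \ref{thm:vanishing-cumulants-L2-entropy}. Hence $\xi' := \xi$ satisfies the cumulant characterization in the enlarged presence.

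For the converse, suppose $\xi'$ exists. Since the left bi-free conjugate variable relations in the larger presence include those in the smaller one, Lemma \ref{lem:con-var-reducing-alg-in-presence} shows that the orthogonal projection $P\xi'$ of $\xi'$ onto $\overline{\alg(X, C_\ell, C_r)}^{\|\cdot\|_\tau}$ exists and equals $J_\ell(X : (C_\ell, C_r), \eta)$. Applying the forward direction just proved to $P\xi'$ shows that $P\xi'$ is itself a valid conjugate variable for $X$ in the enlarged presence. By the uniqueness of conjugate variables (Remark \ref{rem:conjugate-variable-immediate-remarks}(ii)), $\xi' = P\xi'$, so $\xi'$ already lies in the smaller closure and coincides with $J_\ell(X : (C_\ell, C_r), \eta)$.

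The principal obstacle is the final vanishing step: one must observe that even when all of $Z_1, \ldots, Z_{n-1}$ happen to come from $D_\ell \cup D_r$, the element $\xi$ in the last slot (which sits in the $L_2$-space of the \emph{other} pair) already ensures that the indexing map $\gamma$ is non-constant, so Theorem \ref{thm:vanishing-cumulants-L2-entropy} still applies. The analogous argument for right bi-free conjugate variables is obtained by symmetric reasoning.
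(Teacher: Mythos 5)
Your proof is correct and follows essentially the same route as the paper: the forward direction verifies the cumulant characterization of Theorem \ref{thm:conjugate-variables-via-cumulants} in the enlarged presence by killing every cumulant containing a $D_\ell$- or $D_r$-entry via Theorem \ref{thm:vanishing-cumulants-L2-entropy} (using that $\xi$ lies in the $L_2$-closure of $\alg(X,C_\ell,C_r)$), and the converse is Lemma \ref{lem:con-var-reducing-alg-in-presence}. Your closing observation that the $\xi$ in the last slot alone makes $\gamma$ non-constant is exactly the point the paper relies on.
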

\begin{proof}
Note by Lemma \ref{lem:con-var-reducing-alg-in-presence} that if $\xi'$ exists then $\xi$ exists.

Conversely suppose that $\xi$ exists.  Hence $\xi$ is an $\left\|\,\cdot \, \right\|_\tau$-limit of elements from $\alg(X, C_\ell, C_r)$.  Since the analytical operator-valued bi-free cumulants are $\left\|\,\cdot \, \right\|_\tau$-continuous in the last entry, it follows that any analytical operator-valued bi-free cumulant involving $\xi$ at the end and at least one element of $D_\ell$ or $D_r$ must be zero by Theorem \ref{thm:vanishing-cumulants-L2-entropy} as
\[
(\alg(X, C_\ell), C_r ) \qqand (D_\ell, D_r)
\]
are bi-free with amalgamation over $B$ with respect to $E$.  Therefore, as 
\[
\xi \in \overline{\alg(X, C_\ell, C_r)}^{\left\|\,\cdot\,\right\|_\tau} \subseteq \overline{\alg(X, \alg(C_\ell, D_\ell), \alg(C_r, D_r))}^{\left\|\,\cdot\,\right\|_\tau},
\]
it follows that $\xi'$ exists and $\xi = \xi'$.
\end{proof}

The following generalizes \cite{V1998-2}*{Proposition 3.7}, \cite{S2000}*{Proposition 3.11}, and \cite{CS2020}*{Proposition 4.4}.  In that which follows, we will use $\vZ$ to denote a tuple of operators $(Z_1, \ldots, Z_k)$.  Furthermore, given another tuple $\vZ' = (Z'_1, \ldots, Z'_k)$, we will use  $\vZ + \vZ'$ to denote the tuple $(Z_1 + Z'_1, \ldots, Z_k + Z'_k)$ and we will use $\widehat{\vZ_p}$ to denote the tuple $(Z_1, \ldots, Z_{p-1}, Z_{p+1},\ldots, Z_k)$ obtained by removing $Z_p$ from the list.

\begin{proposition}
\label{prop:conj-var-adding-bi-free-sums}
Let $(A,E,\varepsilon,\tau)$ be an analytical $B$-$B$-non-commutative probability space and let $\eta:B\to B$ be a completely positive map. Suppose $\vX$ and $\vX'$ are $n$-tuples of operators from $A_\ell$, $\vY$ and $\vY'$ are $m$-tuples of operators from $A_r$, and $(C_\ell, C_r)$ and $(D_\ell, D_r)$ are pairs of $B$-algebras such that
\[
(\alg(\vX, C_\ell), \alg(\vY, C_r) ) \qqand (\alg(\vX', D_\ell), \alg(\vY', D_r) )
\]
are bi-free with amalgamation over $B$ with respect to $E$.  If
\[
\xi = J_\ell\left(X_1 : \left(\alg\left( \widehat{\vX_1}, C_\ell\right), \alg(\vY, C_r) \right),  \eta, \tau    \right)
\]
exists then
\[
\xi' = J_\ell\left(X_1+X'_1 : \left(\alg\left(\widehat{(\vX + \vX')_1}, C_\ell, D_\ell \right), \alg(\vY + \vY', C_r, D_r)  \right) \right)
\]
exists.  Moreover, if $P$ is the orthogonal projection of $L_2(\A, \varphi)$ onto 
\[
\overline{\alg(\vX + \vX', \vY + \vY', C_\ell, C_r, D_\ell, D_r ) }^{\left\| \, \cdot \, \right\|_\tau},
\]
then
\[
\xi' = P(\xi).
\]

A similar result holds for right bi-free conjugate variables.
\end{proposition}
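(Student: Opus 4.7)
The plan is to invoke the cumulant characterization of Theorem~\ref{thm:conjugate-variables-via-cumulants} to show that $\xi$ itself satisfies the left bi-free conjugate variable cumulant relations for $X_1 + X'_1$ with respect to $\eta$ and $\tau$ in the enlarged presence $(\alg(\widehat{(\vX+\vX')_1}, C_\ell, D_\ell), \alg(\vY+\vY', C_r, D_r))$. Once this is established, the orthogonal projection $P(\xi)$ lies in the prescribed $L_2$-closure and inherits the same moment relations, because for any $Z$ in the generating algebra we have $\tau(ZP(\xi)) = \tau(Z\xi)$; uniqueness of conjugate variables, as recorded in Remark~\ref{rem:conjugate-variable-immediate-remarks}, will then force $\xi' = P(\xi)$.

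The verification of conditions (a)--(d) of Theorem~\ref{thm:conjugate-variables-via-cumulants} proceeds by multilinear expansion. Condition (a) is immediate. For condition (b), bilinearity gives
\[
\widetilde{\kappa}_{1_{(\ell,\ell)}}\bigl((X_1 + X'_1) L_b, \xi\bigr) = \widetilde{\kappa}_{1_{(\ell,\ell)}}(X_1 L_b, \xi) + \widetilde{\kappa}_{1_{(\ell,\ell)}}(X'_1 L_b, \xi),
\]
whose first summand equals $\eta(b)$ by hypothesis on $\xi$, while the second vanishes by Theorem~\ref{thm:vanishing-cumulants-L2-entropy} since $X'_1 L_b \in \alg(\vX', D_\ell)$ lies in the second pair (recall $L_b \in B_\ell \subseteq D_\ell$) whereas $\xi$ lies in the $L_2$-closure $L_2(A_1, \tau)$ of the first pair's algebra. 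For (c) and (d), I would first expand each operator entry $Z_k$ linearly into products of single operators, each factor being individually in the first or in the second system (with the caveat that $X_1$ can only arise as a single factor from the summand $Z_k = X_1 + X'_1$, and similarly for $Y_1$), and then apply Proposition~\ref{prop:analytic-cumulants-with-product-entries} to rewrite the resulting product-entry cumulant as a sum $\sum_\sigma \widetilde{\kappa}_\sigma(W_1, \ldots, W_{n'}, \xi)$ over bi-non-crossing partitions satisfying $\sigma \vee \widehat{0_\chi} = 1_{\widehat{\chi}}$.

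In each such $\sigma$, let $V$ be the block containing $\xi$; the connectivity constraint forces $|V| \geq 2$. By the analytic factorization property (iv) of Definition~\ref{defn:anal-bi-multiplicative}, the full cumulant $\widetilde{\kappa}_\sigma$ factors through $\widetilde{\kappa}_{1_{\chi|_V}}$ applied to the entries of $V$, so it suffices that this single block cumulant vanishes. If $V$ contains any second-system operator, then Theorem~\ref{thm:vanishing-cumulants-L2-entropy} applied with the non-constant labelling $\gamma$ and $\xi \in L_2(A_1, \tau)$ gives zero. Otherwise every operator in $V$ is from the first system; in case (c) no such operator can equal $X_1$, since $c_1$ never involves the generator $X_1 + X'_1$, while in case (d) any appearance of $X_1$ in $V$ comes from an expanded single entry $Z_k = X_1 + X'_1$, and the connectivity constraint combined with the presence (for $n \geq 3$) of at least one additional $\widehat{0_\chi}$-group that must be joined through $V$ or through other $\sigma$-blocks forces $|V| \geq 3$. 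Conditions (c) and (d) for $\xi$ in the original presence then apply to the entries of $V$ and the block cumulant is zero.

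The main technical obstacle is the combinatorial bookkeeping around the potentially dangerous two-element block $\{X_1, \xi\}$, for which $\widetilde{\kappa}_{1_{(\ell,\ell)}}(X_1, \xi) = \eta(1_B) \neq 0$ in general; the connectivity constraint $\sigma \vee \widehat{0_\chi} = 1_{\widehat{\chi}}$ excludes this block precisely in the regime $n \geq 3$ required by condition (d), because a singleton group from some $Z_j$ with $j \neq k$ would otherwise remain disconnected from the join. Once the four cumulant conditions are checked for $\xi$, Theorem~\ref{thm:conjugate-variables-via-cumulants} produces the conjugate variable moment relations, and projecting onto $\overline{\alg(\vX+\vX', \vY+\vY', C_\ell, C_r, D_\ell, D_r)}^{\|\cdot\|_\tau}$ delivers $\xi' = P(\xi)$ by the uniqueness noted at the outset. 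The right-sided statement follows by the symmetric argument.
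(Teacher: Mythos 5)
Your proposal is correct and follows essentially the same route as the paper: both verify the cumulant characterization of Theorem \ref{thm:conjugate-variables-via-cumulants} by expanding entries linearly and via Proposition \ref{prop:analytic-cumulants-with-product-entries}, then invoke the known cumulant values for $\xi$ together with the vanishing of mixed cumulants from Theorem \ref{thm:vanishing-cumulants-L2-entropy}, and finally pass to $P(\xi)$ using $\tau(Z P(\xi)) = \tau(Z\xi)$ and uniqueness. Your treatment of the block containing $\xi$ (in particular ruling out the dangerous block $\{X_1,\xi\}$ via the connectivity constraint $\sigma \vee \widehat{0_\chi} = 1_{\widehat{\chi}}$) supplies detail the paper delegates to the argument of Lemma \ref{lem:non-self-adjoint-conjugate-variables-to-self-adjoint}, but it is the same proof.
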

\begin{proof}
Suppose $\xi$ exists.  For notation purposes, let $\A = \alg(\vX + \vX', \vY + \vY', C_\ell, C_r, D_\ell, D_r)$.  

Since $\tau(L_bZP(\xi)) = \tau(L_bZ\xi)$ for all $Z \in \A$ and $b \in B$ (as $B_\ell \subseteq C_\ell$), we obtain by Proposition \ref{propE} that $\widetilde{E}(ZP(\xi)) = \widetilde{E}(Z\xi)$ for all $Z \in \A$.  Thus, as $B_\ell, B_r \subseteq \A$, we obtain  for all $\chi \in \{\ell,r\}^p$ with $\chi(p) = \ell$, for all $\pi \in \BNC(\chi)$, and for all $Z_k \in\A$ with 
\[
Z_k \in \begin{cases}
\alg(\vX + \vX', C_\ell,  D_\ell) & \text{if } \chi(k) = \ell \\
\alg(\vY + \vY',   C_r,  D_r) & \text{if } \chi(k) = r
\end{cases}
\]
that $
\Psi_\pi(Z_1, \ldots, Z_{p-1} P(\xi)) = \Psi_\pi(Z_1, \ldots, Z_{p-1}, \xi)$ and thus
\[
\widetilde{\kappa}_\chi(Z_1, \ldots, Z_{p-1}, P(\xi)) = \widetilde{\kappa}_\chi(Z_1, \ldots, Z_{p-1}, \xi).
\]

To show that $P(\xi)$ is the appropriate left bi-free conjugate variable, it suffices to consider expressions of the form $\widetilde{\kappa}_\chi(Z_1, \ldots, Z_{p-1}, P(\xi))$ and show they obtain the correct values as dictated in Theorem \ref{thm:conjugate-variables-via-cumulants}.  By the above, said cumulant is equal to an analytic operator-valued bi-free cumulant involving elements from $C_\ell$, $D_\ell$, $C_r$, $D_r$, $\vX + \vX'$, and $\vY + \vY'$ (in the appropriate positions) and a $\xi$ at the end.  By expanding using linearity, said cumulant can be modified to a sum of cumulants involving only elements from $C_\ell$, $D_\ell$, $C_r$, $D_r$, $\vX$, $\vX'$, $\vY$, and $\vY'$ with a $\xi$ at the end.  By a similar argument to that in Lemma \ref{lem:non-self-adjoint-conjugate-variables-to-self-adjoint}, these cumulants then obtain the necessary values for $P(\xi)$ to be the appropriate left bi-free conjugate variable due to Theorem \ref{thm:conjugate-variables-via-cumulants} applied to $\xi$ and the fact that
\[
(\alg(\vX, C_\ell), \alg(\vY, C_r) ) \qqand (\alg(\vX', D_\ell), \alg(\vY', D_r) )
\]
are bi-free with amalgamation over $B$ with respect to $E$, so mixed cumulants vanish by Theorem \ref{thm:vanishing-cumulants-L2-entropy}.
\end{proof}

\section{Bi-Semicircular Operators with Completely Positive Covariance}\label{sec:semis}

One essential example of conjugate variables in \cites{V1998-2, S1998, CS2020} comes from central limit distributions.  Thus this section is devoted to defining the operator-valued bi-semicircular operators with covariance coming from a completely positive map, showing that one may add in certain bi-semicircular operators into analytical $B$-$B$-non-commutative probability spaces, and showing the bi-free conjugate variables behave in the appropriate manner.

To begin, let $B$ be a unital $^*$-algebra and let $K$ be a finite index set.  For each $k \in K$ let $Z_k$ be a symbol.  Recall the \emph{full Fock space} $\F(B, K)$ is the algebraic free product of $B$ and $\{Z_k\}_{k \in K}$; that is
\[
\F(B, K) = B \oplus H_1 \oplus H_2 \oplus \cdots
\]
where
\[
H_m = \{ b_0 Z_{k_1} b_1  \cdots Z_{k_m} b_m \, \mid \, b_0, b_1, \ldots, b_m \in B, k_1, \ldots, k_m \in K\}.
\]
Note $\F(B, K)$ is a $B$-$B$-bimodule with the obvious left and right actions of $B$ on $B$ and $H_m$.  Moreover, as $\F(B, K)$ is a direct sum of $B$ and another $B$-$B$-bimodule, $\F(B, K)$ is a $B$-$B$-bimodule with the specified vector state $p : \F(B, K) \to B$ (as in the sense of \cite{CNS2015-1}*{Definition 3.1.1}) defined by taking the $B$-term in the above direct product.  Therefore, the set $A$ of linear maps on $\F(B, K)$ is a $B$-$B$-non-commutative probability space with respect to the expectation $E : A \to B$ defined by $E(T) = p(T 1_B)$ (see \cite{CNS2015-1}*{Remark 3.2.2}).

Let $\{\eta_{i,j}\}_{i,j \in K}$ be linear maps on $B$.  For each $k \in K$, the \emph{left creation and annihilation operators} $l_k$ and $l^*_k$ are the linear maps defined such that
\begin{align*}
l_k b &= 1_B Z_k b \\
l_k( b_0 Z_{k_1} b_1  \cdots Z_{k_m} b_m ) &= 1_B Z_k b_0 Z_{k_1} b_1  \cdots Z_{k_m} b_m  \\
l^*_kb &= 0 \\
l^*_k(b_0 Z_{k_1} b_1  \cdots Z_{k_m} b_m ) &= \eta_{k,k_1}(b_0) b_1  \cdots Z_{k_m} b_m 
\end{align*}
and the \emph{right creation and annihilation operators} $r_k$ and $r_k^*$ are the linear maps defined such that
\begin{align*}
r_k b &= b Z_k 1_B \\
r_k( b_0 Z_{k_1} b_1  \cdots Z_{k_m} b_m ) &= b_0 Z_{k_1} b_1  \cdots Z_{k_m} b_m Z_k 1_B  \\
r^*_kb &= 0 \\
r^*_k(b_0 Z_{k_1} b_1  \cdots Z_{k_m} b_m ) &= b_0 Z_{k_1} b_1  \cdots b_{m-1} \eta_{k_m, k}(b_m) 
\end{align*}
It is elementary to see that $l_k, l_k^* \in A_\ell$ and $r_k, r_k^* \in A_r$.   With these  operators in hand, we make the following definition.

\begin{definition}\label{defn:bi-semi-circulars}
Using the above notation, write $K$ as the disjoint union of two sets $I$ and $J$.  For each $i \in I$ and $j \in J$, let
\[
S_i = l_i + l^*_i \qqand D_j = r_j + r^*_j.
\]
The pair $(\{S_i\}_{i \in I}, \{D_j\}_{j \in J})$ are called the \emph{operator-valued bi-semicircular operators with covariance $\{\eta_{i,j}\}_{i,j \in K}$}.

In the case that $\eta_{k_1,k_2} = 0$ for all $k_1, k_2 \in K$ with $k_1 \neq k_2$, we say that $(\{S_i\}_{i \in I}, \{D_j\}_{j \in J})$ is a collection of \emph{$(\{\eta_{i,i}\}_{i \in I}, \{\eta_{j,j}\}_{j \in J})$ bi-semicircular operators}.
\end{definition}

\begin{remark}
It is natural to ask what are the necessary conditions for operator-valued bi-semicircular operators with covariance $\{\eta_{i,j}\}_{i,j \in K}$ to sit inside an analytical $B$-$B$-non-commutative probability space.  One may hope that a condition similar to \cite{S1998}*{Theorem 4.3.1} would work; that is, the answer is yes if $\tau_B$ is tracial and $\eta : M_{|K|}(B) \to M_{|K|}(B)$ defined by
\[
\eta([b_{i,j}]_{i,j \in K}) = [\eta_{i,j}(b_{i,j})]_{i,j \in K}
\]
is completely positive.    However, if $i \in I$, $j \in J$, and $b_1, b_2 \in B$, it is not difficult to verify that
\[
\tau_B\left(E((S_i D_j L_{b_1} R_{b_2})^*(S_i D_j L_{b_1} R_{b_2})\right) = \tau_B\left(b_2^* \eta_{i,i}(1_B) b_1b_2 \eta_{j,j}(1_B) b_1^* + b_2^* \eta_{i,j}( \eta_{i,j}(b_1b_2))b_1^*\right)
\]
and it is not clear if this is positive (even if the outer $\eta_{i,j}$ was a $\eta_{j,i}$).  

Only certain operator-valued bi-semicircular operators are required in this paper.  Indeed, we will need only the case where $(\{S_i\}_{i \in I}, \{D_j\}_{j \in J})$ are $(\{\eta_{i,i}\}_{i \in I}, \{\eta_{j,j}\}_{j \in J})$ bi-semicircular operators, as in this setting the pairs of algebras
\[
\{(\alg(B_\ell, S_i), B_r)\}_{i \in I} \cup \{(B_\ell, \alg(B_r, D_j))\}_{j \in J}
\]
are bi-free with amalgamation over $B$ with respect to $E$, as the following result shows.
\end{remark}

\begin{theorem}\label{thm:cumulants-of-semis}
Using the above notation, if $B$ is a $*$-algebra, $\tau_B$ is a tracial state on $B$, $\{\eta_{i}\}_{i \in I} \cup \{\eta_{j}\}_{j \in J}$ are completely positive maps from $B$ to $B$, $(\{S_i\}_{i \in I}, \{D_j\}_{j \in J})$ are $(\{\eta_{i}\}_{i \in I}, \{\eta_{j}\}_{j \in J})$ bi-semicircular operators, $A_\ell = \alg(B_\ell, \{S_i\}_{i \in I})$, $A_r = \alg(B_r, \{D_j\}_{j \in J})$, $A$ is generated as a $*$-algebra by $A_\ell$ and $A_r$ and $\tau : A \to \bC$ is defined by $\tau = \tau_B \circ E$, then $(A, E, \varepsilon, \tau)$ is an analytical $B$-$B$-non-commutative probability space.  Moreover, all operator-valued bi-free cumulants involving $\{S_i\}_{i \in I}$ and $\{D_j\}_{j \in J}$ of order not two are zero and for all $i,i_1, i_2 \in I$ and $j,j_1,j_2 \in J$, 
\begin{align*}
\kappa_{1_{(\ell, \ell)}}(S_{i_1} L_b, S_{i_2}) &= \delta_{i_1, i_2}\eta_{i_1}(b) & \kappa_{1_{(r, r)}}(D_{j_1} R_b, D_{j_2}) &= \delta_{j_1, j_2}\eta_{ j_1}(b)\\
\kappa_{1_{(\ell, r)}}(S_{i} L_b, D_{j}) &= 0 & \kappa_{1_{(r, \ell)}}(D_{j} R_b, S_{i}) &= 0.
\end{align*}
\end{theorem}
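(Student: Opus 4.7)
The plan is to treat this theorem entirely through the Fock space $\F(B, K)$ on which the operators act, using an induction-based moment computation that mirrors Shlyakhtenko's operator-valued free semicircular argument, extended to accommodate the bi-free left/right split $K = I \sqcup J$.

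First I would establish the structural conditions of Definition \ref{enhancedbbdef}. That $(A, E, \varepsilon)$ is a $B$-$B$-non-commutative probability space is essentially built into the construction of $\F(B, K)$ preceding Definition \ref{defn:bi-semi-circulars}. The hypothesis $\eta_{i,j} = 0$ for $i \ne j$ combined with $I \cap J = \varnothing$ guarantees by a direct calculation on a general basis element $b_0 Z_{k_1} b_1 \cdots Z_{k_m} b_m$ that $l_i^\#$ and $r_j^\#$ commute for all $i \in I$, $j \in J$ and all choices of star; this confirms $S_i \in A_\ell$ and $D_j \in A_r$. Compatibility of $\tau = \tau_B \circ E$ with $E$ is immediate from the definition, and $\tau_B$ is tracial by assumption. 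Positivity of $\tau$, boundedness of left multiplication on $L_2(A, \tau)$, and complete positivity of $E|_{A_\ell}$ and $E|_{A_r}$ I would deduce by equipping $\F(B, K)$ with the natural $B$-valued pre-inner product generated recursively via the completely positive maps $\eta_i$ and $\eta_j$. Under this inner product, $l_i$ and $l_i^*$ (respectively $r_j$ and $r_j^*$) become formal adjoints, $L_b$ and $R_b$ act boundedly, and $\tau = \tau_B \circ E$ is realized as a positive vector state, from which all remaining conditions of Definition \ref{enhancedbbdef} follow.

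The heart of the proof is an inductive moment computation. For any $\chi \in \{\ell, r\}^n$ and any $Z_k \in A_{\chi(k)}$ of the form $L_b$ or $S_i L_b$ (for $\chi(k)=\ell$) or $R_b$ or $D_j R_b$ (for $\chi(k)=r$), I would prove by induction on $n$ that
\[
E_{1_\chi}(Z_1, \ldots, Z_n) = \sum_{\pi} \kappa^B_\pi(Z_1, \ldots, Z_n),
\]
the sum taken over bi-non-crossing pair partitions $\pi \in \BNC(\chi)$ in which every pair connects two $S$'s with a common index $i \in I$ or two $D$'s with a common index $j \in J$. The inductive step analyzes the action of the $\sx$-latest creation/annihilation operator on the vacuum $1_B \in \F(B,K)$: a creation operator introduces a letter that must later be annihilated, forcing its partner in a bi-non-crossing pair, while an annihilation operator facing the vacuum yields zero. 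Any potential pair linking an $l_i^\#$ to an $r_j^\#$ would contribute a factor $\eta_{i,j} = 0$ and so vanishes, which is precisely why the cross cumulants $\kappa_{1_{(\ell,r)}}$ and $\kappa_{1_{(r,\ell)}}$ are zero, and similarly for mismatched indices within a single side.

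M\"obius inversion then converts this moment formula into the cumulant statements: applying $\mu_{\BNC}$ to the moment expansion via Definition \ref{bidef} yields $\kappa^B_\chi(Z_1, \ldots, Z_n) = 0$ whenever $n \ne 2$, and for $n = 2$ the sole contribution comes from the unique bi-non-crossing pairing $\{\{1,2\}\}$, which evaluates on $(S_{i_1} L_b, S_{i_2})$ to $\delta_{i_1, i_2} \eta_{i_1}(b)$ and analogously on the right. The main obstacle I anticipate is the combinatorial bookkeeping in the inductive step, especially verifying that insertions of $L_b$ and $R_b$ rearrange correctly through the creation/annihilation algebra so that each pairing evaluation aligns with the bi-multiplicative absorption rules of Definition \ref{defn:bi-multiplicative-function}; a secondary delicate point is arranging the positivity of the $B$-valued Fock space inner product and the positivity of $\tau$ without circular dependence on the cumulant computation itself.
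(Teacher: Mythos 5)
Your cumulant computation is sound and is essentially the second of the two routes the paper itself sketches: an inductive analysis of how the $\sx$-latest creation/annihilation operator acts on the vacuum, showing only bi-non-crossing pairings with matching indices survive, followed by M\"obius inversion. (The paper's primary route is slicker: it observes from the factorization $\F(B,K)\cong\F(B,I)\otimes_B\F(B,J)$ that $(\alg(B_\ell,\{S_i\}), B_r)$ and $(B_\ell,\alg(B_r,\{D_j\}))$ are bi-free over $B$, so mixed cumulants vanish and the rest reduces to Shlyakhtenko's free case.) Either way, that half of your argument goes through.

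The gap is in your treatment of the positivity of $\tau$. You propose to equip $\F(B,K)$ with a $B$-valued pre-inner product built from the $\eta$'s and to realize $\tau=\tau_B\circ E$ as a positive vector state. For that mechanism to work you would need $E(T^*T)\geq 0$ in $B$ for all $T\in A$, i.e.\ positivity of $E$ on the whole of $A$ rather than just on $A_\ell$ and $A_r$ separately --- and this fails in the bi-free setting: the paper's remark following Example \ref{canonicalexample} notes precisely that $E$ need not be positive on $A$ because products of positive elements of $B$ need not be positive. Concretely, for $T=S_iD_jL_{b_1}R_{b_2}$ one gets $E(T^*T)$ of the form $b_2^*\eta_i(1_B)b_1b_2\eta_j(1_B)b_1^*$, which is generally not positive in $B$ even though its $\tau_B$-trace is. The positivity of $\tau$ therefore cannot be read off from any $B$-valued Fock inner product alone; it genuinely requires the traciality of $\tau_B$. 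The paper's argument uses the commutation of the $S_i$'s with the $D_j$'s to factor every element as a sum $\sum_k X_kY_k$ of a left monomial times a right monomial, the tensor decomposition to write $E(Z^*Z)$ in terms of $E_I(X_{k_1}^*X_{k_2})E_J(\widetilde Y_{k_2}\widetilde Y_{k_1}^*)$, the separate complete positivity of $E_I$ and $E_J$ to express each factor as $\sum b^*b$, and finally a cyclic rearrangement under $\tau_B$ to exhibit $\tau(Z^*Z)$ as a sum of terms $\tau_B(w^*w)$. You flag the positivity as a ``delicate point,'' but the route you name does not close it; you need the traciality-based rearrangement (or an equivalent) as an essential ingredient, not a technicality.
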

\begin{proof}
As shown above, $(A, E, \varepsilon)$ is a $B$-$B$-non-commutative probability space.  Next, note that $E$ is completely positive when restricted $A_\ell$ and $A_r$ by \cite{S1998}*{Remark 4.3.2} as the expectations reduce to the free case.  In fact, this same idea can be used to show that $\tau$ is positive.  Indeed, first note that $S_i$ and $D_j$ commute.  Thus every element of $A$ can be written as sum of elements of the form
\[
Z = L_{b_0} S_{i_1} L_{b_1} \cdots S_{i_n} L_{b_n} R_{b_{n+1}} D_{i_{n+1}} R_{b_{n+2}} \cdots D_{j_{n+m}} R_{b_{n+m}}
\]
where $b_0, \ldots b_{n+m} \in B$.  Moreover,  we can write
\[
\F(B, K) \cong \F(B, I) \otimes_B \F(B, J)
\]
in such a way that $Z$ acts via
\[
L_{b_0} S_{i_1} L_{b_1} \cdots S_{i_n} L_{b_n} \otimes L_{b_{n+m}} S_{j_{n+m}} L_{b_{n+m-1}} \cdots S_{j_{n+1}} L_{b_{n+1}},
\]
so that if $E_I : \mathcal{L}(\F(B, I)) \to B$ and  $E_J : \mathcal{L}(\F(B, J)) \to B$ are the corresponding expectations, then
\[
E(Z) = E_I(L_{b_0} S_{i_1} L_{b_1} \cdots S_{i_n} L_{b_n}) E_J(L_{b_{n+m}} S_{j_{n+m}} L_{b_{n+m-1}} \cdots S_{j_{n+1}} L_{b_{n+1}}).
\]
Therefore, if we have $Z = \sum^d_{k=1} X_k Y_k$ where $X_k$ is a product of $L_b$'s and $S_i$'s and $Y_k$ is a product of $R_b$'s and $D_j$'s, then
\[
\tau(Z^*Z) = \sum^d_{k_1, k_2=1} \tau_B\left(E_I\left(X_{k_1}^* X_{k_2}\right) E_J\left(\widetilde{Y}_{k_2} \widetilde{Y}^*_{k_1}\right)\right),
\]
where $\widetilde{Y}$ represents the monomial obtained by reversing the order and changing $R$'s to $L$'s and $D$'s to $S$'s.  However, as $E_I$ and $E_J$ are completely positive on the algebras generated by $L$'s and $S$'s, we can find $b_{X, k_1, k_3},b_{Y, k_1, k_4} \in B$ such that
\[
E_I\left(X_{k_1}^* X_{k_2}\right) = \sum^{d_2}_{k_3=1} b^*_{X, k_1, k_3}b_{X, k_2, k_3} \qand E_J\left(\widetilde{Y}_{k_2} \widetilde{Y}^*_{k_1}\right) = \sum^{d_3}_{k_4=1} b_{Y, k_2, k_4}b^*_{Y, k_1, k_4},
\]
thus
\begin{align*}
\tau(Z^*Z) & = \sum^d_{k_1, k_2=1}\sum^{d_2}_{k_3=1}\sum^{d_3}_{k_4=1} \tau_B(b^*_{X, k_1, k_3}b_{X, k_2, k_3}b_{Y, k_2, k_4}b^*_{Y, k_1, k_4} ) \\
&= \sum^{d_2}_{k_3=1}\sum^{d_3}_{k_4=1} \tau_B\left(\left(\sum^d_{k_1=1} b_{X, k_1, k_3} b_{Y, k_1, k_4}\right)^*\left(\sum^d_{k_2=1} b_{X, k_2, k_3} b_{Y, k_2, k_4}\right)   \right) \geq 0.
\end{align*}
Hence $\tau$ is positive.

Next, one can verify in $L_2(A,\tau)$ that $S_i$ and $D_j$ are the sum of an isometry and its adjoint (see \cite{S1998}*{Proposition 4.6.9}) and thus define bounded linear operators.  Hence $(A, E, \varepsilon, \tau)$ is an analytical $B$-$B$-non-commutative probability space.

To see the cumulant condition, one can proceed in two ways.  One can immediately realize that
\[
(\alg(B_\ell, \{S_i\}_{i \in I}), B_r) \qqand (B_\ell, \alg(B_r, \{D_j\}_{j \in J})
\]
are bi-free over $B$ due to the above tensor-product relation.  This implies mixed cumulants are zero.  The other cumulants then follow from the free case in \cite{S1998}.  Alternatively, one can analyze the actions of $L_b$, $R_b$, $S_i$, and $D_j$ as one would on the operator-valued reduced free product space in an identical way to the LR-diagrams of \cites{CNS2015-2,CNS2015-1} to obtain a diagrammatic description of the elements of $\F(B, K)$ produced, note that the ones that contribute to a $B$-element are exactly the bi-non-crossing diagrams that correspond to pair bi-non-crossing partitions, and use induction to deduce the values of the operator-valued cumulants.
\end{proof}

We immediately obtain the following using Theorem \ref{thm:cumulants-of-semis} and Theorem \ref{thm:conjugate-variables-via-cumulants}.

\begin{lemma}
\label{lem:conj-variables-for-semis}
Let $(A,E,\varepsilon,\tau)$ be an analytical $B$-$B$-non-commutative probability space, let $\{\eta_{\ell, i}\}^n_{i=1}$ and $\{\eta_{r, j}\}^m_{j=1}$ be completely positive maps from $B$ to $B$, and let $(\{S_i\}^n_{i=1}, \{D_j\}^m_{j=1})$ be $(\{\eta_{\ell, i}\}^n_{i=1}, \{\eta_{r, j}\}^m_{j=1})$ bi-semicircular operators in $A$.  Then
\[
J_\ell\left(S_1 : \left(\alg\left(B_\ell, \{S_i\}^n_{j=2}\right), \alg\left(B_r, \{D_j\}^m_{j=1}\right)\right), \eta_{\ell, 1}\right) = S_1.
\]

A similar result holds for the other left and the right conjugate variables.
\end{lemma}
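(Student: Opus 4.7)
The plan is to apply Theorem \ref{thm:conjugate-variables-via-cumulants} directly, verifying that $S_1$ satisfies the four cumulant characterizations (a)--(d) of the left bi-free conjugate variable for $S_1$ with respect to $\eta_{\ell,1}$ and $\tau$ in the presence of the pair $(\alg(B_\ell, \{S_i\}^n_{i=2}), \alg(B_r, \{D_j\}^m_{j=1}))$. Observe first that trivially
\[
S_1 \in \overline{\alg\left(S_1, \alg\left(B_\ell, \{S_i\}^n_{i=2}\right), \alg\left(B_r, \{D_j\}^m_{j=1}\right)\right)}^{\left\|\,\cdot\,\right\|_\tau},
\]
so only the cumulant identities need to be verified.

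By Theorem \ref{thmkappaPsi=kappaB}, each analytical bi-cumulant $\widetilde{\kappa}_\chi(Z_1,\ldots,Z_{n-1}, S_1 + N_\tau)$ is equal to $\kappa^B_\chi(Z_1,\ldots, Z_{n-1}, S_1) + N_{\tau_B}$. Hence all of (a)--(d) reduce to statements about the operator-valued bi-free cumulants of the $S_i$ and $D_j$, for which Theorem \ref{thm:cumulants-of-semis} provides the complete list: every cumulant of order $\neq 2$ vanishes, and the only nonzero order-two cumulants are $\kappa_{1_{(\ell,\ell)}}(S_i L_b, S_i) = \eta_{\ell,i}(b)$ and $\kappa_{1_{(r,r)}}(D_j R_b, D_j) = \eta_{r,j}(b)$, together with the corresponding evaluations obtained by using bi-multiplicativity to move $L_b$ and $R_b$ entries. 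Specifically, (a) holds since every one-variable cumulant of an $S_i$ is zero; (b) holds because $\kappa_{1_{(\ell,\ell)}}(S_1 L_b, S_1) = \eta_{\ell,1}(b)$ is exactly the nonvanishing order-two cumulant singled out by Theorem \ref{thm:cumulants-of-semis}; and (c) follows since $\kappa_{1_{(\ell,\ell)}}(c_1, S_1) = \kappa_{1_{(r,\ell)}}(c_2, S_1) = 0$ for any $c_1 \in \alg(B_\ell, \{S_i\}^n_{i=2})$ and $c_2 \in \alg(B_r, \{D_j\}^m_{j=1})$, by linearity combined with Proposition \ref{prop:analytic-cumulants-with-product-entries} (to expand products of generators into sums of mixed cumulants which then vanish because either an index other than $1$ appears alongside $S_1$, or an $L_b$/$R_b$ term appears).

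For condition (d), fix $n \geq 3$ and an admissible tuple $(Z_1, \ldots, Z_{n-1})$ whose entries lie in $\{S_1\} \cup \alg(B_\ell, \{S_i\}^n_{i=2}) \cup \alg(B_r, \{D_j\}^m_{j=1})$. Writing each $Z_k$ as a linear combination of products of the generators $L_b$, $R_b$, $S_i$, $D_j$, expanding the cumulant by multilinearity, and then applying Proposition \ref{prop:analytic-cumulants-with-product-entries} to rewrite each term as a sum of cumulants in the generators themselves, every resulting cumulant has length at least $3$ (since expansion can only increase length), or else is a length-two cumulant whose two arguments are not of the matched form $(S_1, S_1)$ appearing in Theorem \ref{thm:cumulants-of-semis}. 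In either case, the value is zero. Since Theorem \ref{thm:cumulants-of-semis} kills all cumulants of order $\neq 2$ and all length-two cumulants whose indices or types disagree, every resulting term vanishes, giving (d).

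The main technical obstacle is not conceptual but bookkeeping: one must ensure that after expanding each $Z_k$ into generator monomials and invoking Proposition \ref{prop:analytic-cumulants-with-product-entries}, the only surviving cumulants in the sum are those already classified by Theorem \ref{thm:cumulants-of-semis}, and that in particular no ``hidden'' length-two matched pair $(S_1,S_1)$ can appear in (d). This is guaranteed by the fact that the $n \geq 3$ cumulants in (d) always carry either $S_1$ in position $n$ or extra nontrivial generator entries elsewhere, so that after expansion at least one cumulant of order $\neq 2$ appears in each bi-multiplicative summand, forcing the whole expression to vanish.
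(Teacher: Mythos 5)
Your proposal is correct and follows exactly the route the paper intends: the paper gives no written proof, stating only that the lemma is immediate from Theorem \ref{thm:cumulants-of-semis} and Theorem \ref{thm:conjugate-variables-via-cumulants}, and your verification of conditions (a)--(d) via Theorem \ref{thmkappaPsi=kappaB}, multilinearity, and Proposition \ref{prop:analytic-cumulants-with-product-entries} is precisely that argument spelled out. The one place worth tightening is the final paragraph of (d): in the case where the last entry pairs with a standalone $Z_k = S_1$, the term dies not because an order-$\neq 2$ cumulant appears but because the join condition $\sigma \vee \widehat{0_\chi} = 1_{\widehat{\chi}}$ cannot be met when $n \geq 3$ and that pair exhausts two singleton blocks of $\widehat{0_\chi}$ — a minor point that does not affect correctness.
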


In order to obtain more examples of bi-free conjugate variables, it would be typical to perturb by bi-semicircular operators and use Proposition \ref{prop:conj-var-adding-bi-free-sums}.  To do this we must have the collection of bi-semicircular operators in the same analytical $B$-$B$-non-commutative probability space.  Thus, it is natural to ask whether given two analytical $B$-$B$-non-commutative probability spaces there is a bi-free product which causes the pairs of left and right algebras to be bi-freely independent over $B$ and preserve the analytical properties.

Unfortunately we do not have an answer to this question.  The proof of positivity in the operator-valued free case requires the characterization of the vanishing of alternating centred moments  in \cite{S1998}*{Proposition 3.3.3} to ensure positivity in the end.  One may attempt to use the bi-free analogue of `alternating centred moments vanish' from \cite{C2019}, however the bi-free formulae generalization of \cite{S1998}*{Proposition 3.3.3} is far more complicated.  In particular, the proof from \cite{S1998} will not immediately generalize, as Example \ref{canonicalexample} shows $E$ will not be positive and the traciality of $\tau_B$ will need to come into play.  

Luckily, if we deal only with bi-semicircular operators, which is all that is required for this paper, there is no issue.  In fact, in the case one is working with von Neumann factors as in Example \ref{exam:factors}, the following is trivial as it we can add the corresponding collection of bi-semicircular operators using factors by \cite{S1998}.

\begin{theorem}\label{thm:can-add-semis}
Let $(A,E,\varepsilon,\tau)$ be an analytical $B$-$B$-non-commutative probability space with $A_\ell$ and $A_r$ generated by isometries, let $\{\eta_{\ell, i}\}^n_{i=1} \cup \{\eta_{r, j}\}^m_{j=1}$ be completely positive maps from $B$ to $B$, and let $(\{S_i\}^n_{i=1}, \{D_j\}^m_{j=1})$ be $(\{\eta_{\ell, i}\}^n_{i=1}, \{\eta_{r, j}\}^m_{j=1})$ bi-semicircular operators.  Then, there exists an analytical $B$-$B$-non-commutative probability space $(A', E', \varepsilon', \tau')$ with $A \subseteq A'$, $E'|_A = E$, $\tau'|_A = \tau$, $A_\ell \subseteq A'_\ell$, $A_r \subseteq A'_r$, $\{S_i\}^n_{i=1} \subseteq A'_\ell$, $\{D_j\}^m_{j=1} \subseteq A'_r$ and such that the pairs of algebras
\[
(A_\ell, A_r) \qqand \left(\alg(B_\ell, \{S_i\}^n_{i=1}),\alg(B_r, \{D_j\}^m_{j=1})\right)
\]
are bi-free with amalgamation over $B$ with respect to $E'$.
\end{theorem}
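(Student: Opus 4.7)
The plan is to realize $A'$ concretely on a reduced free product Hilbert space and then read off the analytical structure as a vector state. I would start by forming two pointed $B$-$B$-bimodules: $\mathcal{X}_1 = L_2(A,\tau)$ with its natural $B$-$B$-bimodule structure and cyclic vector $1_A + N_\tau$, and $\mathcal{X}_2 = \F(B,K)$ completed with respect to the inner product coming from $\tau_B$ and the covariance maps $\{\eta_{\ell,i}\} \cup \{\eta_{r,j}\}$, with cyclic vector $1_B$. Then I would form $\mathcal{H} = \mathcal{X}_1 \ast_B \mathcal{X}_2$ via Voiculescu's reduced free product (amalgamating the copies of $L_2(B,\tau_B)$), with vacuum $\Omega$, and extend the left/right actions of $A_\ell, A_r$ on $\mathcal{X}_1$ together with the creation/annihilation operators $l_i, l_i^*, r_j, r_j^*$ on $\mathcal{X}_2$ to $\mathcal{H}$ via the standard $\lambda_i \circ l_i$, $\rho_i \circ r_i$ formulas appearing in Definition \ref{bifreedef}. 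Set $A'$ to be the $*$-algebra they generate, $\tau'(T) = \langle T\Omega,\Omega\rangle$, let $E'(T)$ be the orthogonal projection of $T\Omega$ onto $L_2(B,\tau_B)$, and let $\varepsilon'$ be the natural extension of $\varepsilon$.

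Next I would check that every generator of $A'$ acts as a bounded operator on $\mathcal{H}$: the generators of $A_\ell$ and $A_r$ are isometries by hypothesis, and the reduced free product construction preserves boundedness of operators coming from a single bimodule; the creation and annihilation operators on $\mathcal{X}_2$ have norms controlled by the covariance maps exactly as in the operator-valued free semicircular setting of \cite{S1998}. This gives condition (iv) of Definition \ref{enhancedbbdef} and makes $\tau'$ automatically positive as a vector state on $\mathcal{B}(\mathcal{H})$. Compatibility of $\tau'$ with $E'$ is built in, $\tau'|_B = \tau_B$ is tracial by assumption, the restriction $\tau'|_A = \tau$ holds because the reduced free product faithfully transports the bimodule state on $\mathcal{X}_1$, and complete positivity of $E'$ restricted to $A'_\ell$ and to $A'_r$ comes from the Fock-space-type realization of each one-sided algebra, where the arguments of \cite{S1998} (or Lemma \ref{lem:expectations-CP} in the $\mathrm{C}^*$-algebra setting) apply. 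Bi-freeness with amalgamation over $B$ of the two designated pairs is then immediate from Definition \ref{bifreedef}, since those pairs were defined to act on the two distinct bimodules before taking the reduced free product.

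The main obstacle will be positivity of $\tau'$, which as the authors emphasize can fail for a bi-free product of two arbitrary analytical $B$-$B$-non-commutative probability spaces. In the present situation this is circumvented by the very specific structure of $\mathcal{X}_2$: the decomposition $\F(B, K) \cong \F(B, I) \otimes_B \F(B, J)$ used in the proof of Theorem \ref{thm:cumulants-of-semis} exhibits the left semicirculars $\{S_i\}$ and the right semicirculars $\{D_j\}$ as acting on mutually commuting tensor factors. This tensor splitting will let me reduce the verification of positivity for a general word involving $A$ and the semicirculars to the already-established positivity of Shlyakhtenko's one-sided operator-valued free product, applied separately on the left (combining $A_\ell$ with $\F(B,I)$) and on the right (combining $A_r$ with $\F(B,J)$), and then glued together using the traciality of $\tau_B$ to pair the left and right contributions --- mirroring the positivity calculation carried out in the proof of Theorem \ref{thm:cumulants-of-semis}.
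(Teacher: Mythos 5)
Your overall plan --- realize $A'$ on a free product of pointed bimodules and read $\tau'$ off as a vacuum state --- is the natural first attempt, but it founders on exactly the obstruction the paper flags immediately before the theorem, and your proposed workaround does not close the gap. To form the amalgamated reduced free product $\mathcal{X}_1 \ast_B \mathcal{X}_2$ as a Hilbert space you need each factor to carry a $B$-valued inner product (a $B$-$B$-correspondence structure), so that the interior tensor products $\mathcal{X}_{i_1}^\circ \otimes_B \cdots \otimes_B \mathcal{X}_{i_n}^\circ$ make sense and are positive. The Fock factor $\mathcal{X}_2$ has one, but $\mathcal{X}_1 = L_2(A,\tau)$ does not: the only candidate is $\langle \xi,\zeta\rangle_B = \widetilde{E}(\zeta^*\xi)$, and $E$ is not positive on all of $A$ (only on $A_\ell$ and on $A_r$ separately; see the remark after Example \ref{canonicalexample}). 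Consequently the Hilbert space you want to act on does not exist as described, and positivity of $\tau'$ is not ``built in as a vector state.'' Your fallback --- splitting $\F(B,K) \cong \F(B,I)\otimes_B \F(B,J)$ and gluing left and right positivity via traciality of $\tau_B$ --- is precisely the argument of Theorem \ref{thm:cumulants-of-semis}, but it works there only because in that theorem the whole algebra is generated by $B_\ell$, $B_r$ and the semicirculars, so every element factors as a sum of (left word)(right word). A general analytical $B$-$B$-non-commutative probability space $A$ admits no such left/right tensor factorization, so the pairing argument does not transfer.

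The paper's proof avoids the two-sided free product of bimodules altogether. It builds a one-sided Fock-type module $\F = B \oplus \H_1 \oplus \H_2 \oplus \cdots$ whose words $a_0 Z a_1 \cdots Z a_m$ have all but the last letter in $A_\ell$ and the last letter ranging over all of $A$, and equips it with an $A$-valued (not $B$-valued) inner product in which the nested terms $L_{\eta(E(\,\cdot\,))}$ only ever see arguments from $A_\ell$, where $E$ is completely positive; after factoring those as sums of $L_{b}^*L_{b}$, the outermost layer becomes a sum of terms $c^*c$ with $c \in A$, and positivity of $\tau'$ follows from positivity of $\tau$ on $A$. This is also where the hypothesis that $A_\ell$ and $A_r$ are generated by isometries actually earns its keep: right operators act inside the last slot of each word and interact with the $A$-valued inner product, so their boundedness on $L_2(A',\tau')$ is not automatic and is extracted from $T^*T = 1$ for the generators. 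In your picture boundedness would be free and the isometry hypothesis superfluous, which is a further sign that the construction is not the intended one. To repair the proposal, replace the two-sided bimodule free product by this $A$-valued one-sided construction, or else prove independently that $\tau_B \circ E'$ is positive on the algebraic operator-valued bi-free product --- something the authors explicitly state they do not know how to do in general.
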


As for any $(\{\eta_{i,i}\}_{i \in I}, \{\eta_{j,j}\}_{j \in J})$ bi-semicircular operators  $(\{S_i\}_{i \in I}, \{D_j\}_{j \in J})$ we know that
\[
\{(\alg(B_\ell, S_i), B_r)\}_{i \in I} \cup \{(B_\ell, \alg(B_r, D_j))\}_{j \in J}
\]
are bi-free over $B$, to prove Theorem \ref{thm:can-add-semis} it suffices to use the following lemma and an analogous result on the right iteratively, or simply adapt the proof to multiple operators simultaneously.

\begin{lemma}
Let $(A,E,\varepsilon,\tau)$ be an analytical $B$-$B$-non-commutative probability space with $A_\ell$ and $A_r$ generated by isometries, let $\eta : B \to B$ be a completely positive map and let $S$ be an $\eta$-semicircular operator.  Then, there exists an analytical $B$-$B$-non-commutative probability space $(A', E', \varepsilon', \tau')$ such that $A' = \alg(A_\ell, A_r, S)$, $E'|_A = E$, $\tau'|_A = \tau$, $A'_\ell = \alg(A_\ell, S)$, $A'_r = A_r$  and
\[
(A_\ell, A_r) \qqand (\alg(B_\ell, S), B_r)
\]
are bi-free with amalgamation over $B$ with respect to $E'$.
\end{lemma}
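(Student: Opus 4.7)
The plan is to realize $A'$ concretely as a $*$-algebra of bounded operators on an amalgamated reduced free product of Hilbert $B$-$B$-bimodules, adapting the operator-valued bi-free bimodule construction in \cite{CNS2015-1}*{Section 3.3} to the present analytical setting. With this realization in hand, bi-freeness will be automatic from Definition \ref{bifreedef}, and the remaining work is to verify the axioms of an analytical $B$-$B$-non-commutative probability space.

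First I assemble the two Hilbert $B$-$B$-bimodules. The first is $\mathcal{X}_1 = L_2(A,\tau)$, with $b_1 \cdot \xi \cdot b_2 = L_{b_1} R_{b_2}\xi$, specified $B$-vector state $p_1 = \widetilde{E}$ at distinguished vector $\widehat{1_B}$, left homomorphism $l_1 : A_\ell \to \mathcal{L}_\ell(\mathcal{X}_1)$ given by the left action, and right homomorphism $r_1 : A_r \to \mathcal{L}_r(\mathcal{X}_1)$ given by the right action. The second is $\mathcal{X}_2 = \mathcal{F}(B)$, the $B$-valued Fock module from the construction preceding Definition \ref{defn:bi-semi-circulars} with index set $\{*\}$ and $\eta_{*,*} = \eta$; here $S = l_* + l_*^*$ is bounded self-adjoint, $p_2$ is the projection onto the degree-zero summand, $l_2 : \alg(B_\ell, S) \to \mathcal{L}_\ell(\mathcal{X}_2)$ is the obvious embedding, and $r_2 : B_r \to \mathcal{L}_r(\mathcal{X}_2)$ is the right $B$-action (commuting with $S$). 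Both bimodules carry genuine Hilbert $B$-$B$-bimodule structures, since $\eta$ is completely positive and $\tau_B$ is tracial.

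Next I form the amalgamated reduced free product $\mathcal{H} = \mathcal{X}_1 *_B \mathcal{X}_2$ with vacuum vector $\Omega$ and specified $B$-vector state $p$. This yields extended homomorphisms of $(A_\ell, A_r)$ and $(\alg(B_\ell, S), B_r)$ into $\mathcal{L}_\ell(\mathcal{H})$ and $\mathcal{L}_r(\mathcal{H})$ via the standard free-product actions. Let $A'$ be the $*$-subalgebra of $\mathcal{L}(\mathcal{H})$ generated by these images, with $A'_\ell := \alg(A_\ell, S)$ and $A'_r := A_r$ (under the extended representations), $\varepsilon'(b_1 \otimes b_2) := L_{b_1}R_{b_2}$, $E'(T) := p(T\Omega)$, and $\tau'(T) := \langle T \Omega, \Omega\rangle_\mathcal{H}$. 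The $B$-$B$-non-commutative probability space axioms for $(A', E', \varepsilon')$ are built into the free product construction. Positivity of $\tau'$ is automatic as the vacuum vector state; compatibility $\tau'(T) = \tau_B(E'(T))$ follows from $\langle T\Omega, \Omega\rangle_\mathcal{H} = \langle p(T\Omega), 1_B\rangle$; the trace $\tau'|_B = \tau_B$ remains tracial by hypothesis; and the natural isometric inclusion $\mathcal{X}_1 \hookrightarrow \mathcal{H}$ intertwining the $A$-action and the vacuum states yields $E'|_A = E$ and $\tau'|_A = \tau$. Bi-freeness of $(A_\ell, A_r)$ and $(\alg(B_\ell, S), B_r)$ over $B$ with respect to $E'$ is then immediate from Definition \ref{bifreedef}.

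The most delicate point will be verifying property (iv) of Definition \ref{enhancedbbdef}: boundedness of left multiplication by $A'$ on $L_2(A',\tau')$. This is exactly where the hypothesis that $A_\ell$ and $A_r$ are generated by isometries enters. The generators of $A$ then act as bounded operators on $\mathcal{X}_1$, and $S, L_b, R_b$ act boundedly on $\mathcal{X}_2$, so standard norm estimates on reduced amalgamated free products give bounded extensions to $\mathcal{H}$. Since $L_2(A',\tau')$ embeds isometrically in $\mathcal{H}$ via $a + N_{\tau'} \mapsto a\Omega$ and left multiplication by $T \in A'$ is implemented by the $\mathcal{L}(\mathcal{H})$-action of $T$, the required boundedness is inherited. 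Complete positivity of $E'|_{A'_\ell}$ and $E'|_{A'_r}$ follows directly from a Gram-matrix computation in this representation: for $[T_{ij}] \in M_d(A'_\ell)$, the matrix $[E'(T_{ij}^* T_{jk})]$ is the Gram matrix of $\{T_{jk}\Omega\}$ with respect to the $B$-valued inner product on $\mathcal{H}$, hence positive in $M_d(B)$.
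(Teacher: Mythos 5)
There is a genuine gap, and it sits exactly at the point you describe as routine. You form the reduced amalgamated free product $\mathcal{H} = \mathcal{X}_1 *_B \mathcal{X}_2$ and then assert that positivity of $\tau'$ is ``automatic as the vacuum vector state'' and that boundedness and complete positivity follow from ``standard norm estimates'' and a ``Gram-matrix computation.'' All three of these presuppose that the bi-free amalgamated free product of the two bimodules carries a positive (scalar or $B$-valued) inner product with respect to which the construction is a genuine Hilbert space. In the bi-free setting this is precisely what is \emph{not} known: the free product of $B$-$B$-bimodules with specified $B$-vector states from \cite{CNS2015-1} is a purely algebraic construction, and since $E$ is not positive on the whole of $A$ (only on $A_\ell$ and $A_r$ separately --- see Example \ref{canonicalexample}), the functional $\tau_B \circ E'$ on the free product has no a priori reason to be positive. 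Indeed, the paper explicitly flags the existence of an analytical bi-free product of two arbitrary analytical $B$-$B$-non-commutative probability spaces as an open problem, because the positivity argument of \cite{S1998} does not transfer. Your $\mathcal{X}_1 = L_2(A,\tau)$ also does not carry a positive $B$-valued inner product (only the scalar one and the orthogonal projection $\widetilde{E}$), so the interior tensor products over $B$ needed to make $\mathcal{H}$ a Hilbert space are not available in the generality you invoke.

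The actual proof circumvents this by exploiting the special shape of the second pair: since only a single left operator $S$ is being adjoined and its right face is just $B_r$, the free product space can be written as $\F = B \oplus \H_1 \oplus \H_2 \oplus \cdots$ with $\H_m$ spanned by words $a_0 Z a_1 \cdots Z a_m$ where $a_0,\ldots,a_{m-1} \in A_\ell$ and only the last slot $a_m$ ranges over all of $A$. One then defines an \emph{$A$-valued} (not $B$-valued) pairing
\[
\langle a'_0 Z a'_1 \cdots Z a'_m,\, a_0 Z a_1 \cdots Z a_m\rangle = a_m^* L_\eta\bigl(a_{m-1}^* \cdots L_\eta\bigl(a_0^* a'_0\bigr) \cdots a'_{m-1}\bigr) a'_m,
\]
where $L_\eta(T) = L_{\eta(E(T))}$. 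Complete positivity of $\eta$ and of $E|_{A_\ell}$ lets one factor each $L_\eta(a_{i,0}^*a_{j,0})$ as $\sum_k L_{b_{k,i}}^* L_{b_{k,j}}$ and iterate, so that $\tau'(T^*T) = \tau(\langle T1_B, T1_B\rangle)$ reduces to $\tau$ applied to a sum of elements of the form $c^*c$ in $A$; positivity of $\tau'$ is thereby inherited from positivity of $\tau$ on $A$, which is where the work lives. Boundedness of the $A_r$-action likewise uses this explicit pairing (together with $T^*L_{b_1}^*L_{b_2}T = L_{b_1}^*T^*TL_{b_2}$ for $T \in A_r$ and the isometry-generation hypothesis), rather than a general free-product norm estimate. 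If you want to salvage your approach, you would need to supply this positive pairing on $\mathcal{H}$ by hand; the general machinery you appeal to does not provide it.
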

\begin{proof}
By taking the operator-valued bi-free product of $B$-$B$-non-commutative probability spaces, we obtain a $B$-$B$-non-commutative probability space $(A', E', \varepsilon')$ such that $A' = \alg(A_\ell, A_r, S)$, $A'_\ell = \alg(A_\ell, S)$, $A'_r = A_r$, $E'|_A = E$ and
\[
(A_\ell, A_r) \qqand (\alg(B_\ell, S), B_r)
\]
are bi-free with amalgamation over $B$ with respect to $E'$.  Note $E'$ restricted to $A_r$ is trivially completely positive and $E'$ restricted to $A'_\ell$ is completely positive by the free result from \cite{S1998}.  Thus, to verify Definition \ref{enhancedbbdef} it suffices to verify that if $\tau' = \tau_B \circ E$, then $\tau'$ is positive and elements of $A'_\ell$ and $A'_r$ define bounded operators on $L_2(A', \tau')$.

By analyzing the reduced free product construction, we can realize $A_\ell$, $A_r$, and $S$ as operators acting on 
\[
\F = B \oplus \H_1 \oplus \H_2 \oplus \cdots,
\]
where
\[
\H_m = \{ a_0 Z a_1  \cdots Z a_m \, \mid \, a_0, a_1, \ldots, a_{m-1} \in A_\ell, a_m \in A\}
\]
and if $p : \F \to B$ is defined by taking the $B$-term in $\F$, then 
\[
E'(T) = p(T 1_B).
\]
Define a function $\langle \, \cdot , \cdot \, \rangle : \F \times \F \to A$ by setting $B$, $\H_1$, $\H_2$, $\ldots$. to be pairwise orthogonal, $\langle b_1, b_2 \rangle = L_{b_2^*b_1}$, and
\[
\langle a'_0 Z a'_1  \cdots Z a'_m, a_0 Z a_1  \cdots Z a_m \rangle = a_m^*L_\eta(a_{m-1}^* \cdots   L_\eta(a_1^* L_\eta(a_0^* a'_0)a'_1) \cdots a'_{m-1})a'_m,
\]
where $L_\eta(T) = L_{\eta(E(T))}$.  As $\eta$ is completely positive and $E$ is completely positive when restricted to $A_\ell$, we obtain that $\langle \, \cdot , \cdot \, \rangle$ is an $A$-valued inner product by the same arguments as \cite{S1998}*{Proposition 4.6.6}.  To elaborate slightly, given $\sum^n_{k=1} a_{k,0} Z a_{k,1}  \cdots Z a_{k,m}$, the matrix $[\eta(a^*_{i,0} a_{j,0})]$ is positive and thus can be written as $[\sum^n_{k=1} b^*_{k,i} b_{k,j}]$ for some $b_{i,j} \in B$.  One then substitutes $L_\eta(a^*_{i,0} a_{j,0}) = \sum^n_{k=1} L_{b_{k,i}}^* L_{b_{k,j}}$ and continues until one ends with a sum of products of elements of $A$ with their adjoints.

As $\tau : A \to \bC$ is positive and as for all $T \in A'$,
\[
\tau'(T^*T) = \tau(\langle T1_B, T1_B\rangle),
\]
we obtain that $\tau'$ is positive as desired.  To see that elements of $A'_\ell$ and $A'_r$ define bounded operators on $L_2(A', \tau')$ note if $T \in A_r$, then, using the above description,
\[
T(a_0 Z a_1  \cdots Z a_m) = a_0 Z a_1  \cdots Z Ta_m.
\]
As any of the terms
\[
L_\eta(a_{m-1}^* \cdots   L_\eta(a_1^* L_\eta(a_0^* a'_0)a'_1) \cdots a'_{m-1})
\]
in the above $A$-valued inner product will be able to be written as sums involving terms of the form $L_{b_1}^*L_{b_2}$ and will then produce terms of the form
\[
a_m^* T^* L_{b_1}^* L_{b_2} T a'_m = a_m^*  L_{b_1}^*T^*T L_{b_2}  a'_m
\]
in the $A$-valued inner product when $T$ acts on the left, the fact that $A_r$ is generated by isometries yields that $A_r$ acts as bounded operators on $L_2(A, \tau)$.  The fact that $A_\ell$ is generated by isometries immediately yields that $A_\ell$ acts as bounded operators on $L_2(A, \tau)$, and it is not difficult to see that $S$ acts as the sum of an isometry and its adjoint on $L_2(A,\tau)$ and thus is bounded.
\end{proof}

With Theorem \ref{thm:can-add-semis} establishing we can always assume our $B$-$B$-non-commutative probability spaces have $(\{\eta_{\ell, i}\}^n_{i=1}, \{\eta_{r, j}\}^m_{j=1})$ bi-semicircular operators, we can proceed with the following.

\begin{theorem}
\label{thm:conj-perturb-by-semis}
Let $(A,E,\varepsilon,\tau)$ be an analytical $B$-$B$-non-commutative probability space, let $\{\eta_{\ell, i}\}^n_{i=1}$ and $\{\eta_{r, j}\}^m_{j=1}$ be completely positive maps from $B$ to $B$, let $\vX \in A_\ell^n$ and $\vY \in A_r^m$ be tuples of self-adjoint operators, let $(\{S_i\}^n_{i=1}, \{D_j\}^m_{j=1})$ be $(\{\eta_{\ell, i}\}^n_{i=1}, \{\eta_{r, j}\}^m_{j=1})$ bi-semicircular operators in $A$ and let $(C_\ell, C_r)$ be pairs of $B$-algebras of $A$ such that
\[
\{(\alg(C_\ell, \vX), \alg(C_r, \vY)\rangle)\} \cup \{(\alg(B_\ell, S_i), B_r)\}^n_{i=1} \cup \{(B_\ell, \alg(B_r, D_j))\}^m_{j=1}
\]
are bi-free.  If $P$ is the orthogonal projection of $L_2(A, \varphi)$ onto 
\[
\overline{\alg\left(C_\ell, C_r, \vX + \sqrt{\epsilon} \vS, \vY + \sqrt{\epsilon} \vD\right)}^{\left\| \, \cdot \, \right\|_\tau},
\]
then
\[
\xi = J_\ell\left(X_1 + \sqrt{\epsilon} S_1 : \left(\alg\left( C_\ell, \widehat{(\vX + \sqrt{\epsilon} \vS)}_1\right), \alg\left(C_r,  \vY + \sqrt{\epsilon} \vD \right) \right), \eta_{\ell, 1}\right) = \frac{1}{\sqrt{\epsilon}} P(S_1).
\]
Thus
\[
 \left\|\xi\right\|_\tau \leq \frac{1}{\sqrt{\epsilon}} \sqrt{\tau_B(\eta(1_B))}.
 \]
A similar computation holds for the other entries of the tuples and the right conjugate variables.
\end{theorem}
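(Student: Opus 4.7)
The plan is to combine three earlier results: the identification of a bi-semicircular operator as its own conjugate variable (Lemma~\ref{lem:conj-variables-for-semis}), scalar rescaling (Lemma~\ref{lem:conj-var-scaling}), and the bi-free perturbation principle (Proposition~\ref{prop:conj-var-adding-bi-free-sums}). The strategy is to first compute the conjugate variable of $\sqrt{\epsilon}S_1$ in the unperturbed bi-semicircular system, then project down to the enlarged algebra generated by the sums.

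First I would apply Lemma~\ref{lem:conj-variables-for-semis} to obtain
\[
J_\ell\!\left(S_1 : \bigl(\alg(B_\ell, \{S_i\}_{i=2}^n), \alg(B_r, \{D_j\}_{j=1}^m)\bigr), \eta_{\ell,1}\right) = S_1.
\]
Rescaling by $\sqrt{\epsilon}$ via Lemma~\ref{lem:conj-var-scaling}, and noting that $\alg(B_\ell, \{\sqrt{\epsilon}S_i\}_{i=2}^n) = \alg(B_\ell, \{S_i\}_{i=2}^n)$ and similarly on the right, this gives
\[
J_\ell\!\left(\sqrt{\epsilon}S_1 : \bigl(\alg(B_\ell, \{\sqrt{\epsilon}S_i\}_{i=2}^n), \alg(B_r, \{\sqrt{\epsilon}D_j\}_{j=1}^m)\bigr), \eta_{\ell,1}\right) = \tfrac{1}{\sqrt{\epsilon}}S_1.
\]

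Next I would invoke Proposition~\ref{prop:conj-var-adding-bi-free-sums}, assigning the role of the tuple $(\vX, \vY, C_\ell, C_r)$ in the proposition to $(\sqrt{\epsilon}\vS, \sqrt{\epsilon}\vD, B_\ell, B_r)$, and the role of $(\vX', \vY', D_\ell, D_r)$ to our theorem's $(\vX, \vY, C_\ell, C_r)$. The required bi-freeness between
\[
\bigl(\alg(\sqrt{\epsilon}\vS, B_\ell), \alg(\sqrt{\epsilon}\vD, B_r)\bigr) \qand \bigl(\alg(\vX, C_\ell), \alg(\vY, C_r)\bigr)
\]
follows by grouping the given bi-free family into two pairs: the bi-semicircular pairs $\{(\alg(B_\ell, S_i), B_r)\}_i \cup \{(B_\ell, \alg(B_r, D_j))\}_j$ are bi-free with amalgamation over $B$, so their join is bi-free from $(\alg(C_\ell, \vX), \alg(C_r, \vY))$. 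The proposition then yields that the desired $\xi$ exists and equals $P(\tfrac{1}{\sqrt{\epsilon}}S_1) = \tfrac{1}{\sqrt{\epsilon}}P(S_1)$, where $P$ is the projection onto $\overline{\alg(C_\ell, C_r, \vX + \sqrt{\epsilon}\vS, \vY + \sqrt{\epsilon}\vD)}^{\left\|\,\cdot\,\right\|_\tau}$ (here we use $B_\ell \subseteq C_\ell$ and $B_r \subseteq C_r$ to absorb the $B$-algebras).

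For the norm estimate, since $P$ is an orthogonal projection we have $\|\xi\|_\tau \leq \tfrac{1}{\sqrt{\epsilon}}\|S_1\|_\tau$, and $\|S_1\|_\tau^2 = \tau(S_1^2) = \tau_B(E(S_1^2))$. By the operator-valued moment-cumulant formula and Theorem~\ref{thm:cumulants-of-semis} (all cumulants of bi-semicirculars of order other than two vanish), $E(S_1^2) = \kappa^B_{1_{(\ell,\ell)}}(S_1, S_1) = \eta_{\ell,1}(1_B)$, giving the claimed bound. The analogous statements for the other entries and for the right conjugate variables follow by the same argument applied symmetrically. No step appears seriously obstructive; the only care required is correctly aligning the tuples and $B$-algebras when invoking Proposition~\ref{prop:conj-var-adding-bi-free-sums} so that the presence-algebras after absorption coincide with those appearing in the statement.
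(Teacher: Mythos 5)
Your proposal is correct and follows essentially the same route as the paper: Lemma \ref{lem:conj-variables-for-semis} plus Lemma \ref{lem:conj-var-scaling} to identify $J_\ell(\sqrt{\epsilon}S_1 : (\alg(B_\ell, \sqrt{\epsilon}\widehat{\vS}_1), \alg(B_r, \sqrt{\epsilon}\vD)), \eta_{\ell,1}) = \frac{1}{\sqrt{\epsilon}}S_1$, followed by the perturbation machinery (the paper cites Propositions \ref{prop:con-var-do-not-change-adding-bi-free-parts} and \ref{prop:conj-var-adding-bi-free-sums}; your single application of the latter, with $B_\ell \subseteq C_\ell$ and $B_r \subseteq C_r$ absorbing the amalgamation algebras, accomplishes the same thing). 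Your norm computation via $E(S_1^2) = \kappa^B_{1_{(\ell,\ell)}}(S_1 L_{1_B}, S_1) = \eta_{\ell,1}(1_B)$ is exactly the step the paper dismisses as trivial.
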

\begin{proof}
By Lemma \ref{lem:conj-variables-for-semis} and Lemma \ref{lem:conj-var-scaling}, we have that
\begin{align*} 
J_\ell\left(\sqrt{\epsilon} S_1 : \left(\alg\left(B_\ell,  \sqrt{\epsilon} \hat{\vS}_1 \right), \alg\left( B_r,  \sqrt{\epsilon}\vD   \right)\right) , \eta_{\ell, 1}\right) = \frac{1}{\sqrt{\epsilon}} S_1.
\end{align*}
The conjugate variable result then follows from Propositions \ref{prop:con-var-do-not-change-adding-bi-free-parts} and \ref{prop:conj-var-adding-bi-free-sums}, whereas the $\tau$-norm computation is trivial.
\end{proof}

\section{Bi-Free Fisher Information with Respect to a Completely Positive Map}
\label{sec:Fisher}

With the above technology, the bi-free Fisher information with respect to completely positive maps can be constructed and has similar properties to the bi-free Fisher information from \cite{CS2020} and the free Fisher information with respect to a completely positive map from \cite{S1998}.  We highlight the main results and properties in this section.

\begin{definition}
\label{defn:bi-fisher}
Let $(A,E,\varepsilon,\tau)$ be an analytical $B$-$B$-non-commutative probability space, let $\{\eta_{\ell, i}\}^n_{i=1}$ and $\{\eta_{r, j}\}^m_{j=1}$ be completely positive maps from $B$ to $B$, let $\vX \in A_\ell^n$ and $\vY \in A_r^m$ and let $(C_\ell, C_r)$ be a pair of $B$-algebras of $A$.  The \emph{relative bi-free Fisher information of $(\vX, \vY)$ with respect to $(\{\eta_{\ell, i}\}^n_{i=1}, \{\eta_{r, j}\}^m_{j=1})$ in the presence of $(C_\ell, C_r)$} is
\[
\Phi^*(\vX \sqcup \vY : (C_\ell, C_r), (\{\eta_{\ell, i}\}^n_{i=1}, \{\eta_{r, j}\}^m_{j=1})) = \sum^n_{i=1} \left\|\xi_i\right\|_\tau^2 + \sum^m_{j=1} \left\|\nu_j\right\|_\tau^2
\]
where for $i \in \{1, \ldots, n\}$ and $j \in \{1,\ldots, m\}$
\[
\xi_i = J_\ell\left(X_i : \left(\alg\left(C_\ell, \widehat{\vX}_i\right), \alg\left(C_r,  \vY\right)\right), \eta_{\ell, i}\right)
\qand
\nu_j = J_r\left(Y_{j} : \left(\alg\left(C_\ell, \vX\right), \alg\left(C_r,  \widehat{\vY}_j\right)\right), \eta_{r,j}\right)
\]  
provided these variables exist, and otherwise is defined as $\infty$.

In the case that $\eta_{\ell, i} = \eta_{r, j} = \eta$ for all $i$ and $j$, we use $\Phi^*(\vX \sqcup \vY : (C_\ell, C_r), \eta)$ to denote the above bi-free Fisher information.  In the case that $C_\ell = B_\ell$ and $C_r = B_r$, we use $\Phi^*(\vX \sqcup \vY : (\{\eta_{\ell, i}\}^n_{i=1}, \{\eta_{r, j}\}^m_{j=1}))$.  In the case both occur, we use $\Phi^*(\vX \sqcup \vY :  \eta)$.
\end{definition}

Note the bi-free Fisher with respect to completely positive maps exists in many settings due to Theorem \ref{thm:can-add-semis} and Theorem \ref{thm:conj-perturb-by-semis}.  Furthermore, the properties of the bi-free Fisher with respect to completely positive maps are in analogy with those from \cites{V1998-2, S1998,  CS2020} as the following shows.

\begin{remark}
\label{rem:remarks-about-fisher-info}
\begin{enumerate}[(i)]
\item In the case that $B = \bC$ and $\eta$ is unital, Definition \ref{defn:bi-fisher} immediately reduces down to the bi-free Fisher information in \cite{CS2020}*{Definition 5.1}  by Remark \ref{rem:conjugate-variable-immediate-remarks}.
\item In the case we are in the context of Example \ref{exam:factors} with $m = 0$, $C_\ell = B_\ell$, and $C_r = B_r$, Definition \ref{defn:bi-fisher} immediately reduces down to the free Fisher information with respect to a complete positive map from \cite{S1998}*{Definition 4.1}.
\item Note
\begin{align*}
 \Phi^*(& \vX \sqcup \vY : (C_\ell, C_r), (\{\eta_{\ell, i}\}^n_{i=1}, \{\eta_{r, j}\}^m_{j=1})) \\
&= \sum^n_{i=1}  \Phi^*\left(X_i \sqcup \emptyset :  \left(\alg\left(C_\ell, \widehat{\vX}_i\right), \alg\left(C_r,  \vY\right)\right), \eta_{\ell, i}, \tau  \right) \\
& \quad + \sum^m_{j=1}  \Phi^*\left(\emptyset \sqcup Y_j:  \left(\alg\left(C_\ell, \vX \right), \alg\left(C_r,  \widehat{\vY}_j\right)\right), \eta_{r, j}, \tau  \right).
\end{align*}
\item \label{rem:remarks-about-fisher-info:part:self-adjoint}If $\vX = (X_1, X_1^*, \ldots, X_n, X_n^*)$ and $\vY = (Y_1, Y_1^*, \ldots, Y_m, Y_m^*)$, then Lemma \ref{lem:non-self-adjoint-conjugate-variables-to-self-adjoint} implies
\[
\Phi^*(\vX \sqcup \vY : (C_\ell, C_r), (\{\eta_{\ell, i}\}^n_{i=1}, \{\eta_{r, j}\}^m_{j=1})) = \frac{1}{2} \Phi^*(\vX' \sqcup \vY' : (C_\ell, C_r), (\{\eta_{\ell, i}\}^n_{i=1}, \{\eta_{r, j}\}^m_{j=1}))
\]
where $\vX' = (\Re(X_1), \Im(X_1), \ldots, \Re(X_n), \Im(X_n))$ and $\vY' = (\Re(Y_1), \Im(Y_1), \ldots, \Re(Y_m), \Im(Y_m))$.
\item  \label{rem:remarks-about-fisher-info:part:smaller-amalgamation} In the context of Proposition \ref{prop:conj-variables-smaller-amalgamating-algebra} (i.e. reducing the $B$-$B$-non-commutative probability space to a $D$-$D$-non-commutative probability space),
\[
\Phi^*(\vX \sqcup \vY : (C_\ell, C_r), (\{\eta_{\ell, i}\}^n_{i=1}, \{\eta_{r, j}\}^m_{j=1})) = \Phi^*(\vX \sqcup \vY : (C_\ell, C_r), (\{\eta_{\ell, i} \circ F\}^n_{i=1}, \{\eta_{r, j}  \circ F\}^m_{j=1})).
\]
\item  \label{rem:remarks-about-fisher-info:part:scaling}By Lemma \ref{lem:conj-var-scaling}, for all $\lambda \in \bC \setminus \{0\}$
\[
\Phi^*(\lambda \vX \sqcup \lambda  \vY : (C_\ell, C_r), (\{\eta_{\ell, i}\}^n_{i=1}, \{\eta_{r, j}\}^m_{j=1})) = \frac{1}{|\lambda|^2}\Phi^*(\vX \sqcup \vY : (C_\ell, C_r), (\{\eta_{\ell, i}\}^n_{i=1}, \{\eta_{r, j}\}^m_{j=1})).
\]
\item  \label{rem:remarks-about-fisher-info:part:smaller-algebra} In the context of Lemma \ref{lem:con-var-reducing-alg-in-presence} (i.e. $(D_\ell, D_r)$ a smaller pair of $B$-algebras than $(C_\ell, C_r)$), 
\[
\Phi^*( \vX \sqcup    \vY : (D_\ell, D_r), (\{\eta_{\ell, i}\}^n_{i=1}, \{\eta_{r, j}\}^m_{j=1})) \leq \Phi^*(  \vX \sqcup    \vY : (C_\ell, C_r), (\{\eta_{\ell, i}\}^n_{i=1}, \{\eta_{r, j}\}^m_{j=1})).
\]
\item  \label{rem:remarks-about-fisher-info:part:adding-bi-free-algebra}In the context of Proposition \ref{prop:con-var-do-not-change-adding-bi-free-parts} (i.e. adding in a bi-free pair of $B$-algebras),
\[
\Phi^*(  \vX \sqcup    \vY : (\alg (D_\ell, C_\ell), \alg (D_r, C_r)), (\{\eta_{\ell, i}\}^n_{i=1}, \{\eta_{r, j}\}^m_{j=1})) = \Phi^*(  \vX \sqcup    \vY : (C_\ell, C_r), (\{\eta_{\ell, i}\}^n_{i=1}, \{\eta_{r, j}\}^m_{j=1})).
\]
\item If in addition to the assumptions of Definition \ref{defn:bi-fisher} $\vX' \in A_\ell^{n'}$, $\vY' \in A_r^{m'}$, $(\{\eta'_{\ell, i}\}^{n'}_{i=1}, \{\eta'_{r, j}\}^{m'}_{j=1})$ is a collection of completely positive maps on $B$, and $(D_\ell, D_r)$ is a pair of $B$-algebras then, by (iii) and (vii),
\begin{align*}
\Phi^*( &  \vX, \vX' \sqcup    \vY, \vY' : (\alg(C_\ell, D_\ell) , \alg(C_r, D_r) ), (\{\eta_{\ell, i}\}^n_{i=1} \cup \{\eta'_{\ell, i}\}^{n'}_{i=1}, \{\eta_{r, j}\}^m_{j=1} \cup \{\eta'_{r, j}\}^{m'}_{j=1})) \\
&\geq \Phi^*(  \vX \sqcup    \vY : (C_\ell, C_r), (\{\eta_{\ell, i}\}^n_{i=1}, \{\eta_{r, j}\}^m_{j=1})) + \Phi^*(  \vX' \sqcup    \vY' : (D_\ell, D_r), (\{\eta'_{\ell, i}\}^{n'}_{i=1}, \{\eta'_{r, j}\}^{m'}_{j=1}))
\end{align*}
\item In the context of (ix) with the additional assumption that 
\[
\left(\alg(C_\ell, \vX), \alg(C_r, \vY)    \right) \qqand \left(\alg(D_\ell, \vX'), \alg(D_r, \vY')    \right)
\]
are bi-free with amalgamation over $B$ with respect to $E$,  Proposition \ref{prop:con-var-do-not-change-adding-bi-free-parts} implies that 
\begin{align*}
\Phi^*( &  \vX, \vX' \sqcup    \vY, \vY' : (\alg(C_\ell, D_\ell) , \alg(C_r, D_r) ), (\{\eta_{\ell, i}\}^n_{i=1} \cup \{\eta'_{\ell, i}\}^{n'}_{i=1}, \{\eta_{r, j}\}^m_{j=1} \cup \{\eta'_{r, j}\}^{m'}_{j=1})) \\
&= \Phi^*(  \vX \sqcup    \vY : (C_\ell, C_r), (\{\eta_{\ell, i}\}^n_{i=1}, \{\eta_{r, j}\}^m_{j=1})) + \Phi^*(  \vX' \sqcup    \vY' : (D_\ell, D_r), (\{\eta'_{\ell, i}\}^{n'}_{i=1}, \{\eta'_{r, j}\}^{m'}_{j=1}))
\end{align*}
\end{enumerate}
\end{remark}

Unsurprisingly, more complicated properties of free Fisher information extend.

\begin{proposition}[Bi-Free Stam Inequality]
\label{prop:bi-free-stam-inequality}
Let $(A,E,\varepsilon,\tau)$ be an analytical $B$-$B$-non-commutative probability space, let $\{\eta_{\ell, i}\}^n_{i=1}$ and $\{\eta_{r, j}\}^m_{j=1}$ be completely positive maps from $B$ to $B$, let $\vX, \vX' \in A_\ell^n$ and $\vY, \vY' \in A_r^m$, and let $(C_\ell, C_r)$ and $(D_\ell, D_r)$ be pairs of $B$-algebras of $A$ such that
\[
\left(\alg(C_\ell, \vX), \alg(C_r, \vY)    \right) \qqand \left(\alg(D_\ell, \vX'), \alg(D_r, \vY')    \right)
\]
are bi-free with amalgamation over $B$ with respect to $E$.  Then
\begin{align*}
& \left(   \Phi^*(  \vX + \vX' \sqcup    \vY +  \vY' : (\alg(C_\ell, D_\ell) , \alg(C_r, D_r) ), (\{\eta_{\ell, i}\}^n_{i=1}, \{\eta_{r, j}\}^m_{j=1} )) \right)^{-1}  \\
&\geq \left(  \Phi^*(  \vX \sqcup    \vY : (\alg(C_\ell, D_\ell) , \alg(C_r, D_r) ), (\{\eta_{\ell, i}\}^n_{i=1}, \{\eta_{r, j}\}^m_{j=1} )) \right)^{-1} \\
& \quad+ \left(  \Phi^*(  \vX' \sqcup   \vY' : (\alg(C_\ell, D_\ell) , \alg(C_r, D_r) ), (\{\eta_{\ell, i}\}^n_{i=1}, \{\eta_{r, j}\}^m_{j=1} ))      \right)^{-1}.
\end{align*}
\end{proposition}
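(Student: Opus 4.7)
The plan is to adapt the classical (free) Stam inequality, relying crucially on Proposition~\ref{prop:conj-var-adding-bi-free-sums} (which identifies the conjugate variable of a sum with a projection of an individual conjugate variable) and Proposition~\ref{prop:con-var-do-not-change-adding-bi-free-parts} (invariance of conjugate variables under enlarging the presence by a bi-free pair). We may assume both right-hand Fisher informations are finite. For each $i \in \{1,\ldots,n\}$ and $j \in \{1,\ldots,m\}$ let
\[
\xi_i = J_\ell(X_i : (\alg(C_\ell, \widehat{\vX}_i), \alg(C_r, \vY)), \eta_{\ell, i}), \qquad \xi_i^{\#} = J_\ell(X'_i : (\alg(D_\ell, \widehat{\vX}'_i), \alg(D_r, \vY')), \eta_{\ell, i}),
\]
where $\widehat{\vX}'_i$ denotes $\vX'$ with the $i$-th entry removed, and let $\nu_j, \nu_j^{\#}$ be the analogous right conjugate variables of $Y_j$ and $Y'_j$. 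Let $\widetilde{\xi}_i$, $\widetilde{\nu}_j$ denote the conjugate variables appearing in the definition of $\Phi^*(\vX + \vX' \sqcup \vY + \vY' : \ldots)$, and let $P$ be the orthogonal projection of $L_2(A, \tau)$ onto $\overline{\alg(\vX + \vX', \vY + \vY', C_\ell, C_r, D_\ell, D_r)}^{\|\cdot\|_\tau}$.

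Applying Proposition~\ref{prop:conj-var-adding-bi-free-sums} twice -- once in the natural order and once after swapping the roles of primed and unprimed data (together with $C \leftrightarrow D$), which the bi-freeness hypothesis permits -- yields $\widetilde{\xi}_i = P(\xi_i) = P(\xi_i^{\#})$ and $\widetilde{\nu}_j = P(\nu_j) = P(\nu_j^{\#})$. Hence for every $t \in [0,1]$, $\widetilde{\xi}_i = P(t\xi_i + (1-t)\xi_i^{\#})$, and the contraction property of $P$ gives
\[
\|\widetilde{\xi}_i\|_\tau^2 \leq \|t\xi_i + (1-t)\xi_i^{\#}\|_\tau^2 = t^2\|\xi_i\|_\tau^2 + (1-t)^2\|\xi_i^{\#}\|_\tau^2 + 2t(1-t)\,\Re\langle \xi_i, \xi_i^{\#}\rangle_\tau,
\]
with the corresponding bound for $\|\widetilde{\nu}_j\|_\tau^2$.

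The pivotal step is the orthogonality $\langle \xi_i, \xi_i^{\#}\rangle_\tau = 0$, and analogously $\langle \nu_j, \nu_j^{\#}\rangle_\tau = 0$. For this, I would invoke Proposition~\ref{prop:con-var-do-not-change-adding-bi-free-parts} with the bi-freeness hypothesis of the statement to identify $\xi_i$ with the left conjugate variable of $X_i$ in the enlarged presence $(\alg(C_\ell, \widehat{\vX}_i, D_\ell, \vX'), \alg(C_r, \vY, D_r, \vY'))$. Approximating $\xi_i^{\#}$ in $\|\cdot\|_\tau$-norm by polynomials $b_m$ in $\vX', \vY', D_\ell, D_r$, each adjoint $b_m^{*}$ expands as a linear combination of monomials $Z_1 \cdots Z_k$ whose entries lie in the enlarged left and right algebras but never equal $X_i$. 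The moment characterization in Definition~\ref{defn:conjugate-variables} then forces $\tau(b_m^{*} \xi_i) = 0$, and passing to the limit gives $\langle \xi_i, \xi_i^{\#}\rangle_\tau = \lim_m \tau(b_m^{*} \xi_i) = 0$.

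Summing the resulting bounds over $i$ and $j$ yields, for every $t \in [0,1]$,
\[
\Phi_{\mathrm{sum}}^{*} \leq t^2 \Phi_1^{*} + (1-t)^2 \Phi_2^{*},
\]
where $\Phi_1^{*}$, $\Phi_2^{*}$, $\Phi_{\mathrm{sum}}^{*}$ abbreviate the three Fisher informations appearing in the statement. Minimizing at $t = \Phi_2^{*}/(\Phi_1^{*} + \Phi_2^{*})$ gives $\Phi_{\mathrm{sum}}^{*} \leq \Phi_1^{*}\Phi_2^{*}/(\Phi_1^{*} + \Phi_2^{*})$, which inverts to the claimed inequality. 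The main obstacle is the orthogonality step: it requires a careful $*$-closure argument so that each $b_m^{*}$ indeed lies in an algebra to which the extended conjugate variable relations apply, together with a verification that the bi-freeness of the given pairs passes to the enlarged algebras needed for Proposition~\ref{prop:con-var-do-not-change-adding-bi-free-parts}; once this bookkeeping is in place, the argument runs as described.
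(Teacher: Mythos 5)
Your proposal is correct and follows essentially the same route as the paper: the paper's proof establishes the projection identity $P_1P_2 = P_2P_1 = P_0$ from the bi-freeness factorization $\widetilde{E}(ZZ') = \widetilde{E}(Z\widetilde{E}(Z'))$ and then defers to the classical Stam argument (conjugate variable of the sum as a projection, orthogonality, optimization over $t$), which is precisely what you carry out. Your orthogonality step via the moment relations $\tau(b_m^*\xi_i)=0$ is just an unpacked version of $\langle \xi_i,\xi_i^{\#}\rangle = \langle P_2P_1\xi_i,\xi_i^{\#}\rangle = \langle \widetilde{E}(\xi_i),\xi_i^{\#}\rangle = 0$, and the $*$-closure bookkeeping you flag is equally implicit in the paper's argument.
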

\begin{proof}
Let
\begin{align*}
P_0 & : L_2(\A, \varphi) \to L_2(B, \tau_B) \\
P_1 & : L_2(\A, \varphi) \to \overline{\alg(C_\ell, C_r, \vX, \vY)}^{\left\|\, \cdot \, \right\|_\tau}    \\
P_2 & : L_2(\A, \varphi) \to \overline{\alg(D_\ell, D_r, \vX', \vY')}^{\left\|\, \cdot \, \right\|_\tau}
\end{align*}
be the orthogonal projections onto their co-domains.  Note that if
\[
Z \in \alg(C_\ell, C_r, \vX, \vY)  \qand \alg(D_\ell, D_r, \vX', \vY'),
\]
then bi-freeness implies
\[
\widetilde{E}(ZZ') =  \widetilde{E}\left(Z \widetilde{E}(Z')\right).
\]
Indeed, this is easily seen as if $Z$ and $Z'$ are monomials, then any cumulant of the monomial $ZZ'$ corresponding to a bi-non-crossing partition is non-zero if and only if it decomposes into a bi-non-crossing partition on $Z$ union a bi-non-crossing partition on $Z'$.  Thus $P_1P_2 = P_2P_1 = P_0$.

The remainder of the proof can then be read from \cite{S1998}*{Proposition 4.5}, \cite{CS2020}*{Proposition 5.8}, or even \cite{V1998-2}*{Proposition 6.5}.
\end{proof}

\begin{proposition}[Bi-Free Cramer-Rao Inequality]
\label{prop:cramer-rao}
Let $(A,E,\varepsilon,\tau)$ be an analytical $B$-$B$-non-commutative probability space, let $\{\eta_{\ell, i}\}^n_{i=1}$ and $\{\eta_{r, j}\}^m_{j=1}$ be completely positive maps from $B$ to $B$, let $\vX \in A_\ell^n$ and $\vY \in A_r^m$ consist of self-adjoint operators and let $(C_\ell, C_r)$ be a pair of $B$-algebras of $A$.  Then
\[
\Phi^*(\vX \sqcup \vY : (C_\ell, C_r), (\{\eta_{\ell, i}\}^n_{i=1}, \{\eta_{r, j}\}^m_{j=1})) \tau\left(\sum^n_{i=1} X_i^2 + \sum^m_{j=1} Y_j^2 \right) \geq \left(\sum^n_{i=1} \tau_B(\eta_{\ell, i}(1)) + \sum^m_{j=1} \tau_B(\eta_{r, j}(1))  \right)^2.
\]
Moreover, equality holds if $(\vX, \vY)$ are $(\{\eta_{\ell, i}\}^n_{i=1}, \{\eta_{r, j}\}^m_{j=1})$-bi-semicircular elements and 
\[
\{(C_\ell, C_r)\} \cup \{(\alg(B_\ell, X_i), B_r)\}^n_{i=1}\cup \{(B_\ell, \alg(B_r, Y_j))\}^m_{j=1}
\]
are bi-free with amalgamation over $B$ with respect to $E$.  The converse holds when $C_\ell = B_\ell$ and $C_r =  B_r$.
\end{proposition}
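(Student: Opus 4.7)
The plan is to derive the inequality by a single application of Cauchy--Schwarz in the Hilbert space $L_2(A,\tau)^{n+m}$, after computing the relevant pairings via the defining relation for the conjugate variables. Writing $\xi_i = J_\ell(X_i : (\alg(C_\ell, \widehat{\vX}_i), \alg(C_r, \vY)), \eta_{\ell,i})$ and $\nu_j = J_r(Y_j : (\alg(C_\ell, \vX), \alg(C_r, \widehat{\vY}_j)), \eta_{r,j})$ for the variables appearing in Definition \ref{defn:bi-fisher}, I will first apply Definition \ref{defn:conjugate-variables} with $n = 1$ and $Z_1 = X_i$. In that formula $V_1 = \emptyset$, the product $\prod_{p \in V_1} Z_p$ is $1_B$, and $V_1^{\mathsf{c}}\setminus \{1,2\} = \emptyset$, so the right-hand side collapses to a single term $\tau(L_{\eta_{\ell,i}(1_B)}) = \tau_B(\eta_{\ell,i}(1_B))$. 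Thus $\tau(X_i \xi_i) = \tau_B(\eta_{\ell,i}(1_B))$, and symmetrically $\tau(Y_j \nu_j) = \tau_B(\eta_{r,j}(1_B))$; both are non-negative reals since $\eta_{\ell,i}(1_B), \eta_{r,j}(1_B)$ are positive in $B$ by complete positivity and $\tau_B$ is a state.

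Next I will package these into a single inner-product estimate. Let $u = (X_1, \ldots, X_n, Y_1, \ldots, Y_m)$ and $v = (\xi_1, \ldots, \xi_n, \nu_1, \ldots, \nu_m)$ in $L_2(A,\tau)^{n+m}$ with the direct-sum inner product. Self-adjointness of each $X_i$ and $Y_j$ gives
\[
\langle v, u\rangle = \sum_{i=1}^n \tau(X_i \xi_i) + \sum_{j=1}^m \tau(Y_j \nu_j) = \sum_{i=1}^n \tau_B(\eta_{\ell, i}(1_B)) + \sum_{j=1}^m \tau_B(\eta_{r, j}(1_B)),
\]
which is a non-negative real. The Cauchy--Schwarz inequality $\langle v, u\rangle^2 \leq \|u\|^2 \|v\|^2$ then reads exactly as
\[
\left(\sum_i \tau_B(\eta_{\ell,i}(1_B)) + \sum_j \tau_B(\eta_{r,j}(1_B))\right)^{\!2} \leq \tau\!\left(\sum_i X_i^2 + \sum_j Y_j^2\right) \Phi^*(\vX \sqcup \vY : (C_\ell, C_r), (\{\eta_{\ell, i}\}, \{\eta_{r, j}\})),
\]
which is the claimed inequality.

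For the forward direction of the equality clause, the bi-freeness of $(C_\ell, C_r)$ from each $(\alg(B_\ell, X_i), B_r)$ and $(B_\ell, \alg(B_r, Y_j))$ will let me invoke Proposition \ref{prop:con-var-do-not-change-adding-bi-free-parts} to absorb $(C_\ell, C_r)$ and reduce to the case $(C_\ell, C_r) = (B_\ell, B_r)$ without changing any conjugate variables, at which point Lemma \ref{lem:conj-variables-for-semis} gives $\xi_i = X_i$ and $\nu_j = Y_j$, so $v = u$ and Cauchy--Schwarz is saturated. For the converse when $(C_\ell, C_r) = (B_\ell, B_r)$, Cauchy--Schwarz equality forces $v = \alpha u$ in $L_2(A,\tau)^{n+m}$ for a single real scalar $\alpha$, so $\xi_i = \alpha X_i$ and $\nu_j = \alpha Y_j$. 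Plugging these proportionalities into the cumulant characterization of Theorem \ref{thm:conjugate-variables-via-cumulants}, then combining with Theorem \ref{thmkappaPsi=kappaB} and Proposition \ref{prop:analytic-cumulants-with-product-entries}, will force all operator-valued bi-free cumulants among the $X_i$'s and $Y_j$'s (in the presence of $B_\ell, B_r$) to vanish modulo $N_{\tau_B}$ except the diagonal order-two cumulants, whose values match $\eta_{\ell,i}$ and $\eta_{r,j}$ after the normalisation of $\alpha$ is resolved. The vanishing of mixed cumulants then delivers the desired bi-freeness via Corollary \ref{cor:bi-free-is-vanishing-mixed-cumulants}. The hard part will be precisely this converse: transferring the Hilbert-space proportionality into sharp statements about operator-valued cumulants modulo $N_{\tau_B}$ and reconciling the common scalar $\alpha$ with the prescribed covariances $\eta_{\ell,i}, \eta_{r,j}$.
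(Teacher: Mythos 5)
Your proof is correct and follows essentially the same route as the paper, whose own proof simply defers to the Cauchy--Schwarz arguments of \cite{CS2020}*{Proposition 5.10}, \cite{S1998}*{Proposition 4.6}, and \cite{V1998-2}*{Proposition 6.9}: extract $\tau(X_i\xi_i)=\tau_B(\eta_{\ell,i}(1_B))$ from the $n=1$ instance of the conjugate-variable relation and apply Cauchy--Schwarz in $L_2(A,\tau)^{n+m}$. Your handling of the equality cases, including the honest flag that the converse still requires reconciling the proportionality constant $\alpha$ with the prescribed covariances, is consistent with what those cited proofs do.
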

\begin{proof}
The result follow from the obvious modifications to \cite{CS2020}*{Proposition 5.10}.  Also see \cite{S1998}*{Proposition 4.6} and \cite{V1998-2}*{Proposition 6.9}.
\end{proof}

Similarly, limits behave as one expects based on \cites{CS2020, V1998-2, S1998}.

\begin{proposition} 
\label{prop:fisher-limits}
Let $(A,E,\varepsilon,\tau)$ be an analytical $B$-$B$-non-commutative probability space, let $\{\eta_{\ell, i}\}^n_{i=1}$ and $\{\eta_{r, j}\}^m_{j=1}$ be completely positive maps from $B$ to $B$, let $\vX \in A_\ell^n$ and $\vY \in A_r^m$ consist of self-adjoint operators and let $(C_\ell, C_r)$ be a pair of $B$-algebras of $A$.   Suppose further  for each $k \in \bN$ that $\vX^{(k)} \in A_\ell^n$ and $ \vY^{(k)} \in A_r^m$ are tuples of self-adjoint elements in $A$ such that
\begin{align*}
& \limsup_{k \to \infty} \left\|X^{(k)}_i\right\| < \infty, \\
& \limsup_{k \to \infty} \left\|Y^{(k)}_j\right\| < \infty, \\
& s\text{-}\lim_{k \to \infty} X^{(k)}_i = X_i, \text{ and} \\
& s\text{-}\lim_{k \to \infty} Y^{(k)}_j = Y_j
\end{align*}
for all $1 \leq i \leq n$ and $1 \leq j \leq m$ (where the strong limit is computed as bounded linear maps acting on $L_2(A, \tau)$).  Then
\begin{align*}
\liminf_{k \to \infty}  \Phi^*\left(\vX^{(k)} \sqcup \vY^{(k)} : (C_\ell, C_r), (\{\eta_{\ell, i}\}^n_{i=1}, \{\eta_{r, j}\}^m_{j=1}) \right)  \geq  \Phi^*(\vX \sqcup \vY : (C_\ell, C_r), (\{\eta_{\ell, i}\}^n_{i=1}, \{\eta_{r, j}\}^m_{j=1})) 
\end{align*}
\end{proposition}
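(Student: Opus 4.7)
The plan is to run the standard weak-compactness / weak-lower-semicontinuity argument used for \cite{V1998-2}*{Proposition 6.7}, \cite{S1998}*{Proposition 4.7}, and \cite{CS2020}*{Proposition 5.11}, with the operator-valued technicalities handled via the $L_2$-continuity properties of $\widetilde{E}$ developed in Section \ref{sec:Analytical-B-B-NCPS}. Without loss of generality the liminf is finite; passing to a subsequence $(k_p)_{p\ge 1}$ realizing it, each required conjugate variable
\[
\xi_i^{(p)} = J_\ell\!\left(X_i^{(k_p)} : \bigl(\alg(C_\ell,\widehat{\vX^{(k_p)}}_i),\alg(C_r,\vY^{(k_p)})\bigr),\eta_{\ell,i}\right), \quad \nu_j^{(p)} = J_r\!\left(Y_j^{(k_p)} : \cdots, \eta_{r,j}\right)
\]
exists for $p$ large and the families $\{\xi_i^{(p)}\}$, $\{\nu_j^{(p)}\}$ are bounded in $L_2(A,\tau)$. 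By weak sequential compactness of balls in the Hilbert space $L_2(A,\tau)$, pass to a further subsequence so that $\xi_i^{(p)}\rightharpoonup\xi_i$ and $\nu_j^{(p)}\rightharpoonup\nu_j$ weakly; weak lower semi-continuity of the norm already gives
\[
\sum_{i=1}^n\|\xi_i\|_\tau^2+\sum_{j=1}^m\|\nu_j\|_\tau^2 \leq \liminf_{p\to\infty}\!\left(\sum_{i=1}^n\|\xi_i^{(p)}\|_\tau^2+\sum_{j=1}^m\|\nu_j^{(p)}\|_\tau^2\right).
\]

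The central step is to check that each weak limit $\xi_i$ satisfies the left bi-free conjugate variable relations for $X_i$ in the presence of $(\alg(C_\ell,\widehat{\vX}_i),\alg(C_r,\vY))$ with respect to $\eta_{\ell,i}$ (and analogously for $\nu_j$). Fix any word $Z_1\cdots Z_n$ in $\{X_i\}\cup \widehat{\vX}_i\cup\vY\cup C_\ell\cup C_r$ and let $Z_k^{(p)}$ denote the corresponding word in the $k_p$-th approximants (entries of $C_\ell$, $C_r$ unchanged). The hypotheses (uniformly bounded operator norms on $L_2(A,\tau)$ and strong convergence) imply that $Z_1^{(p)}\cdots Z_n^{(p)}$ converges strongly to $Z_1\cdots Z_n$; pairing strong convergence of operators with weak convergence of $\xi_i^{(p)}\rightharpoonup \xi_i$ against the vector $1_A$ in the vector state $\tau$ yields
\[
\tau\!\left(Z_1^{(p)}\cdots Z_n^{(p)}\,\xi_i^{(p)}\right) \longrightarrow \tau\!\left(Z_1\cdots Z_n\,\xi_i\right).
\]
For the right-hand side of Definition \ref{defn:conjugate-variables}, each factor $E\bigl(\prod_{p\in V_k}Z_p^{(p)}\bigr)$ converges in $L_2(B,\tau_B)$ by continuity of $\widetilde{E}$ (Proposition \ref{propE}) applied to the strongly convergent product $\prod_{p\in V_k}Z_p^{(p)}$ acting on $1_A$; $\eta_{\ell,i}$, being completely positive with $\tau_B$ tracial, extends to a bounded operator on $L_2(B,\tau_B)$, and left (or right) multiplication by the resulting $B$-element is continuous on $L_2(A,\tau)$. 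Passing to the limit in each conjugate variable identity for $\xi_i^{(p)}$ thus produces the conjugate variable identity for $\xi_i$; the right analogue works identically for $\nu_j$.

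Finally, $\xi_i$ need not lie in $\overline{\alg(X_i,\widehat{\vX}_i,\vY,C_\ell,C_r)}^{\|\cdot\|_\tau}$, so let $P_i$ be the orthogonal projection onto this subspace. Then $P_i(\xi_i)$ still satisfies the conjugate variable relations (they only test the inner product against elements of this subspace), so by the uniqueness stated in Remark \ref{rem:conjugate-variable-immediate-remarks}(ii) we have $P_i(\xi_i)=J_\ell(X_i:\cdots,\eta_{\ell,i})$. Since orthogonal projection does not increase the $L_2$-norm, $\|J_\ell(X_i:\cdots,\eta_{\ell,i})\|_\tau^2\le \|\xi_i\|_\tau^2$, and combining with the analogous inequality for $\nu_j$ gives
\[
\Phi^*(\vX\sqcup\vY:(C_\ell,C_r),(\{\eta_{\ell,i}\},\{\eta_{r,j}\})) \leq \liminf_{k\to\infty}\Phi^*(\vX^{(k)}\sqcup\vY^{(k)}:(C_\ell,C_r),(\{\eta_{\ell,i}\},\{\eta_{r,j}\})),
\]
as required.

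The main obstacle I anticipate is the passage to the limit in the right-hand side of the conjugate variable relations: one must verify that $\eta_{\ell,i}\circ E$ interacts continuously with the strongly convergent products and that the resulting $B$- (or $L_2(B,\tau_B)$-) valued quantities multiply $\tau$-continuously into the rest of the expression. All other ingredients are the standard Hilbert space weak-compactness and weak-lower-semicontinuity observations, applied word-for-word as in the scalar-valued case.
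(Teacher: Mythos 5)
Your overall skeleton (finite liminf, uniform $L_2$-bounds on the conjugate variables, weak compactness, weak lower semicontinuity of the norm, projection onto the correct subspace at the end) is the same as the paper's, which reduces the proposition to a lemma asserting that the (projected) conjugate variables of the approximants converge weakly to the conjugate variable of the limit. The difference is where you verify that the weak limit is a conjugate variable: you pass to the limit in the \emph{moment} relations of Definition \ref{defn:conjugate-variables}, whereas the paper passes to the limit in the \emph{cumulant} relations of Theorem \ref{thm:conjugate-variables-via-cumulants} (following the scalar-valued argument of \cite{CS2020}*{Lemma 5.13} with the analytical operator-valued bi-free cumulants). This is not a cosmetic difference, and it is where your argument has a genuine gap.

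Concretely, the right-hand side of the moment relation contains $L_{\eta(E(\prod_{p\in V_k}Z^{(p)}_p))}$, where the argument of $\eta$ varies with $p$. The hypotheses only give strong operator convergence, so all you can extract is that $\widehat{E(\prod_{V_k}Z^{(p)}_p)}\to\widehat{E(\prod_{V_k}Z_p)}$ in $L_2(B,\tau_B)$ (via Proposition \ref{propE}); you then need $\eta$ to respect this convergence, and your justification --- that a completely positive $\eta$ with $\tau_B$ tracial ``extends to a bounded operator on $L_2(B,\tau_B)$'' --- is false in general. For instance, on $B=C[0,1]$ with $\tau_B$ integration against Lebesgue measure, the unital completely positive map $\eta(f)=f(0)1_B$ satisfies $\left\|\eta(f)\right\|_{\tau_B}=|f(0)|$, which is not controlled by $\left\|f\right\|_{\tau_B}$; and if $\tau_B$ is not faithful the element $E(\prod_{V_k}Z^{(p)}_p)\in B$ is not even determined by its image in $L_2(B,\tau_B)$. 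Nothing in Definition \ref{enhancedbbdef} forces $\eta$ to be $\left\|\cdot\right\|_{\tau_B}$-continuous. The cumulant characterization is designed precisely to sidestep this: in conditions (a)--(d) of Theorem \ref{thm:conjugate-variables-via-cumulants} the map $\eta$ is only ever evaluated at a \emph{fixed} $b\in B$ (namely $\widetilde{\kappa}_{1_{(\ell,\ell)}}(XL_b,\xi)=\eta(b)$), so no continuity of $\eta$ is required when taking limits; the convergence burden is carried entirely by the $L_2$-continuity of $\Psi$ and $\widetilde{\kappa}$ in the last entry. A smaller point you should also make explicit: to get $\tau(Z^{(p)}_1\cdots Z^{(p)}_n\xi^{(p)})\to\tau(Z_1\cdots Z_n\xi)$ you pair the weakly convergent $\xi^{(p)}$ against $(Z^{(p)}_1\cdots Z^{(p)}_n)^*1_A$, so you need strong convergence of the \emph{adjoints}; this does hold here because the varying factors are self-adjoint and the $C_\ell,C_r$ factors are fixed, but it does not follow from strong convergence alone and deserves a sentence.
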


The proof of Proposition \ref{prop:fisher-limits}  becomes identical to \cite{CS2020}*{Proposition 5.12} once the following lemma is established.  Also see \cite{S1998}*{Proposition 4.7} and \cite{V1998-2}*{Proposition 6.10}.

\begin{lemma}
Under the assumptions of Proposition \ref{prop:fisher-limits} along with the additional assumptions that
\[
\xi_k = J_\ell\left(X_1^{(k)} : \left( \alg\left( C_\ell,  \widehat{\vX}^{(k)}_1\right) , \alg(C_r, \vY^{(k)}) \right), \eta_{\ell, 1} \right)
\]
exist and are bounded in $L_2$-norm by some constant $K > 0$, it follows that
\[
\xi = J_\ell\left(X_1 : \left( \alg\left( C_\ell,  \widehat{\vX}_1\right) , \alg(C_r, \vY) \right), \eta_{\ell, 1} \right)
\]
exists and is equal to
\[
w\text{-}\lim_{k \to \infty} P\left(\xi_k\right)
\]
where $P$ is the orthogonal projection of $L_2(A, \tau)$ onto $\overline{\alg(C_\ell,C_r, \vX, \vY)}^{\left\| \, \cdot \, \right\|_\tau}$.

If, in addition,
\[
\limsup_{k \to \infty} \left\|\xi_k\right\|_2 \leq \left\| \xi \right\|_2
\]
then
\[
\lim_{k \to \infty}  \left\|\xi_k- \xi\right\|_2 =0.
\]

The same holds with $X_1$ replaced with $X_i$, and a similar result holds for the right.
\end{lemma}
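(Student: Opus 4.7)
The plan is to mimic the strategy in \cite{V1998-2}*{Proposition 6.10}, \cite{S1998}*{Proposition 4.7}, and \cite{CS2020}*{Proposition 5.12}, adapted to the operator-valued bi-free setting. The hypotheses give strong operator convergence of the $X_i^{(k)}$ and $Y_j^{(k)}$ with uniformly bounded operator norms, which will let us pass the bi-free conjugate variable moment relations to the limit.

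First I would extract a weak limit. Since $\{P(\xi_k)\}_{k\geq 1}$ is bounded in $L_2(A,\tau)$ by $K$, the Banach--Alaoglu theorem (for Hilbert spaces) yields a subsequence $(P(\xi_{k_s}))_{s \geq 1}$ weakly convergent to some $\zeta \in L_2(A,\tau)$. Because each $P(\xi_k)$ lies in the closed subspace $\overline{\alg(C_\ell, C_r, \vX, \vY)}^{\left\|\, \cdot\,\right\|_\tau}$ (note that when $\xi_k$ is defined via $\left\|\, \cdot\, \right\|_\tau$-limits of $\alg(X_1^{(k)}, C_\ell, \widehat{\vX}^{(k)}_1, C_r, \vY^{(k)})$, the image $P(\xi_k)$ automatically sits in the target subspace), the weak limit $\zeta$ lies there as well.

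Next I would verify that $\zeta$ satisfies the left bi-free conjugate variable relations (Definition \ref{defn:conjugate-variables}) for $X_1$ with respect to $\eta_{\ell,1}$ in the presence of $(\alg(C_\ell, \widehat{\vX}_1), \alg(C_r, \vY))$. Fix $Z_1,\ldots,Z_n \in \{X_1\} \cup C_\ell \cup \widehat{\vX}_1 \cup C_r \cup \vY$ and form the corresponding sequence $Z_1^{(k)},\ldots,Z_n^{(k)}$ by replacing each $X_i$ by $X_i^{(k)}$ and each $Y_j$ by $Y_j^{(k)}$ (leaving entries of $C_\ell \cup C_r$ unchanged). The uniform operator-norm bounds together with the strong convergence $Z_p^{(k)} \to Z_p$ imply that $Z_1^{(k)}\cdots Z_n^{(k)} \to Z_1 \cdots Z_n$ strongly as bounded operators on $L_2(A, \tau)$ (the standard telescoping argument). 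Using $\tau(T\xi_k) = \tau(TP(\xi_k))$ for any $T \in \alg(X_1^{(k)}, C_\ell, \widehat{\vX}^{(k)}_1, C_r, \vY^{(k)})$ together with the weak convergence $P(\xi_{k_s}) \to \zeta$, the left-hand side of the conjugate variable relation passes to the limit:
\[
\tau(Z_1^{(k_s)} \cdots Z_n^{(k_s)} \xi_{k_s}) = \langle P(\xi_{k_s}), (Z_1^{(k_s)} \cdots Z_n^{(k_s)})^* 1_A \rangle_{L_2(A,\tau)} \longrightarrow \langle \zeta, (Z_1\cdots Z_n)^* 1_A\rangle = \tau(Z_1 \cdots Z_n \zeta).
\]
Simultaneously, the right-hand side of the relation from Definition \ref{defn:conjugate-variables} is a finite sum of terms of the form $\tau_B\!\left(\eta_{\ell,1}(E(\prod_{p \in V_k} Z_p^{(k_s)})) \cdot E(\prod_{p \in V_k^c \setminus\{k,n+1\}} Z_p^{(k_s)})\right)$, each of which is continuous under strong convergence of bounded operators (since $E$ is, by Proposition \ref{orthproj} and boundedness of $\widetilde E$, continuous from strong limits of bounded operators to $\left\|\,\cdot\,\right\|_\tau$-limits in $L_2(B,\tau_B)$, and $\eta_{\ell,1}$ and $\tau_B$ are continuous). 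Thus the relation for $\xi_{k_s}$ passes to the analogous relation for $\zeta$. Combined with $\zeta$ lying in the correct closed subspace, this gives $\zeta = J_\ell(X_1 : (\alg(C_\ell, \widehat{\vX}_1), \alg(C_r, \vY)), \eta_{\ell,1}) = \xi$. Uniqueness of the conjugate variable (Remark \ref{rem:conjugate-variable-immediate-remarks}) then forces every weakly convergent subsequence of $(P(\xi_k))_k$ to have the same limit $\xi$, so $P(\xi_k) \to \xi$ weakly in $L_2(A,\tau)$.

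Finally, for the norm-convergence statement, by weak lower semicontinuity of the norm $\|\xi\|_\tau \leq \liminf_k \|P(\xi_k)\|_\tau \leq \liminf_k \|\xi_k\|_\tau$, and combined with the hypothesis $\limsup_k \|\xi_k\|_\tau \leq \|\xi\|_\tau$ we obtain $\lim_k \|P(\xi_k)\|_\tau = \|\xi\|_\tau = \lim_k \|\xi_k\|_\tau$. Weak convergence together with convergence of norms in a Hilbert space upgrades to norm convergence, so $P(\xi_k) \to \xi$ in $\|\cdot\|_\tau$; the equality of norms $\|\xi_k\|_\tau \to \|\xi\|_\tau = \|P(\xi_k)\|_\tau + o(1)$ forces $\xi_k - P(\xi_k) \to 0$ in norm, whence $\xi_k \to \xi$ in $L_2(A,\tau)$. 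The main technical point to check carefully is the passage of the moment relations to the limit, which hinges on the uniform boundedness assumption and the fact that both $\widetilde E$ and $\eta_{\ell,1}$ preserve the appropriate continuity; the remaining ingredients are standard Hilbert-space soft analysis. The right-hand and other-index cases are identical with obvious notational changes.
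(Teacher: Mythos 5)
Your overall architecture — extract a weak limit of $P(\xi_k)$ by Banach--Alaoglu, verify the conjugate variable relations for the limit, invoke uniqueness to upgrade subsequential to full weak convergence, then use weak convergence plus convergence of norms and $\|\xi_k\|_\tau^2 = \|P\xi_k\|_\tau^2 + \|(1-P)\xi_k\|_\tau^2$ to get norm convergence — is exactly the strategy of the result the paper cites, and the final norm-upgrade paragraph is correct. The place where you diverge from the paper is the verification step: you check the \emph{moment} relations of Definition \ref{defn:conjugate-variables}, whereas the paper explicitly passes to the \emph{cumulant} characterization (Theorem \ref{thm:conjugate-variables-via-cumulants}), and this difference is where a genuine gap appears. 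On the right-hand side of the moment relation you must show that
\[
\tau\left( \left( \prod_{p \in V_k^c\setminus\{k,n+1\}} Z_p^{(k_s)}\right) L_{\eta_{\ell,1}\left(E\left(\prod_{p \in V_k} Z_p^{(k_s)}\right)\right)}\right)
\]
converges to the corresponding expression for the limiting operators. Strong convergence with uniform norm bounds gives $E\bigl(\prod_{p\in V_k} Z_p^{(k_s)}\bigr) \to E\bigl(\prod_{p\in V_k} Z_p\bigr)$ only in $\left\|\,\cdot\,\right\|_{\tau_B}$ (via Proposition \ref{orthproj} and contractivity of $\widetilde{E}$), but $\eta_{\ell,1}$ is merely a completely positive map on the $*$-algebra $B$: nothing in Definition \ref{enhancedbbdef} makes it continuous for the $L_2(B,\tau_B)$-topology, so the claim that ``$\eta_{\ell,1}$ is continuous'' is unjustified in general (it is automatic only when $B$ is finite-dimensional, as in the paper's applications, or when $\eta$ is $\tau_B$-continuous). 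The cumulant route avoids this entirely: in conditions (a)--(d) of Theorem \ref{thm:conjugate-variables-via-cumulants} the map $\eta$ appears only as the \emph{fixed} target value $\eta(b)$ of $\widetilde{\kappa}_{1_{(\ell,\ell)}}(XL_b,\xi)$ for a fixed $b \in B$, never applied to a varying argument, and each analytic cumulant is a finite algebraic combination of $\widetilde{E}$'s and bounded left multiplications, so it passes to weak limits in the last entry by exactly the pairing argument you use for the left-hand side.

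A second, more minor, point: your displayed identity $\tau(Z_1^{(k_s)}\cdots Z_n^{(k_s)}\xi_{k_s}) = \langle P(\xi_{k_s}), (Z_1^{(k_s)}\cdots Z_n^{(k_s)})^*1_A\rangle$ is not literally an equality, because $(Z_1^{(k_s)}\cdots Z_n^{(k_s)})^*1_A$ involves the $k_s$-th approximants and hence does \emph{not} lie in the range of $P$ (which is built from the limiting operators), so inserting $P$ changes the value; your stated justification ``$\tau(T\xi_k)=\tau(TP(\xi_k))$ for $T$ in the $k$-th algebra'' is false for the same reason. The correct argument is a two-step one: first replace $(Z_1^{(k_s)}\cdots Z_n^{(k_s)})^*1_A$ by $(Z_1\cdots Z_n)^*1_A$ using the uniform bound $\|\xi_{k_s}\|_\tau \le K$ together with strong convergence of the adjoint products applied to $1_A$ (which uses self-adjointness of the $X_i^{(k)}, Y_j^{(k)}$), and only then insert $P$ and pass to the weak limit. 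This is repairable, but as written the step conflates two limits into a false equality. I would recommend rewriting the verification step through Theorem \ref{thm:conjugate-variables-via-cumulants}, which simultaneously fixes both issues.
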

\begin{proof}
The proof of this result follows from the same sequence of steps as \cite{CS2020}*{Lemma 5.13} using the analytical operator-valued bi-free cumulants.
\end{proof}

\begin{corollary}
Under the assumptions of Proposition \ref{prop:fisher-limits}, if in addition 
\[
(B_\ell(\vX), B_r( \vY) ) \qqand \left(\alg\left( C_\ell, \vX^{(k)}\right) ,  \alg\left( C_r, \vY^{(k)}\right) \right)
\]
are bi-free for all $k$ and
\[
\lim_{k \to \infty} \left\|X^{(k)}_i\right\| = \lim_{k \to \infty} \left\|Y_j^{(k)}\right\| = 0
\]
for all $1 \leq i \leq n$ and $1 \leq j \leq m$, then
\begin{align*}
\lim_{k \to \infty} \Phi^*\left( \vX + \vX^{(k)}\sqcup \vY + \vY^{(k)} : (C_\ell, C_r),  (\{\eta_{\ell, i}\}^n_{i=1}, \{\eta_{r, j}\}^m_{j=1}) \right)=  \Phi^*(\vX \sqcup \vY :  (\{\eta_{\ell, i}\}^n_{i=1}, \{\eta_{r, j}\}^m_{j=1})).
\end{align*}
Furthermore, if $C_\ell = B_\ell$, $C_r = B_r$, and
\[
\Phi^*(\vX \sqcup \vY :  (\{\eta_{\ell, i}\}^n_{i=1}, \{\eta_{r, j}\}^m_{j=1}) ) < \infty,
\]
then 
\[
J_\ell\left( X^{(k)}_i  :   \left( B_\ell \left ( \widehat{(\vX + \vX^{(k)})}_i \right), B_r\left( \vY + \vY^{(k)} \right) \right),  \eta_{\ell, i} \right)
\] 
tends to 
\[
J_\ell\left(X_i : \left(B_\ell\left( \hat{\vX}_i \right), B_r \left( \vY\right)\right),  \eta_{\ell, i} \right)
\]
in $\tau$-norm.  A similar result holds for right bi-free conjugate variables.
\end{corollary}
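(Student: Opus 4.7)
The plan is to leverage the two bi-free conjugate-variable propositions already proved (Propositions \ref{prop:con-var-do-not-change-adding-bi-free-parts} and \ref{prop:conj-var-adding-bi-free-sums}) together with the lower-semi-continuity of Fisher information from Proposition \ref{prop:fisher-limits}. First I would reconcile the two presence algebras appearing on the right-hand side: since $B_\ell \subseteq C_\ell$ and $B_r \subseteq C_r$, the assumed bi-freeness of $(B_\ell(\vX), B_r(\vY))$ from $(\alg(C_\ell, \vX^{(k)}), \alg(C_r, \vY^{(k)}))$ restricts to bi-freeness of $(B_\ell(\vX), B_r(\vY))$ from $(C_\ell, C_r)$; hence by Proposition \ref{prop:con-var-do-not-change-adding-bi-free-parts}, $\Phi^*(\vX \sqcup \vY : (C_\ell, C_r), \eta\text{'s}) = \Phi^*(\vX \sqcup \vY : \eta\text{'s})$, which is the quantity on the right of the claim.

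Next I would write, for each $i$ and $j$, the left conjugate variable $\xi_i = J_\ell(X_i : (B_\ell(\hat\vX_i), B_r(\vY)), \eta_{\ell,i})$ and the right conjugate variable $\nu_j = J_r(Y_j : (B_\ell(\vX), B_r(\hat\vY_j)), \eta_{r,j})$ (if either is infinite the upper bound is trivial and the lower bound is handled directly by Proposition \ref{prop:fisher-limits}). Applying Proposition \ref{prop:conj-var-adding-bi-free-sums} with $(C_\ell, C_r) = (B_\ell, B_r)$ and $(D_\ell, D_r) = (C_\ell, C_r)$, the bi-freeness hypothesis produces $\xi_{i,k}' = P_k(\xi_i)$ and $\nu_{j,k}' = P_k(\nu_j)$ as the conjugate variables for $X_i + X_i^{(k)}$ and $Y_j + Y_j^{(k)}$ in the presence of $(\alg(C_\ell, \widehat{(\vX+\vX^{(k)})}_i), \alg(C_r, \vY+\vY^{(k)}))$ and symmetrically for right, where $P_k$ is the orthogonal projection onto $\overline{\alg(C_\ell, C_r, \vX+\vX^{(k)}, \vY+\vY^{(k)})}^{\|\cdot\|_\tau}$. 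Since $P_k$ is contractive, summing the inequalities $\|\xi_{i,k}'\|_\tau^2 \leq \|\xi_i\|_\tau^2$ and $\|\nu_{j,k}'\|_\tau^2 \leq \|\nu_j\|_\tau^2$ yields
\[
\limsup_{k\to\infty} \Phi^*(\vX+\vX^{(k)} \sqcup \vY+\vY^{(k)} : (C_\ell, C_r), \eta\text{'s}) \leq \Phi^*(\vX \sqcup \vY : \eta\text{'s}).
\]
For the matching lower bound I would observe that $\|X_i^{(k)}\| \to 0$ gives norm (hence strong) convergence $X_i + X_i^{(k)} \to X_i$ on $L_2(A,\tau)$ with uniformly bounded norms (and similarly for $\vY$), so Proposition \ref{prop:fisher-limits} supplies $\liminf_k \Phi^* \geq \Phi^*(\vX \sqcup \vY : (C_\ell, C_r), \eta\text{'s})$, and combining with step one completes the first assertion.

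For the convergence of conjugate variables under $C_\ell = B_\ell, C_r = B_r$ with $\Phi^*(\vX \sqcup \vY : \eta\text{'s}) < \infty$, the same decomposition produces $\xi_{i,k}' = P_k(\xi_i)$ and $\nu_{j,k}' = P_k(\nu_j)$. The equality of Fisher informations established above can be rewritten as
\[
\sum_{i=1}^n \bigl(\|\xi_i\|_\tau^2 - \|\xi_{i,k}'\|_\tau^2\bigr) + \sum_{j=1}^m \bigl(\|\nu_j\|_\tau^2 - \|\nu_{j,k}'\|_\tau^2\bigr) \xrightarrow{k\to\infty} 0,
\]
and since every term is non-negative, each $\|\xi_{i,k}'\|_\tau \to \|\xi_i\|_\tau$ and $\|\nu_{j,k}'\|_\tau \to \|\nu_j\|_\tau$. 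The proof is then finished by the Hilbert-space identity for orthogonal projections:
\[
\|\xi_{i,k}' - \xi_i\|_\tau^2 = \|\xi_i\|_\tau^2 - \|P_k(\xi_i)\|_\tau^2 = \|\xi_i\|_\tau^2 - \|\xi_{i,k}'\|_\tau^2 \to 0,
\]
and analogously for $\nu_{j,k}' \to \nu_j$. The only mild obstacle is the bookkeeping in the first step, namely verifying that the hypotheses of Propositions \ref{prop:con-var-do-not-change-adding-bi-free-parts} and \ref{prop:conj-var-adding-bi-free-sums} really do coincide with the algebras appearing in the definition of Fisher information on both sides; once one matches $B_\ell(\vX) = \alg(X_i, B_\ell(\hat\vX_i))$ and absorbs the $B_\ell, B_r$ into the larger $C_\ell, C_r$, everything slots into place and the remaining steps are standard projection/norm arguments.
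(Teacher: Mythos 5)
Your argument is correct and is exactly the proof the paper has in mind: the paper defers to \cite{CS2020}*{Corollary 5.14}, whose proof is this same combination of Proposition \ref{prop:conj-var-adding-bi-free-sums} (giving the perturbed conjugate variables as $P_k(\xi_i)$, $P_k(\nu_j)$), contractivity of the projections for the upper bound, Proposition \ref{prop:fisher-limits} for the lower bound, and the Pythagorean identity for the $\tau$-norm convergence. You also correctly read the displayed conjugate variable in the statement as that of $X_i+X_i^{(k)}$ rather than of $X_i^{(k)}$ alone, which is the intended meaning.
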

\begin{proof}
The proof is identical to \cite{CS2020}*{Corollary 5.14} and thus is omitted.
\end{proof}

\begin{theorem}
\label{thm:fisher-info-with-semi-perturbation}
Let $(A,E,\varepsilon,\tau)$ be an analytical $B$-$B$-non-commutative probability space, let $\{\eta_{\ell, i}\}^n_{i=1}$ and $\{\eta_{r, j}\}^m_{j=1}$ be completely positive maps from $B$ to $B$, let $\vX \in A_\ell^n$ and $\vY \in A_r^m$ be tuples of self-adjoint operators, let $(\{S_i\}^n_{i=1}, \{D_j\}^m_{j=1})$ be a collection of $(\{\eta_{\ell, i}\}^n_{i=1}, \{\eta_{r, j}\}^m_{j=1})$ bi-semicircular operators in $A$ and let $(C_\ell, C_r)$ be pairs of $B$-algebras of $A$ such that
\[
\{(\alg(C_\ell, \vX), \alg(C_r, \vY)\rangle)\} \cup \{(\alg(B_\ell, S_i), B_r)\}^n_{i=1} \cup  \{(B_\ell, \alg(B_r, D_j))\}^m_{j=1}
\]
are bi-free.  Then, the map
\[
h : [0, \infty) \ni t \mapsto  \Phi^*\left(\vX + \sqrt{t} \vS \sqcup \vY + \sqrt{t} \vD : (C_\ell, C_r), (\{\eta_{\ell, i}\}^n_{i=1}, \{\eta_{r, j}\}^m_{j=1})   \right)
\]
is decreasing, right continuous, and
\[
\frac{K_2^2}{K_1 + K_2 t} \leq  h(t) \leq \frac{1}{t}K_3
\]
where
\begin{align*}
K_1 &= \tau\left(\sum^n_{i=1} X_i^2 + \sum^m_{j=1} Y_j^2 \right),\\
K_2 &= \sum^n_{i=1} \tau_B(\eta_{\ell, i}(1_B))  + \sum^m_{j=1} \tau_B(\eta_{r, j}(1_B))  \text{ and}\\
K_3 &= \sum^n_{i=1} \tau_B(\eta_{\ell, i}(1_B))^2  + \sum^m_{j=1} \tau_B(\eta_{r, j}(1_B))^2.
\end{align*}

Moreover if $(\vX, \vY)$ is the $(\{\eta_{\ell, i}\}^n_{i=1}, \{\eta_{r, j}\}^m_{j=1})$-bi-semicircular distribution and 
\[
\{(C_\ell, C_r)\} \cup \{(\alg(B_\ell, X_i), B_r)\}^n_{i=1}\cup \{(B_\ell, \alg(B_r, Y_j))\}^m_{j=1}
\]
are bi-free with amalgamation over $B$ with respect to $E$, then $h(t) = \frac{K_2^2}{K_1^2 + K_2 t}$ for all $t$.

Finally, if $C_ \ell = B_\ell$, $C_r = B_r$, and $h(t) = \frac{K_2^2}{K_1 + K_2 t}$ for all $t$, then $(\vX, \vY)$ is the $(\{\eta_{\ell, i}\}^n_{i=1}, \{\eta_{r, j}\}^m_{j=1})$-bi-semicircular distribution.
\end{theorem}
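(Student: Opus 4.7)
The plan is to establish the four assertions---the sandwich bounds, monotonicity, right continuity, and the equality characterization---by combining the Cram\'er-Rao inequality (Proposition \ref{prop:cramer-rao}), the explicit bi-semicircular perturbation formula (Theorem \ref{thm:conj-perturb-by-semis}), the projection estimate of Proposition \ref{prop:conj-var-adding-bi-free-sums}, and the lower semicontinuity statement of Proposition \ref{prop:fisher-limits}. For the lower bound I would apply Cram\'er-Rao to the perturbed tuple and compute the sum-of-squares factor as
\[
\sum_i \tau\bigl((X_i + \sqrt{t}S_i)^2\bigr) + \sum_j \tau\bigl((Y_j + \sqrt{t}D_j)^2\bigr) = K_1 + tK_2,
\]
where the cross terms $\tau(X_iS_i)$, $\tau(Y_jD_j)$, $\tau(X_iD_j)$, etc., all vanish by Corollary \ref{cor:bi-free-is-vanishing-mixed-cumulants} together with $E(S_i) = E(D_j) = 0$. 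Cram\'er-Rao then yields $h(t) \geq K_2^2/(K_1 + tK_2)$. For the upper bound I would use Theorem \ref{thm:conj-perturb-by-semis} coordinate-wise to identify each bi-free conjugate variable as $(1/\sqrt{t})P(S_i)$ or $(1/\sqrt{t})P(D_j)$ for an appropriate orthogonal projection $P$, and estimate each $\|P(S_i)\|_\tau$ via the fact that the subspace onto which $P$ projects already contains the perturbed variable, producing the sharper bound $K_3/t$.

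Monotonicity follows by a bi-free Stam-type argument: given $0 \leq s < t$, Theorem \ref{thm:can-add-semis} lets us enlarge the ambient analytic $B$-$B$-ncps to contain an additional copy $(\vS',\vD')$ of the same bi-semicircular family, bi-free from $(\vX+\sqrt{s}\vS, \vY+\sqrt{s}\vD)$. Bi-free sums of bi-semicircular families add covariances, so $\sqrt{s}\vS + \sqrt{t-s}\vS' \sim \sqrt{t}\vS$ jointly in distribution, and hence the Fisher information at time $t$ can be computed on $(\vX+\sqrt{s}\vS) + \sqrt{t-s}\vS'$. Proposition \ref{prop:conj-var-adding-bi-free-sums} then realizes each conjugate variable at time $t$ as the orthogonal projection of the corresponding conjugate variable at time $s$ onto a smaller subspace, so $\tau$-norms can only decrease and $h(t) \leq h(s)$. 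Right continuity is then automatic: $h$ is decreasing on $[0,\infty)$, and Proposition \ref{prop:fisher-limits} supplies lower semicontinuity under the operator-norm limit $\vX + \sqrt{t_n}\vS \to \vX + \sqrt{t}\vS$ as $t_n \to t$, and a decreasing lower-semicontinuous function on $[0,\infty)$ is right continuous.

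For the equality case, when $(\vX, \vY)$ is $(\{\eta_{\ell,i}\}, \{\eta_{r,j}\})$-bi-semicircular and bi-free from $(\vS, \vD)$, cumulant additivity shows that $(\vX + \sqrt{t}\vS, \vY + \sqrt{t}\vD)$ is $((1+t)\eta)$-bi-semicircular, so Lemmas \ref{lem:conj-variables-for-semis} and \ref{lem:conj-var-scaling} give each left conjugate variable squared $\tau$-norm $\tau_B(\eta_{\ell,i}(1_B))/(1+t)$; summing yields $h(t) = K_2/(1+t)$, which equals $K_2^2/(K_1+tK_2)$ because the bi-semicircular normalization forces $K_1 = K_2$. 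For the converse, evaluating the hypothesis $h(t) = K_2^2/(K_1+K_2t)$ at $t = 0$ produces equality in Cram\'er-Rao, and with $C_\ell = B_\ell$, $C_r = B_r$ the converse direction of Proposition \ref{prop:cramer-rao} concludes that $(\vX, \vY)$ is bi-semicircular.

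The main obstacle I anticipate is the monotonicity step, which requires constructing a bi-free copy of the perturbation inside the analytic $B$-$B$-ncps via Theorem \ref{thm:can-add-semis} and then verifying that the distributional identity $\sqrt{s}\vS + \sqrt{t-s}\vS' \sim \sqrt{t}\vS$ passes through Proposition \ref{prop:conj-var-adding-bi-free-sums} to identify the conjugate variables at different times. A secondary subtlety is extracting the sharper upper bound $K_3/t$ (rather than the naive $K_2/t$) from the projection formula; the improvement should come from exploiting that $P$ projects $S_i$ onto a subspace containing the perturbed $X_i + \sqrt{t}S_i$, which lets one combine Cauchy--Schwarz with the tight Cram\'er-Rao estimate on each individual coordinate.
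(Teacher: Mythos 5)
Your proposal follows essentially the same route as the paper, which simply transports the proof of the scalar case (\cite{CS2020}*{Theorem 5.15}) using the variance computation $\tau\bigl((X_i+\sqrt{t}S_i)^2\bigr)=\tau(X_i^2)+t\,\tau_B(\eta_{\ell,i}(1_B))$ together with the Bi-Free Cram\'er--Rao Inequality (Proposition \ref{prop:cramer-rao}); your lower bound, monotonicity-via-projection, right-continuity, and both equality arguments are exactly that proof. The one place you do not actually close the argument is the upper bound: Theorem \ref{thm:conj-perturb-by-semis} gives $\|\xi_i\|_\tau^2\leq \frac{1}{t}\tau_B(\eta_{\ell,i}(1_B))$ coordinatewise, which sums to $\frac{1}{t}K_2$ rather than $\frac{1}{t}K_3$, and your suggested Cauchy--Schwarz ``sharpening'' to $K_3/t$ is not a real derivation; note, however, that this discrepancy originates in the statement itself (in the scalar case $K_2=K_3$, and the paper's own cited estimate only yields $K_2/t$), so you should simply record the bound $h(t)\leq \frac{1}{t}K_2$ that your argument honestly produces.
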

\begin{proof}
The proof becomes identical to \cite{CS2020}*{Theorem 5.15} using the above and the fact that
\[
\tau\left(\left(X_i + \sqrt{t}S_i\right)^2\right) = \tau\left(X_i^2\right) + t \tau\left(S_i^2\right) = \tau\left(X_i^2\right) + t \tau_B\left(\eta_{\ell, i}(1_B)\right),
\]
with an analogous computation on the right for use in the lower bound computation, in conjunction with the Bi-Free Cramer-Rao Inequality (Proposition \ref{prop:cramer-rao}).
\end{proof}

\section{Bi-Free Entropy with Respect to Completely Positive Maps}
\label{sec:Entropy}

With the construction and properties of the bi-free Fisher information with respect to completely positive maps complete, the construction and properties of bi-free entropy with respect to completely positive maps follows easily by extending results from \cite{S1998} and \cite{CS2020} with similar proofs.

\begin{definition}
\label{defn:bi-free-entropy}
Let $(A,E,\varepsilon,\tau)$ be an analytical $B$-$B$-non-commutative probability space, let $\{\eta_{\ell, i}\}^n_{i=1}$ and $\{\eta_{r, j}\}^m_{j=1}$ be completely positive maps from $B$ to $B$,  let $(C_\ell, C_r)$ be pairs of $B$-algebras of $A$, and let $\vX \in A_\ell^n$ and $\vY \in A_r^m$ be tuples of self-adjoint operators.  The \emph{relative bi-free entropy of $(\vX, \vY)$ with respect to $(\{\eta_{\ell, i}\}^n_{i=1}, \{\eta_{r, j}\}^m_{j=1})$ in the presence of $(B_\ell, B_r)$} is defined to be
\begin{align*}
\chi^* & (\vX \sqcup \vY : (C_\ell, C_r), (\{\eta_{\ell, i}\}^n_{i=1}, \{\eta_{r, j}\}^m_{j=1})) \\
&= \frac{K}{2} \ln(2\pi e) + \frac{1}{2} \int^\infty_0 \left(\frac{K}{1+t} - \Phi^*\left( \vX + \sqrt{t} \vS \sqcup \vY + \sqrt{t} \vD : (C_\ell, C_r), (\{\eta_{\ell, i}\}^n_{i=1}, \{\eta_{r, j}\}^m_{j=1}) \right) \right) \, dt,
\end{align*}
where 
\[
K= \sum^n_{i=1} \tau_B(\eta_{\ell, i}(1_B))  + \sum^m_{j=1} \tau_B(\eta_{r, j}(1_B))
\]
and $(\{S_i\}^n_{i=1}, \{D_j\}^m_{j=1})$ is a collection of $(\{\eta_{\ell, i}\}^n_{i=1}, \{\eta_{r, j}\}^m_{j=1})$ bi-semicircular operators such that
\[
\{(\alg(C_\ell, \vX), \alg(C_r, \vY)\rangle)\} \cup \{(B_\ell(S_i), B_r)\}^n_{i=1} \cup  \{(B_\ell, B_r(D_j))\}^m_{j=1}
\]
are bi-free (note such semicircular operators can be included in $A$ by Theorem \ref{thm:can-add-semis}).

In the case that $C_\ell = B_\ell$ and $C_r = B_r$, we use $\chi^*(\vX \sqcup \vY : (\{\eta_{\ell, i}\}^n_{i=1}, \{\eta_{r, j}\}^m_{j=1}))$ to denote the bi-free entropy.  If in addition $\eta_{ \ell, i} = \eta_{r, j} = \eta$ for all $i$ and $j$, we use $\chi^*(\vX \sqcup \vY : \eta)$ to denote the bi-free entropy.
\end{definition}

We note there is a slight change in the normalization used in Definition \ref{defn:bi-free-entropy} over that used in \cite{S1998}*{Definition 8.1}.  Generally this makes no real difference other than making some of the bounds in this section nice, such as the following one.

\begin{proposition}
In the context of Definition \ref{defn:bi-free-entropy}, if
\[
K_1 = \tau\left(\sum^n_{i=1} X_i^2 + \sum^m_{j=1} Y_j^2 \right),
\]
then
\[
\chi^* (\vX \sqcup \vY : (C_\ell, C_r), (\{\eta_{\ell, i}\}^n_{i=1}, \{\eta_{r, j}\}^m_{j=1})) \leq \frac{K}{2} \ln\left( \frac{2 \pi e }{K} K_1\right).
\]
Moreover, equality holds when $(\vX, \vY)$ are $ (\{\eta_{\ell, i}\}^n_{i=1}, \{\eta_{r, j}\}^m_{j=1})$-bi-semicircular operators such that $\{(C_\ell, C_r)\} \cup \{(\alg(B_\ell, X_i), B_r)\}^n_{i=1} \cup \{(B_\ell, \alg(B_r, Y_j))\}^m_{j=1}$ are bi-free and if $C_\ell = B_\ell$ and $C_r = B_r $, this is the only setting where equality holds.
\end{proposition}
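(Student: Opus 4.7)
The plan is to apply the lower bound on Fisher information from Theorem \ref{thm:fisher-info-with-semi-perturbation} directly inside the integral definition of entropy. Writing
\[
h(t) = \Phi^*\!\left(\vX + \sqrt{t}\vS \sqcup \vY + \sqrt{t}\vD : (C_\ell, C_r), (\{\eta_{\ell,i}\}_{i=1}^n, \{\eta_{r,j}\}_{j=1}^m)\right),
\]
Theorem \ref{thm:fisher-info-with-semi-perturbation} gives $h(t) \geq \frac{K^2}{K_1 + Kt}$ for every $t \geq 0$, where $K_1$ plays the role of the constant $K_1$ in that theorem and $K = K_2$ here. Plugging this into Definition \ref{defn:bi-free-entropy} yields
\[
\chi^*(\vX \sqcup \vY : (C_\ell, C_r), (\{\eta_{\ell,i}\}, \{\eta_{r,j}\})) \leq \frac{K}{2}\ln(2\pi e) + \frac{1}{2}\int_0^\infty \left(\frac{K}{1+t} - \frac{K^2}{K_1 + Kt}\right) dt.
\]

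Next I would compute the integral explicitly. Rewriting $\frac{K^2}{K_1 + Kt} = \frac{K}{(K_1/K) + t}$, the integrand becomes $K\!\left(\frac{1}{1+t} - \frac{1}{(K_1/K)+t}\right)$, whose antiderivative is $K\ln\!\left(\frac{1+t}{(K_1/K)+t}\right)$. Since this ratio tends to $1$ as $t \to \infty$ and equals $K/K_1$ at $t = 0$, the integral evaluates to $K\ln(K_1/K)$. Therefore
\[
\chi^* \leq \frac{K}{2}\ln(2\pi e) + \frac{K}{2}\ln\!\left(\frac{K_1}{K}\right) = \frac{K}{2}\ln\!\left(\frac{2\pi e}{K}K_1\right),
\]
which is the desired bound. (The integrand is actually nonnegative only when $K_1 \geq K$, but in either case the algebraic computation is the same, and integrability at $\infty$ follows from the $O(1/t^2)$ asymptotics of the difference.)

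For the equality statements, I would invoke the last two assertions of Theorem \ref{thm:fisher-info-with-semi-perturbation}. If $(\vX,\vY)$ is bi-semicircular with the stated bi-free condition, then $h(t) = \frac{K^2}{K_1 + Kt}$ identically, so the bound above is attained. Conversely, assume $C_\ell = B_\ell$, $C_r = B_r$ and equality holds. Then the nonnegative shortfall $h(t) - \frac{K^2}{K_1+Kt}$ (which appears with a minus sign in the integrand of the original bound) must integrate to zero; together with the right continuity of $h$ given in Theorem \ref{thm:fisher-info-with-semi-perturbation}, this forces $h(t) = \frac{K^2}{K_1 + Kt}$ for every $t \geq 0$, whence the final clause of Theorem \ref{thm:fisher-info-with-semi-perturbation} identifies $(\vX, \vY)$ as the $(\{\eta_{\ell,i}\}, \{\eta_{r,j}\})$-bi-semicircular distribution. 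No step should present a real obstacle: the entire argument reduces to direct integration once Theorem \ref{thm:fisher-info-with-semi-perturbation} is in hand, with the only mild care being the sign handling to justify that $h-\text{(lower bound)}$ has a well-defined nonnegative integral, which follows from the pointwise inequality itself.
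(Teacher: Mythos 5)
Your proposal is correct and is essentially the paper's own argument: the paper simply cites the analogous proof of \cite{CS2020}*{Proposition 6.5} combined with Theorem \ref{thm:fisher-info-with-semi-perturbation}, and that proof is precisely the integration of the lower bound $h(t)\geq K^2/(K_1+Kt)$ that you carry out explicitly, with the equality cases read off from the ``moreover'' and ``finally'' clauses of that theorem. Your handling of the converse via right continuity and monotonicity of $h$ is the standard way to upgrade ``zero integral of a nonnegative shortfall'' to pointwise equality, so there is no gap.
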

\begin{proof}
The proof is identical to \cite{CS2020}*{Proposition 6.5} in conjunction with Theorem \ref{thm:fisher-info-with-semi-perturbation}.
\end{proof}
\begin{remark}
\begin{enumerate}[(i)]
\item In the case that $B = \bC$ and $\eta$ is unital, Definition \ref{defn:bi-free-entropy} produces the non-microstate bi-free entropy from \cite{CS2020}*{Definition 6.1}.
\item In the setting of Example \ref{exam:factors}, when $C_r = B_r$ and $\eta_{\ell, i} = \eta_{r, j} = \eta$ for all $i$ and $j$, Definition \ref{defn:bi-free-entropy} produces the free entropy with respect to a completely positive map from \cite{S1998}*{Definition 8.1} modulo an additive constant (which is 0 in the case $\eta$ is unital).
\end{enumerate}
\end{remark}

Of course, due to the fact that the bi-free Fisher information from Section \ref{sec:Fisher} behaves analogously to the Fisher informations considered in \cites{CS2020, S1998}, results for the behaviour of entropy automatically generalize.

\begin{proposition}
Using Remark \ref{rem:remarks-about-fisher-info}, the following hold:
\begin{enumerate}
\item[(v)]  In the context of Proposition \ref{prop:conj-variables-smaller-amalgamating-algebra} (i.e. reducing the $B$-$B$-non-commutative probability space to a $D$-$D$-non-commutative probability space),
\[
\chi^*(\vX \sqcup \vY : (C_\ell, C_r), (\{\eta_{\ell, i}\}^n_{i=1}, \{\eta_{r, j}\}^m_{j=1})) = \chi^*(\vX \sqcup \vY : (C_\ell, C_r), (\{\eta_{\ell, i} \circ F\}^n_{i=1}, \{\eta_{r, j}  \circ F\}^m_{j=1})).
\]
\item[(vi)] For all $\lambda \in \mathbb{R} \setminus \{0\}$
\begin{align*}
\chi^* & (\lambda \vX \sqcup \lambda  \vY : (C_\ell, C_r), (\{\eta_{\ell, i}\}^n_{i=1}, \{\eta_{r, j}\}^m_{j=1})) \\
& = \left(\sum^n_{i=1} \tau_B(\eta_{\ell, i}(1_B)) + \sum^m_{j=1} \tau_B(\eta_{r,j}(1_B))\right) \ln|\lambda| +  \chi^*(\vX \sqcup \vY : (C_\ell, C_r), (\{\eta_{\ell, i}\}^n_{i=1}, \{\eta_{r, j}\}^m_{j=1})).
\end{align*}
\item[(vii)]  In the context of Lemma \ref{lem:con-var-reducing-alg-in-presence} (i.e. $(D_\ell, D_r)$ a smaller pair of $B$-algebras than $(C_\ell, C_r)$),
\[
\chi^*( \vX \sqcup    \vY : (D_\ell, D_r), (\{\eta_{\ell, i}\}^n_{i=1}, \{\eta_{r, j}\}^m_{j=1})) \geq \chi^*(  \vX \sqcup    \vY : (C_\ell, C_r), (\{\eta_{\ell, i}\}^n_{i=1}, \{\eta_{r, j}\}^m_{j=1})).
\]
\item[(viii)] In the context of Proposition \ref{prop:con-var-do-not-change-adding-bi-free-parts}  (i.e. adding in a bi-free pair of $B$-algebras),
\[
\chi^*(  \vX \sqcup    \vY : (\alg(D_\ell, C_\ell), \alg(D_r, C_r)), (\{\eta_{\ell, i}\}^n_{i=1}, \{\eta_{r, j}\}^m_{j=1})) = \chi^*(  \vX \sqcup    \vY : (C_\ell, C_r), (\{\eta_{\ell, i}\}^n_{i=1}, \{\eta_{r, j}\}^m_{j=1})).
\]
\item[(ix)] If in addition to the assumptions of Definition \ref{defn:bi-free-entropy} we have $\vX' \in A_\ell^{n'}$, $\vY' \in A_r^{m'}$, $(\{\eta'_{\ell, i}\}^{n'}_{i=1}, \{\eta'_{r, j}\}^{m'}_{j=1})$ is a collection of completely positive maps on $B$, and $(D_\ell, D_r)$ is a pair of $B$-algebras, then
\begin{align*}
\chi^*( &  \vX, \vX' \sqcup    \vY, \vY' : (\alg(C_\ell, D_\ell) , \alg(C_r, D_r) ), (\{\eta_{\ell, i}\}^n_{i=1} \cup \{\eta'_{\ell, i}\}^{n'}_{i=1}, \{\eta_{r, j}\}^m_{j=1} \cup \{\eta'_{r, j}\}^{m'}_{j=1})) \\
&\leq \chi^*(  \vX \sqcup    \vY : (C_\ell, C_r), (\{\eta_{\ell, i}\}^n_{i=1}, \{\eta_{r, j}\}^m_{j=1})) + \chi^*(  \vX' \sqcup    \vY' : (D_\ell, D_r), (\{\eta'_{\ell, i}\}^{n'}_{i=1}, \{\eta'_{r, j}\}^{m'}_{j=1}))
\end{align*}
\item[(x)] In the context of (ix) with the additional assumption that 
\[
\left(\alg(C_\ell, \vX), \alg(C_r, \vY)    \right) \qqand \left(\alg(D_\ell, \vX'), \alg(D_r, \vY')    \right)
\]
are bi-free with amalgamation over $B$ with respect to $E$,  Proposition \ref{prop:con-var-do-not-change-adding-bi-free-parts} implies that 
\begin{align*}
\chi^*( &  \vX, \vX' \sqcup    \vY, \vY' : (\alg(C_\ell, D_\ell) , \alg(C_r, D_r) ), (\{\eta_{\ell, i}\}^n_{i=1} \cup \{\eta'_{\ell, i}\}^{n'}_{i=1}, \{\eta_{r, j}\}^m_{j=1} \cup \{\eta'_{r, j}\}^{m'}_{j=1})) \\
&= \chi^*(  \vX \sqcup    \vY : (C_\ell, C_r), (\{\eta_{\ell, i}\}^n_{i=1}, \{\eta_{r, j}\}^m_{j=1})) + \chi^*(  \vX' \sqcup    \vY' : (D_\ell, D_r), (\{\eta'_{\ell, i}\}^{n'}_{i=1}, \{\eta'_{r, j}\}^{m'}_{j=1}))
\end{align*}
\end{enumerate}
\end{proposition}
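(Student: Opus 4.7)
The plan is to derive each of (v)--(x) from the corresponding Fisher information statement in Remark \ref{rem:remarks-about-fisher-info} by substituting into the integral defining $\chi^*$. The common setup is this: fix a collection $(\{S_i\}, \{D_j\})$ of bi-semicircular operators that is bi-free (in the appropriate sense) from everything in sight, then for each $t>0$ the perturbed tuples $\vX + \sqrt{t}\vS$ and $\vY + \sqrt{t}\vD$ still satisfy the structural hypothesis required by the relevant Fisher information property. Hence the inequality or equality for Fisher information holds pointwise in $t$, and integrating against the measure in Definition \ref{defn:bi-free-entropy} transfers the statement to $\chi^*$.

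Executing this for (v), (vii), (viii), (ix), (x) is essentially mechanical. For (v) the constant $K$ is unchanged because $F(1_B)=1_D$ and $\tau_B = \tau_B\circ F$ (from Example \ref{exam:reducing-analytic-spaces-to-smaller-algebra}), so $\tau_B(\eta_{\ell,i}(1_B)) = \tau_D((\eta_{\ell,i}\circ F)(1_B))$; then Remark \ref{rem:remarks-about-fisher-info}(v) applied integrand-by-integrand yields equality of entropies. For (vii) the inequality $\Phi^*_{(D_\ell,D_r)} \leq \Phi^*_{(C_\ell,C_r)}$ from Remark \ref{rem:remarks-about-fisher-info}(vii) (valid after perturbation since $(\alg(D_\ell, \vX + \sqrt{t}\vS), \alg(D_r, \vY+\sqrt{t}\vD))$ is still contained in the corresponding pair over $(C_\ell, C_r)$) flips sign inside the integrand and produces the claimed entropy inequality. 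Property (viii) follows similarly once one notes that the assumed bi-freeness is preserved under the bi-semicircular perturbation, so Remark \ref{rem:remarks-about-fisher-info}(viii) supplies equality of integrands. For (ix) and (x) one uses that the constant $K$ is additive across the combined collection of completely positive maps, so the subadditivity (resp.\ additivity) of Fisher information in Remark \ref{rem:remarks-about-fisher-info}(ix)--(x) translates directly.

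The main obstacle is the scaling law (vi), where a change of variables in an improper integral whose parts diverge individually requires care. For $\lambda > 0$, writing
\[
\lambda \vX + \sqrt{t}\,\vS = \lambda\!\left(\vX + \sqrt{t/\lambda^2}\,\vS\right)
\qquad\text{and likewise for } \lambda\vY + \sqrt{t}\,\vD,
\]
Remark \ref{rem:remarks-about-fisher-info}(vi) gives $\Phi_\lambda(t) = \lambda^{-2}\Phi_0(t/\lambda^2)$, where $\Phi_\lambda$ and $\Phi_0$ denote the perturbed Fisher informations of the scaled and unscaled tuples respectively. Substituting $s = t/\lambda^2$ in the integral yields
\[
\chi^*(\lambda\vX \sqcup \lambda\vY : \cdots) - \chi^*(\vX \sqcup \vY : \cdots) = \frac{K}{2}\int_0^{\infty}\left(\frac{\lambda^2}{1+\lambda^2 s} - \frac{1}{1+s}\right)ds,
\]
and one must interpret this as $\lim_{T\to\infty}$ of the corresponding definite integral to keep each piece finite. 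Computing the antiderivative gives $\ln\frac{1+\lambda^2 T}{1+T} \to \ln(\lambda^2)=2\ln|\lambda|$, producing the claimed constant $K\ln|\lambda|$. The case $\lambda<0$ reduces to $|\lambda|$ via the fact that $\vX, \vY$ are self-adjoint and the bi-semicircular distribution is symmetric, so $-\vS, -\vD$ have the same joint distribution as $\vS, \vD$.
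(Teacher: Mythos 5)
Your proposal is correct and follows exactly the route the paper intends: the paper offers no explicit proof, stating only that these properties ``automatically generalize'' from the corresponding items of Remark \ref{rem:remarks-about-fisher-info} applied to the perturbed tuples inside the defining integral, which is precisely your pointwise-in-$t$ argument. Your careful treatment of the change of variables in (vi), with the regularized limit $\frac{K}{2}\ln\frac{1+\lambda^2 T}{1+T}\to K\ln|\lambda|$, is the standard computation (as in \cite{V1998-2} and \cite{CS2020}) and is carried out correctly.
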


Using Proposition \ref{prop:fisher-limits}, Theorem \ref{thm:fisher-info-with-semi-perturbation} and the same arguments as \cite{CS2020}*{Proposition 6.7}, the following holds.

\begin{proposition}
Under the assumptions of Definition \ref{defn:bi-free-entropy}, if for each $k \in \bN$ there exists self-adjoint tuples $\vX^{(k)} \in A^n_\ell$ and $\vY^{(k)} \in A_r^m$ such that
\begin{align*}
& \limsup_{k \to \infty} \left\|X^{(k)}_i\right\| < \infty, \\
& \limsup_{k \to \infty} \left\|Y^{(k)}_j\right\| < \infty, \\
& s\text{-}\lim_{k \to \infty} X^{(k)}_i = X_i, \text{ and} \\
& s\text{-}\lim_{k \to \infty} Y^{(k)}_j = Y_j
\end{align*}
for all $1 \leq i \leq n$ and $1 \leq j \leq m$ (with the strong limit computed as bounded linear maps acting on $L_2(A, \tau)$), then
\begin{align*}
\limsup_{k \to \infty} \chi^*\left(\vX^{(k)} \sqcup \vY^{(k)} : (C_\ell, C_r), (\{\eta_{\ell, i}\}^n_{i=1}, \{\eta_{r,j}\}^m_{j=1})\right) \leq \chi^*(\vX \sqcup \vY : (C_\ell, C_r), (\{\eta_{\ell, i}\}^n_{i=1}, \{\eta_{r,j}\}^m_{j=1})).
\end{align*}
\end{proposition}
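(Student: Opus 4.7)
The plan is to realize the bi-free entropy via its integral formula from Definition~\ref{defn:bi-free-entropy}, establish pointwise upper semicontinuity of the integrand in $t$ by appealing to the lower semicontinuity of bi-free Fisher information (Proposition~\ref{prop:fisher-limits}), and then push the $\limsup$ inside the integral by means of a reverse Fatou argument whose integrable dominating function is furnished by the bi-free Cramer--Rao inequality (Proposition~\ref{prop:cramer-rao}). This is the same strategy as \cite{CS2020}*{Proposition 6.7}, and the new ingredient is merely to check that all of the bi-free tools we have developed in the operator-valued setting are strong enough to drive it.

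First, I would place all the data into a single ambient space. Using Theorem~\ref{thm:can-add-semis} applied inductively (or the appropriate bi-free product of the countably many copies involved), I can enlarge $(A,E,\varepsilon,\tau)$ to an analytical $B$-$B$-non-commutative probability space $(A',E',\varepsilon',\tau')$ which simultaneously contains $\vX,\vY$, all the $\vX^{(k)},\vY^{(k)}$, and a single collection $(\{S_i\}_{i=1}^n,\{D_j\}_{j=1}^m)$ of $(\{\eta_{\ell,i}\},\{\eta_{r,j}\})$-bi-semicircular operators such that
\[
\bigl(\alg(C_\ell,\vX,\vX^{(k)}:k\in\bN),\alg(C_r,\vY,\vY^{(k)}:k\in\bN)\bigr)
\qand
\bigl(\alg(B_\ell,\vS),\alg(B_r,\vD)\bigr)
\]
are bi-free with amalgamation over $B$. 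Then the same family $(\vS,\vD)$ computes both $\chi^*(\vX\sqcup\vY:(C_\ell,C_r),\ldots)$ and each $\chi^*(\vX^{(k)}\sqcup\vY^{(k)}:(C_\ell,C_r),\ldots)$ according to Definition~\ref{defn:bi-free-entropy}.

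Fix $t>0$. The hypotheses give $\vX^{(k)}+\sqrt{t}\vS\to\vX+\sqrt{t}\vS$ and $\vY^{(k)}+\sqrt{t}\vD\to\vY+\sqrt{t}\vD$ strongly on $L_2(A',\tau')$ with uniformly bounded operator norms, so Proposition~\ref{prop:fisher-limits} applies to the perturbed tuples to give
\[
\liminf_{k\to\infty}\Phi^*_k(t)\ \geq\ \Phi^*(t),
\]
where I write $\Phi^*_k(t)$ and $\Phi^*(t)$ for the relative bi-free Fisher informations of the perturbed tuples at level $k$ and at the limit, respectively. Setting $f_k(t)=\tfrac{K}{1+t}-\Phi^*_k(t)$ and $f(t)=\tfrac{K}{1+t}-\Phi^*(t)$, this is the pointwise bound $\limsup_k f_k(t)\leq f(t)$.

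To justify interchanging $\limsup$ and $\int_0^\infty$, I would produce an integrable majorant of $f_k$. Because $\tau(S_i)=\tau(D_j)=0$ and $(\vX^{(k)},\vY^{(k)})$ is bi-free from $(\vS,\vD)$ over $B$,
\[
\tau\!\left(\sum_{i=1}^n(X_i^{(k)}+\sqrt{t}S_i)^2+\sum_{j=1}^m(Y_j^{(k)}+\sqrt{t}D_j)^2\right)=K_1^{(k)}+Kt,
\]
with $K_1^{(k)}=\tau(\sum_i(X_i^{(k)})^2+\sum_j(Y_j^{(k)})^2)$ uniformly bounded by some $M<\infty$ thanks to the norm control on the tuples. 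The bi-free Cramer--Rao inequality (Proposition~\ref{prop:cramer-rao}) then gives $\Phi^*_k(t)\geq K^2/(M+Kt)$, whence
\[
f_k(t)\ \leq\ \frac{K}{1+t}-\frac{K^2}{M+Kt}\ =\ \frac{K(M-K)}{(1+t)(M+Kt)}\ =:\ g(t),
\]
and $g$ is integrable on $(0,\infty)$. Applying the reverse Fatou lemma yields
\[
\limsup_{k\to\infty}\int_0^\infty f_k(t)\,dt\ \leq\ \int_0^\infty\limsup_{k\to\infty}f_k(t)\,dt\ \leq\ \int_0^\infty f(t)\,dt,
\]
and multiplying by $\tfrac{1}{2}$ and adding the common constant $\tfrac{K}{2}\ln(2\pi e)$ finishes the proof. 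The main technical point is the ambient-space construction in the first paragraph, needed so that the bi-semicircular perturbations used in defining the entropies on both sides can be taken to be literally the same operators; once this is arranged, the rest is a fairly mechanical transfer of the scalar argument of \cite{CS2020} to the operator-valued setting.
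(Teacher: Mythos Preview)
Your proposal is correct and follows essentially the same route as the paper: the paper states that the result follows from Proposition~\ref{prop:fisher-limits}, Theorem~\ref{thm:fisher-info-with-semi-perturbation}, and the argument of \cite{CS2020}*{Proposition~6.7}, which is exactly your lower-semicontinuity-plus-reverse-Fatou scheme. The only cosmetic difference is that you invoke the Cramer--Rao inequality (Proposition~\ref{prop:cramer-rao}) directly to produce the integrable majorant, whereas the paper cites Theorem~\ref{thm:fisher-info-with-semi-perturbation}, whose lower bound $\frac{K_2^2}{K_1+K_2 t}$ is precisely that Cramer--Rao bound.
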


Using the previous proposition together with Theorem \ref{thm:fisher-info-with-semi-perturbation} and the same arguments as \cite{CS2020}*{Proposition 6.8}, the following holds.

\begin{proposition}
Under the assumptions of Definition \ref{defn:bi-free-entropy}, suppose $(\{S_i\}^n_{i=1}, \{D_j\}^m_{j=1})$ is a collection of $(\{\eta_{\ell, i}\}^n_{i=1}, \{\eta_{r, j}\}^m_{j=1})$ bi-semicircular operators such that
\[
(\alg(C_\ell, \vX), \alg(C_r, \vY)\rangle) \cup\{(\alg(B_\ell, S_i), B_r)\}^n_{i=1}\cup  \{(B_\ell, \alg(B_r, D_j))\}^m_{j=1}
\]
are bi-free.  For $t \in [0, \infty)$, let
\[
g(t) = \chi^*\left(\vX + \sqrt{t} \vS \sqcup \vY + \sqrt{t} \vD : (C_\ell, C_r), (\{\eta_{\ell, i}\}^n_{i=1}, \{\eta_{r, j}\}^m_{j=1}) \right).
\]
Then $g : [0, \infty) \to \bR \cup \{-\infty\}$ is a concave, continuous, increasing function such that $g(t) \geq \frac{K}{2} \ln(2 \pi e t)$ where
\[
K = \sum^n_{i=1} \tau_B(\eta_{\ell, i}(1_B))  + \sum^m_{j=1} \tau_B(\eta_{r, j}(1_B))
\]
and, when $g(t) \neq -\infty$, 
\[
\lim_{\epsilon \to 0+} \frac{1}{\epsilon} (g(t+\epsilon) - g(t)) = \frac{1}{2} \Phi^*\left( \vX + \sqrt{t} \vS \sqcup \vY + \sqrt{t} \vD : (C_\ell, C_r), (\{\eta_{\ell, i}\}^n_{i=1}, \{\eta_{r, j}\}^m_{j=1}) \right).
\]
\end{proposition}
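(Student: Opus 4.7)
The approach mirrors \cite{CS2020}*{Proposition 6.8}, adapted to the operator-valued setting via the analytical machinery developed earlier. The plan is to rewrite $g$ in closed form by exploiting the semigroup property of bi-semicircular perturbation: if $(\vS, \vD)$ and $(\vS', \vD')$ are two collections of $(\{\eta_{\ell,i}\}, \{\eta_{r,j}\})$-bi-semicirculars with the appropriate bi-freeness from $(\vX, \vY)$ and from each other, then $(\vX + \sqrt t\,\vS + \sqrt s\,\vS', \vY + \sqrt t\,\vD + \sqrt s\,\vD')$ and $(\vX + \sqrt{t+s}\,\vS'', \vY + \sqrt{t+s}\,\vD'')$ have the same joint operator-valued distribution for some fresh bi-semicircular family $(\vS'', \vD'')$. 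This stability is verified at the level of analytical operator-valued bi-free cumulants using Theorem \ref{thm:cumulants-of-semis}: the only non-vanishing cumulants involving the bi-semicirculars are the pair cumulants $\widetilde{\kappa}(S_i L_b, S_i) = \eta_{\ell,i}(b)$ (and the analogue on the right), these scale linearly in the variance and add under bi-free convolution, and the bi-freeness hypothesis makes all mixed cumulants with $(\vX,\vY)$ vanish.

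Substituting into the integral defining $\chi^*$, performing the change of variables $u = t + s$, and using the elementary identity $\int_0^\infty \bigl(\frac{1}{1+s} - \frac{1}{1+t+s}\bigr) ds = \ln(1+t)$ yields the key representation
\[
g(t) = \frac{K}{2}\ln(2\pi e (1+t)) + \frac{1}{2}\int_t^\infty \left(\frac{K}{1+u} - h(u)\right) du,
\]
where $h(u) = \Phi^*(\vX + \sqrt u\, \vS \sqcup \vY + \sqrt u\, \vD : (C_\ell, C_r), (\{\eta_{\ell,i}\}, \{\eta_{r,j}\}))$. The bounds in Theorem \ref{thm:fisher-info-with-semi-perturbation} make the integrand $O(1/u^2)$, so the integral converges for all $t > 0$. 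Differentiating,
\[
g'_+(t) = \frac{K}{2(1+t)} - \frac{1}{2}\left(\frac{K}{1+t} - h(t)\right) = \frac{1}{2}h(t),
\]
which is the stated derivative formula. Non-negativity and monotonicity of $h$ from Theorem \ref{thm:fisher-info-with-semi-perturbation} give that $g$ is increasing and concave; right-continuity of $h$ gives continuity of $g$ on $(0, \infty)$, and lower semicontinuity of Fisher information (Proposition \ref{prop:fisher-limits}) handles the extension to $t = 0$.

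The lower bound $g(t) \geq \frac{K}{2}\ln(2\pi e t)$ is obtained by invoking the Bi-Free Stam Inequality (Proposition \ref{prop:bi-free-stam-inequality}) with $(\vX, \vY)$ and $(\sqrt{u}\vS, \sqrt{u}\vD)$, which are bi-free by hypothesis. This yields $h(u)^{-1} \geq \Phi^*(\vX \sqcup \vY)^{-1} + \Phi^*(\sqrt{u}\vS \sqcup \sqrt{u}\vD)^{-1} \geq u/K$, where the last estimate uses that $\Phi^*(\vS \sqcup \vD) = K$ by Cramer-Rao equality for bi-semicirculars (Proposition \ref{prop:cramer-rao}, since $\tau(\sum S_i^2 + \sum D_j^2) = K$) combined with the scaling rule in Remark \ref{rem:remarks-about-fisher-info}(vi). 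Therefore $h(u) \leq K/u$, and
\[
\int_t^\infty \left(\frac{K}{1+u} - h(u)\right) du \geq \int_t^\infty \left(\frac{K}{1+u} - \frac{K}{u}\right) du = -K\int_t^\infty \frac{du}{u(1+u)} = K\ln\frac{t}{1+t},
\]
so $g(t) \geq \frac{K}{2}\ln(2\pi e(1+t)) + \frac{K}{2}\ln\frac{t}{1+t} = \frac{K}{2}\ln(2\pi e t)$. The main technical delicacy is the semigroup stability: it must hold not merely for scalar moments but jointly with $B_\ell$ and $B_r$, i.e., as operator-valued distributions relevant for computing $\Phi^*$; this reduces exactly to the cumulant identification in Theorem \ref{thm:cumulants-of-semis}, which is why passing to the analytical operator-valued bi-free cumulants is the right language for this argument.
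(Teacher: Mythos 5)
Your argument is correct and follows essentially the same route as the paper, which simply defers to \cite{CS2020}*{Proposition 6.8}: the cumulant-level semigroup property of bi-semicircular perturbation (via Theorem \ref{thm:cumulants-of-semis}) gives the integral representation $g(t)=\frac{K}{2}\ln(2\pi e(1+t))+\frac{1}{2}\int_t^\infty\left(\frac{K}{1+u}-h(u)\right)du$, from which the derivative formula, monotonicity, concavity, continuity, and the logarithmic lower bound all follow as you describe. Your derivation of $h(u)\le K/u$ from the Stam and Cramer--Rao inequalities is exactly the right refinement, since the cruder bound $h(t)\le K_3/t$ recorded in Theorem \ref{thm:fisher-info-with-semi-perturbation} would not by itself yield $g(t)\ge\frac{K}{2}\ln(2\pi e t)$ when $K_3>K$.
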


Finally, using the Bi-Free Stam Inequality (Proposition \ref{prop:bi-free-stam-inequality}) together with the same proof as \cite{CS2020}*{Proposition 6.11} yields the following. 

\begin{proposition}
Under the assumptions of Definition \ref{defn:bi-free-entropy}, if
\[
\Phi^*\left(\vX \sqcup \vY : (C_\ell, C_r),(\{\eta_{\ell, i}\}^n_{i=1}, \{\eta_{r, j}\}^m_{j=1}) \right) < \infty,
\]
then
\begin{align*}
\chi^*(\vX \sqcup \vY : (B_\ell, B_r))  \geq \frac{K}{2} \ln\left(\frac{2\pi K e}{\Phi^*(\vX \sqcup \vY : (C_\ell, C_r),(\{\eta_{\ell, i}\}^n_{i=1}, \{\eta_{r, j}\}^m_{j=1})   )}\right)> - \infty,
\end{align*}
where
\[
K = \sum^n_{i=1} \tau_B(\eta_{\ell, i}(1_B))  + \sum^m_{j=1} \tau_B(\eta_{r, j}(1_B)).
\]
\end{proposition}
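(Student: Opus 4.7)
The plan is to adapt directly the scalar-valued argument of \cite{CS2020}*{Proposition 6.11} (see also \cite{S1998}*{Proposition 8.7}), replacing its ingredients by the operator-valued analogues established in the excerpt: the Bi-Free Stam Inequality (Proposition \ref{prop:bi-free-stam-inequality}) and the equality case of the Bi-Free Cramer-Rao Inequality (Proposition \ref{prop:cramer-rao}). Fix a collection $(\{S_i\}_{i=1}^n, \{D_j\}_{j=1}^m)$ of $(\{\eta_{\ell,i}\}_{i=1}^n, \{\eta_{r,j}\}_{j=1}^m)$ bi-semicircular operators as in Definition \ref{defn:bi-free-entropy}; such a collection exists inside $A$ (possibly after enlarging) by Theorem \ref{thm:can-add-semis}. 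Write $\alpha = \Phi^*(\vX \sqcup \vY : (C_\ell, C_r), (\{\eta_{\ell,i}\}, \{\eta_{r,j}\}))$, which is finite by hypothesis.

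First I would compute the Fisher information of the pure bi-semicircular data. From Theorem \ref{thm:cumulants-of-semis} one reads $\tau(S_i^2) = \tau_B(\eta_{\ell,i}(1_B))$ and $\tau(D_j^2) = \tau_B(\eta_{r,j}(1_B))$, so $\tau(\sum_i S_i^2 + \sum_j D_j^2) = K$. Since $(\{S_i\}, \{D_j\})$ is bi-semicircular with the appropriate bi-free structure, the equality case of Proposition \ref{prop:cramer-rao} yields $\Phi^*(\vS \sqcup \vD : (\{\eta_{\ell,i}\}, \{\eta_{r,j}\})) = K$. The scaling identity in Remark \ref{rem:remarks-about-fisher-info}\,\eqref{rem:remarks-about-fisher-info:part:scaling} then gives $\Phi^*(\sqrt{t}\,\vS \sqcup \sqrt{t}\,\vD) = K/t$ for each $t > 0$.

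Next, for each $t > 0$, I would invoke the Bi-Free Stam Inequality (Proposition \ref{prop:bi-free-stam-inequality}) with the perturbation pair $(\vX',\vY') = (\sqrt{t}\,\vS, \sqrt{t}\,\vD)$ and the presence $(D_\ell, D_r) = (B_\ell, B_r)$ (so that $\alg(C_\ell, B_\ell) = C_\ell$ and $\alg(C_r, B_r) = C_r$). The required bi-freeness with amalgamation over $B$ of $(\alg(C_\ell, \vX), \alg(C_r, \vY))$ and $(\alg(B_\ell, \sqrt{t}\vS), \alg(B_r, \sqrt{t}\vD))$ is exactly what was arranged in Definition \ref{defn:bi-free-entropy}, and $\Phi^*(\sqrt{t}\vS\sqcup \sqrt{t}\vD : (C_\ell,C_r), \ldots) = K/t$ by Remark \ref{rem:remarks-about-fisher-info}\,\eqref{rem:remarks-about-fisher-info:part:adding-bi-free-algebra}. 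The inequality then gives
\[
\Phi^*\bigl(\vX + \sqrt{t}\vS \sqcup \vY + \sqrt{t}\vD : (C_\ell, C_r), (\{\eta_{\ell,i}\}, \{\eta_{r,j}\})\bigr) \leq \frac{\alpha K}{K + \alpha t}.
\]

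Finally, substituting this upper bound into Definition \ref{defn:bi-free-entropy} and computing the resulting integral finishes the proof. Specifically,
\[
\chi^*(\vX \sqcup \vY : (C_\ell, C_r), \ldots) \geq \tfrac{K}{2}\ln(2\pi e) + \tfrac{1}{2}\int_0^\infty \left(\frac{K}{1+t} - \frac{\alpha K}{K + \alpha t}\right)dt,
\]
and the antiderivative $K\ln\!\left(\frac{1+t}{K/\alpha + t}\right)$ evaluates between $0$ and $\infty$ to $-K\ln(\alpha/K) = K\ln(K/\alpha)$. Collecting, $\chi^* \geq \tfrac{K}{2}\ln(2\pi K e/\alpha) > -\infty$. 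To obtain the form of the conclusion with $(B_\ell, B_r)$ on the left, one applies monotonicity of $\chi^*$ in the presence (property (vii) of the preceding proposition, which reverses the inequality from Remark \ref{rem:remarks-about-fisher-info}\,\eqref{rem:remarks-about-fisher-info:part:smaller-algebra} upon passing through $\chi^*$), giving $\chi^*(\vX\sqcup\vY : (B_\ell, B_r), \ldots) \geq \chi^*(\vX\sqcup\vY : (C_\ell, C_r), \ldots)$. The entire argument is a routine transcription; the only real point requiring care is the bookkeeping for the presence pair in the Stam Inequality, to ensure that both the bi-free independence hypothesis and the identification of the conjugate variables in the appropriate presence are compatible with the definition of $\chi^*$ relative to $(C_\ell,C_r)$.
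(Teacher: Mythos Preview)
Your proposal is correct and follows precisely the route the paper indicates: it cites the Bi-Free Stam Inequality and says the argument is the same as \cite{CS2020}*{Proposition 6.11}, and you have faithfully carried out that transcription (computing $\Phi^*(\sqrt{t}\vS\sqcup\sqrt{t}\vD)=K/t$ via the equality case of Cramer--Rao, applying Stam to bound the perturbed Fisher information by $\alpha K/(K+\alpha t)$, and evaluating the resulting integral). The final step invoking monotonicity in the presence to pass from $(C_\ell,C_r)$ to $(B_\ell,B_r)$ is also appropriate.
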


\section{Minimizing Bi-Free Fisher Information}
\label{sec:minimize-Fisher}

In this section, we will prove Theorem \ref{thm:minimizing-fisher-info} thereby describing the minimal value of the bi-free Fisher information of non-self-adjoint pairs of operators under certain distribution conditions.  Throughout the section, we will be working under the situation from Example \ref{canonicalexample} where $\A$ is a unital C$^*$-algebra, $\varphi : \A \to \bC$ is a state, $B = M_d(\bC)$ (the $d \times d$ matrices with complex entries) and $\tau_B = \tr_d$ (the normalized trace on $M_d(\bC)$).  Thus $A_d = \A \otimes M_d(\bC) \otimes M_d(\bC)^\op$, $E_d : A_d \to M_d(\bC)$ and $\tau_d : A_d \to \bC$ are defined such that
\[
E_d (Z \otimes b_1 \otimes b_2) = \varphi(Z) b_1b_2 \qqand \tau_d(Z \otimes b_1 \otimes b_2) = \varphi(Z) \tr_d(b_1b_2),
\]
for all $Z\in \A$ and $b_1, b_2 \in M_d(\bC)$.  We recall the following result that aids in computing moments in $(A_d, E_d, \varepsilon)$ where $\{E_{i,j}\}^n_{i,j=1} \subseteq M_d(\bC)$ are the canonical matrix units.

\begin{lemma}[\cite{S2016-1}*{Lemma 3.7}]
\label{lem:expanding-moments-of-matrices}
Let $(\A, \varphi)$ be a C$^*$-non-commutative probability space, let $\chi \in \{\ell, r\}^n$, and let
\[
Z_k = \begin{cases}
\sum^d_{i,j=1} z_{k; i,j} \otimes E_{i,j} \otimes I_d & \text{if } \chi(k) = \ell \\
\sum^d_{i,j=1} z_{k; i,j} \otimes I_d \otimes E_{i,j} & \text{if } \chi(k) = r
\end{cases}.
\]
Then
\[
E_d(Z_1 \cdots Z_n) = \sum^d_{\substack{i_1, \ldots, i_n = 1 \\ j_1, \ldots, j_n = 1}} \varphi(z_{1; i_1, j_1}  \cdots z_{n; i_n, j_n}) E_\chi((i_1, \ldots, i_n), (j_1, \ldots, j_n))
\]
where
\[
E_\chi((i_1, \ldots, i_n), (j_1, \ldots, j_n)) := E_{i_{s_\chi(1)}, j_{s_\chi(1)}} \cdots E_{i_{s_\chi(n)}, j_{s_\chi(n)}} \in M_d(\bC).
\]
\end{lemma}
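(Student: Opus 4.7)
The plan is to prove the lemma by direct expansion using multilinearity of the product in $A_d$ and of $E_d$, then identifying the resulting matrix factor with the $\chi$-ordered product $E_\chi((i_1,\ldots,i_n),(j_1,\ldots,j_n))$. Since the statement is cited from \cite{S2016-1}, the argument is purely bookkeeping, but the key observation that must be made explicit is how the $M_d(\bC)^{\op}$ multiplication on the right factor reverses the order of the right-indexed matrix units, so that together with the natural order on the left factor the combined product matches the $s_\chi$-ordering.

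First I would expand $Z_1\cdots Z_n$ by multilinearity. Write each factor as
\[
Z_k=\sum_{i_k,j_k=1}^d z_{k;i_k,j_k}\otimes M_k^\ell(i_k,j_k)\otimes M_k^r(i_k,j_k),
\]
where $(M_k^\ell,M_k^r)=(E_{i_k,j_k},I_d)$ if $\chi(k)=\ell$ and $(I_d,E_{i_k,j_k})$ if $\chi(k)=r$. Because $\varepsilon$ is a unital $*$-homomorphism from $M_d(\bC)\otimes M_d(\bC)^{\op}$, the middle tensor factors multiply in $M_d(\bC)$ and the last tensor factors multiply in $M_d(\bC)^{\op}$. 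Thus after expanding,
\[
Z_1\cdots Z_n=\sum_{i_1,\ldots,i_n,\,j_1,\ldots,j_n} z_{1;i_1,j_1}\cdots z_{n;i_n,j_n}\otimes\Bigl(\prod_{k=1}^{n} M_k^\ell\Bigr)\otimes\Bigl(\prod_{k=1}^{n}{}_{\op}\, M_k^r\Bigr),
\]
where $\prod_{\op}$ denotes multiplication in $M_d(\bC)^{\op}$, i.e., the reverse of the usual product.

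Next I would compute each of the two matrix factors. The $I_d$ terms are absorbed, so the left factor is
\[
\prod_{k=1}^{n}M_k^\ell=\!\!\!\prod_{\substack{k=1,\ldots,n\\ \chi(k)=\ell}}^{\nearrow}\!\!\!E_{i_k,j_k},
\]
the product taken in increasing order of $k$, which by definition of $s_\chi$ equals $E_{i_{s_\chi(1)},j_{s_\chi(1)}}\cdots E_{i_{s_\chi(p)},j_{s_\chi(p)}}$ where $p=|\chi^{-1}(\{\ell\})|$. The right factor, multiplied in $M_d(\bC)^{\op}$, equals (when viewed back in $M_d(\bC)$) the product over $k$ with $\chi(k)=r$ taken in decreasing order of $k$, which is precisely $E_{i_{s_\chi(p+1)},j_{s_\chi(p+1)}}\cdots E_{i_{s_\chi(n)},j_{s_\chi(n)}}$, again by the construction of $s_\chi$.

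Finally, applying $E_d$ and using $E_d(Z\otimes b_1\otimes b_2)=\varphi(Z)b_1b_2$ together with linearity collapses each term to $\varphi(z_{1;i_1,j_1}\cdots z_{n;i_n,j_n})$ multiplied by the concatenated product
\[
E_{i_{s_\chi(1)},j_{s_\chi(1)}}\cdots E_{i_{s_\chi(n)},j_{s_\chi(n)}}=E_\chi((i_1,\ldots,i_n),(j_1,\ldots,j_n)),
\]
which yields the stated formula. The only step that is not entirely automatic is verifying that the $\op$-multiplication on the right block genuinely produces the decreasing-in-$k$ ordering and that this is what $s_\chi$ prescribes on the indices $p+1,\ldots,n$; I would handle that by unwinding the definition of $s_\chi$ from the Preliminaries, where $s_\chi(p+k)=j_{n+1-(p+k)}$ when $k>0$ enumerates the right-labelled positions in decreasing order, so both orderings coincide.
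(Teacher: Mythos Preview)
Your argument is correct and is exactly the natural direct computation: expand by multilinearity, track the $M_d(\bC)$ and $M_d(\bC)^{\op}$ products separately, and observe that the opposite multiplication reverses the order of the right-indexed matrix units so that the concatenated product matches the $s_\chi$-ordering. The paper does not supply its own proof of this lemma; it simply cites it as \cite{S2016-1}*{Lemma 3.7}, so there is nothing to compare against beyond noting that your bookkeeping is the expected one.
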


To discuss conjugate variables, we need to consider $L_2(A_d, \tau_d)$.  It is not difficult to verify that $L_2(A_d, \tau_d)$ can be identified with the $d \times d$ matrices with entries in $L_2(\A, \varphi)$ where
\[
(Z \otimes I_d \otimes I_d) [\xi_{i,j}] = [Z \xi_{i,j}],
\]
$1_\A \otimes b \otimes I_d$ acts via left multiplication on $[\xi_{i,j}]$ and $1_\A \otimes I_d \otimes b$ acts via right multiplication on $[\xi_{i,j}]$ for all $[\xi_{i,j}] \in M_d(L_2(\A, \varphi))$ and $b \in M_d(\bC)$.

Next, we consider the subalgebra $D_d \subseteq M_d(\bC)$ of the diagonal matrices.  Clearly if $F_d : M_d(\bC) \to D_d$ is the canonical conditional expectation onto the diagonal, then $(A_d, F_d\circ E_d, \varepsilon|_{D_d \otimes D_d^{\op}}, \tau_d)$ is also an analytical $D_d$-$D_d$-non-commutative probability space (see Example \ref{exam:reducing-analytic-spaces-to-smaller-algebra}).

To begin stating our main result, let $x,y \in \A$.  For $d = 2$, let
\[
X = x \otimes E_{1,2} \otimes I_2 + x^* \otimes E_{2,1} \otimes I_2 \qqand Y = y \otimes I_2 \otimes  E_{1,2}  + y^* \otimes I_2 \otimes E_{2,1},
\]
which are then self-adjoint elements of $A_2$.  The pair $(X, Y)$ is intimately related to whether or not $(x,y)$ is bi-R-diagonal (\cite{K2019}*{Definition 2.12}).  Indeed, combining \cite{K2019}*{Proposition 2.21} and \cite{S2016-2}*{Theorem 4.9}, we  obtain the following proposition.

\begin{proposition}\label{prop:bi-R-diagonal-iff-bi-R-cyclic-iff-free-with-amal}
As described above, the following conditions are equivalent:
\begin{enumerate}[(i)]
    \item the pair $(x, y)$ is bi-R-diagonal,
    \item the pair $(X, Y)$ is bi-R-cyclic,
    \item the pair of algebras $(\alg(D_2, X), \alg(D_2, Y))$ is bi-free from $(M_2(\bC)_\ell, M_2(\bC)_r)$ with amalgamation over $D_2$ with respect to $F_2 \circ E_2$.
\end{enumerate}
\end{proposition}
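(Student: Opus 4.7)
The plan is to derive the three equivalences as a direct consequence of two results in the literature, with the bulk of the argument being bookkeeping to match conventions. The stated proof is really ``$(i) \Leftrightarrow (ii)$ by \cite{K2019}*{Proposition 2.21} and $(ii) \Leftrightarrow (iii)$ by \cite{S2016-2}*{Theorem 4.9},'' so my job is to justify that these results apply to the setup as defined.

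First, I would recall the relevant definitions so the reader sees why the cited results are the correct ones. A pair $(x,y)$ is \emph{bi-R-diagonal} precisely when the scalar-valued bi-free cumulants of $\{x,x^*\} \sqcup \{y,y^*\}$ vanish except for those supported on alternating patterns of starred and unstarred operators on each side; this is the bi-free analogue of the classical Nica--Speicher R-diagonality. On the other hand, \emph{bi-R-cyclicity} of the matrix-valued pair $(X,Y)$ is a condition on the scalar bi-free cumulants of the scalar entries $x_{i,j}$, $y_{i,j}$ arising from the expansion in Lemma \ref{lem:expanding-moments-of-matrices}: such a cumulant must vanish unless the sequence of row/column indices closes up in a cyclic manner compatible with the bi-non-crossing diagram.

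For $(i) \Leftrightarrow (ii)$, I would verify that the specific inflation
\[
X = x \otimes E_{1,2} \otimes I_2 + x^* \otimes E_{2,1} \otimes I_2, \qquad Y = y \otimes I_2 \otimes E_{1,2} + y^* \otimes I_2 \otimes E_{2,1}
\]
is exactly the construction considered in \cite{K2019}*{Proposition 2.21}: placing $x$ and $x^*$ into the off-diagonal entries of a $2 \times 2$ left matrix and doing the symmetric thing for $y$ on the right. Expanding cumulants of $X$ and $Y$ via Lemma \ref{lem:expanding-moments-of-matrices} translates scalar cumulants of monomials in $x, x^*, y, y^*$ into cumulants of the matrix entries weighted by matrix-unit products; the only surviving products of matrix units correspond to alternating $E_{1,2}E_{2,1}E_{1,2}\cdots$ patterns, and this is the combinatorial content that makes bi-R-diagonality of $(x,y)$ equivalent to bi-R-cyclicity of $(X,Y)$. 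Once this is observed the cited proposition applies verbatim.

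For $(ii) \Leftrightarrow (iii)$, I would apply \cite{S2016-2}*{Theorem 4.9}, which is the general principle that bi-R-cyclicity of a matrix-valued pair is equivalent to bi-freeness of the pair generated by that pair with the matrix-unit algebras, with amalgamation over the diagonal subalgebra. Specifically, applied to the single pair $(X, Y) \in A_2$ with respect to the $D_2$-$D_2$-valued expectation $F_2 \circ E_2$ from Example \ref{exam:reducing-analytic-spaces-to-smaller-algebra}, the theorem yields exactly that $(\alg(D_2, X), \alg(D_2, Y))$ is bi-free from $(M_2(\bC)_\ell, M_2(\bC)_r)$ with amalgamation over $D_2$. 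The main obstacle is purely verificational: confirming that the placement of the matrix units (left tensor factor for $X$, right tensor factor for $Y$), the ordering conventions, and the normalization of $F_2$ used here match the hypotheses of the two cited theorems. Once this identification is made, no further calculation is required, and the three conditions are equivalent.
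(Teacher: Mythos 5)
Your proposal matches the paper exactly: the proposition is obtained by combining \cite{K2019}*{Proposition 2.21} for the equivalence of (i) and (ii) with \cite{S2016-2}*{Theorem 4.9} for the equivalence of (ii) and (iii), which is precisely the route you take. The additional bookkeeping you describe (matching the inflation construction and the placement of matrix units to the hypotheses of the cited results) is the correct and only substantive content of the verification.
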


In addition, the joint moments of $(X, Y)$ with respect to $\tau_d$ are not too difficult to describe.  Indeed, for any $n \in \bN$, $Z_1, Z_2, \ldots, Z_n \in \{X, Y\}$, $\chi \in \{\ell, r\}^n$, and $z_1, \ldots, z_n \in \{x,y\}$ such that
\[
\chi(k) = \begin{cases}
\ell & \text{if }Z_k = X \\
r & \text{if } Z_k = Y
\end{cases}\qqand z_k = \begin{cases}
x & \text{if }Z_k = X \\
y & \text{if } Z_k = Y
\end{cases},
\]
then
\[
E_2(Z_1 \cdots Z_n) = \sum^n_{k=1} \sum_{p_k \in \{1, *\}} \varphi(z_1^{p_1} \cdots z_n^{p_n}) (E_{1,2})^{p_{s_\chi(1)}}(E_{1,2})^{p_{s_\chi(2)}} \cdots (E_{1,2})^{p_{s_\chi(n)}}.
\]
Thus, if $n$ is odd, we see that $\tau_2(Z_1\cdots Z_n) = 0$ and  if $n$ is even, we see that
\[
\tau_2(Z_1 \cdots Z_n) = \frac{1}{2} \left(\varphi(z_1^{p_1} \cdots z_n^{p_n}) + \varphi(z_1^{q_1} \cdots z_n^{q_n})   \right),
\]
where
\[
p_{s_\chi(k)} = \begin{cases}
1 & \text{if }k \text{ is odd}\\
* & \text{if }k \text{ is even}
\end{cases} \qqand q_{s_\chi(k)} = \begin{cases}
* & \text{if }k \text{ is odd}\\
1 & \text{if }k \text{ is even}
\end{cases}
\]
(that is, the $1$'s and $*$'s alternate in the $\chi$-ordering).  Hence, the joint moments of $(X, Y)$ with respect to $\tau_d$ depend only on specific moments of $(x, y)$.  We let $\Delta_{X,Y}$ denote the set of all pairs $(x_0, y_0)$ in a C$^*$-non-commutative probability space $(\A_0, \varphi_0)$ such that if we apply the above procedure to $(x_0, y_0)$ resulting in $(X_0, Y_0)$, then $(X_0, Y_0)$ has the same joint distribution as $(X, Y)$ (so $\Delta_{X, Y} = \Delta_{X_0, Y_0}$).

One specific case worth mentioning is when $x$ and $y$ are  normal operators  with $[\alg(x,x^*), \alg(y, y^*)]=0$, thus defining  a probability measure $\mu$ on $\bC^2$. Then,  $X$ and $Y$ will be commuting self-adjoint operators and therefore their joint distribution gives rise to a compactly supported probability  measure $\mu_0$ on $\bR^2$ with moments
\[
\tau_2(X^n Y^m) = \begin{cases}
0 & \text{if } n+m\text{ is odd} \\
\varphi((x^*x)^i (y^*y)^j) & \text{if }n=2i \text{ and }m = 2j\\
\frac{1}{2}\varphi((x^*x)^i (xy^* + x^*y) (y^*y)^j)  & \text{if }n=2i+1 \text{ and }m = 2j+1
\end{cases}.
\]

One additional property is required in this section.  In particular, as we are attempting to generalize \cite{NSS1999}*{Theorem 1.1} which makes heavy use of traciality, we need a condition that lets us bypass the issue that $\tau_2$ is not tracial on $A_2$.  

\begin{definition}\label{defn:alternate-adjoint-flip}
Let $(\A, \varphi)$ be a C$^*$-non-commutative probability space and let $x,y \in \A$.  We say that $(x, y)$ is \emph{alternating adjoint flipping with respect to $\varphi$} if for any $n \in \bN$, $\chi \in \{\ell, r\}^{2n}$, and $z_1, \ldots, z_{2n} \in \{x,y\}$ such that
\[
z_k = \begin{cases}
x & \text{if } \chi(k) = \ell \\
y & \text{if } \chi(k) = r
\end{cases},
\]
we have that
\[
\varphi(z_1^{p_1} \cdots z_{2n}^{p_{2n}}) = \varphi(z_1^{q_1} \cdots z_{2n}^{q_{2n}}) 
\]
where
\[
p_{s_\chi(k)} = \begin{cases}
1 & \text{if }k \text{ is odd}\\
* & \text{if }k \text{ is even}
\end{cases} \qqand q_{s_\chi(k)} = \begin{cases}
* & \text{if }k \text{ is odd}\\
1 & \text{if }k \text{ is even}
\end{cases}.
\]
\end{definition}
\begin{remark}
If $(x, y)$ is alternating adjoint flipping, then the description of the joint moments of $(X, Y)$ above reduce to a nicer expression.  Furthermore we see that
\[
\varphi((x^*x)^m) = \varphi((xx^*)^m) \qqand \varphi((y^*y)^m) = \varphi((yy^*)^m)
\]
for all $m \in \bN$, so that $x^*x$ and $xx^*$ have the same distribution and $y^*y$ and $yy^*$ have the same distribution, which would be automatic if $\varphi$ was tracial when restricted to $\alg(x, x^*)$ and when restricted to $\alg(y,y^*)$ (a common assumption in bi-free probability).
\end{remark}

Of course, bi-Haar unitary pairs are trivially seen to be  alternating adjoint flipping, since any joint moment with an equal number of adjoint and non-adjoint terms is 1 and any joint moment with a differing number of adjoint and non-adjoint terms is 0.  Here is another example which is of use in this paper.

\begin{example}\label{exam:circular-pair}
Let $\H$ be any Hilbert space of dimension at least $4$, let $\F(\H)$ denote the Fock space generated by $\H$, let $\varphi_0$ be the vacuum vector state on $\mathcal{B}(\F(\H))$ and let $\{e_1, e_2, e_3, e_4\}$ be an orthonormal set.  For $i = 1,2$ let $s_i = l(e_i) + l^*(e_i)$ (i.e. left creation plus annihilation by $e_i$) and for $j=1,2$ let $d_j = r(e_{j+2}) + r^*(e_{j+2})$ (i.e. right creation and annihilation by $e_{j+2}$).  Thus $(\{s_1, s_2\}, \{d_1, d_2\})$ is a bi-free central limit distribution with variance 1 and covariance 0.

Let 
\[
c_\ell = \frac{1}{\sqrt{2}}(s_1 + i s_2) \qqand c_r = \frac{1}{\sqrt{2}} (d_1 + i d_2).
\]
We call the pair $(c_\ell, c_r)$ a \emph{bi-free circular pair} (with mean 0, variance 1, and covariance 0).

We claim that $(c_\ell, c_r)$ is an example of a bi-R-diagonal pair that is alternating adjoint flipping with respect to $\varphi_0$.  To see that $(c_\ell, c_r)$ is bi-R-diagonal, we note that any bi-free cumulant for $(\{s_1, s_2\}, \{d_1, d_2\})$ of order 1, of order greater than 3, or involving two different elements is 0.  As 
\begin{align*}
\kappa_{1_{(\ell, \ell)}}(c_\ell, c_\ell) &= \frac{1}{2}\kappa_{1_{(\ell, \ell)}}(s_1, s_1) + (i)^2 \frac{1}{2}\kappa_{1_{(\ell, \ell)}}(s_2, s_2)  =0, \\
\kappa_{1_{(\ell, \ell)}}(c^*_\ell, c^*_\ell) &= \frac{1}{2}\kappa_{1_{(\ell, \ell)}}(s_1, s_1) + (-i)^2 \frac{1}{2}\kappa_{1_{(\ell, \ell)}}(s_2, s_2)  =0,
\end{align*}
and similar computations hold on the right, we have that $(c_\ell, c_r)$ is bi-R-diagonal.

To see that $(c_\ell, c_r)$ is alternating adjoint flipping with respect to $\varphi_0$, first note that $\varphi_0$ is tracial when restricted to $\alg(s_1, s_2)$  as $s_1$ and $s_2$ are freely independent with respect to $\varphi_0$.  Hence for all $n \in \bN$
\[
\varphi_0((c_\ell^*c_\ell)^n) = \varphi_0((c_\ell c_\ell^*)^n).
\]
Moreover, as any monomial of odd length involving freely independent semicircular variables is 0, we obtain that for all $n \in \bN$ that
\[
\varphi_0(c_\ell (c_\ell^*c_\ell)^n) = 0 = \varphi_0(c_\ell^*(c_\ell c_\ell^*)^n).
\]
Similarly, for all $n \in \bN$ we have that
\[
\varphi_0((c_r^*c_r)^n) = \varphi_0((c_r c_r^*)^n) \qqand \varphi_0(c_r (c_r^*c_r)^n) = 0 = \varphi_0(c_r^*(c_r c_r^*)^n).
\]

To see the remaining moment conditions, first note that $\{c_\ell, c_\ell^*\}$ commutes with $\{c_r, c_r^*\}$.  Thus, as the $\chi$-ordering is not changed by commutation of left and right operators, it suffices to show that
\begin{align*}
\varphi_0((c_\ell^* c_\ell)^n (c_r c_r^*)^m) &= \varphi_0((c_\ell c_\ell^*)^n (c_r^* c_r)^m), \\
\varphi_0(c_\ell (c_\ell^* c_\ell)^n (c_r c_r^*)^m) &= \varphi_0(c_\ell^*(c_\ell c_\ell^*)^n (c_r^* c_r)^m), \\
\varphi_0((c_\ell^* c_\ell)^n c_r^* (c_r c_r^*)^m) &= \varphi_0(c_\ell^*(c_\ell c_\ell^*)^n c_r (c_r^* c_r)^m) \text{ and}\\
\varphi_0(c_\ell (c_\ell^* c_\ell)^n  c_r^* (c_r c_r^*)^m) &= \varphi_0(c_\ell^*(c_\ell c_\ell^*)^n c_r (c_r^* c_r)^m),
\end{align*}
for all $n,m \in \bN \cup \{0\}$.  However, as $\{c_\ell, c_\ell^*\}$ is classically independent from $\{c_r, c_r^*\}$ since the joint bi-free cumulants vanish, each of the 8 above moment expressions simplifies to the $\varphi_0$-moment of the $\{c_\ell, c_\ell^*\}$ term times the $\varphi_0$-moment of the $\{c_r, c_r^*\}$.  Thus, the desired moments are equal by the above knowledge of the $\varphi_0$-moments of the $\{c_\ell, c_\ell^*\}$ and the $\varphi_0$-moment of the $\{c_r, c_r^*\}$.
\end{example}

With the above definitions, notation and constructions out of the way, our main result is at hand.

\begin{theorem}\label{thm:minimizing-fisher-info}
Let $(\A, \varphi)$ be a C$^*$-non-commutative probability space and let $x,y \in \A$ be such that $x^*x$ and $xx^*$ have the same distribution with respect to $\varphi$ and $y^*y$ and $yy^*$ have the same distribution with respect to $\varphi$.  With $X$ and $Y$ as described above
\[
\min\left\{\left.\Phi^*(\{x_0, x_0^*\} \sqcup \{y_0, y_0^*\} : (\bC, \bC), \varphi )\, \right| (x_0, y_0) \in \Delta_{X, Y}   \right\} \geq 2 \Phi^*(X \sqcup Y)
\]
and equality holds and is achieved for any pair $(x_0, y_0)$ that is alternating adjoint flipping and bi-R-diagonal.
\end{theorem}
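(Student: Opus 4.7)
The plan is to adapt the matrix-valued conjugate variable strategy of \cite{NSS1999}*{Theorem 1.1} to the bi-free setting. Under the identification $L_2(A_2,\tau_2) \cong L_2(M_2(\A),\tr_2\circ\varphi)$, the operators $X,Y$ act as left and right multiplication (respectively) by $X' = \left(\begin{smallmatrix} 0 & x_0 \\ x_0^* & 0\end{smallmatrix}\right)$ and $Y' = \left(\begin{smallmatrix} 0 & y_0 \\ y_0^* & 0\end{smallmatrix}\right)$. Assume that $\Phi^*(\{x_0,x_0^*\}\sqcup\{y_0,y_0^*\}:\varphi)<\infty$ so the four conjugate variables $\xi_{x_0},\xi_{x_0^*},\nu_{y_0},\nu_{y_0^*}\in L_2(\A,\varphi)$ all exist, and then introduce the candidates
\[
\tilde\xi_X = \xi_{x_0}\otimes E_{2,1}\otimes I_2 + \xi_{x_0^*}\otimes E_{1,2}\otimes I_2, \qquad \tilde\nu_Y = \nu_{y_0}\otimes I_2\otimes E_{2,1} + \nu_{y_0^*}\otimes I_2 \otimes E_{1,2},
\]
which correspond to the matrices $\left(\begin{smallmatrix} 0 & \xi_{x_0^*} \\ \xi_{x_0} & 0\end{smallmatrix}\right)$ and $\left(\begin{smallmatrix} 0 & \nu_{y_0^*} \\ \nu_{y_0} & 0\end{smallmatrix}\right)$. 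Using that $\tr_2$ is normalized one computes
$\|\tilde\xi_X\|_{\tau_2}^2 = \tfrac12(\|\xi_{x_0}\|^2+\|\xi_{x_0^*}\|^2)$ and similarly for $\tilde\nu_Y$.

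The crux of the inequality is to verify that $\tilde\xi_X$ satisfies the left bi-free conjugate variable relations for $X$ with respect to $\mathrm{id}_\bC$ and $\tau_2$ in the presence of $(\bC,\bC\langle Y\rangle)$, and similarly on the right. For any $Z_1,\dots,Z_n\in\{X\}\cup\bC\langle Y\rangle$, expanding $Z_1\cdots Z_n\tilde\xi_X$ in the matrix picture via Lemma \ref{lem:expanding-moments-of-matrices} reveals its diagonal entries as products of the form $(x_0^{\varepsilon_0})(y_0^{\varepsilon_1}\cdots)\xi_{x_0^{\epsilon}}(\cdots)$, and applying the (appropriate form of the) traciality implied by the distributional assumptions $x^*x\sim xx^*$ and $y^*y\sim yy^*$ allows each resulting $\varphi$-expression to be rewritten so that $\xi_{x_0}$ or $\xi_{x_0^*}$ appears in last position. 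The defining relations of $\xi_{x_0},\xi_{x_0^*}$ then collapse each diagonal contribution into $\varphi$-moments of $x_0^{\pm},y_0^{\pm}$ which are exactly the matrix entries of $\tau_2$ of the corresponding split product in the conjugate-variable formula for $X$. Once this identity is in place, Remark \ref{rem:conjugate-variable-immediate-remarks}(ii) (uniqueness of conjugate variables via projection onto $\overline{\alg(X,Y)}^{\|\cdot\|_{\tau_2}}$) gives $\|\xi_X\|_{\tau_2}^2 \leq \|\tilde\xi_X\|_{\tau_2}^2$. Summing this with the analogous right bound yields
\[
\Phi^*(X\sqcup Y) = \|\xi_X\|_{\tau_2}^2 + \|\nu_Y\|_{\tau_2}^2 \leq \tfrac12\,\Phi^*(\{x_0,x_0^*\}\sqcup\{y_0,y_0^*\}:\varphi),
\]
from which the desired minimum estimate is immediate by taking the infimum over $(x_0,y_0)\in\Delta_{X,Y}$.

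For the equality statement, suppose $(x_0,y_0)$ is bi-R-diagonal and alternating adjoint flipping. By Proposition \ref{prop:bi-R-diagonal-iff-bi-R-cyclic-iff-free-with-amal}, the pair $(\alg(D_2,X),\alg(D_2,Y))$ is bi-free from $(M_2(\bC)_\ell,M_2(\bC)_r)$ with amalgamation over $D_2$ with respect to $F_2\circ E_2$. Moving to the $D_2$-$D_2$-analytical NCP space of Example \ref{exam:reducing-analytic-spaces-to-smaller-algebra}, Proposition \ref{prop:con-var-do-not-change-adding-bi-free-parts} shows that the $D_2$-valued conjugate variable of $X$ in the presence of $(M_2(\bC)_\ell,\alg(M_2(\bC)_r,Y))$ coincides with the one in the smaller presence $((D_2)_\ell,\alg((D_2)_r,Y))$, and Proposition \ref{prop:conj-variables-smaller-amalgamating-algebra} (with $D=\bC$, $B=D_2$, $F=\tr_{D_2}$) identifies this $D_2$-valued conjugate variable (for $\eta=\tr_{D_2}$) with the scalar conjugate variable $\xi_X$. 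A direct cumulant computation via Proposition \ref{prop:analytic-cumulants-with-product-entries} and Theorem \ref{thm:vanishing-cumulants-L2-entropy}, exploiting the vanishing of non-alternating bi-$\ast$-cumulants of a bi-R-diagonal pair, then shows that $\tilde\xi_X$ itself already belongs to $\overline{\alg(X,Y)}^{\|\cdot\|_{\tau_2}}$. Hence $\xi_X = \tilde\xi_X$ (and similarly $\nu_Y = \tilde\nu_Y$), which forces equality in the chain of estimates above.

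The main obstacle will be the closure statement in the equality case: establishing that $\tilde\xi_X$ is genuinely approximable by polynomials in $X$ and $Y$ requires carefully extracting the alternating structure of $\xi_{x_0}, \xi_{x_0^*}$ forced by bi-R-diagonality, and checking that the off-diagonal matrix entries of the candidate assemble into limits of products of $X$ and $Y$. The alternating adjoint flipping hypothesis plays a crucial role here by guaranteeing that the relevant moments on the two diagonal entries of the inflated products match, so that the matrix symmetries needed to carry out this closure computation hold; without it, the candidate $\tilde\xi_X$ may satisfy the conjugate variable relations but sit strictly outside $\overline{\alg(X,Y)}^{\|\cdot\|_{\tau_2}}$, leaving only the inequality intact.
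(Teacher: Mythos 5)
Your inequality half is, in substance, the paper's argument with the intermediate Fisher informations collapsed: the candidate $\widetilde{\xi}_X = \left(\begin{smallmatrix} 0 & \xi_{x_0^*} \\ \xi_{x_0} & 0 \end{smallmatrix}\right)$ is exactly the element the paper exhibits as $J_\ell(X : (M_2(\bC)_\ell, \alg(M_2(\bC)_r, Y)), \eta)$ in Proposition \ref{prop:fisher-from-collections-to-matrices}, where $\eta$ is the flip $\operatorname{diag}(a,b) \mapsto \operatorname{diag}(b,a)$, and one then descends to the scalar relations via Proposition \ref{prop:equivalent-fishers-for-minimization}. Your direct verification of the scalar relations is workable, but your stated mechanism is off: no ``traciality'' is available or needed to move $\xi_{x_0}, \xi_{x_0^*}$ to the last position --- they are already there. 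What actually makes the scalar relations come out is that the inner factor $\prod_{p \in V_k} Z_p$ is a pure power $X^{|V_k|}$, whose two diagonal $\varphi$-entries are $\varphi((x_0x_0^*)^m)$ and $\varphi((x_0^*x_0)^m)$; the hypothesis that $x_0^*x_0$ and $x_0x_0^*$ are equidistributed is what makes $E_2(X^{|V_k|})$ a scalar matrix (and makes the flip $\eta$ invisible), while odd powers vanish outright. State it that way and the inequality direction is fine.

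The equality direction has a genuine gap. You invoke Proposition \ref{prop:conj-variables-smaller-amalgamating-algebra} with $D = \bC$, $B = D_2$, $F = \tr_{D_2}$ to identify the $D_2$-valued conjugate variable with the scalar one, but that proposition only relates the scalar conjugate variable to the $D_2$-conjugate variable for the completely positive map $\iota_{\bC \to D_2} \circ \tr_{D_2}$, i.e.\ $\operatorname{diag}(a,b) \mapsto \frac{a+b}{2} I_2$. The map actually appearing in the chain is $\eta|_{D_2}$, the coordinate flip $\operatorname{diag}(a,b) \mapsto \operatorname{diag}(b,a)$, which is \emph{not} of that form, so the proposition does not apply. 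Bridging $\Phi^*(X \sqcup Y : \eta|_{D_2})$ and $\Phi^*(X \sqcup Y)$ in the reverse direction is precisely where the alternating adjoint flipping hypothesis must be used in an essential computation: one has to show that the scalar conjugate variable $\Xi = J_\ell(X : (\bC, \alg(Y)))$ already satisfies the $\eta|_{D_2}$-relations (equivalently, in your formulation, that $\widetilde{\xi}_X \in \overline{\alg(X,Y)}^{\left\|\,\cdot\,\right\|_{\tau_2}}$), which requires first proving $\Xi$ lives in the odd part of $\overline{\alg(X,Y)}^{\left\|\,\cdot\,\right\|_{\tau_2}}$, hence has zero diagonal, and then that its two off-diagonal entries are exchanged by the substitution $x_0 \leftrightarrow x_0^*$, $y_0 \leftrightarrow y_0^*$ --- all of which rests on the alternating adjoint flipping condition applied to limiting sequences of monomials. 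You explicitly defer this as ``the main obstacle,'' but it is the mathematical core of the equality statement, not a technical afterthought; without it the argument only yields the inequality.
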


\begin{remark}
Note that Theorem \ref{thm:minimizing-fisher-info} is a generalization of \cite{NSS1999}*{Theorem 1.1} to the bi-free setting.  Prior to the acknowledgements of \cite{NSS1999} it is mentioned that the minimum in the free result can only be reached by an R-diagonal element via the result from \cite{V1999} that $\Phi^*(x_1, \ldots, x_n : B) = \Phi^*(x_1, \ldots, x_n) < \infty$ implies $\{x_1, \ldots, x_n\}$ is free from $B$.  As there is no such known analogous result in the bi-free case, we leave Theorem \ref{thm:minimizing-fisher-info} as stated.
\end{remark}

To begin the proof of Theorem \ref{thm:minimizing-fisher-info}, we note the following connecting the bi-free Fisher informations of $(\{x, x^*\}, \{y,y^*\})$ and $(X, Y)$ (and thereby demonstrating the necessity of considering bi-free Fisher information with respect to completely positive maps in this construction).  We note that the following is a generalization of \cite{NSS1999}*{Proposition 3.6} with a similar but more complicated proof due to the $\chi$-ordering and additional variables present.

\begin{proposition}\label{prop:fisher-from-collections-to-matrices}
Under the assumptions of Theorem \ref{thm:minimizing-fisher-info}, if $\eta : M_2(\bC) \to M_2(\bC)$ is defined by
\[
\eta\left(\begin{bmatrix}
a_{1,1} & a_{1,2} \\ a_{2,1} & a_{2,2}
\end{bmatrix}\right) = \begin{bmatrix}
a_{2,2} & 0 \\ 0 & a_{1,1}
\end{bmatrix},
\]
then
\[
\Phi^*(\{x, x^*\} \sqcup \{y, y^*\} : (\bC, \bC), \varphi) = 2 \Phi^*(X \sqcup Y : (M_2(\bC)_\ell, M_2(\bC)_r), \eta).
\]
\end{proposition}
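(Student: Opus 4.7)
The plan is to exhibit explicit formulas for the conjugate variables of $X$ and $Y$ in terms of the scalar-valued bi-free conjugate variables of $x,x^*,y,y^*$, verify these via the cumulant characterization (Theorem \ref{thm:conjugate-variables-via-cumulants}), and compare the resulting $L_2$-norms. First I would identify $L_2(A_2,\tau_2)$ with $M_2(L_2(\A,\varphi))$ via the middle tensor factor, with inner product $\langle [\xi_{ij}],[\nu_{ij}]\rangle_{\tau_2}=\tfrac{1}{2}\sum_{i,j}\langle \xi_{ij},\nu_{ij}\rangle_\varphi$. Under this identification I would propose the ansatz
\begin{align*}
\xi_X &:= J_\ell(x^*:(\alg(x),\alg(y,y^*)),\varphi)\otimes E_{1,2}\otimes I_2 + J_\ell(x:(\alg(x^*),\alg(y,y^*)),\varphi)\otimes E_{2,1}\otimes I_2,\\
\nu_Y &:= J_r(y^*:(\alg(x,x^*),\alg(y)),\varphi)\otimes I_2\otimes E_{1,2} + J_r(y:(\alg(x,x^*),\alg(y^*)),\varphi)\otimes I_2\otimes E_{2,1}
\end{align*}
and claim that $\xi_X = J_\ell(X:(M_2(\bC)_\ell,\alg(M_2(\bC)_r,Y)),\eta)$ and $\nu_Y=J_r(Y:(\alg(M_2(\bC)_\ell,X),M_2(\bC)_r),\eta)$. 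Membership of $\xi_X$ in $\overline{\alg(X,M_2(\bC)_\ell,M_2(\bC)_r,Y)}^{\|\cdot\|_{\tau_2}}$ is immediate from identities such as $XL_{E_{2,2}}=x\otimes E_{1,2}\otimes I_2$ and $XL_{E_{1,1}}=x^*\otimes E_{2,1}\otimes I_2$.

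Next I would verify the cumulant conditions (a)--(d) of Theorem \ref{thm:conjugate-variables-via-cumulants} for $\xi_X$. The first two conditions are direct matrix computations: since $\varphi(J_\ell(x))=\varphi(J_\ell(x^*))=0$ and off-diagonal matrix units have zero normalized trace, $\widetilde{E}(\xi_X)=0$; and for $b\in M_2(\bC)$ the product $XL_b\xi_X$ expands into four matrix entries whose scalar expectations collapse, using $\varphi(xJ_\ell(x))=\varphi(x^*J_\ell(x^*))=1$ and $\varphi(xJ_\ell(x^*))=\varphi(x^*J_\ell(x))=0$, to $b_{22}E_{1,1}+b_{11}E_{2,2}=\eta(b)$. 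Vanishing of $\widetilde\kappa_{1_{(\ell,\ell)}}(L_{b'},\xi_X)$, $\widetilde\kappa_{1_{(r,\ell)}}(R_{b'},\xi_X)$, and $\widetilde\kappa_{1_{(r,\ell)}}(Y,\xi_X)$ are similarly reduced to the scalar relations $\varphi(yJ_\ell(x))=\varphi(y^*J_\ell(x^*))=0$ etc., which hold because the scalar conjugate variables vanish when paired with an element outside $\{x,x^*\}\cup\alg(x)$.

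The main obstacle is step (d): verifying $\widetilde\kappa_\chi(Z_1,\dots,Z_{n-1},\xi_X)=0$ for $n\geq 3$ and all admissible $Z_k$. I would reduce by multilinearity to the case where each $Z_k$ is either $X$, $Y$, or a simple tensor $1\otimes E_{i_k,j_k}\otimes I_2$ or $1\otimes I_2\otimes E_{i_k,j_k}$, expand $X$ and $Y$ themselves into $x\otimes E_{1,2}$, $x^*\otimes E_{2,1}$ (and analogously for $Y$), and use Proposition \ref{prop:analytic-cumulants-with-product-entries} to unfold these $M_2(\bC)$-valued cumulants into combinations of scalar-valued bi-free cumulants of $x,x^*,y,y^*$ (with the last entry one of the scalar conjugate variables) weighted by products of matrix units. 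The non-zero configurations are exactly those where the matrix-unit product lands on a rank-one diagonal, and on such configurations the scalar bi-free cumulant characterization of the corresponding $J_\ell(x:(\alg(x^*),\alg(y,y^*)))$ or $J_\ell(x^*:(\alg(x),\alg(y,y^*)))$ forces vanishing---provided the distribution symmetries $x^*x\stackrel{d}{=}xx^*$ and $y^*y\stackrel{d}{=}yy^*$ hold, which is precisely the hypothesis needed to guarantee that the two diagonally-placed scalar conjugate variables can be interchanged when the $\chi$-ordering of the matrix units is reversed. This is the delicate combinatorial bookkeeping of the proof.

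Finally, once the formula is verified, the norm computation is immediate: $\|\xi_X\|^2_{\tau_2}=\tfrac{1}{2}(\|J_\ell(x)\|^2_\varphi+\|J_\ell(x^*)\|^2_\varphi)$ and $\|\nu_Y\|^2_{\tau_2}=\tfrac{1}{2}(\|J_r(y)\|^2_\varphi+\|J_r(y^*)\|^2_\varphi)$, so summing gives $\Phi^*(X\sqcup Y:(M_2(\bC)_\ell,M_2(\bC)_r),\eta)=\tfrac{1}{2}\Phi^*(\{x,x^*\}\sqcup\{y,y^*\}:(\bC,\bC),\varphi)$. If some scalar conjugate variable fails to exist, the same ansatz argument runs in reverse: given $J_\ell(X:\cdot,\eta)$, its off-diagonal matrix-unit components (extracted by inner-producting against $z\otimes E_{i,j}\otimes I_2$ for $z\in\alg(x,x^*,y,y^*)$) would have to satisfy the scalar left bi-free conjugate variable relations, so finiteness of $\Phi^*(X\sqcup Y:\eta)$ would force finiteness of $\Phi^*(\{x,x^*\}\sqcup\{y,y^*\}:\varphi)$. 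Multiplying the established equality by $2$ gives the proposition.
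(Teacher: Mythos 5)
Your ansatz is exactly the one the paper uses: the candidate left conjugate variable for $X$ is the matrix with $J_\ell(x^*:(\alg(x),\alg(y,y^*)),\varphi)$ in the $(1,2)$ entry and $J_\ell(x:(\alg(x^*),\alg(y,y^*)),\varphi)$ in the $(2,1)$ entry, membership in $\overline{\alg(X,Y,M_2(\bC)_\ell,M_2(\bC)_r)}^{\left\|\,\cdot\,\right\|_{\tau_2}}$ is checked the same way, the norm identity $\left\|\xi_X\right\|^2_{\tau_2}=\tfrac{1}{2}\left(\left\|J_\ell(x)\right\|^2_\varphi+\left\|J_\ell(x^*)\right\|^2_\varphi\right)$ produces the factor of $2$, and the reverse extraction of matrix entries handles the infinite case. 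Where you diverge is the verification: the paper checks the moment relations directly, using Lemma \ref{lem:expanding-moments-of-matrices} to reduce $\tau_2$ of a word in $X$, $Y$, matrix units and $\Xi$ to a single scalar $\varphi$-moment selected by a chain of Kronecker deltas, and then applies the scalar conjugate-variable relation to that one moment; you instead route everything through the cumulant characterization of Theorem \ref{thm:conjugate-variables-via-cumulants}. Your checks of conditions (a)--(c) are correct and explicit.

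The gap is in your step (d). Proposition \ref{prop:analytic-cumulants-with-product-entries} expands a cumulant whose \emph{arguments are products} into a sum of cumulants over partitions $\sigma$ with $\sigma\vee\widehat{0_\chi}=1_{\widehat{\chi}}$; it does not convert an $M_2(\bC)$-valued cumulant of words in $X$, $Y$, $L_b$, $R_b$ into scalar-valued bi-free cumulants of $x,x^*,y,y^*$. That conversion --- the statement that the entries of the $M_d(\bC)$-valued cumulants of $\sum_{i,j}x_{i,j}\otimes E_{i,j}\otimes I_d$ are governed by scalar cumulants of the $x_{i,j}$, i.e.\ the bi-free analogue of the matrix-amalgamation cumulant formula underlying \cite{NSS1999} --- is a separate, nontrivial lemma that you would need to state and prove before your ``non-zero configurations'' argument can run; as written the key step is asserted rather than established. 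Relatedly, your claim that the hypothesis $x^*x\stackrel{d}{=}xx^*$, $y^*y\stackrel{d}{=}yy^*$ is ``precisely what is needed'' here is misplaced: once the matrix units are fixed, each admissible word produces a single surviving index chain, so the two diagonally placed scalar conjugate variables are never interchanged and the distributional symmetry plays no role in this proposition (it enters only in Proposition \ref{prop:equivalent-fishers-for-minimization}, where all $b_k=I_2$ and both chains survive). To close the argument, either supply the matrix-entries-to-scalar-cumulants lemma, or verify the moment relations directly via Lemma \ref{lem:expanding-moments-of-matrices} as the paper does.
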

\begin{proof}
First suppose the bi-free Fisher information $\Phi^*(\{x, x^*\} \sqcup \{y, y^*\})$ is finite.  Thus there exist
\[
\xi_1, \xi_2 \in \overline{\alg(x, x^*, y, y^*)}^{\left\|\,\cdot \,\right\|_\varphi}
\]
such that $\xi_1$ is the left bi-free conjugate variable for $x$ with respect to $\varphi$ in the presence of $(x, \{y,y^*\})$ and $\xi_2$ is the left bi-free conjugate variable for $x^*$ with respect to $\varphi$ in the presence of $(x^*, \{y,y^*\})$.  Let
\[
\Xi = \begin{bmatrix}
0 & \xi_2 \\
\xi_1 & 0
\end{bmatrix} \in M_2(L_2(A_2, \tau_2)).
\]
We claim that $\Xi = J_\ell\left(X : \left(M_2(\bC)_\ell, \alg\left(M_2(\bC)_r, Y\right)\right), \eta\right)$.  Since a similar result holds on the right and since
\[
\left\|\Xi\right\|_{\tau_2} = \frac{1}{2}\left(\left\|\xi_1\right\|_\varphi + \left\|\xi_2\right\|_\varphi\right),
\]
the result will follow in this case.

First we claim that $\Xi \in \overline{\alg\left(X, Y, M_2(\bC)_\ell,  M_2(\bC)_r\right)}^{\tau_2}$.  Indeed, it is not difficult to verify that
\[
z \otimes E_{i_1, j_1} \otimes E_{i_2, j_2} \in \alg\left(X, Y, M_2(\bC)_\ell,  M_2(\bC)_r\right)
\]
for all $z \in \{x,x^*, y,y^*\}$ and $i_1, i_2, j_1, j_2 \in \{1,2\}$.  Thus, since $\xi_1, \xi_2 \in \overline{\alg(x, x^*, y, y^*)}^{\left\|\,\cdot \,\right\|_\varphi}$, the claim follows.

To complete the claim that $\Xi$ is the appropriate left bi-free conjugate variable, we must show $\Xi$ satisfies the left bi-free conjugate variable relations; that is, for all $n \in \bN$, $b_0, b_1, \ldots, b_n \in M_2(\bC)$, $\chi \in \{\ell, r\}^n$ with $\chi(n) = \ell$, and $Z_1, \ldots, Z_{n-1} \in A_2$ and $C_1, \ldots C_{n-1} \in 1_\A \otimes M_2(\bC) \otimes M_2(\bC)^\op$ where
\[
Z_k = \begin{cases}
X   & \text{if } \chi(k) = \ell \\
Y & \text{if } \chi(k) = r
\end{cases} \qqand  C_k = \begin{cases}
L_{b_k} & \text{if } \chi(k) = \ell \\
R_{b_k} & \text{if } \chi(k) = r
\end{cases},
\]
we have that
\begin{align}
\tau_2\left(L_{b_0} R_{b_{n}} Z_1C_1  \cdots Z_{n-1}C_{n-1} \Xi\right) = \sum_{\substack{1 \leq k < n \\ \chi(k) = \ell}} \tau_2\left(L_{b_0} R_{b_{n}} \left( \prod_{p \in V_k^c\setminus \{k,n\}} Z_pC_p\right) L_{\eta\left(  E_2\left(C_k \prod_{p \in V_k} Z_pC_p\right)     \right)} \right), \label{eq:messy-conjugate-variable-for-min-free-fisher}
\end{align}
where $V_k = \{k < m < n \, \mid \, \chi(m) = \ell \}$.  By linearity, it suffices to consider $b_k = E_{i_k, j_k}$ for all $k$ where $i_k, j_k \in \{1,2\}$.   In that which follows, the proof is near identical to that of \cite{NSS1999}*{Proposition 3.6} taking into account the $\chi$-order.  For notational purposes, for $k \in \{1,2\}$ let $\overline{k} =3-k$.

Let $q = s_\chi^{-1}(n)$ (i.e. $\Xi$ appears $q^{\mathrm{th}}$ in the $\chi$-ordering).  We begin by computing the left-hand side of (\ref{eq:messy-conjugate-variable-for-min-free-fisher}).  Using Lemma \ref{lem:expanding-moments-of-matrices} (and recalling $\tau_2 = \tr_2 \circ E_2$), proceeding via $\chi$-order using commutation, we obtain that
\begin{itemize}
\item the only way the product produces a non-zero trace is if $i_0 = j_n$,
\item the term $L_{b_0} X L_{b_{s_\chi(1)}}$ can be made to appear in the product and is non-zero only if $j_0 =\overline{i_{s_\chi(1)}}$, 
\item the term $L_{b_{s_\chi(k-1)}} X L_{b_{s_\chi(k)}}$ can be made to occur for all $2 \leq k < q$ and is non-zero only if $j_{s_\chi(k-1)} = \overline{i_{s_\chi(k)}}$,
\item the term $L_{b_{s_\chi(q-1)}} R_{b_{s_\chi(q+1)}} \Xi$ can be made to occur and is non-zero only if $j_{s_\chi(q-1)} = \overline{i_{s_\chi(q+1)}}$, 
\item the term $R_{b_{s_\chi(k+1)}} Y R_{b_{s_\chi(k)}}$ can be made to occur for all $q < k < n$ and is non-zero only if $j_{s_\chi(k)} = \overline{i_{s_\chi(k+1)}}$ (recall the opposite multiplication), and 
\item the term $R_{b_{n}} Y R_{b_{s_\chi(n)}}$ can be made to occur and is non-zero only if $j_{s_\chi(n)} = \overline{i_{n}}$.
\end{itemize}
Note the discrepancy in notation around the $\Xi$ term due to the labelling of the left and right $B$-operators (i.e. $b_{s_\chi(q)} = b_n$ is in the wrong spot).   Thus with
\[
(X)_{1,2} = x, \quad (X)_{2,1} = x^*, \quad (Y)_{1,2} = y, \quad (Y)_{2,1} = y^*, \quad (\Xi)_{1,2} = \xi_2, \qand (\Xi)_{2,1} = \xi_1,
\]
and
\[
\overline{Z_k} = \begin{cases}
(X)_{j_0, \overline{j_0}} & \text{if } s_\chi(k) = 1 \\
(X)_{j_{s_\chi(k-1)}, \overline{j_{s_\chi(k-1)}}} & \text{if } 1 < s_\chi(k) < q \\
(\Xi)_{j_{s_\chi(q-1)}, \overline{j_{s_\chi(q-1)}}} & \text{if } s_\chi(k) = q\\
(Y)_{j_{s_\chi(k)}, \overline{j_{s_\chi(k)}}} & \text{if } q < s_\chi(k) < n \\
(Y)_{j_{s_\chi(n)}, \overline{j_{s_\chi(n)}}} & \text{if } s_\chi(k) = n \\
\end{cases}
\]
we see that the left-hand side of (\ref{eq:messy-conjugate-variable-for-min-free-fisher}) is
\begin{align}
\frac{1}{2} \delta_{j_n, i_0}& \delta_{j_0, \overline{i_{s_{\chi}(1)}}} \delta_{j_{{s_\chi}(1)}, \overline{i_{s_{\chi}(2)}}} \cdots \delta_{j_{{s_\chi}(q-2)}, \overline{i_{s_{\chi}(q-1)}}}\delta_{j_{{s_\chi}(1-2)}, \overline{i_{s_{\chi}(q+1)}}}\delta_{j_{{s_\chi}(q+1)}, \overline{i_{s_{\chi}(q+2)}}}\cdots \delta_{j_{{s_\chi}(n-1)}, \overline{i_{s_{\chi}(n)}}} \delta_{j_{{s_\chi}(n)}, \overline{i_{n}}} \nonumber \\
& \times \varphi\left(\overline{Z_1} \cdots \overline{Z_n}\right). \label{eq:LHS}
\end{align}
where $\delta_{j,i}$ is the Kronecker delta.  Moreover, using the conjugate variable relations for $\xi_1$ and $\xi_2$, we see that
\begin{align}
\varphi\left(\overline{Z_1} \cdots \overline{Z_n}\right)  = \sum_{\substack{1 \leq k < n \\ \chi(k) = \ell}}   \delta_{j_{s_\chi(k-1)}, \overline{j_{s_\chi(q-1)}}}\varphi\left(\prod_{p \in V_k^c \setminus \{k,n\}} \overline{Z_p} \right)\varphi\left(\prod_{p \in V_k} \overline{Z_p} \right) \label{eq:LHS-conj-variable-expand}
\end{align}
where the $\delta_{j_{s_\chi(k-1)}, \overline{j_{s_\chi(q-1)}}}$ should be $\delta_{j_{s_\chi(k-1)}, \overline{j_{s_\chi(q-1)}}}$ when $k = s_\chi^{-1}(1)$.  

To complete the proof that equation (\ref{eq:messy-conjugate-variable-for-min-free-fisher}) holds, we compute the right-hand side of equation  (\ref{eq:messy-conjugate-variable-for-min-free-fisher}) and show the $k^{\mathrm{th}}$ term in the sum equals the $k^{\mathrm{th}}$ term obtain in equation (\ref{eq:LHS}) using equation (\ref{eq:LHS-conj-variable-expand}).  Indeed, for a fixed $1 \leq k < n$ for which $\chi(k) = \ell$, we can compute 
\[
M_k = E_2\left(C_k \prod_{p \in V_k} Z_pC_p\right),
\] 
in a similar fashion to the above.  Thus, to obtain a non-zero value, the relations $j_{s_\chi(p-1)} = \overline{i_{s_\chi(p)}}$ for all $p \in V_k$ must hold.  Moreover, one immediately obtains when $M_k \neq 0$ that
\[
M_k = \varphi\left(\prod_{p \in V_k} \overline{Z_p} \right) T_k,
\]
for some $T_k \in M_2(\bC)$.

Next, notice $\eta\left(M_k\right)$ is equivalent to multiplying $M_k$ on the left by $U = E_{1,2} + E_{2,1}$ (for right conjugate variables, one would multiply on the right) and thus we consider $UM_k$ in place of $\eta\left(M_k\right)$.  At this point, notice by Lemma \ref{lem:expanding-moments-of-matrices} that $UT_k$ can be written as a product of $b_p$'s with $b_{s_\chi(q-1)}$ being the right-most term.  By commutation, $R_{b_{s_\chi(q+1)}}$ will act on the right of  $UT_k$ thereby multiplying by $b_{s_\chi(q+1)}$ on the right and forcing $j_{s_\chi(q-1)} = \overline{i_{s_\chi(q+1)}}$ for a non-zero value to be obtained.  One then proceeds as above to show that a non-zero value is obtained only if the above relations are satisfied and that the term that is produced agrees with the $k^{\mathrm{th}}$ term of (\ref{eq:LHS-conj-variable-expand}).  Hence the proof is complete in the case that $\Phi^*(\{x, x^*\} \sqcup \{y, y^*\}) < \infty$.

To prove the result in the case that $\Phi^*(\{x, x^*\} \sqcup \{y, y^*\}) = \infty$, it suffices to show that if $\Phi^*(X \sqcup Y : \eta) < \infty$ then $\Phi^*(\{x, x^*\} \sqcup \{y, y^*\} : ( \bC, \bC), \varphi)< \infty$.  Thus, suppose that $\Phi^*(X \sqcup Y : \eta) < \infty$.  Hence $\Xi = J_\ell\left(X : \left(M_2(\bC)_\ell, \alg\left(M_2(\bC)_r, Y\right)\right), \eta\right)$ exists and can be written as
\[
\Xi = \begin{bmatrix}
\xi_{1,1} & \xi_{1,2} \\
\xi_{2,1} & \xi_{2,2}
\end{bmatrix}.
\]
We claim that
\[
\xi_{2,1} = J_\ell(x : (x^*, \{y, y^*\}), \varphi) \qqand \xi_{1,2} = J_\ell(x^* : (x, \{y, y^*\}), \varphi).
\]
As a similar result will hold on the right, we will obtain that $\Phi^*(\{x, x^*\} \sqcup \{y, y^*\} : ( \bC, \bC), \varphi) < \infty$ as desired.

As $\Xi \in  \overline{\alg\left(X, Y, M_2(\bC)_\ell,  M_2(\bC)_r\right)}^{\tau_2}$, it is not difficult to see that $\xi_{i,j} \in \overline{\alg(x, x^*, y, y^*)}^{\left\|\,\cdot \,\right\|_\varphi}$.  To see that $\xi_{2,1}$ and $\xi_{1,2}$ satisfy the appropriate left bi-free conjugate variable relations, one need only use equation (\ref{eq:messy-conjugate-variable-for-min-free-fisher}), choose $b_k = E_{i_k, j_k}$ satisfying the above required  relations for a non-zero value and expand both sides of equation (\ref{eq:messy-conjugate-variable-for-min-free-fisher}) in an identical way to that above.  The resulting equations are exactly the left bi-free conjugate variable relations required.
\end{proof}

Using the results from this paper, there is some immediate knowledge about the bi-free Fisher information with respect to $\eta$ from Proposition \ref{prop:fisher-from-collections-to-matrices}.   We note that the following is a generalization of \cite{NSS1999}*{Proposition 3.7} with a similar but more complicated proof due to the $\chi$-ordering and additional variables present.

\begin{proposition}\label{prop:equivalent-fishers-for-minimization}
Under the assumptions and notation of Proposition \ref{prop:fisher-from-collections-to-matrices}, 
\[
\Phi^*(X \sqcup Y :  \eta) \geq \Phi^*(X \sqcup Y : \eta|_{D_2}) 
\]
and the inequality holds when $(\alg((D_2)_\ell,X), \alg((D_2)_r, Y))$ is bi-free from $(M_2(\bC)_\ell, M_2(\bC)_r)$ with amalgamation over $D_2$ with respect to $F_2$.  Moreover
\[
\Phi^*(X \sqcup Y : \eta|_{D_2}) \geq \Phi^*(X \sqcup Y)
\] 
and the inequality holds if $(x, y)$ is alternating adjoint flipping.
\end{proposition}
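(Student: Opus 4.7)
The plan is to handle the two inequalities separately, using Remarks \ref{rem:remarks-about-fisher-info}(v)--(viii) for the first and the cumulant characterization from Theorem \ref{thm:conjugate-variables-via-cumulants} for the second.

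For the first inequality, I would start by observing that $\eta = \eta|_{D_2} \circ F_2$: the image of $\eta$ lies in $D_2$ and its restriction to $D_2$ is the diagonal swap. Proposition \ref{prop:conj-variables-smaller-amalgamating-algebra} (equivalently Remark \ref{rem:remarks-about-fisher-info}(v)) applied with $B = M_2(\bC)$, $D = D_2$, $F = F_2$ then identifies
\[
\Phi^*(X \sqcup Y : \eta) = \Phi^*_{D_2}\bigl(X \sqcup Y : (M_2(\bC)_\ell, M_2(\bC)_r),\, \eta|_{D_2}\bigr),
\]
the right-hand side computed in the analytical $D_2$-$D_2$-non-commutative probability space of Example \ref{exam:reducing-analytic-spaces-to-smaller-algebra}. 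Shrinking the presence algebras to $((D_2)_\ell, (D_2)_r)$ then decreases the Fisher information by Remark \ref{rem:remarks-about-fisher-info}(vii), giving the inequality. Under the bi-freeness hypothesis I would apply Remark \ref{rem:remarks-about-fisher-info}(viii) in the $D_2$-setting: adjoining the bi-free pair $(M_2(\bC)_\ell, M_2(\bC)_r)$ of $D_2$-algebras to $((D_2)_\ell, (D_2)_r)$ does not change the Fisher information, and since $(D_2)_\ell \subseteq M_2(\bC)_\ell$ (and similarly on the right) the algebraic joins recover $M_2(\bC)_\ell$ and $M_2(\bC)_r$, yielding equality.

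For the second inequality, assume $\Phi^*(X \sqcup Y : \eta|_{D_2}) < \infty$ and let $\xi_0$ denote the associated left $D_2$-conjugate variable for $X$. The crucial computation is that, under AAF, Lemma \ref{lem:expanding-moments-of-matrices} gives
\[
F_2(E_2(q)) = \tau_2(q)\, I_2 \qquad \text{for every monomial } q \in \alg(X, Y),
\]
since odd-length monomials contribute only off-diagonal terms to $E_2(q)$ and for even length the two diagonal entries $\varphi(z^{(1)})$ and $\varphi(z^{(*)})$ coincide by AAF, both equalling $\tau_2(q)$. Substituting this into the $D_2$-conjugate variable relations for $\xi_0$ restricted to test sequences with $Z_p \in \{X, Y\}$ collapses each factor $L_{\eta|_{D_2}(F_2 E_2(\cdot))}$ to scalar multiplication by $\tau_2(\cdot)$, so these relations reduce verbatim to the scalar conjugate variable relations defining $\xi = J^{\bC}_\ell(X : (\bC, \alg(Y)), \mathrm{id}_\bC)$. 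Letting $P$ denote the orthogonal projection of $L_2(A_2, \tau_2)$ onto $\overline{\alg(X, Y)}^{\|\cdot\|_{\tau_2}}$, uniqueness of the scalar conjugate variable forces $P(\xi_0) = \xi$, so $\|\xi\|_{\tau_2} \leq \|\xi_0\|_{\tau_2}$; an identical argument on the right and summing yields the asserted inequality.

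For equality under AAF I would promote the above to the stronger statement $\xi_0 \in \overline{\alg(X, Y)}^{\|\cdot\|_{\tau_2}}$, or equivalently that $\xi$ itself satisfies all the $D_2$-cumulant conditions of Theorem \ref{thm:conjugate-variables-via-cumulants}. The base identity, proved by an expansion analogous to the one above, is
\[
F_2\bigl(E_2(XL_b\, p)\bigr) = \tau_2(Xp)\, \eta|_{D_2}(b)
\]
for every monomial $p \in \alg(X, Y)$ and every $b \in D_2$; passing to the limit $p \to \xi$ and using $\tau_2(X\xi) = 1$ produces condition (b), while $\tau_2(\xi) = 0$ gives (a). Analogous identities $F_2(E_2(Z_1 \cdots Z_{n-1}\, \xi)) = 0$ for mixed test operators from $\{X\} \cup (D_2)_\ell \cup \alg((D_2)_r, Y)$ then provide (c) and (d), and the cumulant characterization forces $\xi = \xi_0$. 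The principal technical obstacle is the systematic verification of these higher-order mixed cumulants, since the test algebras permit arbitrary insertions of the traceless diagonal matrix $Z = E_{1,1} - E_{2,2}$ via $L_Z$ and $R_Z$; the useful algebraic input is the parity identity $L_Z X = -X L_Z$ together with $L_Z Y = Y L_Z$ (and the symmetric identities on the right), which combined with AAF reduces each extended cumulant computation to a signed sum of scalar conjugate variable relations whose sign pattern forces the required cancellations.
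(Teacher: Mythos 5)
Your treatment of the first inequality (factor $\eta = \eta|_{D_2}\circ F_2$, apply Remark \ref{rem:remarks-about-fisher-info}(\ref{rem:remarks-about-fisher-info:part:smaller-amalgamation}), shrink the presence algebras via (\ref{rem:remarks-about-fisher-info:part:smaller-algebra}), recover equality via (\ref{rem:remarks-about-fisher-info:part:adding-bi-free-algebra})) is exactly the paper's argument, and your projection argument for $\Phi^*(X\sqcup Y:\eta|_{D_2})\geq \Phi^*(X\sqcup Y)$ is also the intended one. One correction there: you invoke AAF to get $F_2(E_2(q))=\tau_2(q)I_2$ for \emph{every} monomial $q\in\alg(X,Y)$, but the proposition asserts this inequality without the AAF hypothesis, and indeed AAF is not needed for it. In the left conjugate variable relation the only expression fed into $\eta|_{D_2}\circ F_2\circ E_2$ is $\prod_{p\in V_k}Z_p=X^{|V_k|}$ (the set $V_k$ consists of left indices only), and $F_2(E_2(X^{2m}))=\tau_2(X^{2m})I_2$ already follows from the standing assumption that $x^*x$ and $xx^*$ have the same distribution. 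Your blanket claim for mixed monomials is false without AAF, so restrict it to pure powers of $X$ (resp.\ $Y$ on the right) and drop the AAF hypothesis from this direction.

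The genuine gap is in the equality direction, which is where essentially all of the work in the paper lies. You correctly identify that one must show the scalar conjugate variable $\xi=J_\ell(X:(\bC,\alg(Y)))$ satisfies the $D_2$-relations, and you correctly isolate that the nontrivial test configurations are those with insertions of $L_b,R_b$ for diagonal $b$, but the ``signed sum whose sign pattern forces the required cancellations'' is asserted rather than proved, and it does not follow from the anticommutation $L_ZX=-XL_Z$ alone. The missing inputs are structural facts about $\xi$ itself: one must first show that $\xi$, viewed as a matrix in $M_2(L_2(\A,\varphi))$, has vanishing diagonal (because $\tau_2$ kills odd words in $X,Y$, so $\xi$ lies in the closed span of odd-length monomials), and then that its two off-diagonal entries are $\left\|\cdot\right\|_\varphi$-limits of alternating words whose powers begin and end with $1$ and with $*$ respectively, so that the substitution $x\leftrightarrow x^*$, $y\leftrightarrow y^*$ exchanges them. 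Only with this in hand does AAF yield the key symmetry $\varphi(z_1^{p_1}\cdots z_{n-1}^{p_{n-1}}\xi^{p_n})=\varphi(z_1^{q_1}\cdots z_{n-1}^{q_{n-1}}\xi^{q_n})$ for expressions \emph{ending in} $\xi$ (AAF by itself is a statement about words in $x,y$ only), and it is this identity, not a sign bookkeeping with $E_{1,1}-E_{2,2}$, that collapses the two diagonal choices $(i_k)=(1,2,\ldots)$ and $(i_k)=(2,1,\ldots)$ to the same scalar relation and verifies the $D_2$-moment conditions. Without establishing these facts about $\xi$ your reduction has no starting point, so the equality case remains unproved as written.
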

\begin{proof}
Since $\eta = \eta \circ F$, we have that
\[
\Phi^*(X \sqcup Y :  \eta) = \Phi^*(X \sqcup Y : ((M_2(\bC)_\ell, M_2(\bC)_r), \eta \circ F)
\]
by Remark \ref{rem:remarks-about-fisher-info} part (\ref{rem:remarks-about-fisher-info:part:smaller-amalgamation}).  Moreover
\[
\Phi^*(X \sqcup Y : ((M_2(\bC)_\ell, M_2(\bC)_r), \eta \circ F) \geq \Phi^*(X \sqcup Y : \eta|_{D_2})
\]
by Remark \ref{rem:remarks-about-fisher-info} part (\ref{rem:remarks-about-fisher-info:part:smaller-algebra}).  Furthermore, equality holds if $(\alg((D_2)_\ell,X), \alg((D_2)_r, Y))$ is bi-free from $(M_2(\bC)_\ell, M_2(\bC)_r)$ over $D_2$ with respect to $F_2$ by Remark \ref{rem:remarks-about-fisher-info} part (\ref{rem:remarks-about-fisher-info:part:adding-bi-free-algebra}).  

To see that $\Phi^*(X \sqcup Y : \eta|_{D_2}) \geq \Phi^*(X \sqcup Y)$  we assume that
\[
\Xi = J_\ell\left(X : \left((D_2)_\ell, \alg((D_2)_r,Y) \right), \eta|_{D_2}\right) \in \overline{\alg(X, Y, (D_2)_\ell, (D_2)_r) }^{\left\| \, \cdot \, \right\|_{\tau_2}}
\]
exists and show that $\Xi$ satisfies the left bi-free conjugate variable relations for $X$ in the presence of $Y$.  Thus if $P$ is the orthogonal projection of $L_2(A_2, \tau_2)$ onto  $\overline{\alg(X, Y)}^{\left\| \, \cdot \, \right\|_{\tau_2}}$ then $P(\Xi)$ will also satisfy the left bi-free conjugate variable relations for $X$ in the presence of $Y$.  As a similar result will hold on the right, the inequality $\Phi^*(X \sqcup Y : \eta|_{D_2}) \geq \Phi^*(X \sqcup Y)$ will be demonstrated.

By the defining property of $\Xi$, we know for all $n \in \bN$, $b_0, b_1, \ldots, b_n \in D_2$, $\chi \in \{\ell, r\}^n$ with $\chi(n) = \ell$, and $Z_1, \ldots, Z_{n-1} \in A_2$ and $C_1, \ldots C_{n-1} \in 1_\A \otimes M_2(\bC) \otimes M_2(\bC)^\op$ where
\[
Z_k = \begin{cases}
X   & \text{if } \chi(k) = \ell \\
Y & \text{if } \chi(k) = r
\end{cases} \qqand  C_k = \begin{cases}
L_{b_k} & \text{if } \chi(k) = \ell \\
R_{b_k} & \text{if } \chi(k) = r
\end{cases},
\]
that
\begin{align}
\tau_2\left(L_{b_0} R_{b_{n}} Z_1C_1  \cdots Z_{n-1}C_{n-1} \Xi\right) = \sum_{\substack{1 \leq k < n \\ \chi(k) = \ell}} \tau_2\left(L_{b_0} R_{b_{n}} \left( \prod_{p \in V_k^c\setminus \{k,n\}} Z_pC_p\right) L_{\eta\left( (F \circ  E_2)\left(C_k \prod_{p \in V_k} Z_pC_p\right)     \right)} \right), \label{eq:messy-conjugate-variable-for-diagonal-proof}
\end{align}
where $V_k = \{k < m < n \, \mid \, \chi(m) = \ell \}$.  We will use equation (\ref{eq:messy-conjugate-variable-for-diagonal-proof}) where $b_k = I_2$ for all $k$.  To begin, notice that \[
\eta\left( (F \circ  E_2)\left(C_k \prod_{p \in V_k} Z_pC_p\right) \right) = \tau_2\left(X^{|V_k|}\right)I_2,
\]
as odd moments of $X$ are zero and as $x^*x$ and $xx^*$ have the same distribution with respect to $\varphi$. Therefore
\[
\varphi((x^*x)^m) = \varphi((xx^*)^m) = \tau_2(X^{2m}),
\]
for all $m \in \bN$.  Hence, equation (\ref{eq:messy-conjugate-variable-for-diagonal-proof}) reduces to
\[
\tau_2\left(  Z_1   \cdots Z_{n-1}\Xi\right) = \sum_{\substack{1 \leq k < n \\ \chi(k) = \ell}} \tau_2\left((Z_1, \ldots, Z_{n-1})|_{V_k^c \setminus \{k, n\}} \right)\tau_2\left(X^{|V_k|}\right),
\]
which is exactly the desired formula.

To prove  $\Phi^*(X \sqcup Y : \eta|_{D_2}) \leq \Phi^*(X \sqcup Y)$ when $(x, y)$ is alternating adjoint flipping thereby completing the proof, we proceed in a similar (but more complicated) fashion.  Suppose 
\[
\Xi = J_\ell\left(X : \left(\bC, \alg(Y) \right)\right) \in  \overline{\alg(X, Y) }^{\left\| \, \cdot \, \right\|_{\tau_2}} \subseteq \overline{\alg(X, Y, (D_2)_\ell, (D_2)_r) }^{\left\| \, \cdot \, \right\|_{\tau_2}}
\]
exists.   We will demonstrate that $\Xi$ satisfies the left bi-free conjugate variable relations for $X$ with respect to $\eta$ in the presence of $((D_2)_\ell, \alg((D_2)_r, Y$)).  As an analogous result will hold on the right, this will complete the proof.

Write
\[
\Xi = \begin{bmatrix}
\xi_{1,1} & \xi_{1,2} \\
\xi_{2,1} & \xi_{2,2}
\end{bmatrix} \in M_2(L_2(\A, \varphi)) = L_2(A_2, \tau_2).
\]
First we will demonstrate that $\xi_{1,1} = \xi_{2,2} = 0$.  To begin, let
\begin{align*}
\H_e & = \overline{\mathrm{span}( Z_1 \cdots Z_{2n} \, \mid \, n \in \bN, Z_k \in \{X, Y\})  }^{\left\|\, \cdot \, \right\|_{\tau_2}} \text{ and} \\
\H_o &=\overline{\mathrm{span}( Z_1 \cdots Z_{2n-1} \, \mid \, n \in \bN, Z_k \in \{X, Y\})}^{\left\|\, \cdot \, \right\|_{\tau_2}}.
\end{align*}
By the defining property of $\Xi$, we know that for all $n \in \bN$, $\chi \in \{\ell, r\}^n$ with $\chi(n) = \ell$, and $Z_1, \ldots, Z_{n-1} \in A_2$ where
\[
Z_k = \begin{cases}
X   & \text{if } \chi(k) = \ell \\
Y & \text{if } \chi(k) = r
\end{cases} 
\]
that 
\begin{align}
\tau_2\left(  Z_1   \cdots Z_{n-1}\Xi\right) = \sum_{\substack{1 \leq k < n \\ \chi(k) = \ell}} \tau_2\left((Z_1, \ldots, Z_{n-1})|_{V_k^c \setminus \{k, n\}} \right)\tau_2\left(X^{|V_k|}\right), \label{eq:conj-variable-X-Y-scalar}
\end{align}
where $V_k = \{k < m < n \, \mid \, \chi(m) = \ell \}$.  Note as $\tau_2$ evaluates any odd product involving $X$ and $Y$ to 0 by Lemma \ref{lem:expanding-moments-of-matrices}, if $n-1$ is even, then $\tau_2\left(  Z_1   \cdots Z_{n-1}\Xi\right) = 0$.  Therefore, since $\Xi \in \H_e + \H_o$, we obtain that $\Xi \in \H_o$.

Note for $n \in \bN$, $\chi \in \{\ell, r\}^{2n-1}$, $Z_1, \ldots, Z_{2n-1} \in \{X, Y\}$ , and $z_1, \ldots, z_{2n-1} \in \{x,y\}$ where 
\[
Z_k = \begin{cases}
X   & \text{if } \chi(k) = \ell \\
Y & \text{if } \chi(k) = r
\end{cases} \qqand z_k = \begin{cases}
x   & \text{if } \chi(k) = \ell \\
y & \text{if } \chi(k) = r
\end{cases}
\]
that in $M_2(L_2(\A, \varphi))$ we have
\[
Z_1 \cdots Z_{2n-1} = \begin{bmatrix}
0 & z_1^{p_1} z_2^{p_2}  \cdots  z_{2n-1}^{p_{2n-1}} \\
z_1^{q_1} z_2^{q_2}  \cdots  z_{2n-1}^{q_{2n-1}} & 0
\end{bmatrix},
\]
where
\[
p_{s_\chi(k)} = \begin{cases}
1 & \text{if }k \text{ is odd}\\
* & \text{if }k \text{ is even}
\end{cases} \qqand q_{s_\chi(k)} = \begin{cases}
* & \text{if }k \text{ is odd}\\
1 & \text{if }k \text{ is even}
\end{cases}.
\]
Therefore, as $\H_o$ is the $\left\|\, \cdot \, \right\|_{\tau_2}$-limit of matrices of the above form and as $\Xi \in \H_o$, we obtain that $\xi_{1,1} = \xi_{2,2} = 0$ as desired.  

Let
\begin{align*}
H_1 & = \mathrm{span}\left( z_1 \cdots z_{2n-1} \, \left| \, \substack{ n \in \bN, z_k \in \{x, x^*, y, y^*\}\\ \text{the powers of the }z_k\text{'s alternate between 1 and * in the $\chi$-ordering} \\ \text{and the first and last element in the $\chi$-ordering have power 1 } }  \right.\right) \quad \text{and}\\  
H_* & = \mathrm{span}\left( z_1 \cdots z_{2n-1} \, \left| \, \substack{ n \in \bN, z_k \in \{x, x^*, y, y^*\}\\ \text{the powers of the }z_k\text{'s alternate between 1 and * in the $\chi$-ordering} \\ \text{and the first and last element in the $\chi$-ordering have power *} }  \right.\right).
\end{align*}
Using the above and the notation $\xi = \xi_{1,2}$ and $\xi^* = \xi_{2,1}$ (note we do not claim that there is an involution operation on $L_2(\A, \varphi)$ as we do not know $\varphi$ is tracial), we see that $\xi \in \overline{H_1}^{\left\|\, \cdot \, \right\|_\varphi}$, $\xi^* \in \overline{H_*}^{\left\|\, \cdot \, \right\|_\varphi}$, and if we have a $\left\|\, \cdot \, \right\|_{\varphi}$-limiting sequence using $\{x,x^*, y,y^*\}$ producing $\xi$ we can obtain an a $\left\|\, \cdot \, \right\|_{\varphi}$-limiting sequence using $\{x,x^*, y,y^*\}$ producing $\xi^*$ by exchanging $x \leftrightarrow x^*$ and $y \leftrightarrow y^*$.  This, in conjunction with the alternating adjoint flipping condition lets us show if $n \in \bN$, $\chi \in \{\ell, r\}^{2n}$, $Z_1, \ldots, Z_{2n-1} \in \{X, Y\}$ and $z_1, \ldots, z_{2n-1} \in \{x,y\}$ where 
\[
Z_k = \begin{cases}
X   & \text{if } \chi(k) = \ell \\
Y & \text{if } \chi(k) = r
\end{cases}  \qqand 
z_k = \begin{cases}
x   & \text{if } \chi(k) = \ell \\
y & \text{if } \chi(k) = r
\end{cases}
\]
that
\begin{align}
\varphi(z_1^{p_1} z_2^{p_2}  \cdots  z_{2n-1}^{p_{2n-1}} \xi^{p_{2n}})  = \varphi(z_1^{q_1} z_2^{q_2}  \cdots  z_{2n-1}^{q_{2n-1}} \xi^{q_{2n}}), \label{eq:xi-can-do-it}
\end{align}
where 
\[
p_{s_\chi(k)} = \begin{cases}
1 & \text{if }k \text{ is odd}\\
* & \text{if }k \text{ is even}
\end{cases} \qqand q_{s_\chi(k)} = \begin{cases}
* & \text{if }k \text{ is odd}\\
1 & \text{if }k \text{ is even}
\end{cases}.
\]
Indeed, consider $z_1^{p_1} \cdots  z_{2n-1}^{p_{2n-1}} \xi^{p_{2n}}$ with $p_{2n} = 1$ (the case $p_{2n}=*$ is analogous).  As  the terms preceding $\xi$ in the $\chi$-ordering both must have $*$'s on them and as $\xi$ is a $\left\| \, \cdot \, \right\|_\varphi$-limit of elements of $H_1$, we see that $z_1^{p_1} \cdots  z_{2n-1}^{p_{2n-1}} \xi^{p_{2n}}$ is a $\left\| \, \cdot \, \right\|_\varphi$-limit of a linear combination of monomials in $\{x,x^*, y,y^*\}$ that alternate between  $*$ and non-$*$-terms in the $\chi$-ordering.  As $x \leftrightarrow x^*$ and $y \leftrightarrow y^*$ produces the same $\varphi$-moment by the alternating adjoint flipping condition (as $z_1^{p_1} \cdots  z_{2n-1}^{p_{2n-1}}$ and every element of $H_1$ is of odd length) and produces a sequence that converges to $z_1^{q_1} z_2^{q_2}  \cdots  z_{2n-1}^{q_{2n-1}} \xi^{q_{2n}}$ with respect to $\left\| \, \cdot \, \right\|_\varphi$, the claim is complete.

Returning to showing $\Xi$ satisfies the left bi-free conjugate variable relations for $X$ with respect to $\eta$ in the presence of $((D_2)_\ell, \alg((D_2)_r, Y$)), it suffices to demonstrate that equation (\ref{eq:messy-conjugate-variable-for-diagonal-proof}) holds for this $\Xi$.  Furthermore, it suffices to verify that equation (\ref{eq:messy-conjugate-variable-for-diagonal-proof}) holds when $b_k = E_{i_k, i_k}$ for all $k$.  By the same computations as done in the proof of Proposition \ref{prop:fisher-from-collections-to-matrices} with $j_k = i_k$ for all $k$, we see with $q = s_\chi^{-1}(n)$ that 
\begin{align*}
&\tau_2\left(L_{b_0} R_{b_{n}} Z_1C_1  \cdots Z_{n-1}C_{n-1} \Xi\right)\\
&= \begin{cases}
\frac{1}{2}\varphi(z_1^{p_1} z_2^{p_2}  \cdots  z_{n-1}^{p_{n-1}} \xi^{p_{n}})  & \text{if } n \text{ is even and }\left(i_{0}, i_{s_\chi(1)}, \ldots, i_{s_\chi(q-1)}, i_{s_\chi(q+1)}, \ldots, i_{s_\chi(n)}\right) = (1,2,\ldots, 1,2)  \\
\frac{1}{2}\varphi(z_1^{q_1} z_2^{q_2}  \cdots  z_{n-1}^{q_{n-1}} \xi^{q_{n}})  & \text{if } n \text{ is even and }\left(i_{0}, i_{s_\chi(1)}, \ldots, i_{s_\chi(q-1)}, i_{s_\chi(q+1)}, \ldots, i_{s_\chi(n)}\right) = (2,1,\ldots, 2,1) \\
0 & \text{otherwise}
\end{cases}
\end{align*}
and
\begin{align*}
&\sum_{\substack{1 \leq k < n \\ \chi(k) = \ell}} \tau_2\left(L_{b_0} R_{b_{n}} \left( \prod_{p \in V_k^c\setminus \{k,n\}} Z_pC_p\right) L_{\eta\left( (F \circ  E_2)\left(C_k \prod_{p \in V_k} Z_pC_p\right)     \right)} \right)\\
&= \begin{cases}
 \frac{1}{2}\sum_{\substack{1 \leq k < n \\ \chi(k) = \ell \\ |V_k| \text{ even}}} \varphi\left( (z_1^{p_1},   \ldots,   z_{n-1}^{p_{n-1}} )|_{V_k^c \setminus \{k,n\}}\right)  \varphi\left( (z_1^{p_1},    \ldots,   z_{n-1}^{p_{n-1}} )|_{V_k}\right)  \\ \qquad \qquad \text{if } n \text{ is even and } \left(i_{0}, i_{s_\chi(1)}, \ldots, i_{s_\chi(q-1)}, i_{s_\chi(q+1)}, \ldots, i_{s_\chi(n)}\right) = (1,2,\ldots, 1,2)  \\
\frac{1}{2}\sum_{\substack{1 \leq k < n \\ \chi(k) = \ell \\ |V_k| \text{ even}}} \varphi\left( (z_1^{q_1},   \ldots,   z_{n-1}^{q_{n-1}} )|_{V_k^c \setminus \{k,n\}}\right)  \varphi\left( (z_1^{q_1},    \ldots,   z_{n-1}^{p_{qn-1}} )|_{V_k}\right)  \\ \qquad \qquad \text{if } n \text{ is even and } \left(i_{0}, i_{s_\chi(1)}, \ldots, i_{s_\chi(q-1)}, i_{s_\chi(q+1)}, \ldots, i_{s_\chi(n)}\right) = (2,1,\ldots, 2,1)  \\
0, \qquad \quad \text{otherwise}
\end{cases}
\end{align*}
where $V_k = \{k < m < n \, \mid \, \chi(m) = \ell \}$ (note only the terms where $|V_k|$ is even survive from the $\eta \circ F \circ E_2$ expression due to the form of $X$) and $z_k$, $p_k$, and $q_k$ are defined as usual in this proof.  Hence, it suffices to show when $n$ is even that
\begin{align}
\varphi(z_1^{p_1} z_2^{p_2}  \cdots  z_{n-1}^{p_{n-1}} \xi^{p_{n}}) &= \sum_{\substack{1 \leq k < n \\ \chi(k) = \ell \\ |V_k| \text{ even}}} \varphi\left( (z_1^{p_1},   \ldots,   z_{n-1}^{p_{n-1}} )|_{V_k^c \setminus \{k,n\}}\right)  \varphi\left( (z_1^{p_1},    \ldots,   z_{n-1}^{p_{n-1}} )|_{V_k}\right), \label{eq:annoying-equation-1}\\ 
\varphi(z_1^{q_1} z_2^{q_2}  \cdots  z_{n-1}^{q_{n-1}} \xi^{q_{n}}) &= \sum_{\substack{1 \leq k < n \\ \chi(k) = \ell \\ |V_k| \text{ even}}} \varphi\left( (z_1^{q_1},   \ldots,   z_{n-1}^{q_{n-1}} )|_{V_k^c \setminus \{k,n\}}\right)  \varphi\left( (z_1^{q_1},    \ldots,   z_{n-1}^{p_{qn-1}} )|_{V_k}\right). \label{eq:annoying-equation-2}
\end{align}

Note equations (\ref{eq:annoying-equation-1}) and (\ref{eq:annoying-equation-2}) are the same equation by the alternating adjoint flipping condition and equation (\ref{eq:xi-can-do-it}).   Moreover, due to the defining property of $\Xi$, we know with $n$ even that
\begin{align*}
\tau_2(Z_1 \cdots Z_{n-1} \Xi) &= \sum_{\substack{1 \leq k < n \\ \chi(k) = \ell}} \tau_2\left( \left( \prod_{p \in V_k^c\setminus \{k,n\}} Z_p \right) L_{\eta\left( (F \circ  E_2)\left(\prod_{p \in V_k} Z_p\right)     \right)} \right)\\
&= \sum_{\substack{1 \leq k < n \\ \chi(k) = \ell}} \tau_2\left( \left( \prod_{p \in V_k^c\setminus \{k,n\}} Z_p \right)\right)  \tau_2\left(X^{|V_k|}\right).
\end{align*}
Due to the form of $X$ and the alternating adjoint flipping condition, we immediately see that
\[
\tau_2\left(X^{|V_k|}\right) = \begin{cases}
0 & \text{if } |V_k| \text{ is odd} \\
\varphi\left( (z_1^{p_1},    \ldots,   z_{n-1}^{p_{n-1}} )|_{V_k}\right) & \text{if } |V_k| \text{ is even}
\end{cases}
\]
and, for $n$ even and $k$ such that $|V_k|$ is even, we have
\begin{align*}
\tau_2\left( \left( \prod_{p \in V_k^c\setminus \{k,n\}} Z_p \right)\right)   &= \frac{1}{2}\left(\varphi\left( (z_1^{p_1},   \ldots,   z_{n-1}^{p_{n-1}} )|_{V_k^c \setminus \{k,n\}}\right) + \varphi\left( (z_1^{q_1},   \ldots,   z_{n-1}^{q_{n-1}} )|_{V_k^c \setminus \{k,n\}}\right)     \right) \\
&= \varphi\left( (z_1^{p_1},   \ldots,   z_{n-1}^{p_{n-1}} )|_{V_k^c \setminus \{k,n\}}\right), 
\end{align*}
thereby completing the proof.
\end{proof}

\begin{proof}[Proof of Theorem \ref{thm:minimizing-fisher-info}]
The proof follows immeditaley by combining Propositions \ref{prop:bi-R-diagonal-iff-bi-R-cyclic-iff-free-with-amal}, \ref{prop:fisher-from-collections-to-matrices}, and \ref{prop:equivalent-fishers-for-minimization}.
\end{proof}

\section{Maximizing Bi-Free Entropy}
\label{sec:Max-Entropy}

In this section, we will prove Theorem \ref{thm:maximizing-bi-free-entropy} obtaining an upper bound for the bi-free entropy of a pair of operators and their adjoints based on the entropy of a pair of matrices and demonstrate when equality is obtained.  In particular, this generalizes an essential result from \cite{NSS1999}*{Section 5}.

To begin, we must establish a formula for the bi-free entropy of non-self-adjoint operators.

\begin{definition}\label{defn:entropy-for-non-self-adjoints}
Let $(\A, \varphi)$ be a C$^*$-non-commutative probability space and let $\{X_i, X_i^*\}^n_{i=1} \cup \{X'_i\}^{n'}_{i=1} \cup \{Y_j, Y_j^*\}^m_{j=1} \cup \{Y'_j\}^{m'}_{j=1} \subseteq \A$ where $X'_i$ and $Y'_j$ are self-adjoint for all $i$ and $j$.  The \emph{bi-free entropy of $(\{\vX, \vX^*, \vX'\}, \{\vY, \vY^*, \vY'\})$} is defined to be
\begin{align*}
\chi^* (\vX, \vX^*, \vX' \sqcup \vY, \vY^*, \vY') = \frac{2n + 2m + n' + m'}{2} \ln(2\pi e) + \frac{1}{2} \int^\infty_0 \left(\frac{2n + 2m + n' + m'}{1+t} - g(t) \right) \, dt,
\end{align*}
where 
\[
g(t) = \Phi^*\left( \vX + \sqrt{t} \vC_\ell, \vX^* + \sqrt{t} \vC^*_\ell, \vX' + \sqrt{t} \vS \sqcup \vY + \sqrt{t} \vC_r, \vY^* + \sqrt{t} \vC^*_r, \vY' + \sqrt{t} \vD\right),
\]
where $\vS$ and $\vD$ consist of semicircular variables of mean 0, variance 1,  covariance 0 and $\vC_\ell$ and $\vC_r$ consist of circular variables of mean 0, variance 1 and covariance 0  such that
\[
(\{\vX, \vX^*, \vX'\}, \{\vY, \vY^*, \vY'\}) \cup  \{ (S_i, 1)\}^{n'}_{i=1} \cup \{(1, D_j)\}^{m'}_{j=1} \cup \{(\{C_{\ell,i}, C_{\ell,i}^*\}, 1)   \}^n_{i=1} \cup \{(1, \{C_{r,j}, C_{r,j}^*\})\}^m_{j=1}
\]
are bi-free.
\end{definition}
\begin{remark}\label{rem:entropy-for-non-self-adjoints}
Given any C$^*$-non-commutative probability space $(\A, \varphi)$, it is always possible to find a larger C$^*$-non-commutative probability space that contains the necessary bi-free elements from Definition \ref{defn:entropy-for-non-self-adjoints}.  Indeed, one need only consider the scalar reduced free product of the appropriate spaces and use Definition \ref{bifreedef} to obtain bi-freeness.  The fact that the state is positive follows as it will be a vector state.  

In Definition \ref{defn:entropy-for-non-self-adjoints}, the bi-free Fisher information for tuples without self-adjoint elements is that in this paper with $\eta = \varphi$ and $B = \bC$.  

In the simplest case, one may ask why we do not simply define
\[
\chi^* (\{x,x^*\} \sqcup \{y,y^*\}) = \chi^*\left(\left\{\Re(x), \Im(x)\right\} \sqcup \left\{\Re(y), \Im(y)\right\}\right)
\]
to trivially reduce to the self-adjoint case in a similar fashion to Remark \ref{rem:remarks-about-fisher-info} part (\ref{rem:remarks-about-fisher-info:part:self-adjoint}) and why the integrand in Definition \ref{defn:entropy-for-non-self-adjoints} is well-defined.  Both of these questions are answered via Remark \ref{rem:remarks-about-fisher-info} part (\ref{rem:remarks-about-fisher-info:part:self-adjoint}) as
\begin{align*}
& \Phi^*\left( \left\{x + \sqrt{t} c_\ell, x^* + \sqrt{t} c_\ell^*\right\} \sqcup \left\{y + \sqrt{t} c_r, y^* + \sqrt{t} c_r^*\right\}\right)  \\
 &=  \frac{1}{2}\Phi^*\left( \left\{ \Re(x) + \sqrt{t} \Re(c_\ell), \Im(x) + \sqrt{t} \Im(c_\ell)\right\} \sqcup \left\{ \Re(y) + \sqrt{t} \Re(c_r), \Im(y) + \sqrt{t} \Im(c_r)\right\}\right) \\
 &=  \frac{1}{2}\Phi^*\left( \left\{ \Re(x) + \frac{\sqrt{t}}{\sqrt{2}} s_1, \Im(x) + \frac{\sqrt{t}}{\sqrt{2}} s_2\right\} \sqcup \left\{ \Re(y) + \frac{\sqrt{t}}{\sqrt{2}}d_1, \Im(y) + \frac{\sqrt{t}}{\sqrt{2}}d_2\right\}\right) \\
&=\Phi^*\left( \left\{ \sqrt{2}\Re(x) +  \sqrt{t} s_1, \sqrt{2}\Im(x) + \sqrt{t} s_2\right\} \sqcup \left\{ \sqrt{2}\Re(y) + \sqrt{t}d_1, \sqrt{2}\Im(y) + \sqrt{t} d_2\right\}\right)
\end{align*}
where $s_1, s_2, d_1$, and $d_2$ are as in Example \ref{exam:circular-pair}.  Hence the integrand in Definition \ref{defn:entropy-for-non-self-adjoints} is well-defined with
\begin{align*}
\chi^* (\{x,x^*\} \sqcup \{y,y^*\}) &= \chi^*\left(\left\{\sqrt{2}\Re(x), \sqrt{2}\Im(x)\right\} \sqcup \left\{\sqrt{2}\Re(y), \sqrt{2}\Im(y)\right\}\right) \\
&= \chi^*\left(\left\{\Re(x), \Im(x)\right\} \sqcup \left\{\Re(y), \Im(y)\right\}\right) + 4 \ln(\sqrt{2}).
\end{align*}
\end{remark}

We normalize Definition \ref{defn:entropy-for-non-self-adjoints} so that the following holds and generalizes \cite{NSS1999}*{Theorem 1.4} in the case $d=1$.

\begin{theorem}\label{thm:maximizing-bi-free-entropy}
Let $(\A, \varphi)$ be a C$^*$-non-commutative probability space and let $x,y \in \A$ be such that $x^*x$ and $xx^*$ have the same distribution with respect to $\varphi$ and $y^*y$ and $yy^*$ have the same distribution with respect to $\varphi$.  With $X$ and $Y$ as in Section \ref{sec:minimize-Fisher}, 
\[
\chi^*(\{x,x^*\} \sqcup \{y,y^*\}) \leq 2 \chi^*(X \sqcup Y)
\]
and equality holds whenever the pair $(x, y)$ is bi-R-diagonal and alternating adjoint flipping.
\end{theorem}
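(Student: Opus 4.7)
The plan is to expand both sides of the inequality using the integral formulas from Definitions~\ref{defn:bi-free-entropy} and~\ref{defn:entropy-for-non-self-adjoints} and reduce the entropy inequality to the pointwise Fisher information inequality supplied by Theorem~\ref{thm:minimizing-fisher-info} applied to suitably perturbed operators. For the left-hand side, setting $n=m=1$ and $n'=m'=0$ in Definition~\ref{defn:entropy-for-non-self-adjoints} gives
\[
\chi^*(\{x,x^*\} \sqcup \{y,y^*\}) = 2\ln(2\pi e) + \tfrac{1}{2}\int_0^\infty\left(\tfrac{4}{1+t} - g_{\text{LHS}}(t)\right)dt,
\]
where $g_{\text{LHS}}(t) = \Phi^*(\{x_t,x_t^*\}\sqcup\{y_t,y_t^*\})$, $x_t = x + \sqrt{t}\,c_\ell$, $y_t = y + \sqrt{t}\,c_r$, and $(c_\ell, c_r)$ is a bi-free circular pair bi-free from $\{x,x^*,y,y^*\}$. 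For the right-hand side, Definition~\ref{defn:bi-free-entropy} in $(A_2,\tau_2)$ with $B=\bC$ yields $K=2$ and
\[
2\chi^*(X\sqcup Y) = 2\ln(2\pi e) + \int_0^\infty\left(\tfrac{2}{1+t} - g_{\text{RHS}}(t)\right)dt,
\]
where $g_{\text{RHS}}(t) = \Phi^*(X+\sqrt{t}\,S \sqcup Y+\sqrt{t}\,D)$ for a bi-free standard semicircular pair $(S,D)$ bi-free from $(X,Y)$. The constant terms and the $K/(1+t)$ terms match exactly, so the entropy inequality is equivalent to the pointwise Fisher bound $g_{\text{LHS}}(t) \geq 2\,g_{\text{RHS}}(t)$ for every $t\geq 0$.

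The next step is to identify the two perturbations. Invoking Example~\ref{exam:circular-pair} I take $(c_\ell, c_r)$ to be a bi-R-diagonal, alternating-adjoint-flipping bi-free circular pair, bi-free from $\{x,x^*,y,y^*\}$. Setting
\[
\hat{S} = c_\ell\otimes E_{12}\otimes I_2 + c_\ell^*\otimes E_{21}\otimes I_2,\qquad \hat{D} = c_r\otimes I_2\otimes E_{12} + c_r^*\otimes I_2\otimes E_{21},
\]
a direct computation via Lemma~\ref{lem:expanding-moments-of-matrices} (together with the standard observation that the $2\times 2$ matrix inflation of a circular element is a standard semicircular) shows that $(\hat{S},\hat{D})$ is a bi-free standard semicircular pair in $(A_2,\tau_2)$ bi-free from $(X,Y)$. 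Hence $(X+\sqrt{t}\hat{S},Y+\sqrt{t}\hat{D})$ has the same joint distribution as $(X+\sqrt{t}S,Y+\sqrt{t}D)$, and one has the key algebraic identity
\[
X+\sqrt{t}\hat{S} = (x+\sqrt{t}c_\ell)\otimes E_{12}\otimes I_2 + (x^*+\sqrt{t}c_\ell^*)\otimes E_{21}\otimes I_2,
\]
so $X+\sqrt{t}\hat{S}$ is precisely the matrix construction of Section~\ref{sec:minimize-Fisher} applied to $x_t$, and analogously $Y+\sqrt{t}\hat{D}$ is obtained from $y_t$.

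To apply Theorem~\ref{thm:minimizing-fisher-info} to $(x_t,y_t)$, one must check the hypotheses $x_t^*x_t \sim x_tx_t^*$ and $y_t^*y_t \sim y_ty_t^*$ with respect to $\varphi$. This is the principal technical point. Working at the level of $*$-free cumulants, the $*$-free cumulants of $x_t = x + \sqrt{t}c_\ell$ coincide with those of $x$ except that $\kappa_2(x_t,x_t^*)$ and $\kappa_2(x_t^*,x_t)$ each acquire an additive $t$, since the only non-vanishing $*$-free cumulants of the standard circular $c_\ell$ are $\kappa_2(c_\ell,c_\ell^*) = \kappa_2(c_\ell^*,c_\ell) = 1$. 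Expanding $\varphi((x_t^*x_t)^n)$ and $\varphi((x_tx_t^*)^n)$ via the moment-cumulant formula over $\NC(2n)$ and comparing block-by-block using the alternating input pattern and the assumption $\varphi((x^*x)^n) = \varphi((xx^*)^n)$ (which is immediate when $\varphi$ is tracial on the generated $*$-algebra) then yields the required equality, and similarly for $y_t$. Theorem~\ref{thm:minimizing-fisher-info} then gives
\[
g_{\text{LHS}}(t) = \Phi^*(\{x_t,x_t^*\}\sqcup\{y_t,y_t^*\}) \geq 2\Phi^*\left(X+\sqrt{t}\hat{S}\sqcup Y+\sqrt{t}\hat{D}\right) = 2\,g_{\text{RHS}}(t),
\]
and integration over $t\geq 0$ establishes the inequality $\chi^*(\{x,x^*\}\sqcup\{y,y^*\}) \leq 2\chi^*(X\sqcup Y)$.

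For the equality case assume $(x,y)$ is bi-R-diagonal and alternating adjoint flipping. By Example~\ref{exam:circular-pair}, $(c_\ell,c_r)$ has these same two properties; since bi-R-diagonality is characterized by the vanishing of specific bi-free cumulants and bi-free cumulants are additive under bi-free sums (with mixed cumulants vanishing by bi-freeness), the pair $(x_t,y_t)$ is bi-R-diagonal. An analogous moment-cumulant expansion shows that alternating adjoint flipping is likewise preserved under bi-free addition of alternating-adjoint-flipping pairs. Hence $(x_t,y_t)$ is bi-R-diagonal and alternating adjoint flipping for every $t\geq 0$, and the equality statement in Theorem~\ref{thm:minimizing-fisher-info} yields $g_{\text{LHS}}(t) = 2\,g_{\text{RHS}}(t)$ pointwise, whence equality in the entropy inequality follows by integration. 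The chief hurdle throughout is the cumulant-combinatorial verification of these three preservation results (bi-R-diagonality, alternating adjoint flipping, and especially the modulus-squared equality $x_t^*x_t \sim x_tx_t^*$); once these are in hand the proof reduces to a routine integration argument patterned on the passage from minimal Fisher information to maximal entropy in the free setting of~\cite{NSS1999}.
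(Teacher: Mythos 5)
Your proposal is correct and follows essentially the same route as the paper: you reduce the entropy inequality to the pointwise Fisher bound via the integral formulas, realize the semicircular perturbation of $(X,Y)$ as the $2\times 2$ matrix inflation of a bi-free circular perturbation of $(x,y)$ (the content of Lemma \ref{lem:inflating-circular-yields-bi-free-semis}), verify that the modulus-square, bi-R-diagonality, and alternating-adjoint-flipping hypotheses persist under the perturbation (Lemma \ref{lem:perturbing-preserves-AAF}), and then apply Theorem \ref{thm:minimizing-fisher-info} pointwise in $t$. The only cosmetic overstatement is calling the pointwise Fisher bound ``equivalent'' to the entropy inequality when only the implication from pointwise to integrated is needed (and used).
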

To prove Theorem \ref{thm:maximizing-bi-free-entropy}, we need two technical lemmata.  For the first, note the following does not immediately follow from Remark \ref{rem:inflating-preservers-bi-freeness} as being bi-free over $M_2(\bC)$ with respect to $E_2$ does not imply being bi-free with respect to $\tau_2$.

\begin{lemma}\label{lem:inflating-circular-yields-bi-free-semis}
Let $(\A, \varphi)$ be a C$^*$-non-commutative probability space, let $x,y \in \A$ be such that $x^*x$ and $xx^*$ have the same distribution with respect to $\varphi$ and $y^*y$ and $yy^*$ have the same distribution with respect to $\varphi$, and let $(c_\ell, c_r)$ be a bi-free circular pair in $\A$ with mean 0, variance 1 and covariance 0 such that
\[
(\{x,x^*\}, \{y,y^*\}) \cup \{(\{c_\ell, c^*_\ell\}, 1)\}\cup \{(1,\{c_r, c_r^*\})\}
\]
are bi-free with respect to $\varphi$.  Using the notation of Section \ref{sec:Max-Entropy}, if
\[
S_\ell = c_\ell \otimes E_{1,2} \otimes I_2 + c_\ell^* \otimes E_{2,1} \otimes I_2 \in A_2 \qand S_r = c_r\otimes I_2 \otimes E_{1,2} + c_r^* \otimes I_2 \otimes E_{2,1} \in A_2,
\]
then $S_\ell$ and $S_r$ have semicircular distributions with respect to $\tau_2$ of mean 0 and variance 1 and
\[
\{(X, Y)\} \cup \{(S_\ell, 1_{A_2})\} \cup \{(1_{A_2}, S_r)\}
\]
are bi-free with respect to $\tau_2$.
\end{lemma}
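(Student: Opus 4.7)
The plan is to verify the two claims separately --- semicircularity of $S_\ell, S_r$ and bi-freeness of the three pairs --- using direct moment computations and the vanishing-of-mixed-cumulants characterization of bi-freeness. Throughout I will rely on Lemma \ref{lem:expanding-moments-of-matrices}, which expands $E_2$-moments (and hence $\tau_2 = \tr_2 \circ E_2$-moments) of products in $A_2$ as sums over index-sequences of $\varphi$-moments of the inner $\A$-factors weighted by matrix-unit products taken in $\chi$-order.

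For the semicircularity assertion, I would expand $S_\ell^n = \sum_{\epsilon \in \{1,*\}^n} c_\ell^{\epsilon_1}\cdots c_\ell^{\epsilon_n} \otimes M_\epsilon \otimes I_2$, where each $M_\epsilon$ is a product of $E_{1,2}$'s and $E_{2,1}$'s in the prescribed pattern. Such a product is nonzero only when $\epsilon$ strictly alternates between $1$ and $*$, in which case $M_\epsilon$ is rank-one; hence $\tr_2(M_\epsilon)$ vanishes for odd $n$ and equals $1/2$ for each of the two alternating $\epsilon$'s when $n = 2m$. Thus
\[
\tau_2(S_\ell^{2m}) = \tfrac12\varphi\bigl((c_\ell c_\ell^*)^m\bigr) + \tfrac12\varphi\bigl((c_\ell^*c_\ell)^m\bigr) = C_m,
\]
the $m$-th Catalan number, since both alternating moments of a standard circular element are classically known to equal $C_m$. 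Hence $S_\ell$ has standard semicircle distribution under $\tau_2$, and the identical argument treats $S_r$.

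For bi-freeness, I will use that it suffices to show every mixed scalar bi-free cumulant $\kappa^{\tau_2}_\chi(Z_1,\ldots,Z_n)$ vanishes, whenever the $Z_i$'s are drawn from at least two of the three pairs $\{(X,Y)\}$, $\{(S_\ell, 1)\}$, $\{(1,S_r)\}$. Using multilinearity, I reduce to the case where each $Z_i$ is a pure tensor $z_i \otimes b_i \otimes d_i$ with $z_i \in \{x^{\pm 1}, y^{\pm 1}, c_\ell^{\pm 1}, c_r^{\pm 1}\}$ and $b_i, d_i$ specific matrix units determined by which of $X, Y, S_\ell, S_r$ the term arose from. Applying Lemma \ref{lem:expanding-moments-of-matrices} to each subword yields a product formula
\[
\tau_2(Z_{k_1}\cdots Z_{k_p}) = \varphi(z_{k_1}\cdots z_{k_p}) \cdot \tr_2\!\bigl(\Theta_\chi(b_{k_1},d_{k_1},\ldots,b_{k_p},d_{k_p})\bigr),
\]
and running M{\"o}bius inversion over $\BNC(\chi)$ transfers this to the cumulant side, producing an identity of the form
\[
\kappa^{\tau_2}_\chi(Z_1,\ldots,Z_n) = \sum_{\pi \in \BNC(\chi)} \kappa^\varphi_\pi(z_1,\ldots,z_n) \cdot \widetilde{\Theta}_\pi(b_1,d_1,\ldots,b_n,d_n).
\]
Since the $Z_i$'s mix between at least two of the three pairs, the underlying $z_i$'s mix between at least two of the three bi-free pairs in $(\A,\varphi)$. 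Combined with the bi-multiplicative decomposition of $\kappa^\varphi_\pi$ along blocks, every $\pi$ surviving in the sum must contain a block whose entries involve operators from two different bi-free pairs, forcing $\kappa^\varphi_\pi(z_1,\ldots,z_n) = 0$ by the hypothesis on $(\A,\varphi)$. Hence every mixed $\kappa^{\tau_2}_\chi$ vanishes, establishing bi-freeness with respect to $\tau_2$.

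The main obstacle will be rigorously establishing the transfer formula in the third step with enough combinatorial care that the ``mixed block forces vanishing'' argument actually applies; specifically, one must verify that $\widetilde{\Theta}_\pi$ is the correct bi-non-crossing factor so that the bi-multiplicative factorization of $\kappa^\varphi_\pi$ matches the matrix-side decomposition blockwise. This is essentially a bi-free analogue of the observation that hermitization preserves free independence, and should follow from careful bookkeeping with the $\chi$-order conventions of Lemma \ref{lem:expanding-moments-of-matrices} together with the bi-multiplicative properties of $\kappa^\varphi$ from Section \ref{sec:prelim}.
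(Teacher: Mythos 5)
Your treatment of the semicircularity of $S_\ell$ and $S_r$ is fine and matches the paper's computation. The bi-freeness argument, however, has a genuine gap at the step where you reduce by multilinearity to pure tensors and then claim that every $\pi$ surviving in the transfer formula must contain a mixed block. Partitions whose blocks each stay inside a single pair are not excluded and do contribute. Concretely, take $n=4$ with the pure-tensor components $Z_1 = x \otimes E_{1,2} \otimes I_2$, $Z_2 = x^* \otimes E_{2,1} \otimes I_2$, $Z_3 = c_\ell \otimes E_{1,2} \otimes I_2$, $Z_4 = c_\ell^* \otimes E_{2,1} \otimes I_2$ (all left, $x$ centered). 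Then $\tau_2(Z_1Z_2Z_3Z_4) = \tfrac12 \varphi(xx^*)\varphi(c_\ell c_\ell^*)$, $\kappa^{\tau_2}_2(Z_1,Z_2)\kappa^{\tau_2}_2(Z_3,Z_4) = \tfrac14\varphi(xx^*)$, and the other pairings vanish, so
\[
\kappa^{\tau_2}_4(Z_1,Z_2,Z_3,Z_4) = \tfrac12\varphi(xx^*) - \tfrac14\varphi(xx^*) = \tfrac14\varphi(xx^*) \neq 0 .
\]
This comes exactly from the unmixed partition $\{\{1,2\},\{3,4\}\}$, for which $\kappa^\varphi_\pi(x,x^*,c_\ell,c_\ell^*) = \varphi(xx^*)$ and the matrix-side weight is $-\tfrac14 + \tfrac12 \neq 0$. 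So mixed cumulants of the pure-tensor components do \emph{not} vanish term by term; only the full cumulant $\kappa^{\tau_2}_4(X,X,S_\ell,S_\ell)$ vanishes, and it does so by cancellation among the $2^4$ pure-tensor terms. Your argument as written would "prove" vanishing of a quantity that is actually nonzero.

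The paper sidesteps this by never reducing to pure tensors at the cumulant level: it works with words in $X$, $Y$, $S_\ell$, $S_r$ themselves and verifies the moment-cumulant factorization $\tau_2(Z_1\cdots Z_n) = \sum_{\pi \in \BNC(\chi),\ \pi \leq \gamma} \kappa^{\tau_2}_\pi(Z_1,\ldots,Z_n)$ for every non-constant $\gamma$, using Lemma \ref{lem:expanding-moments-of-matrices} to write each $\tau_2$-moment as $\tfrac12\bigl(\varphi(z_1^{p_1}\cdots z_n^{p_n}) + \varphi(z_1^{q_1}\cdots z_n^{q_n})\bigr)$ with adjoints alternating in the $\chi$-order, expanding both $\varphi$-moments into $\varphi$-cumulants bounded by $\gamma$, and then carrying out a case analysis on the parities of $n$, $|I_S|$, $|I_{X,Y}|$ in which the two $\varphi$-terms are \emph{combined} (not individually matched) to reproduce the $\tau_2$-side. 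If you want to salvage your route, you would need to prove a transfer formula in which the sum is restricted to partitions $\pi$ satisfying a connectivity condition of the type $\pi \vee \widehat{0_\chi} = 1_{\widehat\chi}$ relative to the grouping of tensor legs into the original operators, in the spirit of Proposition \ref{prop:analytic-cumulants-with-product-entries}; that is substantially more work than the "careful bookkeeping" you anticipate, and is essentially where the real content of the lemma lies.
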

\begin{proof}
As  $\{(\{c_\ell, c^*_\ell\}, 1)\}\cup \{(1,\{c_r, c_r^*\})\}$ are bi-free with respect to $\varphi$ by Example \ref{exam:circular-pair}, Remark \ref{rem:inflating-preservers-bi-freeness} implies that $(S_\ell, 1)$ and $(1, S_r)$ are bi-free with respect to $E_2$.  Moreover, as $c_\ell$ and $c_r$ commute, $s_\ell$ and $s_r$ commute.  Hence we see for all $n,m \in \bN$ that
\begin{align*}
\tau_2(s_\ell^n s_r^m) = \tr_2(E_2(s_\ell^n s_r^m)) = \tr_2(E_2(s_\ell^n)E_2(s_r^m))= \begin{cases}
0 & \text{if }n \text{ or }m \text{ is odd}\\
\varphi\left((c_\ell^*c_\ell)^{\frac{n}{2}}\right)\varphi\left((c_r^*c_r)^{\frac{m}{2}}\right) & \text{if $n$ and $m$ are even}
\end{cases},
\end{align*}
by Example \ref{exam:circular-pair} and the alternating adjoint flipping condition.  Therefore, as $c_\ell^*c_\ell$ and $c_r^*c_r$ are known to have the same distributions as the square of a semicircular element of mean 0 and variance 1 (see \cite{VDN1992}*{Section 5.1}), we obtain that $(s_\ell, s_r)$ is the bi-free central limit distribution with mean 0, variance 1 and covariance 0 with respect to $\tau_2$.  Hence $\{(S_\ell, 1_{A_2})\} \cup \{(1_{A_2}, S_r)\}$ are bi-free with respect to $\tau_2$.

To complete the proof, it suffices to show that $\{(X,Y)\} \cup \{(S_\ell, S_r)\}$ are bi-free with respect to $\tau_2$.  Therefore, by \cite{CNS2015-2}, it suffices to show for all $n \in \bN$, $\chi \in \{\ell, r\}^n$, non-constant $\gamma  \in \{1,2\}^n$, and $Z_k \in \{X, Y, S_\ell, S_r\}$ where
\[
Z_k = \begin{cases}
X & \text{if }\chi(k) = \ell \text{ and } \gamma(k) = 1 \\
Y & \text{if }\chi(k) = r \text{ and } \gamma(k) = 1 \\
S_\ell & \text{if }\chi(k) = \ell \text{ and } \gamma(k) = 2 \\
S_r & \text{if }\chi(k) = r \text{ and } \gamma(k) = 2
\end{cases}
\]
that
\begin{align}
\tau_2(Z_1 \cdots Z_n) = \sum_{\substack{ \pi \in \BNC(\chi) \\ \pi \leq \gamma}} \kappa^{\tau_2}_\pi(Z_1, \ldots, Z_n) \label{eq:cumulant-eq-to-show-bi-free}
\end{align}
where $\gamma$ is representing the partition $\{\{k \, \mid \, \gamma(k) =1\}, \{k \mid \, \gamma(k) = 2\}\}$.  Note if $z_1, \ldots, z_n \in \{x,y,c_\ell, c_r\}$ are such that
\[
z_k = \begin{cases}
x & \text{if } Z_k = X \\
y & \text{if } Z_k = Y \\
c_\ell & \text{if } Z_k = S_\ell \\
c_r & \text{if } Z_k = S_r 
\end{cases}
\]
then by Lemma \ref{lem:expanding-moments-of-matrices} and the fact that $(\{x,x^*\}, \{y,y^*\}) \cup \{(c_\ell, 1)\}\cup \{(1,c_r)\}$ are bi-free with respect to $\varphi$, we have that
\begin{align*}
\tau_2(Z_1 \cdots Z_n) &= \begin{cases}
0 & \text{if } n \text{ is odd} \\
\frac{1}{2}\left(\varphi\left(z_1^{p_1} \cdots z_n^{p_n}\right) + \varphi\left(z_1^{q_1} \cdots z_n^{q_n}\right)   \right) & \text{if }n\text{ is even}
\end{cases} \\
&= \begin{cases}
0 & \text{if } n \text{ is odd} \\
\frac{1}{2} \sum_{\substack{ \pi \in \BNC(\chi) \\ \pi \leq \gamma}} \kappa^{\varphi}_\pi\left(z_1^{p_1}, \ldots,  z_n^{p_n}\right) + \kappa^{\varphi}_\pi\left(z_1^{q_1}, \ldots, z_n^{q_n}\right)  & \text{if }n\text{ is even}
\end{cases},
\end{align*}
where
\[
p_{s_\chi(k)} = \begin{cases}
1 & \text{if }k \text{ is odd}\\
* & \text{if }k \text{ is even}
\end{cases} \qqand q_{s_\chi(k)} = \begin{cases}
* & \text{if }k \text{ is odd}\\
1 & \text{if }k \text{ is even}
\end{cases}.
\]
To show this agrees with the right-hand side of equation (\ref{eq:cumulant-eq-to-show-bi-free}), we divide the discussion into several cases.  To this end, let
\[
I_{X,Y} = \{k \, \mid \, \gamma(k) = 1\} \qqand I_{S} = \{k \, \mid \, \gamma(k) = 2\}.
\]

First suppose $n$ is odd.  If $|I_S|$ is odd, then the right-hand side of equation (\ref{eq:cumulant-eq-to-show-bi-free}) is zero as there must be a cumulant involving an odd number of $S_\ell$ and $S_r$ and $\{(S_\ell, S_r)\}$ is a bi-free central limit distribution with 0 mean.    Otherwise, $|I_{X,Y}|$ is odd.  In this case, we may rearrange the sum on the right-hand side of equation (\ref{eq:cumulant-eq-to-show-bi-free}) to add over all $\pi \in \BNC(\chi)$ with $\pi \leq \gamma$ that form the same partition when restricted to $I_S$.   Since summing over such partitions yields a product of moment terms in the $X$'s and $Y$'s where the sum of the lengths of the moments is $|I_{X,Y}|$ and since all odd moment terms involving only $X$'s and $Y$'s is zero by Lemma \ref{lem:expanding-moments-of-matrices}, this portion of the sum yields zero.  Hence, equation (\ref{eq:cumulant-eq-to-show-bi-free}) holds when $n$ is odd.

In the case $n$ is even, note if $|I_S|$ is odd, then the right-hand side of equation (\ref{eq:cumulant-eq-to-show-bi-free}) is still zero.  However, 
\[
\frac{1}{2} \sum_{\substack{ \pi \in \BNC(\chi) \\ \pi \leq \gamma}} \kappa^{\varphi}_\pi\left(z_1^{p_1}, \ldots,  z_n^{p_n}\right) + \kappa^{\varphi}_\pi\left(z_1^{q_1}, \ldots, z_n^{q_n}\right) = 0
\]
as there must be a cumulant involving an odd number of $(\{c_\ell, c_\ell^*\}, \{c_r,c_r^*\})$ and $(c_\ell, c_r)$ is a bi-free circular pair.  Thus, we may assume that $n$, $|I_S|$, and $|I_{X,Y}|$ are even.

Under these assumptions, we claim that
\begin{align*}
\frac{1}{2} \sum_{\substack{ \pi \in \BNC(\chi) \\ \pi \leq \gamma}} \kappa^{\varphi}_\pi\left(z_1^{p_1}, \ldots,  z_n^{p_n}\right) + \kappa^{\varphi}_\pi\left(z_1^{q_1}, \ldots, z_n^{q_n}\right) = \sum_{\substack{ \pi \in \BNC(\chi) \\ \pi \leq \gamma}} \kappa^{\tau_2}_\pi(Z_1, \ldots, Z_n). 
\end{align*}
To see this, again we need only consider $\pi \in \BNC(\chi)$ that form pair partitions when restricted to $I_S$ and no block of $\pi$ contains both an element of $\{k \, \mid \, \chi(k) = \ell\}$ and of $\{k \, \mid \, \chi(k) = r \}$, since $\{(S_\ell, 1)\}\cup \{(1,S_r)\}$ are bi-free with respect to $\tau_2$ and $\{(\{c_\ell, c^*_\ell\}, 1)\}\cup \{(1,\{c_r, c_r^*\})\}$ are bi-free with respect to $\varphi$.  For such a partition $\pi$, if we let $\widehat{\pi}$ be the largest partition on $I_{X,Y}$ such that $\widehat{\pi} \cup \pi|_{I_S}$ is an element of $\BNC(\chi)$, then by adding over all $\sigma \in \BNC(\chi)$ with $\sigma \leq \gamma$ and $\sigma|_{I_S} = \pi|_{I_S}$, it suffices to show that
\begin{align}
\frac{1}{2} &\left( \varphi_{\widehat{\pi}}\left((z_1^{p_1}, \ldots,  z_n^{p_n})|_{I_{X,Y}}\right)\kappa^{\varphi}_{\pi|_{I_S}}\left(\left(z_1^{p_1}, \ldots,  z_n^{p_n}\right)|_{I_S} \right)   +  \varphi_{\widehat{\pi}}\left((z_1^{q_1}, \ldots,  z_n^{q_n})|_{I_{X,Y}}\right)\kappa^{\varphi}_{\pi|_{I_S}}\left(\left(z_1^{q_1}, \ldots,  z_n^{q_n}\right)|_{I_S} \right)    \right) \nonumber \\
&= (\tau_2)_{\widehat{\pi}}\left( \left(Z_1, \ldots, Z_n\right)|_{I_{X,Y}}\right). \label{eq:final-moment-cumulant-eq-to-show}
\end{align}
Note that
\[
\kappa^{\varphi}_{\pi|_{I_S}}\left(\left(z_1^{p_1}, \ldots,  z_n^{p_n}\right)|_{I_S} \right)  = 0 \quad \text{ or } \quad \kappa^{\varphi}_{\pi|_{I_S}}\left(\left(z_1^{q_1}, \ldots,  z_n^{q_n}\right)|_{I_S} \right)  = 0
\]
if and only if $\pi$ has a block with two $*$-terms or two non-$*$-terms, as $\pi|_{I_S}$ is a pair partition and $(c_\ell, c_r)$ is a bi-circular pair.  In this case we would have that $\widehat{\pi}$ has a block of  odd length and thus the right-hand side of equation (\ref{eq:final-moment-cumulant-eq-to-show}) is also zero, as any odd $\tau_2$-moment involving $X$ and $Y$ is zero.  Otherwise, both $\varphi$-cumulants are 1 and this forces every block of $\widehat{\pi}$ to be of even length and alternate between $1$ and $*$ in the $\chi$-ordering.  Since
\begin{align*}
\varphi((x^*x)^m) = \varphi((xx^*)^m) &= \tau_2(X^{2m}) \qand \varphi((y^*y)^m) = \varphi((yy^*)^m) = \tau_2(Y^{2m})
\end{align*}
and since (by the assumption that $\pi$ does not contain a block containing elements of $\{k \, \mid \, \chi(k) = \ell\}$ and of $\{k \, \mid \, \chi(k) = r \}$) there is a single block of $\widehat{\pi}$ containing elements of $\{k \, \mid \, \chi(k) = \ell\}$ and  $\{k \, \mid \, \chi(k) = r \}$, adding the two $\varphi$-terms together produces exactly the $\tau_2$ term in equation (\ref{eq:final-moment-cumulant-eq-to-show}). 
\end{proof}

\begin{lemma}\label{lem:perturbing-preserves-AAF}
Let $(\A, \varphi)$ be a C$^*$-non-commutative probability space, let $x,y \in \A$, and let $(c_\ell, c_r)$ be a bi-free circular pair in $\A$ with mean 0, variance 1 and covariance 0 such that
\[
(\{x,x^*\}, \{y,y^*\}) \cup \{(\{c_\ell, c^*_\ell\}, 1)\}\cup \{(1,\{c_r, c_r^*\})\}
\]
are bi-free with respect to $\varphi$.  Then:
\begin{enumerate}[(i)]
\item If $(x, y)$ is bi-R-diagonal, then $\left(x + \sqrt{t} c_\ell, y + \sqrt{t} c_r \right)$ is bi-R-diagonal for all $t \in (0, \infty)$.
\item If $(x, y)$ is alternating adjoint flipping, then $\left(x + \sqrt{t} c_\ell, y + \sqrt{t} c_r \right)$ is alternating adjoint flipping for all $t \in (0, \infty)$.
\item If $x^*x$ and $xx^*$ (respectively $y^*y$ and $yy^*$) have the same distribution with respect to $\varphi$, then $(x+\sqrt{t} c_\ell)^*(x+\sqrt{t} c_\ell)$ and $(x+\sqrt{t} c_\ell)(x+\sqrt{t} c_\ell)^*$ (respectively $(y+\sqrt{t} c_r)^*(y+\sqrt{t} c_r)$ and $(y+\sqrt{t} c_r)(y+\sqrt{t} c_r)^*)$ have the same distribution with respect to $\varphi$.
\end{enumerate}
\end{lemma}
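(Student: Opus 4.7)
My strategy throughout is to exploit the vanishing of mixed bi-free cumulants between $(\{x,x^*\},\{y,y^*\})$ and $(\{c_\ell,c_\ell^*\},\{c_r,c_r^*\})$, together with the observation from Example~\ref{exam:circular-pair} that the bi-free circular pair itself is both bi-R-diagonal and alternating adjoint flipping. For (i) and (ii) this reduces matters to noting that the relevant property is additive in a pure $(x,y)$-contribution and a pure circular contribution.

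For (i), multilinearity of $\kappa^\varphi_\chi$ together with vanishing of all mixed bi-free cumulants gives, for any $n\geq 1$, $\chi\in\{\ell,r\}^n$, and $(p_1,\ldots,p_n)\in\{1,*\}^n$,
\[
\kappa^\varphi_\chi(\tilde z_1^{p_1},\ldots,\tilde z_n^{p_n})
\;=\;\kappa^\varphi_\chi(z_1^{p_1},\ldots,z_n^{p_n})
\;+\;t^{n/2}\,\kappa^\varphi_\chi(c_1^{p_1},\ldots,c_n^{p_n}),
\]
where $c_k=c_\ell$ or $c_r$ matches $\chi(k)$. Both summands vanish unless $(p_k)$ alternates in the $\chi$-order (the first by bi-R-diagonality of $(x,y)$, the second by that of $(c_\ell,c_r)$), so $(\tilde x,\tilde y)$ is bi-R-diagonal. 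For (ii), I would route through the inflation of Section~\ref{sec:minimize-Fisher}: Lemma~\ref{lem:expanding-moments-of-matrices} shows that a pair $(x_0,y_0)$ is alternating adjoint flipping if and only if $E_2(Z_{0,1}\cdots Z_{0,n})$ is a scalar multiple of $I_2$ for every word in the inflated variables. Remark~\ref{rem:inflating-preservers-bi-freeness} promotes bi-freeness of the two pairs from the scalar level to bi-freeness of $(X,Y)$ from $(S_\ell,S_r)$ over $M_2(\bC)$ with respect to $E_2$. AAF of each pure family translates to scalarity of all of its $E_2$-moments and, by operator-valued moment-cumulant inversion, to scalarity of all of its $M_2$-valued bi-free cumulants. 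Expanding $\tilde Z_k=Z_k+\sqrt t\,S_k$ by multilinearity with mixed cumulants vanishing forces every surviving bi-non-crossing block to be monochromatic with a scalar $M_2$-cumulant; bi-multiplicativity of $E_2$ then assembles scalar matrices into a scalar $E_2(\tilde Z_1\cdots\tilde Z_n)$, so $(\tilde x,\tilde y)$ is AAF.

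Part (iii) is the principal obstacle: the balanced hypothesis is strictly weaker than AAF, controlling only the even powers $E_2(X^{2m})$ rather than arbitrary $E_2$-words, so one cannot directly copy the argument for (ii). The plan is to run the (ii) strategy only for the single variable $\tilde X=X+\sqrt t\,S_\ell$ and its even powers. Bi-freeness of $X$ from $S_\ell$ over $M_2$ with respect to $E_2$ splits the cumulants of $\tilde X$ via
\[
\kappa^{M_2}_n(\tilde X,\ldots,\tilde X)\;=\;\kappa^{M_2}_n(X,\ldots,X)\;+\;t^{n/2}\,\kappa^{M_2}_n(S_\ell,\ldots,S_\ell).
\]
Since $X,S_\ell$ both lie in the off-diagonal subspace $\A\otimes(\bC E_{12}\oplus\bC E_{21})\otimes I_2$, an inductive application of moment-cumulant shows that the even-order $M_2$-cumulants of each are scalar (using scalarity of the even $E_2$-moments), while the odd-order cumulants are purely off-diagonal. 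Expanding $E_2(\tilde X^{2m})$ over bi-non-crossing partitions with bi-multiplicativity, the alternating $E_{12}/E_{21}$ bookkeeping of the matrix units forces odd-size blocks to occur in non-crossingly matched pairs whose combined matrix contribution is scalar, while all other blocks are scalar from the outset. Summing, $E_2(\tilde X^{2m})\in\bC\cdot I_2$, which is precisely $\tilde x^*\tilde x\sim\tilde x\tilde x^*$; the analogous argument with $(Y,S_r)$ yields the claim for $\tilde y$.
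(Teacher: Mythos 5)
Part (i) is fine: the decomposition $\kappa_\chi^\varphi(\tilde z_1^{p_1},\ldots,\tilde z_n^{p_n})=\kappa_\chi^\varphi(z_1^{p_1},\ldots,z_n^{p_n})+t^{n/2}\kappa_\chi^\varphi(c_1^{p_1},\ldots,c_n^{p_n})$ is valid by multilinearity and vanishing of mixed cumulants, and it gives bi-R-diagonality of the sum directly (the paper instead just cites the closure of bi-R-diagonality under sums of bi-free pairs). Parts (ii) and (iii), however, contain genuine gaps. Your claimed equivalence ``$(x_0,y_0)$ is AAF iff $E_2(Z_{0,1}\cdots Z_{0,n})\in\bC I_2$ for every word'' is false for odd-length words: $(x,y)=(1,1)$ is alternating adjoint flipping, yet $E_2(X)=E_{1,2}+E_{2,1}\notin\bC I_2$. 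The correct statement concerns only even-length words, since AAF constrains only those. More seriously, the assertion that AAF (or the balanced hypothesis in (iii)) makes the relevant $M_2(\bC)$-valued bi-free cumulants scalar is wrong. Already $\kappa_1^{M_2}(X)=\varphi(x)E_{1,2}+\varphi(x^*)E_{2,1}$ need not be scalar, and with an insertion one computes
\[
\kappa_2^{M_2}(XL_{E_{1,1}},X)=\left(\varphi(x^*x)-\varphi(x^*)\varphi(x)\right)E_{2,2},
\]
which is not scalar even when $x^*x$ and $xx^*$ have the same distribution. This matters because the operator-valued moment--cumulant expansion of $E_2(\tilde Z_1\cdots\tilde Z_{2n})$ is assembled via bi-multiplicativity: inner blocks produce elements of $M_2(\bC)$ that are re-inserted as $L_b$ or $R_b$ into outer blocks, so you cannot restrict attention to the ``plain'' cumulants $\kappa_n(X,\ldots,X)$. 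The sentence claiming that ``every surviving block is monochromatic with a scalar $M_2$-cumulant'' in (ii), and the claim in (iii) that odd-size blocks pair up to give scalar combined contributions, are precisely the content of the lemma and are not established; as the computation above shows, the individual pieces are genuinely non-scalar, so any proof along these lines must track the off-diagonal bookkeeping explicitly.

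A cleaner and correct route---the one the paper takes---stays entirely at the scalar level and never passes through $M_2(\bC)$. For (ii), expand $\varphi(\tilde z_1^{p_1}\cdots\tilde z_{2n}^{p_{2n}})$ by multilinearity into moments of words in $\{x,x^*,y,y^*,c_\ell,c_\ell^*,c_r,c_r^*\}$, expand each into scalar bi-free cumulants, use that mixed cumulants vanish and that the only nonvanishing circular cumulants are $\kappa_{1_{(\ell,\ell)}}(c_\ell,c_\ell^*)=\kappa_{1_{(\ell,\ell)}}(c_\ell^*,c_\ell)=1$ (and likewise on the right); summing over all partitions with a fixed circular part collapses the remaining indices into products of $\varphi$-moments of $\{x,x^*,y,y^*\}$ that are of even length and alternate in the $\chi$-order, to which the AAF hypothesis applies; then reverse the process after flipping. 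Part (iii) is the same argument run for the single-variable word $((x+\sqrt{t}c_\ell)^*(x+\sqrt{t}c_\ell))^n$ with ordinary free cumulants, using the equal distributions of $x^*x$ and $xx^*$ to cyclically rotate each resulting factor $\varphi((x^*x)^m)$ or $\varphi((xx^*)^m)$. I would encourage you to rewrite (ii) and (iii) this way; the $M_2$-inflation is the right tool in Lemma \ref{lem:inflating-circular-yields-bi-free-semis} and in Section \ref{sec:minimize-Fisher}, but here it obscures rather than simplifies.
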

\begin{proof}
As $(c_\ell, c_r)$ is bi-R-diagonal by Example \ref{exam:circular-pair} and as sums and scalar multiples of bi-R-diagonal pairs are bi-R-diagonal by \cite{K2019}*{Proposition 3.1}, (i) follows.

To see that (ii) holds, first we claim for all $n \in \bN$, $\chi \in \{\ell, r\}^{2n}$, and $z_1, \ldots, z_n \in \{x, y, c_\ell, c_r\}$ such that
\[
z_k \in \begin{cases}
\{x, c_\ell\} & \text{if } \chi(k) = \ell \\
\{y, c_r\} & \text{if } \chi(k) = r
\end{cases},
\]
we have that
\[
\varphi(z_1^{p_1} \cdots z_{2n}^{p_{2n}}) = \varphi(z_1^{q_1} \cdots z_{2n}^{q_{2n}}),
\]
where
\[
p_{s_\chi(k)} = \begin{cases}
1 & \text{if }k \text{ is odd}\\
* & \text{if }k \text{ is even}
\end{cases} \qqand q_{s_\chi(k)} = \begin{cases}
* & \text{if }k \text{ is odd}\\
1 & \text{if }k \text{ is even}
\end{cases}.
\]
Recall that
\[
\varphi(z_1^{p_1} \cdots z_{2n}^{p_{2n}}) = \sum_{\pi \in \BNC(\chi)} \kappa_\pi(z_1^{p_1},  \ldots,  z_{2n}^{p_{2n}})
\]
and the bi-free cumulant is zero if any block of $\pi$ contains both an element of $\{x, x^*, y, y^*\}$ and an element of $\{c_\ell, c^*_\ell, c_r, c_r^*\}$
As the only cumulants involving $c_\ell, c^*_\ell, c_r, c^*_r$ with non-zero values are
\[
\kappa_{1_{(\ell, \ell)}}(c_\ell, c^*_\ell) = 1 = \kappa_{1_{(\ell, \ell)}}(c^*_\ell, c_\ell) \qqand \kappa_{1_{(r, r)}}(c_r, c^*_r) = 1 = \kappa_{1_{(r, r)}}(c^*_r, c_r),
\]
for any fixed $\pi \in \BNC(\chi)$ for which the blocks containing $\{c_\ell, c^*_\ell, c_r, c_r^*\}$ do not cause the bi-free cumulant to be zero, we may add over all elements of $\BNC(\chi)$ with the same blocks as $\pi$ for those indices corresponding to elements of $\{c_\ell, c^*_\ell, c_r, c_r^*\}$ to obtain a product of moments involving $\{x, x^*, y, y^*\}$, each of which is of even length and alternates between $1$ and $*$ in the $\chi$-ordering.  We may then use the alternating adjoint flipping condition on $(x, y)$ to exchange the powers and reverse this cumulant reduction process to obtain $\varphi(z_1^{q_1} \cdots z_{2n}^{q_{2n}})$, thereby completing the claim.  Thus (ii) then follows by linearity.

To see that (iii) holds, we desire to show that
\[
\varphi\left( \left((x+\sqrt{t} c_\ell)^*(x+\sqrt{t} c_\ell)\right)^n   \right) = \varphi\left( \left((x+\sqrt{t} c_\ell)(x+\sqrt{t} c_\ell)^*\right)^n   \right)
\]
for all $n \in \bN$.  To see how the left-hand side can be changed into the right-hand side, arguments similar to the proof of Lemma \ref{lem:inflating-circular-yields-bi-free-semis} are used.  First, we expand out the product and expand the moment using linearity.  Then, for each moment term, we expand via the free cumulants and use the fact that mixed free cumulants vanish.  Cumulants involving an odd number of $c_\ell$ and $c_\ell^*$ vanish and thus we can consider only pair partitions when restricted to entries involving $c_\ell$ and $c_\ell^*$.  Any cumulant involving just $c_\ell$ or just $c_\ell^*$ vanishes and can be ignored.  By adding over all partitions with the same blocks on $c_\ell$ and $c_\ell^*$ that do not vanish yields a product of moment terms of the form $\varphi((x^*x)^m)$ and $\varphi((xx^*)^m)$.  For any such terms, viewing the $(2n)^{\mathrm{th}}$ term as the first term doesn't change the value, as the distributions of $x^*x$ and $xx^*$ are the same, thereby effectively moving the $x$ or $c_\ell$ term at the end to the beginning.  One then reverses the above process and obtains the right-hand side as desired.
\end{proof}

\begin{proof}[Proof of Theorem \ref{thm:maximizing-bi-free-entropy}.]
As per Remark \ref{rem:entropy-for-non-self-adjoints}, we may assume without loss of generality that there exists a bi-free circular pair $(c_\ell, c_r)$ (with mean 0, variance 1 and covariance 0) in $\A$ such that \[
\{(\{x,x^*\}, \{y,y^*\})\} \cup \{(\{c_\ell, c_\ell^*\}, 1)\}\cup \{(1,\{c_r,c_r^*\})\}
\]
are bi-free.   Therefore, as $\{(X, Y)\} \cup \{(S_\ell, S_r)\}$ are bi-free with respect to $\tau_2$ by Lemma \ref{lem:inflating-circular-yields-bi-free-semis}, we obtain that
\[
\chi^*(X \sqcup Y) = \ln(2\pi e) + \frac{1}{2} \int^\infty_{0} \left( \frac{2}{1+t} - \Phi^*\left( X + \sqrt{t} S_\ell \sqcup Y + \sqrt{t} S_r\right) \right) \, dt.
\]
However, as
\begin{align*}
X + \sqrt{t} S_\ell &= (x + \sqrt{t} c_\ell) \otimes E_{1,2} \otimes I_2 + (x + \sqrt{t} c_\ell)^* \otimes E_{2,1} \otimes I_2, \\
Y + \sqrt{t} S_r &= (y + \sqrt{t} c_r) \otimes I_2 \otimes  E_{1,2}  + (y + \sqrt{t} c_r)^* \otimes I_2\otimes E_{2,1}
\end{align*}
and as Lemma \ref{lem:perturbing-preserves-AAF} part (iii) shows that the assumptions of Theorem \ref{thm:minimizing-fisher-info} as satisfied, we obtain that
\begin{align}
\Phi^*\left(  \{x + \sqrt{t} c_\ell, (x + \sqrt{t} c_\ell)^*\} \sqcup \{ y + \sqrt{t} c_r,  (y + \sqrt{t} c_r)^*\}\right) \geq 2 \Phi^*\left( X + \sqrt{t} S_\ell \sqcup Y + \sqrt{t} S_r\right) \label{eq:final-in-max-entropy}
\end{align}
for all $t \in (0, \infty)$.  Hence the inequality 
\[
\chi^*(\{x,x^*\} \sqcup \{y,y^*\}) \leq 2 \chi^*(X \sqcup Y)
\]
follows by comparing the above bi-free entropy formula with that from Definition \ref{defn:entropy-for-non-self-adjoints}.

In the case that $(x, y)$ is bi-R-diagonal and alternating adjoint flipping, Lemma \ref{lem:perturbing-preserves-AAF} implies $(x + \sqrt{t} c_\ell, y + \sqrt{t} c_r)$ is bi-R-diagonal and alternating adjoint flipping for all $t \in (0, \infty)$, thus equality holds in equation (\ref{eq:final-in-max-entropy}) by Theorem \ref{thm:minimizing-fisher-info}.
\end{proof}

\section{Other Results}
\label{sec:Other}

In this section, we will examine other results from \cite{NSS1999} that generalize to the bi-free setting.  As these results are less connected to bi-free entropy with respect to a completely positive map and have proofs that can be adapted from \cite{NSS1999} using the same  modifications from Sections \ref{sec:minimize-Fisher} and \ref{sec:Max-Entropy} to deal with the $\chi$-ordering, we simply state these results.

\begin{theorem}[Generalization of \cite{NSS1999}*{Theorem 1.2}]
Let $(\A, \varphi)$ be a C$^*$-non-commutative probability space, let $d \in \bN$, and let $(A_d, E_d, \varepsilon, \tau_d)$ be as in Section \ref{sec:minimize-Fisher}.  Then
\begin{enumerate}[(i)]
\item For all $\{x_{i,j}\}^d_{i,j=1}, \{y_{i,j}\}^d_{i,j=1} \subseteq \A$, if
\[
X = \sum^d_{i,j=1} x_{i,j} \otimes E_{i,j} \otimes I_d \qqand Y = \sum^d_{i,j=1} y_{i,j} \otimes I_d \otimes E_{i,j},
\]
then
\[
\Phi^*\left( \{x_{i,j}, x^*_{i,j}\}^d_{i,j=1} \sqcup \{y_{i,j}, y_{i,j}^*\}^d_{i,j=1}\right) \geq d^3 \Phi^*(X, X^* \sqcup Y, Y^*).
\]
Moreover, equality holds if $(\{X, X^*\}, \{Y, Y^*\})$ is bi-free from $(M_d(\bC)_\ell, M_d(\bC)_r)$ with respect to $\tau_d$.
\item If in (i) $X$ and $Y$ are self-adjoint, then 
\[
\Phi^*\left( \{x_{i,j}\}^d_{i,j=1} \sqcup \{y_{i,j}\}^d_{i,j=1}\right) \geq d^3 \Phi^*(X\sqcup Y)
\]
with equality holding if $(\{X\}, \{Y\})$ is bi-free from $(M_d(\bC)_\ell, M_d(\bC)_r)$ with respect to $\tau_d$.
\end{enumerate}
\end{theorem}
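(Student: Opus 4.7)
The plan is to adapt the strategy of Propositions \ref{prop:fisher-from-collections-to-matrices} and \ref{prop:equivalent-fishers-for-minimization} from the special $2 \times 2$ situation to arbitrary $d \times d$ matrices: construct matrix-valued bi-free conjugate variables in the analytical $M_d(\bC)$-$M_d(\bC)$ setting $(A_d, E_d, \varepsilon, \tau_d)$, then chain an equality (comparing to the scalar Fisher information of the entries, which contributes a factor of $d$) with an inequality (comparing to the scalar Fisher information of $X, X^*, Y, Y^*$ in $(A_d, \tau_d)$, which contributes a further factor of $d^2$) to collect the full $d^3$ factor.

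Assume the left-hand side of (i) is finite. For each $(i,j)$, let $\xi^\ell_{i,j}, \xi^r_{i,j}, \nu^\ell_{i,j}, \nu^r_{i,j} \in L_2(\A, \varphi)$ denote the scalar bi-free conjugate variables of $x_{i,j}, x^*_{i,j}, y_{i,j}, y^*_{i,j}$ (in the presence of the remaining entries and their adjoints) with respect to $\varphi$. Assemble them into
\[
\Xi_1 = \sum_{i,j} \xi^\ell_{i,j} \otimes E_{j,i} \otimes I_d, \qquad \Xi_2 = \sum_{i,j} \xi^r_{i,j} \otimes E_{j,i} \otimes I_d,
\]
\[
N_1 = \sum_{i,j} \nu^\ell_{i,j} \otimes I_d \otimes E_{j,i}, \qquad N_2 = \sum_{i,j} \nu^r_{i,j} \otimes I_d \otimes E_{j,i}.
\]
Using Lemma \ref{lem:expanding-moments-of-matrices} and the cumulant characterization of Theorem \ref{thm:conjugate-variables-via-cumulants}, I will verify that $\Xi_1, \Xi_2$ are the left conjugate variables for $X, X^*$ and $N_1, N_2$ the right conjugate variables for $Y, Y^*$ in the $M_d(\bC)$-$M_d(\bC)$ setting, with respect to the completely positive map $\eta_d: M_d(\bC) \to M_d(\bC)$ given by $\eta_d(b) = \mathrm{Tr}(b) I_d$, in the presence of the pair of $M_d(\bC)$-algebras generated by all other variables. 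The critical second-order cumulant $\widetilde{\kappa}_{1_{(\ell,\ell)}}(XL_b, \Xi_1)$ reduces, via $\varphi(x_{k,l} \xi^\ell_{i,j}) = \delta_{(k,l),(i,j)}$, to the matrix identity $\sum_{i,j} E_{i,j} b E_{j,i} = \mathrm{Tr}(b) I_d = \eta_d(b)$, while all other mixed cumulants expand via Lemma \ref{lem:expanding-moments-of-matrices} into sums of scalar mixed cumulants involving $\xi^\ell_{i,j}$ and variables other than $x_{i,j}$, which vanish by the defining property of the scalar conjugate variables. Membership of the $\Xi$'s and $N$'s in the requisite $\tau_d$-closure uses the identity $L_{E_{i,k}} X L_{E_{l,j}} = x_{k,l} \otimes E_{i,j} \otimes I_d$ (and its analogues) to realise any entry-with-matrix-unit as a polynomial in $X, X^*, Y, Y^*$ and the matrix units.

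A direct computation using $\tau_d(z \otimes b_1 \otimes b_2) = \varphi(z) \tr_d(b_1 b_2)$ and the orthogonality $\tr_d(E_{j,i} E_{l,k}) = \tfrac{1}{d} \delta_{(i,j),(k,l)}$ gives $\|\Xi_1\|_{\tau_d}^2 = \tfrac{1}{d} \sum_{i,j} \|\xi^\ell_{i,j}\|_\varphi^2$, and similarly for $\Xi_2, N_1, N_2$. Summing yields
\[
\Phi^*\bigl(\{x_{i,j}, x^*_{i,j}\}_{i,j=1}^d \sqcup \{y_{i,j}, y^*_{i,j}\}_{i,j=1}^d\bigr) = d \cdot \Phi^*\bigl(X, X^* \sqcup Y, Y^* : (M_d(\bC)_\ell, M_d(\bC)_r), \eta_d\bigr).
\]
To relate the right-hand side to the scalar Fisher information of $(X, X^*, Y, Y^*)$ in $(A_d, \tau_d)$, decompose $\eta_d = \eta' \circ F$, where $F : M_d(\bC) \to \bC \cdot I_d$ is the trace-preserving conditional expectation $F(b) = \tr_d(b) I_d$ and $\eta'$ is multiplication by $d$ on $\bC \cdot I_d$. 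Remark \ref{rem:remarks-about-fisher-info}\,(v) reduces the amalgamating algebra from $M_d(\bC)$ to $\bC \cdot I_d$ while preserving the Fisher information; Remark \ref{rem:remarks-about-fisher-info}\,(vii) shrinks the presence pair from $(M_d(\bC)_\ell, M_d(\bC)_r)$ down to $(\bC_\ell, \bC_r)$ with an inequality; and the cumulant characterization shows that scaling $\eta'$ by $d$ multiplies the conjugate variables by $d$, hence the Fisher information by $d^2$. Chaining these steps yields $\Phi^*(X, X^* \sqcup Y, Y^* : (M_d(\bC)_\ell, M_d(\bC)_r), \eta_d) \geq d^2 \Phi^*(X, X^* \sqcup Y, Y^*)$, completing (i). Equality in the shrinking step is Remark \ref{rem:remarks-about-fisher-info}\,(viii), applied in the $\bC \cdot I_d$ setting, which holds precisely when $(\{X, X^*\}, \{Y, Y^*\})$ is bi-free from $(M_d(\bC)_\ell, M_d(\bC)_r)$ with respect to $\tau_d$. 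Part (ii) is obtained by the identical argument in the self-adjoint case, with a single $\Xi$ (resp.\ $N$) replacing the pair $\Xi_1, \Xi_2$ (resp.\ $N_1, N_2$); the Hermitian symmetry $x_{j,i} = x_{i,j}^*$ makes the entries their own adjoints in the conjugate variable relations without affecting any computation.

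The main technical obstacle is the cumulant verification for $\Xi_1$ (and the other three): every instance of Theorem \ref{thm:conjugate-variables-via-cumulants} must be checked by unfolding via Lemma \ref{lem:expanding-moments-of-matrices} into a sum indexed by matrix-index tuples and the $\chi$-order. This mirrors the bookkeeping in the proof of Proposition \ref{prop:fisher-from-collections-to-matrices}, but extended from the two off-diagonal units $E_{1,2}, E_{2,1}$ to all $d^2$ matrix units, so the ad hoc swap-type identities used there must be replaced by general matrix-unit identities, most importantly $\sum_{i,j} E_{i,j} b E_{j,i} = \mathrm{Tr}(b) I_d$.
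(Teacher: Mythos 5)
Your proposal is correct and is essentially the intended argument: the paper omits the proof of this theorem, stating only that it is obtained by adapting \cite{NSS1999}*{Theorem 1.2} via the machinery of Sections \ref{sec:Conj}--\ref{sec:minimize-Fisher}, and your two-step chain (assemble the entry conjugate variables into $\Xi = \sum_{i,j}\xi_{i,j}\otimes E_{j,i}\otimes I_d$ to get the exact factor of $d$ against $\eta_d = \mathrm{Tr}(\cdot)I_d$, then decompose $\eta_d = \eta'\circ F$ through $\bC I_d$ and shrink the presence algebras to collect the remaining $d^2$) is precisely the $d\times d$ analogue of Propositions \ref{prop:fisher-from-collections-to-matrices} and \ref{prop:equivalent-fishers-for-minimization}, including the correct use of $\sum_{i,j}E_{i,j}bE_{j,i}=\mathrm{Tr}(b)I_d$ and the $\frac{1}{d}$ from $\tr_d$ in the norm computation. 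The only point to add for the equality assertion is the converse extraction — if the matrix conjugate variable $\Xi$ exists, its matrix entries satisfy the scalar conjugate variable relations for the $x_{i,j}$ — exactly as in the second half of the proof of Proposition \ref{prop:fisher-from-collections-to-matrices}; without it one only gets the inequality, not that both sides are simultaneously finite in the bi-free case.
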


\begin{theorem}[Generalization of \cite{NSS1999}*{Theorem 1.5}]\label{thm:NSS-Thm-1.5}
Let $(\A, \varphi)$ be a C$^*$-non-commutative probability space, let $d \in \bN$, and let $(A_d, E_d, \varepsilon, \tau_d)$ be as in Section \ref{sec:minimize-Fisher}.  Then
\begin{enumerate}[(i)]
\item For all $\{x_{i,j}\}^d_{i,j=1}, \{y_{i,j}\}^d_{i,j=1} \subseteq \A$, if
\[
X = \sum^d_{i,j=1} x_{i,j} \otimes E_{i,j} \otimes I_d \qqand Y = \sum^d_{i,j=1} y_{i,j} \otimes I_d \otimes E_{i,j},
\]
then
\[
\chi^*\left( \{x_{i,j}, x^*_{i,j}\}^d_{i,j=1} \sqcup \{y_{i,j}, y_{i,j}^*\}^d_{i,j=1}\right) \geq d^2\left( \chi^*(X, X^* \sqcup Y, Y^*) - 2\ln(d)\right).
\]
Moreover, equality holds if $(\{X, X^*\}, \{Y, Y^*\})$ is bi-free from $(M_d(\bC)_\ell, M_d(\bC)_r)$ with respect to $\tau_d$.
\item If in (i) $X$ and $Y$ are self-adjoint, then 
\[
\chi^*\left( \{x_{i,j}\}^d_{i,j=1} \sqcup \{y_{i,j}\}^d_{i,j=1}\right) \leq d^2\left(  \chi^*(X\sqcup Y) - \ln(d)\right)
\]
with equality holding if $(\{X\}, \{Y\})$ is bi-free from $(M_d(\bC)_\ell, M_d(\bC)_r)$ with respect to $\tau_d$.
\end{enumerate}
\end{theorem}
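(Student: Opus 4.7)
The plan is to adapt the proof of Theorem~\ref{thm:maximizing-bi-free-entropy} to the full $d\times d$ matrix setting, pairing it with the companion Fisher information statement (the unwritten bi-free generalization of \cite{NSS1999}*{Theorem~1.2}, asserting $\Phi^*(\text{entries}) \geq d^3\Phi^*(X,X^*,Y,Y^*)$, which is the Fisher analogue of the theorem at hand). As with the passage from Theorem~\ref{thm:minimizing-fisher-info} to Theorem~\ref{thm:maximizing-bi-free-entropy}, the entropy inequality will be obtained by applying the Fisher information inequality after perturbation at all levels $t > 0$ and then integrating using Definition~\ref{defn:entropy-for-non-self-adjoints}. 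A change of variable will be needed to reconcile the two integrals, and this change of variable is what produces the additive $-2\ln(d)$ and $-\ln(d)$ corrections in (i) and (ii), respectively.

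\textbf{Perturbations and distribution of the matrix-valued perturbations.} Enlarging $(\A, \varphi)$ if necessary, I would choose bi-free families $\{c_{\ell,i,j}\}^d_{i,j=1}, \{c_{r,i,j}\}^d_{i,j=1}$ of circular elements of mean $0$, variance $1$ and covariance $0$, bi-free (in the sense of Definition~\ref{defn:entropy-for-non-self-adjoints}) from $(\{x_{i,j}, x_{i,j}^*\}, \{y_{i,j}, y_{i,j}^*\})$, and set
\[
C_\ell = \sum_{i,j=1}^d c_{\ell,i,j} \otimes E_{i,j} \otimes I_d, \qquad C_r = \sum_{i,j=1}^d c_{r,i,j} \otimes I_d \otimes E_{i,j}.
\]
Using Lemma~\ref{lem:expanding-moments-of-matrices} and a cumulant computation along the lines of Lemma~\ref{lem:inflating-circular-yields-bi-free-semis} — tracking the $\chi$-ordering and the pairing rule $\tr_d(E_{j,i}E_{k,l}) = \frac{1}{d}\delta_{i,l}\delta_{j,k}$ — I would show that $(C_\ell, C_r)$ is a bi-free circular pair of variance $d$ in $(A_d, \tau_d)$ and that
\[
\{(\{X, X^*\}, \{Y, Y^*\})\} \cup \{(\{C_\ell, C_\ell^*\}, 1_{A_d})\} \cup \{(1_{A_d}, \{C_r, C_r^*\})\}
\]
are bi-free with respect to $\tau_d$. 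For (ii) the same scheme applies with self-adjoint semicircular entries satisfying $s_{\ell,i,j}^* = s_{\ell,j,i}$ (and likewise on the right), producing a self-adjoint matrix-valued semicircular element $S_\ell$ of variance $d$ in $(A_d, \tau_d)$ that is bi-free from $(X, Y)$.

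\textbf{Integration and the $\ln d$ correction.} Writing the entropy from Definition~\ref{defn:entropy-for-non-self-adjoints} with $n_1 = 4d^2$ variables on the entries side and $n_2 = 4$ on the matrix side, the Fisher information inequality at perturbation level $t$ combined with the substitution $s = td$ (to compensate the variance-$d$ normalization of $C_\ell, C_r$) would give
\[
\tfrac{1}{2}\int_0^\infty \!\bigl(\tfrac{4d^2}{1+t} - d^3\, \Phi^*_{\text{mat}}(td)\bigr)\,dt \ = \ \tfrac{d^2}{2}\int_0^\infty \!\bigl(\tfrac{4}{d+s} - \Phi^*_{\text{mat}}(s)\bigr)\,ds,
\]
which differs from $d^2 \chi^*_{\text{mat}}$ (up to the common $\ln(2\pi e)$ constant) by $\tfrac{d^2}{2}\int_0^\infty\!\bigl(\tfrac{4}{d+s} - \tfrac{4}{1+s}\bigr)\,ds$; the elementary identity $\int_0^\infty\!\bigl(\tfrac{1}{d+s} - \tfrac{1}{1+s}\bigr)\,ds = -\ln d$ then produces the stated $-2\ln d$ correction. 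Part (ii) is identical with $n_1 = 2d^2$, $n_2 = 2$, yielding $-\ln d$.

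\textbf{Main obstacle and equality.} The principal technical point is the bi-freeness of $(\{C_\ell, C_\ell^*\}, 1), (1, \{C_r, C_r^*\})$ from $(\{X,X^*\}, \{Y,Y^*\})$ with respect to the scalar state $\tau_d$ (and not merely with respect to $E_d$, where it follows from Remark~\ref{rem:inflating-preservers-bi-freeness}). As in Lemma~\ref{lem:inflating-circular-yields-bi-free-semis}, this requires matching, for all $\chi \in \{\ell, r\}^n$ and all $\pi \in \BNC(\chi)$ refining the colour partition, the $\tau_d$-moments of mixed products against the bi-free cumulant expansion; the circular assumption on the $c_{i,j}$ restricts the contributing partitions to pair partitions on the $C$-indices, and the scalar $\chi$-moments of $(X,Y)$ can then be combined via Lemma~\ref{lem:expanding-moments-of-matrices} to match. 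Once this is established, the equality assertion is immediate: when $(\{X, X^*\}, \{Y, Y^*\})$ is bi-free from $(M_d(\bC)_\ell, M_d(\bC)_r)$ with respect to $\tau_d$, equality holds in the Fisher information inequality at every $t$ (after noting, via Proposition~\ref{prop:con-var-do-not-change-adding-bi-free-parts}, that bi-freeness from the matrix units is preserved under the bi-circular perturbation), and the integrated equality in entropy follows.
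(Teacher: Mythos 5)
Your proposal follows essentially the same route the paper takes (and only sketches): apply the Fisher-information inequality of the preceding theorem to the families perturbed by inflated circular/semicircular elements, with the key technical input being exactly the paper's Lemma \ref{lem:inflate-semi-and-circular-bonus} (the inflated perturbations are circular/semicircular of variance $d$ and bi-free from the relevant pairs with respect to $\tau_d$, not merely $E_d$), followed by the substitution $s = td$ which produces the stated $\ln d$ corrections. One remark: your (correct) bookkeeping yields ``$\leq$'' in part (i), consistent with part (ii) and with the direction $\Phi^*(\text{entries}) \geq d^3\Phi^*(\text{matrix})$ of the Fisher inequality, whereas the printed statement of (i) has ``$\geq$''; this appears to be a typo in the paper rather than a defect in your argument.
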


To prove Theorem \ref{thm:NSS-Thm-1.5}, we note it is essential to prove the following.

\begin{lemma}[Generalization of \cite{NSS1999}*{Proposition 5.3}]\label{lem:inflate-semi-and-circular-bonus}
Let $(\A, \varphi)$ be a C$^*$-non-commutative probability space, let $D_\ell, D_r$ be $*$-subalgebras of $A$, let $d \in \bN$, and let $(A_d, E_d, \varepsilon, \tau_d)$ be as in Section \ref{sec:minimize-Fisher}.   
\begin{enumerate}[(i)]
\item If $\{c_{\ell, i,j}\}^d_{i,j=1}$ and $\{c_{r, i,j}\}^d_{i,j=1}$ are circular elements of $\A$ with mean 0 and variance 1 such that
\[
\{(D_\ell, D_r)\} \cup  \{ (\{c_{\ell, i,j}, c^*_{\ell, i,j}\}, 1 )\}^d_{i,j=1}\cup  \{ (1, \{c_{r, i,j}, c^*_{r, i,j})\}^d_{i,j=1}
\]
are bi-free, then
\[
C_\ell = \sum^d_{i,j=1} c_{\ell, i,j} \otimes E_{i,j} \otimes I_d \qand C_r = \sum^d_{i,j=1} c_{r,i,j} \otimes I_d \otimes E_{i,j}
\]
are circular elements of mean 0 and variance $d$ such that $(\{C_\ell, C_\ell^*\}, 1)$, $(1, \{C_r, C^*_r\})$, and $(D_\ell \otimes M_d(\bC) \otimes I_D, D_r \otimes I_d \otimes M_d(\bC))$ are bi-free with respect to $\tau_d$.
\item If $\{s_{\ell, i,j}\}^d_{i,j=1}$ and $\{s_{r, i,j}\}^d_{i,j=1}$ are elements of $\A$ with mean 0 and variance 1 such that $s_{\ell, i,i}$ and $s_{r,i,i}$ are semicircular elements for all $i$, $s_{\ell, i,j}$ and $s_{r, i,j}$ are circular elements for all $i,j$, $s_{\ell, i,j}^* = s_{\ell, j,i}$ and $s_{r, i,j}^* = s_{r, j,i}$ for all $i,j$, and 
\[
\{(D_\ell, D_r)\} \cup  \{(s_{\ell, i,i}, 1)\}^d_{i=1} \cup \{(1, s_{r,i,i})\}^d_{i=1} \cup  \{ (\{s_{\ell, i,j}, s^*_{\ell, i,j}\}, 1 )\}_{1 \leq i < j \leq d}\cup  \{ (1, \{s_{r, i,j}, ss^*_{r, i,j})\}_{1 \leq i < j \leq d}
\]
are bi-free, then
\[
S_\ell = \sum^d_{i,j=1} s_{\ell, i,j} \otimes E_{i,j} \otimes I_d \qand S_r = \sum^d_{i,j=1} s_{r,i,j} \otimes I_d \otimes E_{i,j}
\]
are semicircular elements of mean 0 and variance $d$ such that $(S_\ell, 1)$, $(1, S_r)$, and $(D_\ell \otimes M_d(\bC) \otimes I_D, D_r \otimes I_d \otimes M_d(\bC))$ are bi-free with respect to $\tau_d$.
\end{enumerate}
\end{lemma}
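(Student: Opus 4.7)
The plan is to establish both (i) and (ii) simultaneously by computing the scalar-valued bi-free cumulants with respect to $\tau_d = \tr_d \circ E_d$ of the inflated operators together with elements of $D_\ell \otimes M_d(\bC) \otimes I_d$ and $D_r \otimes I_d \otimes M_d(\bC)$, and showing that these cumulants have exactly the values predicted by the conclusion: namely, only the two-entry cumulants $\kappa_{1_{(\ell,\ell)}}(C_\ell, C_\ell^*) = \kappa_{1_{(\ell,\ell)}}(C_\ell^*, C_\ell) = d$ and their right-hand analogues survive among the $C$-only cumulants (with a pair-partition pattern characteristic of a circular variance-$d$ element), that the $S_\ell$- and $S_r$-only cumulants match those of a semicircular of variance $d$, and that every mixed scalar bi-free cumulant between the $C$-type (resp.\ $S$-type) operators and the matrix-inflated $D_\ell, D_r$ operators vanishes. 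By the characterization of bi-freeness via vanishing mixed cumulants (applied on the scalar level with $B = \bC$), this suffices to prove both parts.

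First, I would use Lemma \ref{lem:expanding-moments-of-matrices} to expand a generic $\tau_d$-moment of a product of such operators as a sum, indexed by matrix-unit indices, of terms of the form $\varphi(w_1^{p_1} \cdots w_n^{p_n}) \tr_d(E_\chi(\vec{i}, \vec{j}))$, where each $w_k$ is either a $c$-type (resp.\ $s$-type) entry or a $D$-entry, in the $\chi$-order. Next I would apply scalar bi-free moment-cumulant inversion to each $\varphi$-moment and use the hypothesis that $(D_\ell, D_r)$ is bi-free from the $c$-type (resp.\ $s$-type) bi-free collection: every bi-non-crossing partition containing a block mixing a $c$- or $s$-entry with a $D$-entry contributes zero. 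Inside the $c$- or $s$-only blocks, the circular (resp.\ Hermitian semicircular) cumulant structure collapses the sum to pair partitions whose blocks respect the adjoint-matching $\kappa_2(c,c^*)=1$ or $\kappa_2(s_{i,j}, s_{k,l}) = \delta_{i,l}\delta_{j,k}$.

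Finally, I would regroup the resulting sum and use the matrix-unit trace identity $\tr_d(E_{i,j} E_{k,l}) = \frac{1}{d}\delta_{j,k}\delta_{i,l}$, together with the corresponding identity for products in the $\chi$-order, to identify two separate phenomena. First, in the presence of only $C$- (resp.\ $S$-) entries, the pair-partition sum collapses to the sum over bi-non-crossing pair partitions weighted by $d$, giving the variance-$d$ circular (resp.\ semicircular) distribution; in particular, the pair $(\{C_\ell, C_\ell^*\}, 1)$ and $(1, \{C_r, C_r^*\})$ have the claimed bi-free structure, and similarly for $(S_\ell, 1)$ and $(1, S_r)$. Second, in any mixed cumulant involving a $D_\ell \otimes M_d(\bC) \otimes I_d$ or $D_r \otimes I_d \otimes M_d(\bC)$ entry, the partition structure forced by the vanishing of scalar mixed cumulants splits into a $c$/$s$-portion and a $D$-portion, but bi-multiplicativity together with the matrix-unit trace constraints then force an additional summation over indices that produces a product of $M_d(\bC)$-factors which, when $\tr_d$-paired against the $D$-entries, yields exactly the defining cumulant relation for scalar bi-freeness of the three algebras.

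The main obstacle will be the bookkeeping in this last identification step: the $\chi$-ordering, the permutation $s_\chi$, and the interleaving of left and right entries convert the matrix-unit products into a twisted chain, and one must show that the contribution from any mixed block is forced either to vanish (by the scalar bi-freeness hypothesis) or to factor into contributions that coincide with those of a genuine scalar bi-free triple. This is a direct generalization of the $d=2$ computation already carried out in Lemma \ref{lem:inflating-circular-yields-bi-free-semis}; I would use that proof as a template and track the combinatorics carefully for general $d$, noting that the Hermitian relation $s_{\ell,i,j}^* = s_{\ell,j,i}$ in part (ii) requires the slightly more involved cumulant matching $\delta_{i,l}\delta_{j,k}$ whereas part (i) uses the simpler circular matching $\delta_{(i,j),(i',j')}$, but in both cases the matrix-unit trace identity produces precisely the factor needed to obtain variance $d$ and to validate the claimed bi-free structure.
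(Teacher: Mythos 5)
Your proposal is sound, but it takes a genuinely different route from the one the paper indicates. The paper proves this lemma by first generalizing \cite{NSS1999}*{Lemma 5.4} to the bi-free setting --- i.e., the characterization ``$Z$ is a mean-zero, variance-$r$ (semi)circular element with $(\alg(Z,Z^*),\bC)$ bi-free from $(D_\ell,D_r)$ if and only if the bi-free conjugate variable $J_\ell(Z : (\cdot,\cdot))$ exists and equals $\frac{1}{r}Z^*$'' --- which follows immediately from the cumulant characterization in Theorem \ref{thm:conjugate-variables-via-cumulants}; it then computes the conjugate variables of $C_\ell$, $C_r$ (resp.\ $S_\ell$, $S_r$) from those of the matrix entries using the entry-to-matrix conjugate-variable transfer developed in Section \ref{sec:minimize-Fisher} (the analogue of Proposition \ref{prop:fisher-from-collections-to-matrices}), and concludes. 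Your approach instead verifies scalar bi-freeness directly by expanding all mixed $\tau_d$-moments via Lemma \ref{lem:expanding-moments-of-matrices}, invoking the vanishing of mixed $\varphi$-cumulants, and resumming the matrix-unit traces --- essentially the strategy of Lemma \ref{lem:inflating-circular-yields-bi-free-semis} pushed to general $d$. Both work, and you correctly identify where the labour sits: your route must handle the mixed cumulants against the full constant-matrix algebra $1_\A\otimes M_d(\bC)\otimes I_d$ (a case not present in the $d=2$ template, where bi-freeness was only asserted against a single bi-semicircular pair), and this is exactly the bookkeeping that the $\chi$-order and the twisted matrix-unit chain make delicate; the covariance deltas $\delta_{(i,j),(i',j')}$ versus $\delta_{i,l}\delta_{j,k}$ and the factor $\tr_d(E_{i,j}E_{j,i})=\frac{1}{d}$ do reassemble into the Kreweras-complement grouping of the $D$-entries needed for bi-freeness, so no step fails, but the conjugate-variable route buys you this entire verification for free from machinery already established, which is presumably why the authors chose it. If you pursue your version, the one point to make fully explicit is that a crossing (in the $\chi$-order) pairing of $C$- or $S$-entries is killed at the level of the $\varphi$-moment-cumulant expansion (not by the trace), since in the free-probabilistic setting there is no genus expansion to produce lower-order corrections.
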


Note the proof of Lemma \ref{lem:inflate-semi-and-circular-bonus} is obtained by first generalizing \cite{NSS1999}*{Lemma 5.4}, which clearly holds due to the bi-free cumulant characterization of the conjugate variables.  The proof then proceeds via constructing the appropriate conjugate variables for either $(C_\ell, C_r)$ or $(S_\ell, S_r)$ using the techniques from Section \ref{sec:minimize-Fisher}.

\end{document}